\tikzset{commutative diagrams/.cd,every label/.append style = {font = \normalsize}}
\numberwithin{equation}{section}
\newtheorem*{theorem*}{Theorem}
\newtheorem*{corollary*}{Corollary}
\newtheorem{thm}[equation]{Theorem}
\newtheorem{theorem}[equation]{Theorem}
\newtheorem{cor}[equation]{Corollary}
\newtheorem{corollary}[equation]{Corollary}
\newtheorem{lemma}[equation]{Lemma}
\newtheorem{lem}[equation]{Lemma}
\newtheorem{proposition}[equation]{Proposition}
\newtheorem{prop}[equation]{Proposition}
\newtheorem{conj}[equation]{Conjecture}
\theoremstyle{definition}
\newtheorem{defn}[equation]{Definition}
\newtheorem{definition}[equation]{Definition}
\newtheorem*{pf_no_qed}{Proof}
\newtheorem{eg_no_qed}[equation]{Example}
\newenvironment{example}[1][]{\begin{eg_no_qed}[#1]\pushQED{\qed}}{\popQED\end{eg_no_qed}}
\newtheorem{rmk}[equation]{Remark}
\newtheorem{remark}[equation]{Remark}
\theoremstyle{remark}
\newtheorem*{claimpf_no_qed}{Proof of Claim}
\font\pipefont=lcircle10
\def\elbow{\smash{\raise3pt\hbox{\pipefont\rlap{\rlap{\char'014}\char'016}}}}
\def\halfelbow{\smash{\raise2pt\hbox{\pipefont\rlap{\rlap{\rlap{\char'015}\phantom{\char'017}}}}}}
\def\cross{\smash{\lower5pt\hbox{\rlap{\vrule height16pt}}\raise3pt\hbox{\rlap{\hskip-8pt \vrule height0.4pt depth0pt width16pt}}}}
\def\AA{\mathcal{A}}
\def\B{\mathcal{B}}
\def\M{\mathcal{M}}
\def\PP{\mathbb{P}}
\def\T{\mathcal{T}}
\def\C{\mathbb{C}}
\def\mcv{\mathcal{V}}
\def\Aalpha{\boldsymbol\alpha}
\def\Bbeta{\boldsymbol\beta}
\def\Ggamma{\boldsymbol\gamma}
\DeclareMathOperator{\inv}{inv}
\DeclareMathOperator{\flip}{Flip}
\DeclareMathOperator{\Facet}{Facet}
\DeclareMathOperator{\area}{area}
\DeclareMathOperator{\punc}{punc}
\DeclareMathOperator{\Edges}{Edges}
\DeclareMathOperator{\convex}{convex}
\DeclareMathOperator{\Bound}{Bound}
\newcommand{\Bdkn}{\Bound(k, n)}
\DeclareMathOperator{\Gr}{Gr}
\DeclareMathOperator{\Id}{Id}
\DeclareMathOperator{\Mat}{Mat}
\DeclareMathOperator{\codim}{codim}
\newcommand{\Grk}{\Gr_{k,n}^{\ge 0}}
\newcommand{\A}{\mathcal{A}_{n,k,m}}
\newcommand{\R}{\mathbb{R}}
\newcommand{\DD}{\mathbb{D}}
\newcommand{\RR}{\mathbb{R}}
\newcommand{\Z}{\mathbb{Z}}
\DeclareMathOperator{\conv}{conv}
\DeclareMathOperator{\rank}{rank}
\DeclareMathOperator{\rowspan}{rowspan}
\newcommand{\rf}[1]{\hyperref[#1]{(\ref*{#1})}}
\DeclareMathOperator{\sgn}{sgn}
\DeclareMathOperator{\sign}{sign}
\DeclareMathOperator{\spn}{span}
\DeclareMathOperator{\Trop}{Trop}
\DeclareMathOperator{\var}{var}
\DeclareMathOperator{\wt}{wt}
\DeclareMathOperator{\des}{Des_L}
\DeclareMathOperator{\cdes}{cDes_L}
\newcommand{\hatG}{\hat{G}}
\newcommand{\simp}[1]{\Delta_{#1}}
\newcommand{\asimp}[1]{\hat{\Delta}_{#1}(Z)}
\newcommand{\asimpo}[1]{\hat{\Delta}_{#1}^{\circ}(Z)}
\newcommand{\td}[1]{\hat{#1}} 
\newcommand{\gt}[1]{Z_{#1}} 
\newcommand{\gto}[1]{Z_{#1}^\circ} 
\newcommand{\lrangle}[1]{\langle #1 \rangle}
\newcommand{\twMtx}{C^{\text{\normalfont tw}}_\T}
 \title[The $m=2$ amplituhedron and the Hypersimplex]
 {The $m=2$ amplituhedron and the Hypersimplex: \\ \small{Signs, clusters, tilings, Eulerian numbers}}
\author{Matteo Parisi}
\author{Melissa Sherman-Bennett}
\author{Lauren K.\ Williams}
\address{}
\email{\href{mailto:mparisi@cmsa.fas.harvard.edu}{mparisi@cmsa.fas.harvard.edu}}
\email{\href{mailto:msherben@mit.edu}{msherben@mit.edu}}
\email{\href{mailto:williams@math.harvard.edu}{williams@math.harvard.edu}}
\begin{document}

\begin{abstract}

The hypersimplex $\Delta_{k+1,n}$ is the image of the positive
Grassmannian $\Gr^{\geq 0}_{k+1,n}$ under the moment map.  
It is a polytope of dimension $n-1$ in $\R^n$.  
Meanwhile, the amplituhedron
$\mathcal{A}_{n,k,2}(Z)$ is the projection of the
positive Grassmannian $\Gr^{\geq 0}_{k,n}$ into the
Grassmannian $\Gr_{k,k+2}$ under a map $\tilde{Z}$ induced by 
a positive matrix $Z\in \Mat_{n,k+2}^{>0}$.
Introduced in the context of \emph{scattering amplitudes}, it is not a polytope, and 
has full dimension $2k$ inside $\Gr_{k,k+2}$.  
Nevertheless, there seem to be remarkable connections between these two objects via \emph{T-duality},
as was first discovered in \cite{LPW}.
In this paper we use ideas from oriented matroid theory, total positivity, 
and the geometry of the hypersimplex and positroid polytopes
to obtain a deeper understanding of the amplituhedron.
We show that the inequalities cutting out \emph{positroid polytopes} --
images of positroid cells of $\Gr^{\geq 0}_{k+1,n}$ under the moment map --
translate into sign conditions characterizing the T-dual \emph{Grasstopes} -- images
of positroid cells of $\Gr^{\geq 0}_{k,n}$ under 
$\tilde{Z}$.
Moreover, we subdivide the amplituhedron into \emph{chambers}, just as the hypersimplex can be subdivided into simplices - with both chambers and 
simplices enumerated by the Eulerian numbers.
We use these properties to prove the main conjecture of \cite{LPW}: 
a collection of positroid polytopes is a tiling of the hypersimplex if and only if the collection of T-dual Grasstopes is a tiling of the amplituhedron $\mathcal{A}_{n,k,2}(Z)$ for all $Z$.
Moreover, we prove 
Arkani-Hamed--Thomas--Trnka's conjectural sign-flip characterization of 
$\mathcal{A}_{n,k,2}$,
and {\L}ukowski--Parisi--Spradlin--Volovich's conjectures on 
\emph{$m=2$ cluster adjacency} and on \emph{positroid tiles} for $\mathcal{A}_{n,k,2}$ 
(images of $2k$-dimensional positroid cells which map injectively into $\mathcal{A}_{n,k,2}$). 
Finally, we introduce new cluster structures in the amplituhedron.

\end{abstract}

\maketitle
\setcounter{tocdepth}{1}
\tableofcontents


\section{Introduction}\label{sec_intro}

The \emph{positive Grassmannian}\footnote{more formally, 
the \emph{totally nonnegative
Grassmannian}} $\Grk$ 
is the subset of the real Grassmannian $\Gr_{k,n}$ 
where all Pl\"ucker coordinates
are nonnegative \cite{postnikov, rietsch, lusztig}.  
This is a remarkable space 
with connections to cluster algebras, integrable systems,
and high energy physics \cite{ca1, Scott, KodamaWilliams, abcgpt},
and it has a beautiful CW decomposition into \emph{positroid cells} $S_{\pi}$,
which are indexed by various 
combinatorial objects including 
\emph{decorated permutations}
$\pi$ \cite{postnikov}.

There are several interesting maps which one can apply 
to the positive Grassmannian $\Grk$
and its cells.  The first map is the \emph{moment map} $\mu$, 
initially studied by 
Gelfand-Goresky-MacPherson-Serganova \cite{GGMS} in the context of the Grassmannian
and its torus orbits, who showed that the image of the Grassmannian is the
\emph{hypersimplex} $\Delta_{k,n} \subset \R^{n}$, a polytope of dimension
$n-1$.
When one restricts $\mu$ to $\Grk$,
 the image is still the hypersimplex \cite{tsukerman_williams}.


The second map is the \emph{amplituhedron map}, introduced by 
Arkani-Hamed and Trnka 
\cite{arkani-hamed_trnka}
in the context of 
\emph{scattering amplitudes} in $\mathcal{N}=4$ SYM.  In particular,
any $n \times (k+m)$ matrix $Z$ with maximal minors positive induces
a map $\tilde{Z}$ from $\Grk$ to the Grassmannian $\Gr_{k,k+m}$, whose
image has full dimension $mk$ and is called the \emph{amplituhedron}
$\mathcal{A}_{n,k,m}(Z)$. 

Given any surjective map $\phi: \Grk \to X$ where $\dim X=d$,
it is natural to try to decompose $X$ using images
of positroid cells under $\phi$.   
This leads to the following definition.\footnote{There are many reasonable
variations of this definition.  One might want to relax the injectivity
assumption, or to impose further restrictions on how boundaries of the 
images of cells should overlap. Note that in the literature, positroid tilings are sometimes
called \emph{(positroid) triangulations}. We avoid this terminology in order to avoid confusion with the notion of e.g. polytopal triangulations.}
\begin{definition}\label{def:tri}
	Let $\phi: \Grk \to X$ be 
 a continuous surjective map  where $\dim X=d$.
A \emph{positroid tiling} of $X$ (with respect to $\phi$) 
	is a collection $\{\overline{\phi(S_{\pi})}\}$  of images of 
$d$-dimensional positroid cells such that 
\begin{itemize}
	\item $\phi$ is injective on each $S_{\pi}$ from the collection
	\item pairs of distinct images $\phi(S_{\pi})$
		and $\phi(S_{\pi'})$ are disjoint
              \item $\cup \overline{\phi(S_{\pi})} = X$.
	\end{itemize}
\end{definition}

When $\phi$ is the moment map, the 
(closures of) the
images of the positroid cells $S_{\pi}$ 
are the \emph{positroid polytopes} $\Gamma_{\pi}$
\cite{tsukerman_williams}, so a positroid tiling of the hypersimplex
is a decomposition into positroid polytopes.
When $\phi$ is the amplituhedron map $\tilde{Z}$,
the (closures of) the images 
of the positroid cells $S_{\pi}$
are \emph{Grasstopes} $Z_{\pi}$, 
which were first studied in 
\cite{arkani-hamed_trnka}
as the building blocks of conjectural positroid tilings of the amplituhedron.
Note that neither the amplituhedron nor the Grasstopes are polytopes.

At first glance, the $(n-1)$-dimensional hypersimplex $\Delta_{k+1,n} \subset \R^n$ doesn't seem to have any relation to the $2k$-dimensional
amplituhedron $\mathcal{A}_{n,k,2}(Z) \subset \Gr_{k,k+2}$.  Nevertheless,
the recent paper \cite{LPW} showed that there are 
 surprising parallels between them.  
In particular, they showed that \emph{T-duality} gives a bijection between
 loopless cells $S_{\pi}$ of $\Gr_{k+1,n}^{\geq 0}$ and coloopless cells 
$S_{\hat{\pi}}$ of 
$\Grk$, and conjectured that T-duality gives a bijection between
positroid 
 tilings $\{\Gamma_{\pi}\}$ 
 of the hypersimplex $\Delta_{k+1,n}$ 
 and positroid tilings $\{Z_{\hat{\pi}}\}$ of the
amplituhedron $\mathcal{A}_{n,k,2}(Z)$.  \cite{LPW} proved this conjecture
for infinitely many tilings --  specifically, 
the positroid tilings of $\mathcal{A}_{n,k,2}(Z)$ obtained from 
a BCFW-like recurrence \cite{BaoHe}, and the positroid tilings 
of $\Delta_{k+1,n}$ obtained from an analogous recurrence.

In this paper we use \emph{twistor coordinates}
and the geometry of the hypersimplex and positroid
polytopes 
to obtain a deeper understanding of the amplituhedron.
We prove the conjecture of {\L}ukowski--Parisi--Spradlin--Volovich 
\cite{Lukowski:2019sxw} 
 classifying   \emph{positroid tiles},
full-dimensional images of positroid cells which map injectively into the amplituhedron $\mathcal{A}_{n,k,2}(Z)$. We then give a new characterization of them in terms of the signs of their twistor coordinates.
We use this result to prove a conjecture  of Arkani-Hamed--Thomas--Trnka that
$\mathcal{A}_{n,k,2}(Z)$ can be characterized using sign flips of twistor coordinates.
And we prove two results relating the amplituhedron to cluster algebras.
First, we prove the \emph{cluster adjacency} conjecture
\cite{Lukowski:2019sxw} for $\mathcal{A}_{n,k,2}(Z)$,
which says that the 
Pl\"ucker coordinates labeling facets of a given positroid tile
consist of pairwise compatible cluster variables. We also state and prove a generalization of this conjecture by showing that twistor coordinates of a positroid tile associated to Pl\"ucker coordinates compatible with the ones labelling its facets have constant sign. 
Second,
we associate a cluster variety to each
positroid tile in $\mathcal{A}_{n,k,2}(Z) \subset \Gr_{k, k+2}$,
and show that the positroid tile
is the totally positive part of that cluster variety.
We then have the strange phenomenon that 
the $2k$-dimensional amplituhedron
$\mathcal{A}_{n,k,2}(Z)$ can be decomposed into ${n-2 \choose k}$
$2k$-dimensional positroid tiles,
each of which is the totally positive part of a cluster variety.
(Moreover, there are many such decompositions.)

Additionally, we draw striking parallels between
$\Delta_{k+1,n}$ and $\mathcal{A}_{n,k,2}(Z)$, some of which are 
illustrated in 
\cref{tab:PPer}.
We find that the inequalities describing positroid polytopes
translate into sign conditions on {twistor coordinates} characterizing
the corresponding Grasstopes.
And we show that the sign patterns on twistor coordinates
naturally subdivide the amplituhedron into \emph{chambers}. We prove that the ones which are \emph{realizable} are exactly enumerated by the Eulerian numbers $E_{k,n-1}$, just as the hypersimplex can be subdivided
into simplices enumerated by $E_{k,n-1}$.
We use these properties to prove the main conjecture of \cite{LPW}:
a collection of positroid polytopes is a positroid tiling of 
$\Delta_{k+1,n}$ if and only if
the collection of T-dual Grasstopes is a positroid tiling of 
$\mathcal{A}_{n,k,2}(Z)$ for all $Z$.

Let us now explain how the various geometric objects in our story are related to scattering amplitudes in quantum field theories.
In 2009, Grassmannian formulations were introduced to describe scattering amplitudes in planar $\mathcal{N}=4$ super Yang-Mills \cite{ArkaniHamed:2009dn,Bullimore:2009cb}. Remarkably, this led to the discovery that in fact the \emph{positive} Grassmannian encodes most of the physical properties of amplitudes \cite{abcgpt}. Building on these developments and on Hodges' ideas \cite{Hodges:2009hk}, Arkani-Hamed and Trnka arrived at the definition of the \emph{amplituhedron} $\mathcal{A}_{n,k,m}(Z)$ \cite{arkani-hamed_trnka} in 2013.

The $m=4$ amplituhedron $\mathcal{A}_{n,k,4}(Z)$ is the object most relevant
to physics: it encodes the geometry of (tree-level) scattering amplitudes in planar $\mathcal{N}=4$ SYM.  
The $m=2$ amplituhedron $\mathcal{A}_{n,k,2}(Z)$, often considered as a toy-model for the $m=4$ case, is also relevant for physics. For example, it governs the geometry of planar $\mathcal{N}=4$ SYM amplitudes at the subleading order in perturbation theory (`one-loop') of some sectors the theory, specifically the `MHV' and `NMHV' sector \cite{Kojima:2020tjf}. Moreover, as discovered in \cite{LPW}, $\mathcal{A}_{n,k,2}(Z)$  is at the center of the `$m=2$' version of an important physical duality called
\emph{T-duality}.

Scattering amplitudes in planar $\mathcal{N}=4$ SYM enjoy a remarkable duality called `Amplitude/Wilson loop duality' \cite{Alday:2008yw}, which was shown to arise from a more fundamental duality in String Theory called `T-duality' \cite{Berkovits:2008ic}. The geometric counterpart of this fact is a conjectural duality between collections of $4k$-dimensional `BCFW' cells of $\Gr^{\geq 0}_{k,n}$ which give positroid tilings of the $m=4$ amplituhedron $\mathcal{A}_{n,k,4}(Z)$, and corresponding collections of $(2n-4)$-dimensional cells of $\Gr^{\geq 0}_{k+2,n}$ which give positroid tilings the \emph{momentum amplituhedron} $\mathcal{M}_{n,k,4}$ \cite{mamp, LPW}. This duality was evocatively called \emph{T-duality} in \cite{LPW} and conjectured to generalize for any (even) $m$. In particular, for $m=2$, the hypersimplex $\Delta_{k+1,n}$ (a polytope) and the $m=2$ amplituhedron $\mathcal{A}_{n,k,2}(Z)$ (not a polytope!) are T-dual. As for all dualities in mathematics and physics, the aim is to learn something new of one side from the other, trading complexity of the former with simplicity of the latter. 
Interestingly, T-duality has recently appeared also in the context of critical varieties \cite{GalCritVar}, relating electric networks and Ising models.

One recent trend in physics is the connection between 
analytic properties of scattering amplitudes and \emph{cluster algebras} \cite{ca1} and the \emph{positive tropical Grassmannian} \cite{troppos}. `Cluster phenomena', which have led to both computational and theoretical advances, are manifest in momentum-twistor space \cite{Golden:2013xva}. \emph{Twistor coordinates} have been crucial to explore the emergence of some cluster phenomena \cite{Drummond:2017ssj,Drummond:2018dfd} from the geometry of the amplituhedron \cite{Lukowski:2019sxw, Gurdogan:2020tip}. Further connections with plabic graphs were established in \cite{Mago:2020kmp,He:2020uhb}. In this paper, we use twistor coordinates to prove (and generalize) the conjecture of {\L}ukowski--Parisi--Spradlin--Volovich \cite{Lukowski:2019sxw} about $m=2$ \emph{cluster adjacency} and probe new cluster structures in the amplituhedron. Twistor coordinates and their sign-flips recently played a major role in providing a geometric origin of the expansion of the one-loop MHV scattering amplitudes in $\mathcal{N}=4$ SYM  in terms of `chiral pentagons' and in probing new geometries, as in the search of the \emph{dual amplituhedron}  \cite{Kojima:2020tjf,Herrmann:2020qlt}. In general, considering regions of the geometry where some twistor coordinates have constant sign leads to a deeper understanding of the geometry itself and new ways to subdivide it. This would provide new representations of scattering amplitudes, not obtainable from standard physics methods.

Our discovery that Eulerian numbers count sign chambers of  the $m=2$ amplituhedron
is intriguing because 
Eulerian numbers have also come up in the context of \emph{scattering equations} \cite{Cachazo:2013iaa}.
Scattering equations connect the singularity structure of scattering amplitudes of $n$-particles to 
that of the boundaries of the moduli space of \emph{Riemann spheres with $n$ punctures}. 
For $\mathcal{N}=4$ SYM, the number of solutions of the `N$^k$MHV' sector of 
the theory is exactly the Eulerian number $E_{k,n-3}$ \cite{Spradlin:2009qr,Cachazo:2013iaa}. Moreover, \cite{Cachazo:2016ror} provided an explicit bijection between such solutions and permutations on $[n-3]$ with $k$ descents. Finally, in the case of certain scalar quantum field theories, the authors of \cite{Cachazo:2019ngv} formulated a generalization of scattering equations. By studying  `arrays of Feynman diagrams', they made connections to the positive tropical Grassmannian, and, by results of \cite{LPW}, to the hypersimplex. It would be fascinating to explore possible relations between (generalized) scattering equations, simplices of the hypersimplex, and chambers of the amplituhedron. 

 We note that some of the ideas used in 
 this paper can be applied to amplituhedra for other $m$, and to the momentum amplituhedron; we will pursue this in a separate paper.

The structure of this paper is as follows. 
In \cref{sec:background} we give background on the positive Grassmannian and the amplituhedron.
In \cref{sec:twistor} we define twistor coordinates for the amplituhedron, and define
the \emph{sign stratification} of $\mathcal{A}_{n,k,m}(Z)$, which is analogous to the oriented
matroid stratification of the Grassmannian.
In \cref{secSigns} we study positroid tiles of $\mathcal{A}_{n,k,2}(Z)$:
we prove 
a conjecture of 
{\L}ukowski--Parisi--Spradlin--Volovich characterizing tiles in terms of
\emph{bicolored subdivisions} of a polygon, and we give an inequality description of tiles in terms of signs of twistor coordinates.
In \cref{sec:equivalence} we prove Arkani-Hamed--Thomas--Trnka's conjectural description of $\mathcal{A}_{n,k,2}(Z)$ in terms 
of sign flips of twistor coordinates.
In \cref{sec:cluster} we discuss cluster structures in the amplituhedron. In particular, we introduce a generalization of the {\L}ukowski--Parisi--Spradlin--Volovich \emph{$m=2$ cluster adjacency conjecture} and define a cluster varieties for each positroid tile.
In \cref{sec:background2} we give background on the hypersimplex, T-duality, and positroid tilings
of the hypersimplex.
In \cref{sec:Tduality1}, we describe T-duality as a map on decorated
permutations and plabic graphs. 
In \cref{sec:Tduality2} we discuss the close parallel between the inequality descriptions and facets of positroid tiles in $\Delta_{k+1, n}$ and the T-dual positroid tiles in $\AA_{n, k, 2}$.
We also prove 
(a generalization of) the \emph{$m=2$ cluster adjacency conjecture} for positroid tiles.
In \cref{sec:wsimplices} we show how the 
subdivision of 
$\Delta_{k+1,n}$ into $w$-simplices
corresponds to the decomposition of 
 $\mathcal{A}_{n,k,2}(Z)$ into $w$-chambers, 
 where in both cases $w$ ranges over a set of permutations enumerated
 by the Eulerian number.
In \cref{sec:wsimplices2} we use this correspondence to prove the main conjecture of 
\cite{LPW} about positroid tilings.  We also present algorithms to find tilings of $\Delta_{k+1,n}$ and $\mathcal{A}_{n,k,2}(Z)$ based on $w$-simplices and $w$-chambers, and we show some examples. We also explain other combinatorial manifestations of this correspondence:
decomposing the hypersimplex into positroid polytopes based on descent positions corresponds
to the sign-flip (or ``kermit'') tiling of the amplituhedron. Moreover, we provide an amplituhedron-analogue of slicing the hypercube into hypersimplices.
In \cref{sec:appendixB} we prove that positroid tiles are enumerated by a refinement of Schr\"oder numbers via a bijection with \emph{separable} permutations. \cref{sec:appendix} gives background on the combinatorics 
of the positroid cell decomposition of $\Grk$.

\textsc{Acknowledgements:}
M.P. would like to thank Fatemeh Mohammadi, Leonid Monin, and Lionel Mason for useful discussions. 
M.P. and L.W. would like to thank Tomek {\L}ukowski for the previous paper \cite{LPW} which inspired some of this work.  The authors would like to thank
Pasha Galashin, Tomek {\L}ukowski, and David Speyer for helpful comments on the 
first draft of the paper, and an anonymous referee for many useful comments.
M.P. would like to acknowledge the generous support of the Sachs Scholarship at Princeton and the ERC grant number 724638.
M.S.B. would like to acknowledge the support of a National Science Foundation Graduate Research Fellowship, grant number DGE-1752814.
L.W. would  like to acknowledge the support of the National Science Foundation
under agreements No.\ DMS-1854316 and No.\ DMS-1854512, as well as the support of the 
Radcliffe Institute for Advanced Study at Harvard University,
 where some of this work ``took place'' (virtually).  Any opinions,
findings and conclusions or recommendations expressed in this material
are those of the authors and do not necessarily reflect the
views of the National Science Foundation.

\begin{landscape}
\thispagestyle{empty}
\begin{table}
\begin{adjustwidth}{+0.4in}{-1in}
	\caption{Table of correspondences via T-duality: the hypersimplex vs the amplituhedron.} 
\centering
\begin{tabular}{l c r} 
\hline\hline 
\rule{0pt}{3ex}  \large The hypersimplex $\Delta_{k+1,n}$ & \large VS & \large The amplituhedron $\AA_{n,k,2}$ \\ [1ex]
\hline 
\rule{0pt}{3ex} $\Delta_{k+1,n}=\mu(\Gr_{k+1,n}^{\geq 0})$ (moment map) & {} & (amplituhedron map) $\AA_{n,k,2}=\tilde{Z}(\Gr_{k,n}^{\geq 0})$\\[1ex]
\rule{0pt}{3ex} $\mbox{dim}(\Delta_{k+1,n})=n-1$ in $\mathbb{R}^n$  & {} &  $\mbox{dim}(\AA_{n,k,2})=2k$ in $\Gr_{k,k+2}$\\[1ex]
\hline
\rule{0pt}{2ex} {}& POSITROID TILES (PT)& {}\\
\hline
\rule{0pt}{3ex} $\Gamma_{G(\T)}$ (positroid polytope) & bicolored subdivision $\T$ of type $(k,n)$ & (Grasstope) $Z_{\hat{G}(\T)}$\\[1ex]
   \rule{0pt}{3ex} $x_{[h,j-1]} \geq \area_{\T}(h \rightarrow j)$  & compatible arc $h \rightarrow j$ & $\sgn \langle Y hj\rangle=(-1)^{\area_{\T}(h \rightarrow j)}$\\[1ex]
   \rule{0pt}{3ex} $x_{[h,j-1]} = \area_{\T}(h \rightarrow j)$  & facet defining arc $h \rightarrow j$ &  $\langle Y hj\rangle=0$\\[1ex]
\hline
\rule{0pt}{2ex} {}&$w$-SIMPLICES and $w$-CHAMBERS& {}\\
\hline
	\rule{0pt}{3ex} $w$-simplex $\Delta_w \subset \Delta_{k+1,n}$ such that: &  $w \in S_n$: $w_n=n, \# \des(w)=k$ & $w$-chamber $\hat{\Delta}_w(Z)\subset \AA_{n,k,2}$ such that:  \\[1ex]
 \, \, $\Delta_w = \conv\{e_{I_1},\ldots,e_{I_n}\}$ & 
	$I_a=I_a(w):=\cdes(w^{(a-1)})$ & \, $ \flip(
        \langle Y a \hat{1} \rangle , \langle Y a \hat{2} \rangle, \dots, \langle Y a n \rangle)=I_a \setminus \{a\} $ \,  \\[1ex]
   \rule{0pt}{3ex}     Hypersimplex $\Delta_{k+1,n}= \bigcup_w {\Delta_w}$ & {} &  Amplituhedron $\AA_{n, k, 2}(Z)= \bigcup_w {\asimp{w}}$\\[1ex]
\rule{0pt}{3ex}	PT $\Gamma_{\pi}= \bigcup_{\Delta_{w} \cap \Gamma^{\circ}_{\pi} \neq \emptyset} {\Delta_{w}}$ & & PT $\gt{\hat{\pi}}= \bigcup_{\asimp{w} \cap \gto{\hat{\pi}} \neq \emptyset} {\asimp{w}}$\\[1ex]
\rule{0pt}{3ex} $\simp{w} \subset \Gamma_{\pi}$ & $\Leftrightarrow$ &$\asimp{w} \subset \gt{\td{\pi}}$\\[1ex]
\rule{0pt}{3ex} Hypercube $\mbox{\mancube}_{n-1}=\bigcup_k \Delta_{k+1,n}=\bigcup_w \Delta_w$ & $w \in S_n: w_n=n$ & Total amplituhedron $\mathcal{G}_n^{(2)}=\bigcup_k \mathcal{G}_{n,k,2}=\bigcup_w \hat{\Delta}_w(\mathcal{G})$ \\[1ex]
 \hline
\rule{0pt}{2ex} {}&POSITROID TILINGS& {}\\
\hline
\rule{0pt}{3ex}  $\{\Gamma_{\pi}\}$ tiles $\Delta_{k+1, n}$& $\Leftrightarrow$ & $\{\gt{\td{\pi}}\}$ tiles $\AA_{n, k, 2}(Z),$ for all $Z$\\[1ex]
   \rule{0pt}{3ex}  BCFW tiling $\{\Gamma_\pi\}$ & BCFW recurrences & BCFW tiling $\{\gt{\td{\pi}}\}$  \\[1ex]
     \rule{0pt}{3ex}   Regular tiling $\{\Gamma_\pi\}$ & max cones of $\mbox{Trop}^+\Gr_{k+1,n}$ & Regular tiling $\{\gt{\td{\pi}}\}$  \\[1ex]
   \rule{0pt}{3ex}  Catalan tiling $\{\Gamma_\pi\}$ & Coloring vertices of a fixed planar 
	tree & Catalan tiling $\{\gt{\td{\pi}}\}$  \\[1ex]
    \rule{0pt}{3ex}  Descent tiling $\{\Gamma_\pi\}$ & positions of descents/sign flips & Sign-flip tiling $\{\gt{\td{\pi}}\}$  \\[1ex]
\hline 

\end{tabular}
\label{tab:PPer}
\end{adjustwidth}
\end{table}
\end{landscape}

\section{The positive Grassmannian and the amplituhedron }\label{sec:background}

\subsection{The Grassmannian and positive Grassmannian}

\noindent The {\itshape (real) Grassmannian} $\Gr_{k,n}$ is the space of all
$k$-dimensional subspaces of $\R^n$, for $0\le k \le n$.  An element of
$\Gr_{k,n}$ can be viewed as a $k\times n$ matrix of rank $k$, modulo left
multiplication by invertible $k\times k$ matrices. That is, two
$k\times n$ matrices of rank $k$ represent the same point in $\Gr_{k,n}$ if and only if they
can be obtained from each other by invertible row operations. For $C$ a full-rank $k \times n$ matrix, we will often abuse notation and write $C \in \Gr_{k, n}$, identifying $C$ with its rowspan.

Let $[n]$ denote $\{1,\dots,n\}$, and $\binom{[n]}{k}$ the set of all $k$-element subsets of $[n]$. We embed $\Gr_{k,n}$ into projective space $\PP(\wedge^k \RR^n)$ in the usual way. That is, choose $V \in \Gr_{k,n}$ and any representative matrix $C$ with rows $C_1, \dots, C_k$. We map $V$ to the equivalence class of $C_1 \wedge \dots \wedge C_k$ in $\PP(\wedge^k \RR^n)$. This equivalence class depends only on $V$, not on the choice of $C$.

%
The embedding $V \mapsto C_1 \wedge \dots \wedge C_k$ gives a natural choice of coordinates for the Grassmannian.
Let $\{e_1,\dots,e_n\}$ be the standard basis of $\R^n$, and for 
$I= \{i_1 < i_2 < \dots < i_k\} \subset {[n] \choose k}$, let 
$E_I:= e_{i_1} \wedge \dots \wedge e_{i_k}.$ 
Writing $C_1\wedge \dots\wedge C_k$ in terms of the $E_I$, we obtain
\begin{equation}
	C_1 \wedge \dots \wedge C_k = \sum_{I\in {[n] \choose k}} p_I(V) E_I \in \wedge^k(\R^n),
\end{equation}
where $p_I(V)$ is the maximal minor of $C$ located in column set $I$. The $p_I(V)$ are the {\itshape Pl\"{u}cker coordinates} of $V$, and are independent of $C$ (up to simultaneous rescaling by a  constant).

We will also use the notation $\lrangle{C_1, \dots, C_k}$ for $C_1 \wedge \dots \wedge C_k$.

\begin{defn}[{\cite[Section~3]{postnikov}}]\label{def:positroid}
We say that $C\in\Gr_{k,n}$ is {\itshape totally nonnegative} if $p_I(C)\ge 0$ for all $I\in\binom{[n]}{k}$, and {\itshape totally positive} if $p_I(C) > 0$ for all $I\in\binom{[n]}{k}$. The set of all totally nonnegative $C\in\Gr_{k,n}$ is the {\it totally nonnegative Grassmannian} 
	$\Gr_{k,n}^{\ge 0}$, 
	and the set of all totally positive $C$ is the {\itshape totally positive Grassmannian} $\Gr_{k,n}^{>0}$. For $\M\subseteq \binom{[n]}{k}$,
the {\it positroid cell} $S_{\M}$ is
the set of $C\in\Gr_{k,n}^{\geq 0}$ such that 
	$p_I(C)>0$ for all $I\in \M$, and 
	$p_J(C)=0$ for all $J\in\binom{[n]}{k}\setminus \M$. We call $\M$ a \emph{positroid} if $S_\M$ is nonempty. We let $Q(k,n)$ denote the poset on the cells of $\Gr_{k,n}^{\ge 0}$ defined by
$S_{\M} \leq S_{\M'}$ if and only if\footnote{Here, and in what follows, 
	we use closure in the Hausdorff topology.}
$S_\M \subseteq \overline{S_{\M'}}$.
\end{defn}

\begin{remark}\label{rem:PL}
The positive and nonnegative part of a flag variety $G/P$ was 
first introduced by 
	Lusztig \cite{lusztig} (who gave a Lie-theoretic definition
	of $(G/P)_{>0}$ and defined $(G/P)_{\geq 0} :=\overline{(G/P)_{>0}}$),
and proved to have a cell decomposition by Rietsch \cite{rietsch}.  
Postnikov \cite{postnikov} subsequently defined the nonnegative part of the 
Grassmannian as in \cref{def:positroid}, and independently gave the above
	decomposition into cells.
From the beginning it was believed by experts that 
Postnikov's definition of $\Grk$ should agree
with Lusztig's (in the case $G/P$ is the Grassmannian); this was first 
	proved by Rietsch \cite{rietsch_private}, and reproved  
 in \cite[Corollary 1.2]{talaska_williams}, where the authors additionally proved
that the two cell decompositions
coincide.  Two subsequent proofs 
that the two definitions of $\Grk$ coincide were given in 
 \cite{lam, Lusztig3}.
\end{remark}


There are many ways to index the positroid cells of $\Grk$
 \cite{postnikov}, including
\emph{decorated permutations} $\pi$, 
 \emph{affine permutations} $f$,
and \emph{plabic graphs} $G$.  We will refer to the corresponding 
positroid cells using the notation $S_{\pi}$, $S_{f}$, $S_G$.
For background,
 see \cref{sec:appendix}.

\subsection{The amplituhedron}

Building on \cite{abcgpt},
Arkani-Hamed and Trnka \cite{arkani-hamed_trnka} introduced a  new
mathematical object called the \emph{(tree) amplituhedron}, which
is the image of the totally nonnegative Grassmannian under a particular map.
In what follows, we let $\Mat_{n,p}^{>0}$ denote the set of $n\times p$ matrices whose maximal minors
are positive.

\begin{defn}\label{defn_amplituhedron}
Choose positive integers $k<n$ and $m$ such that $k+m \leq n$, and 
	let $Z\in \Mat_{n,k+m}^{>0}$.
Then $Z$ induces a map
$\tilde{Z}:\Gr_{k,n}^{\ge 0} \to \Gr_{k,k+m}$ 
	defined by
$$\tilde{Z}(\langle c_1,\dots, c_k \rangle) := \langle Z(c_1),\dots, Z(c_k) \rangle.$$
Equivalently, if $C$ is a matrix representing an element of 
$\Gr_{k,n}^{\ge 0}$, then $\tilde{Z}(C)$ is defined to be 
	the element of $\Gr_{k,k+m}$
	represented by 
the matrix $CZ$.  
The \emph{(tree) amplituhedron} $\mathcal{A}_{n,k,m}(Z)$ is defined to be the image
$\tilde{Z}(\Gr_{k,n}^{\ge 0})$ inside $\Gr_{k,k+m}$.
\end{defn}

The fact that $Z$ has positive maximal minors ensures that $\tilde{Z}$
is well defined
\cite{arkani-hamed_trnka}. 
See \cite[Theorem 4.2]{karp} for a necessary and sufficient condition 
 (in terms of sign-variation)
for a matrix $Z$ 
to give rise to a well-defined map $\tilde{Z}$.
The amplituhedron $\mathcal{A}_{n,k,m}(Z)$  has full dimension $km$ inside $\Gr_{k,k+m}$.

In special cases the amplituhedron recovers familiar objects. If $Z$ is a square matrix, i.e.\
$k+m=n$, then $\mathcal{A}_{n,k,m}(Z)$ is isomorphic to
the totally nonnegative Grassmannian. If $k=1$,  $\mathcal{A}_{n,1,m}(Z)$ is a {\itshape cyclic polytope} in projective space $\PP^m$ \cite{Sturmfels}.
If $m=1$, then $\mathcal{A}_{n,k,1}(Z)$ can be identified with the 
complex of bounded faces of a cyclic hyperplane arrangement 
\cite{karpwilliams}.

We will consider the restriction of the $\tilde{Z}$-map to positroid 
cells in 
$\Gr_{k,n}^{\ge 0}$.

\begin{definition}
Fix $k, n, m$ with $k+m \leq n$ and choose
	$Z\in \Mat_{n,k+m}^{>0}$.  
Given a positroid cell $S_{\pi}$ of 
$\Gr_{k,n}^{\ge 0}$, we let 
$\gto{\pi} = \tilde{Z}(S_{\pi})$ and 
$\gt{\pi} = \overline{
	\tilde{Z}(S_{\pi})} = \tilde{Z}(\overline{S_{\pi}})$,
	and we refer to 
$\gto{\pi}$ and $\gt{\pi}$ as \emph{open Grasstopes} and 
\emph{Grasstopes}, respectively.
	We call
	$\gt{\pi}$  and $\gto{\pi}$
	a \emph{positroid tile}  and an \emph{open positroid tile}
	for $\mathcal{A}_{n,k,m}(Z)$ 
	if $\dim(S_\pi) =km$ and 
	 $\tilde{Z}$ is injective on $S_\pi$.
\end{definition}

\begin{defn}\label{def:facetG}
        Let $\gt{\pi}$ be a Grasstope of 
        $\mathcal{A}_{n,k,m}(Z)$. We say that $\gt{\pi'}$ is a \emph{facet} of $\gt{\pi}$ if it is maximal by inclusion among the Grasstopes satisfying the following three properties:
        \begin{itemize}
                \item the cell $S_{\pi'}$ is contained in $\overline{S_\pi}$
                \item $\gt{\pi'}$ is contained in the boundary $\partial \gt{\pi}$
                \item $\gt{\pi'}$ has codimension 1 in $\gt{\pi}$.
        \end{itemize}
\end{defn}

\begin{rmk}\label{rmk:closure} By \cite[Proposition 15.2]{lam},  $\tilde{Z}(\overline{S_\pi})=\overline{
\tilde{Z}(S_{\pi})}$.
\end{rmk}

If $k=1$ and $m=2$, the amplituhedron $\mathcal{A}_{n,1,2}(Z)$ is a convex $n$-gon in $\PP^2$. The positroid tiles are exactly the
  triangles on vertices of 
 the polygon.

Images of positroid cells under the map $\tilde{Z}$ 
have been studied since the introduction of the amplituhedron.  In particular,
Arkani-Hamed and Trnka \cite{arkani-hamed_trnka} conjectured that the images
of certain \emph{BCFW} collections of $4k$-dimensional cells in 
$\Gr_{k,n}^{\geq 0}$ give a {positroid tiling} of the amplituhedron 
$\mathcal{A}_{n,k,4}(Z)$.
Positroid tiles were called \emph{generalized triangles} in \cite{Lukowski:2019sxw}.
The terminology of Grassmann polytopes to describe images of positroid cells
in the amplituhedron was used in \cite{lam}.
For brevity, we prefer the 
term \emph{Grasstopes}.

\begin{remark}
	While the definition of the amplituhedron $\A(Z)$ depends on a choice of $Z \in \Mat_{n,k+m}^{>0}$,
	it is believed that many of its
	combinatorial properties do not depend on this choice.
	For example, whether or not 
 $\overline{\tilde{Z}(S_{\pi})}$ is a positroid tile should 
	be independent of the choice of $Z$; 
	 we will see that this 
	is true in \cref{thm:allGTs} in the case that $m=2$.
	It is also believed that whether or not 
	a collection of cells in $\Grk$ gives a positroid tiling 
	of 
	$\A(Z)$ should be independent of $Z$. 
\end{remark}

\begin{remark}\label{rem:twisted}
We note that matrices whose maximal minors are positive (or nonnegative) 
have a \emph{twisted cyclic symmetry}.
	If $Z\in \Mat^{>0}_{n,p}$ with $n \geq p$ has rows $Z_1,Z_2, \dots, Z_n$, and if we let $\hat{Z}_i$ denote
$(-1)^{p-1} Z_i$, then the matrix with rows 
	$Z_2,\dots, Z_n, \hat{Z}_1$ also lies in $\Mat^{>0}_{n,p}$.  
Similarly for the matrix with rows $Z_3,\dots, Z_n, \hat{Z}_1, \hat{Z}_2$, etc\footnote{We will use the `hat' notation $\hat{}$ also in the context of T-duality with a different meaning. It will be always clear from context which one we mean.}.
\end{remark}

\subsection{Previous work on the $m=2$ amplituhedron}
The original paper 
\cite{arkani-hamed_trnka}
gave a conjectural
positroid tiling $\{Z_{\pi}\}$ of $\mathcal{A}_{n,k,2}(Z)$.
\cite{Karp:2017ouj}  
proved that the above collection 
consists of positroid tiles, that is, $\tilde{Z}$ is injective
on the corresponding positroid cells.
A BCFW-style recursion for positroid tilings of 
$\mathcal{A}_{n,k,2}(Z)$ was also conjectured in \cite{Karp:2017ouj};
the fact that this recursion indeed produces positroid tilings
was proved in \cite{BaoHe}.  
A conjectural classification of $m=2$ positroid tiles was 
given in \cite{Lukowski:2019sxw}.

Meanwhile, \cite{ATT} gave a conjectural 
alternative description
of $\mathcal{A}_{n,k,2}(Z)$ in terms of sign flips of twistor coordinates;
they gave a proof sketch of one direction of the conjecture,
and an independent proof of the same direction was given in 
\cite{karpwilliams}.
In a different direction, \cite{Lukowski:2019kqi}
gave a conjectural description of the boundaries of the $m=2$
amplituhedron.  
Finally, \cite{LPW} discovered a link between the $m=2$ amplituhedron
and the hypersimplex via T-duality and 
the \emph{tropical positive Grassmannian}, which inspired
the present paper.



\section{The sign stratification of the amplituhedron}\label{sec:twistor}

In this section we introduce \emph{twistor coordinates} for the amplituhedron
$\mathcal{A}_{n,k,m}(Z)$, and we use them to 
define the \emph{sign stratification} of the amplituhedron.   We also 
introduce terminology for sign variation and sign flips.  We will subsequently
use twistor coordinates to prove a sign flip description of $\AA_{n, k, 2}$ in \cref{thm:main1}, to characterize positroid tiles, and to 
describe Grasstopes.  

The definitions and results in this section
hold for any positive $m$.  The subsequent sections of the paper are mostly concerned with $m=2$.  However, many of our techniques
 can be applied to other $m$, in particular $m=4$; 
we plan to investigate this in a separate paper.

Twistor coordinates were first 
considered in \cite{arkani-hamed_trnka}, and subsequently used in 
\cite{ATT} to give a conjectural
``sign flip'' description of the amplituhedron.
In the case $m=1$,  \cite[Corollary 3.19]{karpwilliams}
studied the sign stratification and 
proved a sign flip description 
of $\mathcal{A}_{n,k,1}(Z)$.


\subsection{Twistor coordinates for $\mathcal{A}_{n,k,m}$}

\begin{definition}\label{def:twistor}
	Fix positive $k<n$ and $m$ such that $k+m \leq n$.
	Choose
	$Z \in \Mat_{n,k+m}^{>0}$ and denote its rows by $Z_1,\dots, Z_n \in \R^{k+m}$.
	Given a matrix $Y$ with rows $y_1,\dots,y_k$ representing an element of 
	$Gr_{k,k+m}$, 
	and $i_1,\dots, i_m$  a sequence of elements of $[n]$,
	we let 
	$$\langle Y Z_{i_1} Z_{i_2} \dots Z_{i_m} \rangle=
	\langle y_1, \dots,y_k, Z_{i_1}, \dots, Z_{i_m} \rangle$$ 
	denote the determinant of the 
	$(k+m) \times (k+m)$ matrix whose rows are 
	$y_1, \dots,y_k, Z_{i_1}, \dots, Z_{i_m}$. 
	We  call
	$\langle Y Z_{i_1} Z_{i_2} \dots Z_{i_m} \rangle$ a \emph{twistor coordinate}.
	We abbreviate 
	$\langle Y Z_{i_1} Z_{i_2} \dots Z_{i_m} \rangle$ by writing 
	$\langle Y {i_1} {i_2} \dots {i_m} \rangle$, 
	when $Z$ is understood.
\end{definition}


Note that the twistor coordinates are a subset of the Pl\"ucker coordinates 
of the $(k+m)\times (k+n)$ matrix whose columns are $y_1,\dots, y_k, Z_1,\dots, Z_n$.
There is also an interpretation of the twistor coordinates as Pl\"ucker 
coordinates in $\Gr_{m,n}$, as we explain in \cref{prop:twistor}. 
In the context of scattering amplitudes of $n$ particles in SYM theory, $\Gr_{m,n}$ is the space of \emph{momentum twistors}\footnote{Momentum twistors, introduced by Hodges in \cite{Hodges:2009hk}, are points $z_1,\ldots,z_n$ in $\mathbb{P}^3$ encoding the kinematic data of scattering particles. Due to \emph{dual conformal symmetry} of scattering amplitudes in SYM theory, these are defined up to a $PGL_4$ transformation on $\mathbb{P}^3$. Therefore, momentum twistors can be embedded in $Gr_{4,n}/(\mathbb{C}^*)^{n-1}$ and scattering amplitudes are functions of Pl\"ucker 
coordinates in $\Gr_{4,n}$. See \cite{Golden:2013xva}.} for $m=4$, which is why we call the coordinates from \cref{def:twistor} \emph{twistor coordinates}. Remarkable connections between scattering amplitudes and the cluster algebra associated to the Grassmannian $Gr_{4,n}$ were discovered in these coordinates \cite{Golden:2013xva}. 

The fact that the twistor coordinates uniquely determine points of the amplituhedron can be deduced from some
results of \cite{karpwilliams}. 

\begin{definition} \label{def:B} \cite[Definition 3.8]{karpwilliams}.
	Given $W\in \Gr_{k+m,n}^{>0}$, we define the \emph{$\mathcal{B}$-amplituhedron}
	$$\mathcal{B}_{n,k,m}(W):= \{V^{\perp} \cap W \ \vert \ V\in \Gr_{k,n}^{\geq 0}\} \subseteq \Gr_m(W),$$
	where $V^{\perp} \in \Gr_{n-k,n}$ denotes 
	the orthogonal complement of $V$ in $\R^n$ and $\Gr_m(W) \subseteq \Gr_{m,n}$ denotes the subset of $\Gr_{m,n}$ of elements
	$X\in \Gr_{m,n}$ with $X\subseteq W$.
\end{definition}

\begin{prop}
	\label{prop:twistor} 
	\cite[Lemma 3.10, Proposition 3.12]{karpwilliams}
	Fix $k, n, m$ and $Z$ as in \cref{def:twistor}, and let $W \in \Gr_{k+m,n}^{>0}$
	be the column span of $Z$.  Then the map 
	\begin{align*}
		f_Z:\Gr_m(W) &\to \Gr_{k,k+m}\\
		X& \mapsto Z(X^{\perp})=\{Z(x) \ \vert \ x\in X^{\perp}\} = 
		\rowspan(X^{\perp} Z)=:Y
	\end{align*}
	is 
	an isomorphism. 
	Here $X^{\perp} \in \Gr_{n-m,n}$ denotes 
	the orthogonal complement of $X$ in $\R^n$.
	
	Moreover, for $X\in \Gr_m(W)$, $Y:=f_Z(X)$, 
and  $I =\{i_1<\cdots < i_m\} \subseteq [n]$, we have 
	\begin{equation} \label{Pluckertwistor}
		p_I(X)=\langle Y Z_{i_1} \dots Z_{i_m} \rangle
	\end{equation} (where we view  Pl\"ucker  and  twistor coordinates as coordinates on points in projective space). 

	Finally, $f_Z: \mathcal{B}_{n,k,m}(W) \to \mathcal{A}_{n,k,m}(Z)$
	is a homeomorphism sending $V^{\perp} \cap W \mapsto \tilde{Z}(V)$.
\end{prop}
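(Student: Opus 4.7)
The plan is to construct an explicit inverse $g$ to $f_Z$ and then establish the coordinate formula via a block-matrix determinant identity. I regard $Z$ as two adjoint linear maps: $\phi \colon \R^n \to \R^{k+m}$, $v \mapsto vZ$, and $\psi \colon \R^{k+m} \to \R^n$, $c \mapsto Zc$. Since $Z$ has full rank $k+m$, $\phi$ is surjective with kernel $W^\perp$, $\psi$ is injective with image $W$, and they satisfy $\langle \phi(v), c \rangle = \langle v, \psi(c) \rangle$. For $X \in \Gr_m(W)$, the inclusion $X \subseteq W$ forces $W^\perp \subseteq X^\perp$, giving $\dim \phi(X^\perp) = (n-m) - (n-k-m) = k$, so $f_Z(X) \in \Gr_{k,k+m}$. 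I then propose the inverse $g(Y) := \psi(Y^\perp)$, an $m$-dimensional subspace of $W$. Using the adjointness identity $\psi(S)^\perp = \phi^{-1}(S^\perp)$ (and its dual), both compositions $g \circ f_Z$ and $f_Z \circ g$ reduce to the identity; since $f_Z$ and $g$ are morphisms of algebraic varieties, the isomorphism follows.

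For the identity $p_I(X) = \langle Y Z_{i_1} \cdots Z_{i_m} \rangle$, fix a $k \times (k+m)$ matrix $\hat{Y}$ whose rows span $Y$ and an $m \times (k+m)$ matrix $N$ whose rows span $Y^\perp$ in $\R^{k+m}$. Since $X = g(Y)$ is the column span of $ZN^T$, it is equivalently the row span of the $m \times n$ matrix $NZ^T$, so $p_I(X) = \det(NZ_I^T)$, where $Z_I$ is the $m \times (k+m)$ submatrix of $Z$ on rows $I$. On the other hand, by \cref{def:twistor}, $\langle Y Z_{i_1} \cdots Z_{i_m} \rangle = \det \begin{pmatrix} \hat{Y} \\ Z_I \end{pmatrix}$. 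The main obstacle is showing these two quantities are proportional as $I$ varies. I would establish this by the block identity
\[
\begin{pmatrix} \hat{Y} \\ Z_I \end{pmatrix} \begin{pmatrix} \hat{Y}^T & N^T \end{pmatrix} = \begin{pmatrix} \hat{Y}\hat{Y}^T & 0 \\ Z_I \hat{Y}^T & Z_I N^T \end{pmatrix},
\]
where the zero block reflects $\hat{Y} N^T = 0$ (since $Y \perp Y^\perp$). Taking determinants gives
\[
\det\begin{pmatrix}\hat{Y}\\ Z_I\end{pmatrix} \cdot \det\begin{pmatrix}\hat{Y}\\ N\end{pmatrix} = \det(\hat{Y}\hat{Y}^T)\cdot \det(Z_I N^T),
\]
and the scalar $\det\begin{pmatrix}\hat{Y}\\ N\end{pmatrix} / \det(\hat{Y}\hat{Y}^T)$ is independent of $I$, depending only on the choices of $\hat{Y}$ and $N$. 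Hence the homogeneous coordinate vectors $(p_I(X))_I$ and $(\langle Y Z_{i_1} \cdots Z_{i_m}\rangle)_I$ agree projectively.

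Finally, for the restriction $f_Z \colon \mathcal{B}_{n,k,m}(W) \to \mathcal{A}_{n,k,m}(Z)$: the identity $(V^\perp \cap W)^\perp = V + W^\perp$ together with $\phi(W^\perp) = 0$ yields
\[
f_Z(V^\perp \cap W) = \phi(V + W^\perp) = \phi(V) = \tilde{Z}(V),
\]
so $f_Z$ carries $\mathcal{B}_{n,k,m}(W)$ bijectively onto $\mathcal{A}_{n,k,m}(Z)$. Since $f_Z$ and its inverse $g$ are algebraic and hence continuous, this restriction is a homeomorphism. The hard step is the scalar-tracking in the block identity above; the rest is routine linear algebra.
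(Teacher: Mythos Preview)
Your proof is correct. The paper does not actually prove this proposition; it simply cites \cite[Lemma 3.10, Proposition 3.12]{karpwilliams} for the result. Your argument is self-contained and sound: the adjoint pair $\phi,\psi$ cleanly handles the isomorphism via the identities $\phi(T)^\perp=\psi^{-1}(T^\perp)$ and $\psi(S)^\perp=\phi^{-1}(S^\perp)$, the block-determinant trick gives the projective equality \eqref{Pluckertwistor} with the $I$-independent constant $\det(\hat{Y}\hat{Y}^T)\big/\det\!\begin{pmatrix}\hat{Y}\\ N\end{pmatrix}$, and the identity $(V^\perp\cap W)^\perp=V+W^\perp$ together with $\phi(W^\perp)=0$ yields the final homeomorphism.
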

From \eqref{Pluckertwistor} we see that $Y\in \Gr_{k,k+m}$ is uniquely determined by its twistor coordinates.

\begin{remark} 
As an alternative to \cref{prop:twistor} 
we can consider the injective map $\psi_Z$
\begin{align*}
	\psi_Z:\Gr_{k, k+m} &\to \Gr_{m,n}\\
	Y& \mapsto Y^{\perp} Z^T=:z
\end{align*}
where $Y^\perp$ is any matrix representing the orthogonal complement of $Y$. 
Then it's not hard to see that 
	for $I =\{i_1<\cdots < i_m\} \subseteq [n]$, $p_I(z)=\langle Y Z_{i_1} \dots Z_{i_m} \rangle$ (viewing both Pl\"ucker and twistor coordinates as coordinates on points in projective space). 
\end{remark}

The following  
 expansion formula \eqref{eq:expand} will be useful in our proofs on positroid tiles. 
\begin{lemma}\label{lem:expansion}
Use the notation of 
\cref{def:twistor}.  
If we write $Y \in \Gr_{k,k+m}$ as $Y=CZ$
with $C\in \Gr_{k,k+n}$, 
we can write the twistor coordinates
in the form 
	\begin{equation}\label{eq:expand}
		\langle CZ, Z_{i_1}, \dots, Z_{i_m} \rangle = 
		\sum_{\{j_1<\dots<j_k\} \in {[n] \choose k}} p_J(C) \langle Z_{j_1}, \dots, Z_{j_k}, Z_{i_1},\dots, Z_{i_m}\rangle.
	\end{equation}
\end{lemma}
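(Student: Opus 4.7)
The identity is essentially a Cauchy--Binet / multilinearity statement, so the plan is to expand the first $k$ rows of the $(k+m)\times(k+m)$ determinant on the left-hand side in terms of the rows of $Z$ and then collect terms indexed by $k$-element subsets of $[n]$.

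Concretely, I would write $Y=CZ$ as the $k\times(k+m)$ matrix whose $\ell$-th row is $y_\ell = \sum_{j=1}^{n} C_{\ell j} Z_j$, so that
\[
\langle CZ,Z_{i_1},\dots,Z_{i_m}\rangle
= \det\!\begin{pmatrix} \sum_{j_1} C_{1j_1} Z_{j_1} \\ \vdots \\ \sum_{j_k} C_{kj_k} Z_{j_k} \\ Z_{i_1} \\ \vdots \\ Z_{i_m} \end{pmatrix}.
\]
By multilinearity of the determinant in the first $k$ rows, this expands as
\[
\sum_{(j_1,\dots,j_k)\in[n]^k} C_{1j_1}C_{2j_2}\cdots C_{kj_k}\,\langle Z_{j_1},\dots,Z_{j_k},Z_{i_1},\dots,Z_{i_m}\rangle.
\]
Tuples with a repeated index contribute zero, so one can restrict to tuples of distinct indices and group them by the underlying set $J=\{j_1<\cdots<j_k\}$.

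For a fixed $J$, summing over the $k!$ orderings $\sigma\in S_k$ of $J$ and using antisymmetry of the determinant pulls out a factor of $\sgn(\sigma)\,C_{1,j_{\sigma(1)}}\cdots C_{k,j_{\sigma(k)}}$ in front of the sorted bracket $\langle Z_{j_1},\dots,Z_{j_k},Z_{i_1},\dots,Z_{i_m}\rangle$. The alternating sum over $\sigma$ is precisely the $k\times k$ minor $p_J(C)$ of $C$ in columns $J$, yielding exactly the claimed formula.

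There is no real obstacle: the only thing to get right is the sign bookkeeping when reordering the $Z_{j_\ell}$'s, and this is handled automatically by pairing the $\sgn(\sigma)$ from the determinant with the $\sgn(\sigma)$ coming from the Leibniz expansion of $p_J(C)$. Equivalently, one can invoke Cauchy--Binet directly for the product of the $(k+m)\times n$ matrix $\bigl(\begin{smallmatrix} C\\ E_I\end{smallmatrix}\bigr)$ (where $E_I$ is the $m\times n$ matrix of standard basis rows $e_{i_1},\dots,e_{i_m}$) with the $n\times(k+m)$ matrix $Z$, which gives the same identity in one line.
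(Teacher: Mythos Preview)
Your proof is correct and takes essentially the same approach as the paper: the paper identifies $CZ$ with the element $\langle CZ\rangle \in \wedge^k(\R^{k+m})$ and writes $\langle CZ\rangle = \sum_{J} p_J(C)\,\langle Z_{j_1},\dots,Z_{j_k}\rangle$, which is exactly your multilinearity/Cauchy--Binet expansion packaged in exterior-algebra notation. Your version spells out the sign bookkeeping that the wedge-product formalism handles implicitly, but the argument is the same.
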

	\begin{proof}
	Identifying the $k \times (k+m)$ matrix  $CZ$ with the corresponding 
	element $\langle CZ \rangle$
	of $\wedge^k(\C^{k+m})$, we have
	\begin{equation*} 
		\langle CZ \rangle = \sum_{\{j_1<\dots<j_k\} \in {[n] \choose k}} p_J(C) \langle Z_{j_1}, \dots, Z_{j_k}\rangle.
	\end{equation*} 
This implies the result.
\end{proof}

We will give a description of positroid tiles in $\AA_{n, k, 2}$ using signs of twistor coordinates. One ingredient in our proofs is the following easy sufficient condition for a twistor coordinate to have constant sign on a Grasstope, which follows directly from 
	 \eqref{eq:expand}.

\begin{lemma}\label{lem:useful}
	Fix positive $k<n$ and $m$ such that $k+m \leq n$.
	Let $S_{\mathcal{M}}$ be a cell of $\Grk$.
	Fix $Z\in \Mat^{>0}_{n,k+m}$ and as usual let $Z_1,\dots, Z_n$ denote the row
	vectors of $Z$.  Choose an $m$-element subset 
	$1\leq i_1 < i_2 < \dots < i_m \leq n$.
	\begin{itemize}
		\item If $\langle Z_{j_1},\dots,Z_{j_k}, Z_{i_1}, \dots, Z_{i_m} \rangle \geq 0$ for each $J=\{j_1<\dots<j_k\}\in \mathcal{M}$,
			 then\\	
		$\langle CZ, Z_{i_1},\dots, Z_{i_m} \rangle \geq 0$ for each $C\in 
		S_{\mathcal{M}}$.
		\item 
		If in addition 
			$\langle Z_{j_1},\dots,Z_{j_k}, Z_{i_1}, \dots, Z_{i_m} \rangle > 0$ for some $J=\{j_1<\dots<j_k\}\in\mathcal{M}$
			then 
		$\langle CZ, Z_{i_1},\dots, Z_{i_m} \rangle > 0$ for each $C\in 
		S_{\mathcal{M}}$.
	\end{itemize}
\end{lemma}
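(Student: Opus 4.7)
The plan is to deduce both claims essentially immediately from the expansion formula \eqref{eq:expand} in \cref{lem:expansion}, combined with the defining sign pattern of the positroid cell $S_{\mathcal{M}}$. So the proof will consist of a single short computation together with a sign-counting argument.

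First, I would fix $C \in S_{\mathcal{M}}$ and apply \eqref{eq:expand} to write
\begin{equation*}
\langle CZ, Z_{i_1},\dots, Z_{i_m} \rangle \;=\; \sum_{J = \{j_1 < \cdots < j_k\} \in \binom{[n]}{k}} p_J(C)\, \langle Z_{j_1},\dots, Z_{j_k}, Z_{i_1},\dots, Z_{i_m} \rangle.
\end{equation*}
By the definition of the positroid cell $S_{\mathcal{M}}$ (\cref{def:positroid}), we have $p_J(C) > 0$ for $J \in \mathcal{M}$ and $p_J(C) = 0$ for $J \notin \mathcal{M}$, so the sum collapses to
\begin{equation*}
\langle CZ, Z_{i_1},\dots, Z_{i_m} \rangle \;=\; \sum_{J \in \mathcal{M}} p_J(C)\, \langle Z_{j_1},\dots, Z_{j_k}, Z_{i_1},\dots, Z_{i_m} \rangle.
\end{equation*}

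Now, under the hypothesis of the first bullet, every bracket $\langle Z_{j_1},\dots,Z_{j_k}, Z_{i_1},\dots, Z_{i_m} \rangle$ with $J \in \mathcal{M}$ is nonnegative, while every coefficient $p_J(C)$ is positive, so the right-hand side is a sum of nonnegative terms, giving $\langle CZ, Z_{i_1},\dots, Z_{i_m} \rangle \geq 0$. Under the hypothesis of the second bullet, at least one summand is strictly positive (its bracket is strictly positive and its Plücker coordinate $p_J(C)$ is strictly positive since $J \in \mathcal{M}$), while all remaining summands are still nonnegative; hence the total sum is strictly positive.

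There is no real obstacle here: the only thing to be careful about is that the sum \eqref{eq:expand} a priori ranges over all $J \in \binom{[n]}{k}$, including those outside $\mathcal{M}$ on which the sign of the bracket is not controlled by the hypothesis. But the vanishing $p_J(C) = 0$ for $J \notin \mathcal{M}$ kills all such contributions, so the uncontrolled brackets never enter. The argument applies uniformly to any $C \in S_{\mathcal{M}}$, which is exactly what the statement asks for.
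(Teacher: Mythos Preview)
Your proof is correct and matches the paper's approach exactly: the paper simply states that the lemma ``follows directly from \eqref{eq:expand}'', and your write-up spells out precisely that deduction.
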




\subsection{The sign stratification of $\mathcal{A}_{n,k,m}$}

Since $Y\in \Gr_{k,k+m}$ is uniquely determined by its twistor coordinates, it makes sense
to stratify $\mathcal{A}_{n,k,m}(Z) \subset \Gr_{k,k+m}$ by the signs of the twistor coordinates.
This was done in \cite{karpwilliams} in the case that $m=1$.
Moreover, this sign stratification is closely related to
the \emph{oriented matroid stratification} on the Grassmannian, which partitions 
elements of the real Grassmannian into strata based on the signs of the Pl\"ucker coordinates.
By \cref{prop:twistor}, the twistor coordinates of $Y\in \mathcal{A}_{n,k,m}(Z)$ are 
Pl\"ucker coordinates 
on the corresponding element of the \emph{B-amplituhedron} \cite{karpwilliams} or \emph{amplituhedron in momentum twistor space} \cite{ATT},
so this sign stratification reduces to the oriented matroid stratification 
in momentum twistor space.

\begin{definition}[Amplituhedron chambers]\label{def:chamber}
Fix positive $k<n$ and $m$ such that $k+m \leq n$.
Let $\sigma= (\sigma_{i_1,\dots,i_m})\in \{0, +,-\}^{n \choose m}$ be a nonzero sign vector, considered\footnote{Pl\"ucker and twistor coordinates are 
	defined only up to multiplication by a common scalar.} 
	 \emph{modulo multiplication by 
	$\pm 1$.}  
	Set $$\mathcal{A}^{\sigma}_{n,k,m}(Z):=\{Y\in \mathcal{A}_{n,k,m}(Z) \ \vert \ 
	\sign \lrangle{Y Z_{i_1} \dots Z_{i_m}} = \sigma_{i_1,\dots,i_m}\}.$$
	We call 
	$\mathcal{A}^{\sigma}_{n,k,m}(Z)$ an \emph{(amplituhedron) sign stratum.}
	Clearly $$\mathcal{A}_{n,k,m}(Z) = \sqcup_{\sigma} \mathcal{A}^{\sigma}_{n,k,m}(Z).$$
	If $\sigma\in \{+,-\}^{n \choose m}$,
	we call 
	$\mathcal{A}^{\sigma}_{n,k,m}(Z)$ an open 
	\emph{(amplituhedron) chamber}.\footnote{We borrow the word ``chamber''
	 from the theory of hyperplane arrangements.}
\end{definition}

For $m=1$, all strata are nonempty \cite[Definition 5.2]{karpwilliams},
but this is not true for $m>1$.  Moreover, whether or not 
$\mathcal{A}^{\sigma}_{n,k,m}(Z)$ is empty depends on  $Z$, see \cref{sec:wsimplices2}.
\begin{definition} \label{def:realizable}
We say that a sign vector $\sigma$ (or sign stratum 
$\mathcal{A}^{\sigma}_{n,k,m}$) is \emph{realizable} for $\mathcal{A}_{n,k,m}$
if 
	$\mathcal{A}^{\sigma}_{n,k,m}(Z)$ is nonempty for some $Z$.
\end{definition}

\subsection{Sign variation and sign flips}

Signs and sign flips will be important to our description of the amplituhedron, so we 
introduce some useful terminology here.

\begin{definition}\label{def:var}
	Given $v\in \R^n$, let
	$\var(v)$ be the number of times $v$ changes sign 
	when we read the components from left to right and ignore any
	zeros.  We also define
	$$\overline{\var}(v):=\max\{\var(w) \ \vert \ 
	w\in \R^n \text{ such that }w_i=v_i\text{ for all }
	i\in [n] \text{ with }v_i \neq 0\}.$$
	If $v\in \{0,+,-\}^n$, we define $\var(v)$ and 
	$\overline{\var}(v)$ in the obvious way.
\end{definition}

For example, if $v:=(4,-1,0,-2)\in \R^4$ then $\var(v)=1$
and $\overline{\var}(v)=3$.


\begin{definition}\label{def:flip}
		If $v\in \R^n$, or $v\in \{+,-, 0\}^n$, we say that 
		$v$ has a \emph{sign flip in position $i$} if $v_i, v_{i+1} \neq 0$
		and they have different signs, where indices are considered modulo $n$.
		We define
		$$\flip(v) = \flip(v_1,\dots,v_n):= \{i \ \vert \ v \text{ has a sign flip in position }i\} \subseteq [n].$$
\end{definition}

\begin{remark} We caution the reader that $|\flip(v)|$ may not equal $\var(v)$. For example, the sequence $(+,0,-,0,+,+,-)\in \{+,-, 0\}^7$ has sign flips in positions
$\{6,7\}$ , but $\var(+,0,-,0,+,+,-)$ is 3.
\end{remark}

\section{Positroid tiles 
of $\mathcal{A}_{n,k,2}$}\label{secSigns}

Recall that a \emph{positroid tile} of $\mathcal{A}_{n,k,m}(Z)$
is the full-dimensional image of a positroid cell on which $\tilde{Z}$ is injective.
In this section, we will obtain a detailed description of the 
positroid tiles of $\mathcal{A}_{n,k,2}(Z)$.  The main results of this
section are the following:
\begin{itemize}
	\item In \cref{thm:allGTs} we classify the positroid tiles
of $\mathcal{A}_{n,k,2}(Z)$, describing them 
	as the Grasstopes  $Z_{\hat{G}(\overline{\T})}$ obtained from 
	 the $2k$-dimensional positroid
cells 
		$S_{\hat{G}(\overline{\T})}$ associated to bicolored subdivisions of polygons, 
		proving a conjecture of 
 \cite{Lukowski:2019sxw}.
This implies  that whether or not 
	$\tilde{Z}(S_{\pi})$ is a positroid tile is independent of the choice
of $Z$.
\item  
In \cref{thm:surjectivity} we  characterize
		each (open) positroid tile $Z^\circ_{\hat{G}(\T)}$ 
		as the subset of $\Gr_{k,k+2}$
		where certain twistor coordinates have a fixed sign;
		this shows that each positroid tile is a union
		of (closures of) amplituhedron chambers.
	\item In \cref{prop:niceRep} we  solve a kind of 
		``inverse problem'' for positroid tiles: 
		given an element $Y\in \Gr_{k,k+2}$ which lies
		in an open positroid tile $Z^{\circ}_{\hat{G}(\T)}$, 
		we  explicitly
		construct an element $C\in \Gr_{k,n}$ whose image
		in $\mathcal{A}_{n,k,2}(Z)$ is $Y$, i.e. 
		$CZ = Y$; the entries of $C$ are in fact twistor coordinates.
\end{itemize}
We note that the techniques that we use in this section can be extended to 
give a \emph{cell decomposition} of $\mathcal{A}_{n,k,2}(Z)$.  This will
be explored in a separate paper.

\begin{figure}[h]
\includegraphics[width=0.65\textwidth]{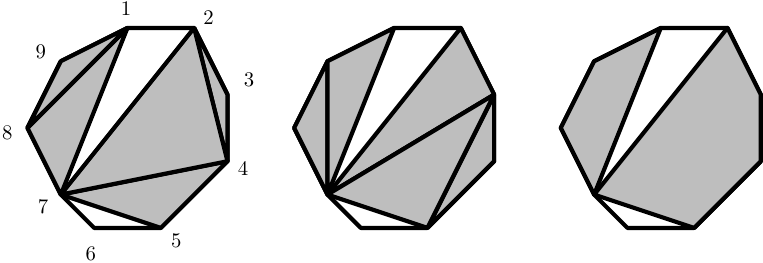}
	\caption{Two equivalent bicolored triangulations $\T_1$ and $\T_2$ of type $(5,9)$
	, and the corresponding 
	bicolored subdivision $\overline{\T}_1 = \overline{\T}_2$ of type $(5,9)$ .}
	\label{fig:unpunctured}
\end{figure}

\begin{definition}[Bicolored triangulations and subdivisions]\label{def:tiling}
Let 
 $\mathbf{P}_n$ be a convex $n$-gon with vertices labeled from 
$1$ to $n$ in clockwise order.
A \emph{bicolored triangulation of type $(k, n)$} is a triangulation of $\mathbf{P}_n$ where $k$ triangles are colored black and the rest are colored white.
Two bicolored triangulations are
\emph{equivalent}
if the union of the black triangles of one is equal to the union of the black triangles of the other.
We represent the equivalence class 
of a bicolored triangulation $\T$ by erasing the diagonals that separate
pairs of triangles of the same color.  The resulting object
	$\overline{\T}$ 
	is a subdivision of $\mathbf{P}_n$ into white and black
	polygons, and is called a \emph{bicolored subdivision of type $(k,n)$}.
	See \cref{fig:unpunctured}.
\end{definition}

Note that in a bicolored subdivision, as defined above, no two polygons of the same color share an edge. Bicolored triangulations of type $(k,n)$ were
called \emph{$k$ nonintersecting triangles in a convex $n$-gon} 
 in \cite{Lukowski:2019sxw}. We will see later (c.f. \cref{rmk:subdivisionAreTiling}) that bicolored triangulations and subdivisions are special cases of the \emph{plabic tilings} of \cite{OPS}.

Given $\T$ a bicolored triangulation of type $(k,n)$,
we build a corresponding bipartite graph 
$\hatG(\T)$
	as in \cref{fig:Kasteleyn},
then use the recipe from 
\cref{thm:Kasteleyn} and 
\cref{rm:Kasteleyn} 
to construct all 
points of the $2k$-dimensional 
cell 
$S_{\hatG(\T)}$ 
of $\Grk$.

\begin{definition}\label{def:param}
Given $\T$ a bicolored triangulation of type $(k,n)$, 
we build a labeled bipartite graph $\hatG(\T)$ by 
placing black boundary vertices
labeled $B_1,B_2,\dots,B_n$ in 
	clockwise order  
	at the $n$ vertices of the $n$-gon,
	and placing a trivalent white vertex in the middle of 
	each black triangle, connecting it to the three vertices of the 
	triangle.
We label the $k$ white vertices by $W_1,\dots, W_k$; we will usually
	label them in the order specified by \cref{rem:lex}.
\end{definition}

\begin{figure}[h]
\includegraphics[width=0.4\textwidth]{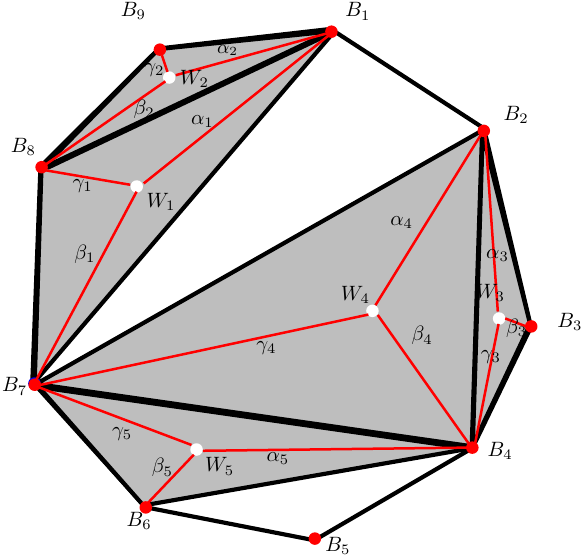}
	\caption{The planar bipartite graph $\hatG(\T_1)$ together
	with its edge-weighting. }
	\label{fig:Kasteleyn}
\end{figure}

\begin{remark}\label{rmk:hatGplabic}
	We can think of $\hatG(\T)$ as a \emph{plabic graph} (see \cref{def:plabic}) if 
we enclose it 
in a slightly larger disk and 
add $n$ edges connecting each $B_i$ to the boundary of the disk.
We will often abuse terminology and refer to $\hatG(\T)$
as a plabic graph. Note that $\hatG(\T)$ does not depend on the triangulation of the white polygons of $\overline{\T}$.
\end{remark}

\begin{lemma}\label{lem:equiv}
If two bicolored triangulations $\T_1$ and $\T_2$
are equivalent, then the plabic graphs 
	$\hatG(\T_1)$ and $\hatG(\T_2)$ are move-equivalent (see \cref{def:move}). 
In other words, these two plabic graphs represent the same
cell of $\Grk$.
\end{lemma}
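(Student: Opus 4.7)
The plan is to reduce to a single diagonal flip and realize it as a square move (possibly preceded by auxiliary contract/split moves) on the plabic graph. Since $\T_1$ and $\T_2$ refine the same bicolored subdivision $\overline{\T}_1 = \overline{\T}_2$, they triangulate the same collection of maximal monochromatic polygons of $\overline{\T}$. By the classical fact that any two triangulations of a convex polygon are related by a sequence of diagonal flips, $\T_1$ and $\T_2$ are connected by such a sequence, each flip lying entirely inside a single polygon of $\overline{\T}$. So it suffices to treat the case where $\T_1$ and $\T_2$ differ by a single flip.

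If the flipped diagonal lies in a white polygon, then $\hatG(\T_1) = \hatG(\T_2)$ identically, as noted in \cref{rmk:hatGplabic}: the white vertices of $\hatG(\T)$ correspond bijectively to the black triangles of $\T$, and these are unchanged under a flip inside a white polygon. Move-equivalence is trivial in this case.

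Now suppose the flipped diagonal lies in a black polygon. Let the two black triangles sharing it form a quadrilateral with vertices $a,b,c,d$ (in $n$-gon order); say $\T_1$ uses diagonal $ac$ (triangles $abc, acd$) while $\T_2$ uses diagonal $bd$ (triangles $abd, bcd$). In $\hatG(\T_1)$ the white vertices $w_1 \sim \{a,b,c\}$ and $w_2 \sim \{a,c,d\}$ together with $a$ and $c$ form a $4$-cycle $w_1 - a - w_2 - c - w_1$ with alternating colors. Because the triangles $abc$ and $acd$ are cyclically adjacent around both $a$ and $c$ (they share the edge $ac$, so no other triangle of $\T_1$ lies between them at those vertices), this $4$-cycle bounds a face of $\hatG(\T_1)$, after first performing contract/split moves at $a$ and $c$ if necessary to separate the two edges of the $4$-cycle from any other edges at those vertices. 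I would then apply the square move to this face; the resulting local configuration, combined with the standard clean-up moves (contracting the resulting degree-$2$ vertices and merging or splitting same-color vertices), yields two new trivalent white vertices attached to $\{a,b,d\}$ and $\{b,c,d\}$ respectively, which is exactly the local structure of $\hatG(\T_2)$ produced by the flipped triangles $abd$ and $bcd$.

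The main obstacle is the bookkeeping in the black-polygon case: when $a$ or $c$ is also a vertex of other black triangles of $\T_1$ (and hence has degree greater than $3$ in $\hatG(\T_1)$), the square move does not apply to the $4$-cycle directly. One must first use splitting moves to isolate the edges $w_1 a, w_2 a$ (respectively $w_1 c, w_2 c$) as the only non-boundary edges at a newly introduced trivalent copy of $a$ (respectively $c$), apply the square move, and then use contract/merge moves to undo the splitting and recover $\hatG(\T_2)$ on the nose. The key consistency check is that the cyclic orders of edges around each of $a,b,c,d$ are preserved through this sequence of moves, so the resulting plabic graph agrees with $\hatG(\T_2)$ not merely abstractly but as an embedded planar graph.
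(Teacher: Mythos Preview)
Your proposal is correct and follows essentially the same approach as the paper's proof: reduce to a single flip via flip-connectivity, note that a flip inside a white polygon leaves $\hatG$ unchanged, and show that a flip inside a black polygon corresponds to a square move. The paper's proof is a terse three-sentence version of exactly this argument; your additional discussion of splitting and re-merging high-degree black vertices before and after the square move is a useful elaboration that the paper omits.
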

\begin{proof}
The fact that $\T_1$ and $\T_2$ are equivalent means that 
we can get from $\T_1$ to $\T_2$ by flipping diagonals inside the black and white polygons of $\overline{\T_1}$.
A flip inside a white polygon does not change the plabic graph, while a flip inside a black polygon corresponds
performing a square move on the plabic graph.
So 
$\hatG(\T_1)$ and $\hatG(\T_2)$ are move-equivalent.
\end{proof}
In light of \cref{lem:equiv}, we 
let $S_{\hatG(\overline{\T})}$ denote the cell specified by any triangulation
 of $\overline{\T}$.

\begin{remark}\label{rem:lex}
We identify each black triangle $T$ in a bicolored triangulation $\mathcal{T}$
with its three vertices $a<b<c$ listed in increasing order.
We list the $k$ black triangles 
$$(T_1,\dots,T_k)=(\{a_1<b_1<c_1 \}, \ldots, \{a_k < b_k <c_k \})$$ 
 in lexicographically
increasing order, and label the white vertex inside of $T_i$ by $W_i$. 
\end{remark}

For example, we list the five black triangles of the bicolored triangulation 
$\T_1$ from \cref{fig:unpunctured} in the order
$$(\{1<7<8\},\{1<8<9\},\{2<3<4\},\{2<4<7\},\{4<6<7\}).$$
We label the white vertices of 
$\hatG(\T_1)$
in \cref{fig:Kasteleyn} so as to reflect
this ordering on black triangles.

\begin{definition}[Statistics of bicolored triangulations]	\label{def:arcs}
	Given a bicolored triangulation $\T$ of type $(k,n)$
	and a pair of vertices $ h, j$ of $\mathbf{P}_n$, we say that the arc $h \to j$ is:
	\begin{itemize}
		\item \emph{compatible} with $\T$ if the arc does not cross any arcs of the underlying bicolored subdivision $\overline{\T}$,
	i.e. it either bounds a polygon
	of $\overline{\T}$ or it lies entirely inside a black or white polygon;
	\item a \emph{black arc} of $\T$ if it bounds a black triangle of $\T$;
		\item \emph{facet-defining} if it bounds a black polygon of $\overline{\T}$ on its left.
	\end{itemize}
	In particular, each black arc of $\T$
	is compatible with $\T$.
	
	When $h\to j$ is compatible with $\T$, we let 
	$\area(h\to j) = \area_{\T}(h\to j)$ 
	denote the number of black triangles to the left of $h\to j$ 
	in any triangulation of $\overline{\T}$ which uses $h\to j$.
\end{definition}

	For example, the arcs $1\to 8$, $1\to 7$ and $2\to 6$ are compatible
	with the bicolored triangulation $\T$ from \cref{fig:Kasteleyn}, and we
	have $\area(1\to 8) = 4$, $\area(1\to 7) = 3$, $\area(2\to 6) = 2$.
	However, the arcs $2\to 8$ and $3\to 8$ are not compatible with $\T$.


We can easily write down representative matrices for points in $S_{\hat{G}(\T)}$ using the theory of Kasteleyn matrices. Note that matrices with the same pattern of zero/nonzero entries appeared in \cite{Lukowski:2019sxw} (though the authors did not prove \cref{prop:signsKmatrix} there).

\begin{prop}\label{prop:signsKmatrix}
Let $\mathcal{T}$ be a bicolored triangulation of type $(k,n)$.
	We 
let	\[(\{a_1<b_1<c_1 \}, \ldots, \{a_k < b_k <c_k \})\]
denote the list of $k$ black triangles of $\T$, written  
in lexicographically
increasing order, 
as in \cref{rem:lex}. 
	Choose 
	a set of edge-weights for the graph $\hatG(\mathcal{T})$,
	which we write as 
	$$
	({\Aalpha}, \Bbeta, \Ggamma) = 
	((\alpha_1,\beta_1,\gamma_1), (\alpha_2, \beta_2, \gamma_2),\dots,
	(\alpha_k, \beta_k, \gamma_k))
	\in (\mathbb{R}_{> 0})^{3k},
	$$
	with $\alpha_i, \beta_i, \gamma_i$ denoting the weights on the 
	edges from $W_i$ to $B_{a_i}, B_{b_i}$ and $B_{c_i}$, respectively.

Let $M_{\T}(\Aalpha, \Bbeta, \Ggamma)=(M_{i,j})$ be the $k \times n$ matrix
	with precisely $3$ nonzero entries in each row:
	\begin{equation}\label{eq:M}
		M_{i, a_i}=\alpha_i, \quad
		M_{i, b_i}= (-1)^{\area(a_i \to b_i)} \beta_i, \quad
		M_{i, c_i}=(-1)^{\area(a_i \to b_i)+ \area(b_i \to c_i)} \gamma_i.
	\end{equation}
	Then the 
	cell $S_{\hat{G}(\mathcal{T})}$ is the
	image of the 
	map $(\mathbb{R}_{>0})^{3k} \to \Gr_{k,n}^{\geq0}$ sending 
	$(\Aalpha, \Bbeta, \Ggamma) \mapsto M_{\T}(\Aalpha, \Bbeta, \Ggamma)$.
\end{prop}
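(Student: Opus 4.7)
The plan is to apply the general Kasteleyn-matrix parameterization of positroid cells (\cref{thm:Kasteleyn}, \cref{rm:Kasteleyn}) to the specific plabic graph $\hatG(\T)$, and to show that the resulting matrices coincide with $M_\T(\boldsymbol\alpha,\boldsymbol\beta,\boldsymbol\gamma)$ after using the obvious gauge freedom. Because every white vertex $W_i$ is trivalent and adjacent only to the boundary vertices $B_{a_i},B_{b_i},B_{c_i}$, the Kasteleyn formula forces row $i$ to be supported exactly on columns $\{a_i,b_i,c_i\}$ with entries of the form $\pm\alpha_i,\pm\beta_i,\pm\gamma_i$; the row-rescaling gauge (a common positive scaling of the three edges incident to $W_i$) then lets me normalize the $a_i$-entry to $+\alpha_i$, matching the shape of \eqref{eq:M}.

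The substantive content of the proposition is the determination of the remaining signs. I would prove that the area-based assignment $\sgn(M_{i,b_i})=(-1)^{\area(a_i\to b_i)}$ and $\sgn(M_{i,c_i})=(-1)^{\area(a_i\to b_i)+\area(b_i\to c_i)}$ constitutes a valid Kasteleyn sign pattern for $\hatG(\T)$ by induction on equivalence classes of bicolored triangulations. By \cref{lem:equiv}, flipping a diagonal inside a black or white polygon of $\overline\T$ does not change the underlying cell; one checks that such a flip alters the area statistics of the compatible arcs in exactly the way compatible with a square move on $\hatG(\T)$ (inside a black polygon) or with a mere relabeling of black triangles (inside a white polygon). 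So it suffices to verify the sign convention on one representative per equivalence class, which I would take to be a fan triangulation whose black diagonals all emanate from a common vertex: in this case the signs collapse to a simple alternating pattern, and the Kasteleyn condition at every internal face can be checked by hand.

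I expect the main obstacle to be this local verification of the Kasteleyn condition, since $\area$ is defined globally over the whole triangulation while the Kasteleyn condition is local to each face. The key combinatorial lemma is that around each internal face $f$ of $\hatG(\T)$ --- corresponding either to an internal vertex of $\mathbf{P}_n$ or to a white polygon of $\overline\T$ --- the product of the area-based signs along $\partial f$ telescopes to the required Kasteleyn sign. Once this is established, every maximal minor $p_I(M_\T)$ is a positive polynomial in $\boldsymbol\alpha,\boldsymbol\beta,\boldsymbol\gamma$ whenever $I$ lies in the positroid of $\hatG(\T)$ and vanishes otherwise, so $M_\T \in \Gr_{k,n}^{\geq 0}$ with $\rowspan M_\T \in S_{\hatG(\T)}$. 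Finally, since the map factors through a $k$-dimensional row-rescaling gauge, its image has dimension at most $2k = \dim S_{\hatG(\T)}$; by irreducibility of $(\mathbb{R}_{>0})^{3k}$ and connectedness of the cell, this forces the image to fill $S_{\hatG(\T)}$.
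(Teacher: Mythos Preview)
Your overall strategy --- invoke \cref{thm:Kasteleyn} and \cref{rm:Kasteleyn}, then verify that the area-based signs constitute a valid Kasteleyn pattern --- is the same as the paper's.  Where you diverge is in \emph{how} the signs are verified.  The paper does this in one global stroke: it shows directly that for any tuple $(d_1,\ldots,d_k)$ of distinct vertices with $d_i\in T_i$, the sign of the permutation sorting $(d_1,\ldots,d_k)$ into increasing order equals a product $\prod_i\epsilon_{i,d_i}$, where (up to a per-row sign coming from the lex ordering) $\epsilon_{i,d_i}$ is exactly the sign of $M_{i,d_i}$ in \eqref{eq:M}.  This makes every term in the Laplace expansion of $p_I(M_\T)$ nonnegative, with no induction, no square moves, and no local face conditions.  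Your proposed route --- reduce to a fan representative via flips, then check the Kasteleyn face condition --- would also work, but it is longer and relies on the face-based characterization of Kasteleyn signs, which the paper never states; the paper's direct sorting-sign computation is both shorter and self-contained.

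Your final surjectivity argument is a genuine gap.  ``Image has dimension at most $2k$, the cell is connected, hence the image is all of $S_{\hatG(\T)}$'' is not valid: a continuous image of $(\mathbb R_{>0})^{3k}$ of the right dimension inside a $2k$-cell can easily be a proper subset.  Surjectivity is not something to deduce by dimension count here; it is built into the Kasteleyn package.  Once you know your sign assignment is a valid Kasteleyn sign pattern for $\hatG(\T)$, the sentence following \cref{thm:Kasteleyn} says that $S_{\hatG(\T)}$ is by definition the set of row spans of all such matrices as the edge weights vary over $\mathbb R_{>0}$, so the map is automatically onto.
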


Note that 
$M_{\T}(\Aalpha, \Bbeta, \Ggamma)$
has rows and columns indexed by the 
white  and black
vertices of $\hatG(\T)$.  The
	$ij$-entry is nonzero if and only if there 
 is an edge $e$ in $\hatG(\T)$ between $W_i$ and $B_j$,
 and in that case is 
 (up to a sign) 
equal to the weight of $e$.

	\begin{remark}
	Clearly the image of the map 
	$(\Aalpha, \Bbeta, \Ggamma) \mapsto M_{\T}(\Aalpha, \Bbeta, \Ggamma)$ 
	is unchanged if we rescale each row of the matrix
	so that the leftmost nonzero entry is $1$, i.e. set each $\alpha_i=1$.
		This map is then a homeomorphism from 
		$(\R_{>0})^{2k}$ to 
		the positroid cell $S_{\hat{G}(\mathcal{T})}$.
	\end{remark}

	\begin{proof}[Proof of \cref{prop:signsKmatrix}]
	This follows from \cref{thm:Kasteleyn} and 
		\cref{rm:Kasteleyn}. For completeness, we sketch why the choice of signs of entries is correct. For a black triangle $T_i=\{a<b<c\}$ of $\T$, define
				\begin{equation*}
			\epsilon_{i, a}:=(-1)^{\#\{j < i: a_i =a_j\}}, \quad \epsilon_{i, b}:=\epsilon_{i, a} \cdot (-1)^{\area(a_i \to b_i)}, \quad  \epsilon_{i, c}:=\epsilon_{i, a} \cdot (-1)^{\area(a_i \to c_i)+1}.
		\end{equation*}	
		Let $(d_1,\ldots,d_k)$ be a tuple of distinct vertices of black triangles of $\T$ such that $d_{i} \in T_i$. The sign of the permutation $\sigma$ such that $d_{\sigma(1)}<\ldots<d_{\sigma(k)}$ is the product $\epsilon_{1,d_1} \cdots \epsilon_{k,d_k}$.
		Then:
		\begin{equation*}
		p_{I}(M) E_I=\sum_{(d_1,\ldots,d_k)} M_{1,d_1} \cdots M_{k,d_k} \langle e_{d_1},\ldots, e_{d_k} \rangle=\sum_{(d_1,\ldots,d_k)} (\epsilon_{1,d_1} M_{1,d_1} )\cdots (\epsilon_{k,d_k} M_{k,d_k} ) \, E_I 
		\end{equation*}
		where the sum is over the collections defined above satisfying $\{d_1,\ldots,d_k\}=I$. A sufficient condition for $p_{I}(M)\geq0$ is that $\sgn{M_{i,d_i}}=\epsilon_{i,d_i}$. Up to rescaling the row $i$ of $M$ by $\epsilon_{i,a_i}$, this is true, as $\area(a_i \to c_i)=\area(a_i \to b_i)+\area(b_i \to c_i)+1$.
	\end{proof}

\begin{example}\label{ex:1}
	For example, the matrix $M_{\T_1}$ 
	corresponding to the bicolored triangulation $\T_1$ from \cref{fig:Kasteleyn}
	is 
	\begin{equation}\label{matrix1}
		\begin{pmatrix}
			\alpha_1&0&0&0&0&0&-\beta_1&-\gamma_1&0 \\
			\alpha_2&0&0&0&0&0&0&\beta_2&\gamma_2\\
			0&\alpha_3&\beta_3&\gamma_3&0&0&0&0&0\\
			0&\alpha_4&0&-\beta_4&0&0&\gamma_4&0&0\\
			0&0&0&\alpha_5&0&\beta_5&\gamma_5&0&0
		\end{pmatrix}.
	\end{equation}
	\cref{prop:signsKmatrix} says that if we let 
	the parameters $((\alpha_1,\beta_1,\gamma_1),\dots,
	(\alpha_5,\beta_5, \gamma_5))$ range over 
	all elements of $(\R_{>0})^{15}$, the matrices 
	\eqref{matrix1} will sweep out all points of the 
	cell $S_{\hatG(\T_1)}$.
\end{example}

\begin{remark}
	The matrices constructed in \cref{prop:signsKmatrix} may have non-positive maximal minors rather than non-negative maximal minors. To obtain a matrix which has non-negative maximal minors, multiply row $j$ by $(-1)^{\#\{i < j: a_i =a_j\}}$.
\end{remark}

\begin{lemma}\label{lem:closure}
	Let $\T = \{T_1,\dots,T_k\}$ be a bicolored triangulation of type $(k, n)$.
	Then 
	$P_I\neq 0$ on the positroid cell
	 $S_{\hatG(\T)}$ 
	if and only if there is a bijection 
	$\phi: I = \{i_1,\dots,i_k\} \to \{T_1,\dots, T_k\}$ with 
	$i$ a vertex of $\phi(i)$ for all $i$.
\end{lemma}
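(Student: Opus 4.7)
The plan is to read off both directions directly from the explicit parameterization of $S_{\hat{G}(\T)}$ given in \cref{prop:signsKmatrix}. Set $M = M_\T(\Aalpha,\Bbeta,\Ggamma)$, whose $(j,\ell)$-entry is nonzero if and only if $\ell$ is a vertex of the black triangle $T_j$ (by \eqref{eq:M}). Expanding by the Leibniz formula,
\[
P_I(M) \;=\; \det\bigl(M_{j,i}\bigr)_{j\in[k],\, i\in I} \;=\; \sum_{\sigma\in S_k} \sgn(\sigma)\prod_{j=1}^{k} M_{j,i_{\sigma(j)}},
\]
where $I=\{i_1<\dots<i_k\}$. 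Each permutation $\sigma$ encodes a bijection $\phi:I\to\{T_1,\dots,T_k\}$ via $\phi(i_{\sigma(j)})=T_j$, and the corresponding term is nonzero precisely when $i_{\sigma(j)}$ is a vertex of $T_j$ for every $j$, i.e. when $i\in\phi(i)$ for all $i\in I$.

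The ``only if'' direction is then immediate: if no bijection $\phi:I\to\{T_1,\dots,T_k\}$ with $i\in\phi(i)$ exists, every term in the Leibniz sum contains a zero factor, so $P_I \equiv 0$ on $S_{\hat{G}(\T)}$.

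The ``if'' direction requires showing that the nonvanishing terms do not cancel. This is the one nontrivial step, and it is already built into the sign conventions of \cref{prop:signsKmatrix}: the factors $(-1)^{\area(a_i\to b_i)}$ and $(-1)^{\area(a_i\to b_i)+\area(b_i\to c_i)}$ were chosen exactly so that, after rescaling each row $j$ by the sign $\epsilon_{j,a_j}=(-1)^{\#\{i<j:a_i=a_j\}}$ (as in the remark following that proposition), every nonzero summand in the Leibniz expansion is \emph{strictly positive}. Thus on the rescaled matrix $M'$ we have $P_I(M') = \sum (\text{nonneg. terms})$, and the sum is strictly positive as soon as one term is. Since $P_I(M)$ and $P_I(M')$ agree up to a nonzero global sign, this proves $P_I\neq 0$ on $S_{\hat{G}(\T)}$ whenever a bijection $\phi$ exists.

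The only place where real work is needed is the sign verification in the paragraph above, but this is exactly the content of the proof of \cref{prop:signsKmatrix} (where one checks $\sgn(\sigma) = \epsilon_{1,d_1}\cdots\epsilon_{k,d_k}$ for any admissible tuple $(d_1,\dots,d_k)$). So once that proposition is granted, the lemma follows by inspecting which terms of the Leibniz expansion survive.
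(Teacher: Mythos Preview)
Your proof is correct and is essentially equivalent to the paper's argument. The paper phrases things via almost-perfect matchings of $\hatG(\T)$ and invokes \cref{thm:Kasteleyn} directly: $p_I\neq 0$ iff there is a matching with boundary $I$, and such matchings are visibly the same data as bijections $\phi:I\to\{T_1,\dots,T_k\}$ with $i\in\phi(i)$. You instead work with the Leibniz expansion of the explicit matrix $M_\T$ and invoke the sign analysis from the proof of \cref{prop:signsKmatrix} to rule out cancellation. These are two presentations of the same idea: the nonzero Leibniz terms \emph{are} the matchings, and the Kasteleyn sign choice is exactly what makes them all contribute with one sign. The paper's formulation is slightly cleaner because the no-cancellation step is packaged once and for all in \cref{thm:Kasteleyn}, whereas you are effectively re-deriving that packaging for this particular graph; but there is no logical gap in what you wrote.
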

\begin{proof}
	It suffices to show that 
	the Pl\"ucker coordinate $P_I$ is nonzero on $S_{\hatG(\T)}$ 
	if and only if there is a bijection 
	$\phi: I = \{i_1,\dots,i_k\} \to \{T_1,\dots, T_k\}$ with 
	$i$ a vertex of $\phi(I)$ for all $i$.

	By 
	\cref{thm:Kasteleyn},
	$p_I \neq 0$ on $S_{\hatG(\T)}$ if and only if 
	there is a  matching $M$ of $\hatG(\T)$
	such that $\partial M = I$.  
	Note that any 
	matching $M$ of $\hatG(\T)$ consists of 
	$k$ edges, obtained by pairing
	each white vertex $W_j$ with one of its three incident
	black vertices $\{B_{a_j}, B_{b_j}, B_{c_j}\}$.    
	The $k$ black vertices $\{B_{i_1},\dots,B_{i_k}\}$
	obtained in this way 
	must  be distinct (since $M$ is a matching), 
	so we get
	a bijection between $I:=\{i_1,\dots,i_k\}$ 
	and the 
	black triangles $T_1,\dots,T_k$.
	Moreover
	$\partial M = I$.
\end{proof}

Now, we turn to the open Grasstopes $\gto{\hatG(\T)}$ and their properties.

\begin{theorem}[Definite signs of twistor coordinates]\label{thm:sign1}
Let $\T$ be a bicolored triangulation of type $(k,n)$
and let $Y:=CZ\in \Gr_{k,k+2}$, where $C$ is a matrix 
representing a point of  the cell 
$S_{\hatG(\T)}$.
Choose $h<j$ such that the chord $h\to j$ is compatible with $\T$.
Then 
	\begin{equation} \label{signcoord}
		\sgn\langle Y Z_h Z_j\rangle = (-1)^{\area(h\to j)}, 
		\text{ or equivalently, }
		(-1)^{\area(h\to j)} \lrangle{Y Z_h Z_j} > 0.
	\end{equation}
	In other words, we have that 
	\begin{equation} \label{eq:inclusion}
		Z^{\circ}_{\hat{G}(\T)} \subseteq \{Y\in \Gr_{k,k+2} \ \vert \ 
	\eqref{signcoord} \text{ holds for all arcs }h\to j
	\text{ compatible with }\T.\}
	\end{equation}
\end{theorem}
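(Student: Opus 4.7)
The plan is to apply the expansion formula of \cref{lem:expansion} to $\lrangle{Y Z_h Z_j}=\lrangle{CZ,Z_h,Z_j}$ and analyze the resulting sum term by term:
\[
  \lrangle{Y Z_h Z_j} \;=\; \sum_{J\in\binom{[n]}{k}} p_J(C)\,\lrangle{Z_{j_1},\dots,Z_{j_k},Z_h,Z_j}.
\]
Since $C$ lies in the open cell $S_{\hatG(\T)}$, \cref{lem:closure} together with the definition of a positroid cell yields $p_J(C)>0$ exactly when $J$ is a system of distinct representatives (SDR) for the black triangles $T_1,\dots,T_k$ of $\T$, and $p_J(C)=0$ otherwise. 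If in addition $J$ meets $\{h,j\}$, then $\lrangle{Z_J,Z_h,Z_j}$ has a repeated row and vanishes. Hence only SDRs $J$ with $J\cap\{h,j\}=\emptyset$ contribute.

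For such $J$, sorting the rows $(j_1,\dots,j_k,h,j)$ of the determinant into increasing order and tracking inversions gives
\[
  \lrangle{Z_{j_1},\dots,Z_{j_k},Z_h,Z_j} \;=\; (-1)^{|J\cap(h,j)|}\,p_{J\cup\{h,j\}}(Z),
\]
with $p_{J\cup\{h,j\}}(Z)>0$ because $Z$ is totally positive. The geometric heart of the argument is then the identity $|J\cap(h,j)|=\area(h\to j)$. By \cref{lem:equiv}, we may assume $\T$ is a triangulation of $\overline{\T}$ containing $h\to j$ as an edge, so $h\to j$ crosses no edge of $\T$ and each black triangle $T_i$ lies entirely on one side. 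The representative $j_i\in V(T_i)$ lies on the same side, and since $j_i\notin\{h,j\}$, we have $j_i\in(h,j)$ iff $T_i$ is on the left of $h\to j$. Summing over $i$ yields the identity. Hence
\[
  \lrangle{Y Z_h Z_j} \;=\; (-1)^{\area(h\to j)}\!\!\sum_{\substack{J\text{ an SDR of }\T\\ J\cap\{h,j\}=\emptyset}} p_J(C)\,p_{J\cup\{h,j\}}(Z),
\]
with every summand strictly positive.

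What remains---and this is where I expect the main obstacle---is to show that the sum is \emph{nonempty}: there must exist at least one SDR of the black triangles avoiding $\{h,j\}$. Equivalently, the bipartite graph $\hatG(\T)$ with $B_h$ and $B_j$ deleted admits a matching saturating all white vertices. The plan is to verify Hall's condition: for every $S\subseteq\{T_1,\dots,T_k\}$, $|V(S)\setminus\{h,j\}|\geq|S|$. Since $\T$ is a triangulation of a convex polygon with no interior vertices, every connected subcomplex of $\T$ is simply connected, and a short Euler-characteristic argument (using that the dual graph of such a triangulation is a tree, so sub-dual-graphs are forests) yields the stronger bound $|V(S)|\geq|S|+2$, from which Hall's condition is immediate. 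This establishes the inclusion \eqref{eq:inclusion}.
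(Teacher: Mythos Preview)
Your proof is correct and follows essentially the same route as the paper: expand $\lrangle{YZ_hZ_j}$ via \cref{lem:expansion}, use \cref{lem:closure} to identify the nonvanishing $p_J(C)$ with SDRs of the black triangles, pass via \cref{lem:equiv} to a triangulation containing the arc $h\to j$, and then read off $|J\cap(h,j)|=\area(h\to j)$ from the SDR. The only difference is in the nonemptiness step: the paper constructs an SDR avoiding $\{h,j\}$ by a direct induction on the number of black triangles (cutting along the arcs of a triangle containing $h\to j$), whereas you invoke Hall's theorem together with the bound $|V(S)|\geq |S|+2$, which indeed holds because the triangles of $S$ are pairwise non-crossing triangles on the convex polygon spanned by $V(S)$, and any such collection has at most $|V(S)|-2$ members.
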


\begin{proof}
	We start by choosing a bicolored triangulation $\T_1$
	such that $\overline{\T_1} = \T$, and such that 
	the chord $h\to j$ is one of the diagonals of $\T_1$.  By 
	\cref{lem:equiv}, the choice of $\T_1$ does not affect the corresponding
	positroid cell.
	By \cref{lem:useful},
	it suffices to verify \eqref{signcoord} for each 
	$Y:=\lrangle{Z_{i_1},\dots,Z_{i_k}}$ indexed by 
	$\{i_1<\dots<i_k\}=I$ such that 
	$p_I\neq 0$ on the cell
	$S_{\hatG(\T)}$.  
	And by \cref{lem:closure}, $p_I \neq 0$ on 
	$S_{\hatG(\T)}$ if and only if 
	there is a bijection 
	$\phi: I = \{i_1,\dots,i_k\} \to 
	\{T_1,\dots, T_k\}$ with 
	$i$ a vertex of $\phi(i)$ for all $i$.
	
	Towards this end, 
	choose $I=\{i_1<\dots < i_k\}$ such that 
	$p_I \neq 0$ on the cell
	$S_{\hatG(\T)}$. 
	We need to calculate
	$\sgn 
	\lrangle{Z_{i_1},\dots,Z_{i_k}, Z_h, Z_j}$.
	
	If $h\in I$ or $j\in I$, 
	$\lrangle{Z_{i_1},\dots,Z_{i_k}, Z_h, Z_j} = 0$.  So without 
	loss of generality, we can assume that 
	$h$ and $j$ are not elements of $I$. 
	Recall that maximal minors of $Z$ are positive: this means that 
	for any ordered sequence $\ell_1 < \dots < \ell_{k+2}$, we have
	$\sgn \lrangle{Z_{\ell_1},\dots, Z_{\ell_{k+2}}} = 1$.
	To determine 
	$\sgn 
	\lrangle{Z_{i_1},\dots,Z_{i_k}, Z_h, Z_j}$, we need to know how many swaps
	are required to put the sequence
	$(i_1,\dots,i_k,h,j)$ in order. 
	Any $i_{\ell}$ which is greater than both $h$ and $j$ needs
	to get swapped past both of them, which has no effect on the 
	sign of the determinant.
	Any $i_{\ell}$ which is less than both $h$ and $j$ does not 
	need to get swapped past either.  Each $i_{\ell}$ such 
	that $h<i_{\ell} < j$ needs to get swapped past $h$ (but not $j$).
	Therefore the parity of the number of swaps required to put 
	the sequence $(i_1,\dots,i_k,h,j)$ in order is 
	the same as the parity of $\#\{i_{\ell}\in I: h < i_{\ell} < j\}$.
	It follows that 
	$\sgn 
	\lrangle{Z_{i_1},\dots,Z_{i_k}, Z_h, Z_j}
	= (-1)^{\#\{i_{\ell}\in I: h < i_{\ell} < j\}}$.
	Finally, the existence of the bijection $\phi$
	means that 
	$\#\{i_{\ell}\in I: h < i_{\ell} < j\}$
	is the number of black triangles of $\T_1$ which are to the left of 
	$h\to j$.  
	
	To complete the proof, we must show that there is some $I \in \binom{[n]}{k}$ containing neither $h$ nor $j$ such that $p_I$ is nonzero. Equivalently, we must find a matching of $\hatG(\T_1)$ which does not have $h$ or $j$ in its boundary. We do so by induction on the number of black triangles of $\T_1$. Clearly there is such a matching if $\hatG(\T_1)$ has a single black triangle. If $h \to j$ is contained in a white polygon of $\T_1$, we cut along $h \to j$ to obtain two smaller bicolored triangulations $\T_2$ and $\T_3$. By induction, we can find matchings of $\hatG(\T_2)$ and $\hatG(\T_3)$ avoiding $h$ and $j$; their union gives the desired matching of $\hatG(\T_1)$. Otherwise, $h \to j$ is the boundary of a black triangle $T_r$ of $\T_1$. Let $c$ be the third vertex of this triangle. Cut $\T_1$ along $h \to j$, $j \to c$ and $c \to h$ to obtain bicolored triangulations of smaller polygons. The $\hatG$ plabic graphs of these bicolored triangulations have matchings avoiding $h, j,c$ by induction, since each smaller polygon contains exactly two of these vertices. The union of these matchings, together with the edge from $B_c$ to $W_r$, gives the desired matching of $\T_1$.
\end{proof}

	The following result solves a kind of 
		``inverse problem:'' given
		$Y\in Z^{\circ}_{\hat{G}(\T)}$, we can construct 
	a particular matrix representative $\twMtx(Y)$ of $\Gr_{k,n}$ whose  image in $\mathcal{A}_{n,k,2}(Z)$ is $Y$. 
		
		\begin{defn}[Twistor coordinate matrix]\label{defn:twistorMtx}
			Let $Y \in \Gr_{k,k+2}$ and let $\T$ be a bicolored triangulation of type $(k,n)$ with black triangles $T_1, \dots, T_k$ labeled as in \cref{rem:lex}. The \emph{twistor coordinate matrix} of $Y$ is the $k \times n$ matrix $\twMtx(Y)=(C_{i,j})$ with precisely 3 nonzero entries in each row:
			\begin{equation}\label{eq:Cmatrix}
				C_{i, a_i}= \lrangle{Yb_ic_i}, \quad
				C_{i, b_i}=- \lrangle{Y  a_i c_i}, \quad
				C_{i, c_i}=\lrangle{Y  a_i b_i}. 
			\end{equation}
			(Recall that e.g. $\lrangle{Y b_i c_i}$ is short-hand for 
			$\lrangle{Y Z_{b_i} Z_{c_i}}$.)
		\end{defn}
		
\begin{theorem}[Inverse problem]\label{prop:niceRep}
	Let $\T$ be a bicolored triangulation of type $(k,n)$ with black triangles $T_1, \dots, T_k$ labeled as in \cref{rem:lex}. 
	Let $Y
	\in Z^{\circ}_{\hatG(\T)}$, i.e. 
	$Y:=\tilde{Z}(V)$ 
for some $V \in S_{\hatG(\T)}$.
	Then $V$ is the row span of the twistor coordinate matrix $C':=\twMtx(Y)$.

In other words, if we let $Y' =C'Z$, then
there is a global scalar $\lambda$ (a polynomial
in $\lrangle{Yab}$'s) such that 
	\begin{equation*}
		\lrangle{Y'ij} = \lambda \lrangle{Yij} \text{ for all }i,j.
	\end{equation*}
\end{theorem}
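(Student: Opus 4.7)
My plan is to prove the theorem by working row by row, comparing the $i$-th row of $V$ and the $i$-th row of $C' = \twMtx(Y)$ via an elementary ``syzygy'' identity, and then reducing both to the same one-dimensional space of linear relations in the quotient $\R^{k+2}/Y$. The main tools are the identity \eqref{eq:expand} from \cref{lem:expansion}, the zero/nonzero pattern of \cref{prop:signsKmatrix}, and the nonvanishing of twistor coordinates from \cref{thm:sign1}.

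The key observation is the following Cramer-type syzygy: for any $Y \in \Gr_{k,k+2}$ and any three indices $a,b,c \in [n]$, the vector
\[
\langle Y b c\rangle Z_a \;-\; \langle Y a c\rangle Z_b \;+\; \langle Y a b\rangle Z_c
\]
lies in the row span of $Y$ inside $\R^{k+2}$. Indeed, the quotient $\R^{k+2}/Y$ is $2$-dimensional, and the images $\bar Z_a, \bar Z_b, \bar Z_c$ satisfy a unique (up to scalar) linear relation whose coefficients are, up to sign, the three $2 \times 2$ ``determinants'' $\langle Y b c\rangle, \langle Y a c\rangle, \langle Y a b\rangle$ in that quotient. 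Applying this identity with $(a,b,c) = (a_i,b_i,c_i)$ and using the three nonzero entries of row $i$ of $C'$ from \eqref{eq:Cmatrix}, the $i$-th row of $C'Z$ equals precisely this syzygy vector and therefore lies in $Y$. Hence $\mathrm{rowspan}(C'Z) \subseteq Y$.

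Next I apply the same quotient argument to $V$ itself. By \cref{prop:signsKmatrix}, the $i$-th row of $V$ has nonzero entries only in columns $a_i, b_i, c_i$, so $(VZ)_i = V_{i,a_i}Z_{a_i} + V_{i,b_i}Z_{b_i} + V_{i,c_i}Z_{c_i}$. Since $Y = VZ$, this lies in $Y$, so its image in $\R^{k+2}/Y$ is zero:
\[
V_{i,a_i}\,\bar Z_{a_i} + V_{i,b_i}\,\bar Z_{b_i} + V_{i,c_i}\,\bar Z_{c_i} = 0.
\]
The three arcs $a_i \to b_i$, $b_i \to c_i$, $a_i \to c_i$ bound the black triangle $T_i$, so they are compatible with $\T$. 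By \cref{thm:sign1}, the twistor coordinates $\langle Y a_i b_i\rangle$, $\langle Y a_i c_i\rangle$, $\langle Y b_i c_i\rangle$ are therefore nonzero, which means that any two of $\bar Z_{a_i}, \bar Z_{b_i}, \bar Z_{c_i}$ are linearly independent in the $2$-dimensional space $\R^{k+2}/Y$, so their space of linear relations is exactly one-dimensional and is spanned by $(\langle Y b_i c_i\rangle, -\langle Y a_i c_i\rangle, \langle Y a_i b_i\rangle)$. Consequently $(V_{i,a_i}, V_{i,b_i}, V_{i,c_i}) = \mu_i \cdot (\langle Y b_i c_i\rangle, -\langle Y a_i c_i\rangle, \langle Y a_i b_i\rangle)$ for some scalar $\mu_i$, and $\mu_i \ne 0$ because $V$ has rank $k$ (so its $i$-th row is nonzero). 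Thus row $i$ of $V$ is a nonzero multiple of row $i$ of $C'$, so $\mathrm{rowspan}(V) = \mathrm{rowspan}(C')$ as elements of $\Gr_{k,n}$, which is the first assertion.

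For the second statement, writing $g$ for the invertible $k \times k$ matrix whose $i$-th row scales row $i$ of $C'$ to row $i$ of $V$, we get $Y = VZ = g(C'Z) = gY'$, so $\langle Y' ij\rangle = \det(g)^{-1} \langle Y ij\rangle$ for every $i,j$; this determinant is a polynomial in the twistor coordinates because the entries of $g$ (equivalently $g^{-1}$) are. The only real obstacle is verifying that the sign conventions line up, i.e.\ that the sign pattern dictated by \eqref{eq:Cmatrix} together with \cref{thm:sign1} is consistent with the signs prescribed by \cref{prop:signsKmatrix}; this is a short check using the additivity relation $\area(a_i \to c_i) = \area(a_i \to b_i) + \area(b_i \to c_i) + 1$ (the black triangle $T_i$ itself contributes to $\area(a_i\to c_i)$ but to neither of the other two), which ensures the rescaling factors $\mu_i$ are well-defined without any sign ambiguity. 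Once this bookkeeping is in place, the $\R^{k+2}/Y$ argument above produces the required equality of row spans directly, and as a byproduct shows that the preimage $V$ of $Y$ in $S_{\hatG(\T)}$ is unique, so that the inverse map $Y \mapsto C'$ is literally the inverse of $\tilde Z$ restricted to this cell.
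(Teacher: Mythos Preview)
Your proof is correct and takes a genuinely different route from the paper. The paper's argument is combinatorial: for each black triangle $\{a,b,c\}$ it sets up a bijection between almost-perfect matchings of $\hatG(\T)$ whose boundary contains $a$ (but not $b,c$), those whose boundary contains $b$ (but not $a,c$), and those whose boundary contains $c$ (but not $a,b$); this yields the identity $p_{J\cup a}(C)/\alpha = p_{J\cup b}(C)/\beta = p_{J\cup c}(C)/\gamma$, and expanding $\lrangle{Ybc}$, $\lrangle{Yac}$, $\lrangle{Yab}$ via \cref{lem:expansion} then shows each equals the corresponding edge weight times a common factor $Q$. Your argument replaces all of this with a clean linear-algebra observation in the $2$-dimensional quotient $\R^{k+2}/Y$: any three vectors there satisfy a linear relation unique up to scalar, and when the vectors are pairwise independent (which \cref{thm:sign1} guarantees for $\bar Z_{a_i},\bar Z_{b_i},\bar Z_{c_i}$), that relation is spanned by the Cramer triple $(\lrangle{Yb_ic_i},-\lrangle{Ya_ic_i},\lrangle{Ya_ib_i})$. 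Since the $i$-th row of $M_\T$ already gives such a relation, it must be a scalar multiple of the $i$-th row of $C'$.

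Your approach is shorter and more conceptual, and as you note it also yields \cref{prop:kindaInverse} (the containment $\rowspan(C'Z)\subseteq\rowspan(Y)$ for \emph{arbitrary} $Y$) in one line, whereas the paper devotes a separate four-step computation to that result. The paper's approach, by contrast, identifies the row-scaling factors $Q_i$ explicitly as matching sums, which gives finer control (used later, e.g., in the proof of \cref{prop:tori}). One small caveat: your justification that $\lambda$ is a polynomial in the $\lrangle{Yab}$'s is not quite right as stated---the diagonal entries $\mu_i$ of $g^{-1}$ involve the edge weights $\alpha_i$, not just twistor coordinates---though the conclusion can be recovered by noting that $\lrangle{Y'ij}$ is by construction a degree-$k$ polynomial in the entries of $C'$, which are themselves twistor coordinates.
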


\begin{example}\label{ex:2}
Let $\T_1$ be the bicolored triangulation from \cref{fig:Kasteleyn}.
\cref{prop:niceRep} says that 
if $V \in S_{\hatG(\T_1)}$ and $Y:=\tilde{Z}(V)$ is
the image of $V$ in $\AA_{n, k, 2}(Z)$, then $V$ 
is the row span of the following matrix:
\begin{equation*}\label{matrix2}
	\begin{pmatrix}
		\lrangle{Y Z_7 Z_8} &0&0&0&0&0&-\lrangle{Y Z_1 Z_8}&\lrangle{Y Z_1 Z_7}&0 \\
		\lrangle{Y Z_8 Z_9}&0&0&0&0&0&0&-\lrangle{Y Z_1 Z_9}&\lrangle{Y Z_1 Z_8}\\
		0&\lrangle{Y Z_3 Z_4}&-\lrangle{Y Z_2 Z_4}&\lrangle{Y Z_2 Z_3}&0&0&0&0&0\\
		0&\lrangle{Y Z_4 Z_7} &0&-\lrangle{Y Z_2 Z_7}&0&0&\lrangle{Y Z_2 Z_4}&0&0\\
		0&0&0&\lrangle{Y Z_6 Z_7}&0&-\lrangle{Y Z_4 Z_7}&\lrangle{Y Z_4 Z_6}&0&0
	\end{pmatrix}
\end{equation*}
\end{example}
\begin{proof}
	Choose a weight vector $(\Aalpha, \Bbeta,\Ggamma)$ so that the matrix $C:=M_\T(\Aalpha, \Bbeta,\Ggamma)$ from \cref{prop:signsKmatrix} represents $V$.
	
	Consider a black triangle $\{a<b<c\}$ of $\T$. Let $W$ be the white vertex of $\hatG(\T)$ in the middle of this triangle and let the edges from $W$ to $B_a$, $B_b$, and $B_c$, respectively, be denoted $e_a$, $e_b$, and $e_c$.  Say the weights of these edges are $\alpha$, $\beta$, and $\gamma$, respectively. 
	
	Choose $J \in \binom{[n]}{k-1}$ which does not contain $a$, $b$, or $c$. Then 
	\begin{equation}\label{eqn:3pluckerEqual}
		\frac{1}{\alpha}p_{J \cup \{a\}}(C)= \frac{1}{\beta} p_{J \cup \{b\}}(C)= \frac{1}{\gamma}p_{J \cup \{c\}}(C).
	\end{equation}
	Indeed, each Pl\"ucker coordinate is a sum of weights of 
	matchings. Any matching $M_a$ contributing to $p_{J \cup \{a\}}(C)$ must include an edge covering the white vertex $W$. Since $b, c \notin J \cup \{a\}$, this edge must be $e_a$. Now, $M_b := M_a \setminus \{e_a\} \cup \{e_b\}$ is a valid matching because $M_a$ does not include any edges covering $B_b$. Moreover, the boundary of $M_b$ is $J \cup \{b\}$. This is easily seen to be a bijection between matchings with boundary $J \cup \{a\}$ and matchings with boundary $J \cup \{b\}$. It is also easy to see that $\wt(M_a)/\alpha= \wt(M_b)/\beta$, so the first equality above holds. The second equality is similar.
	
	Now, we consider the twistor coordinate
	\[\lrangle{Y b c} = \sum_{I \in \binom{[n]}{k}} p_I(C) \lrangle{Z_{i_1}, Z_{i_2}, \dots, Z_{i_k}, Z_{b}, Z_{c}}
	\]
	which is nonzero by \cref{thm:sign1}.

	Notice that the terms in this sum indexed by $I$ containing $b$ or $c$ are zero. Further, for $I \cap \{b, c\}= \emptyset$, $p_I(C)$ is zero if $I$ does not contain $a$. So we can rewrite $\lrangle{Ybc}$ as
	\begin{equation}\label{eqn:twistorA}
		\lrangle{Y bc} = \alpha \cdot \sum_{\substack{{J \in \binom{[n]}{k-1}:}\\{\{a, b, c\} \cap J = \emptyset}}} \frac{1}{\alpha}~p_{J \cup a}(C) \lrangle{Z_{j_1}, Z_{j_2}, \dots,Z_a, \dots Z_{j_{k-1}}, Z_{b}, Z_{c}}
	\end{equation} 
	where $Z_{j_1}, \dots, Z_a, \dots, Z_{j_{k-1}}$ are ordered so the indices are increasing.
	
	Similarly, we can write
	\begin{align}
		\label{eqn:twistorB} \lrangle{Ya c} &= \beta \cdot \sum_{\substack{{J \in \binom{[n]}{k-1}:}\\{\{a, b, c\} \cap J = \emptyset}}} \frac{1}{\beta}~ p_{J \cup b}(C) \lrangle{Z_{j_1}, Z_{j_2}, \dots,Z_b, \dots Z_{j_{k-1}}, Z_{a}, Z_{c}}\\
		\label{eqn:twistorC} \lrangle{Ya  b} &= \gamma	\cdot \sum_{\substack{{J \in \binom{[n]}{k-1}:}\\{\{a, b, c\} \cap J = \emptyset}}} \frac{1}{\gamma}~ p_{J \cup c}(C) \lrangle{Z_{j_1}, Z_{j_2}, \dots,Z_c, \dots Z_{j_{k-1}}, Z_{a}, Z_{b}}.
	\end{align}
	
	Consider a nonzero term in \eqref{eqn:twistorA}, which is indexed by $J$ such that $p_{J \cup a}(C)$ is nonzero. The corresponding term in \eqref{eqn:twistorB} is also nonzero. Because of the first equality in \eqref{eqn:3pluckerEqual}, these two terms differ only by the sign $(-1)^s$, where 
	\[\lrangle{Z_{j_1}, Z_{j_2}, \dots,Z_a, \dots Z_{j_{k-1}}, Z_{b}, Z_{c}}= (-1)^s\lrangle{Z_{j_1}, Z_{j_2}, \dots,Z_b, \dots Z_{j_{k-1}}, Z_{a}, Z_{c}}.\]
	In other words, $s=|J \cap [a+1, b-1]|+1= |(J \cup a) \cap [a+1, b-1]|+1$. Because $J \cup a$ is the boundary of some matching, the size of $(J \cup a) \cap [a+1, b-1]$ is exactly $\area(a \to b)$, and in particular does not depend on $J$.
	
	Similarly, consider the term of \eqref{eqn:twistorC} indexed by $J$. The sign difference between this term and the corresponding one in \eqref{eqn:twistorB} is $(-1)^s$, where \[\lrangle{Z_{j_1}, Z_{j_2}, \dots,Z_b, \dots Z_{j_{k-1}}, Z_{a}, Z_{c}}= (-1)^s\lrangle{Z_{j_1}, Z_{j_2}, \dots,Z_c, \dots Z_{j_{k-1}}, Z_{a}, Z_{b}}.\]
	It is not hard to see that $s=\area(b \to c)+1$.
	
	Altogether, we have 
	\begin{align*}
		\lrangle{Y b c}&= \alpha \cdot Q\\
		\lrangle{Y ac}&= (-1)^{\area(a\to b)+1}\beta \cdot Q\\
		\lrangle{Y ab}&= (-1)^{\area(a \to b)+\area(b \to c)} \gamma \cdot Q
	\end{align*}
	where $Q$ is a nonzero scalar. Notice that up to the factor of $Q$, these three twistor coordinates recover the entries of $C$ corresponding to the edges $e_a$, $e_b$, and $e_c$. This means that the matrix $C'$ with non-zero entries
	\begin{equation*}
		C'_{j, a_j}= \lrangle{Y  b_j c_j}, \quad
		C'_{j, b_j}=-\lrangle{Y  a_j  c_j}, \quad
		C'_{j, c_j}=\lrangle{Y  a_j b_j}
	\end{equation*}
	is related to $M_{\hatG(\T)}(\Aalpha, \Bbeta, \Ggamma)$ by rescaling rows, and so also represents the subspace $V$.
\end{proof}

Using \cref{prop:niceRep}, we can show that $\tilde{Z}$ is injective on $S_{\hatG(\T)}$, and moreover prove that $\tilde{Z}$ is not injective on any other 
$2k$-dimensional positroid cells. This will prove the conjectural
characterization of positroid tiles from 
 \cite{Lukowski:2019sxw}.  We note that the injectivity of $\tilde{Z}$ on $S_{\hatG(\T)}$ was also proved rather indirectly in \cite[Proposition 6.4]{LPW} using results of \cite{BaoHe}.

\begin{theorem}[Characterization of positroid tiles]\label{thm:allGTs}
Fix $k<n$ and $Z \in \Mat^{>0}_{n, k+2}$. Then $\tilde{Z}$ is injective on the $2k$-dimensional cell $S_\M$ if and only if  $S_\M = S_{\hatG(\overline{\T})}$ for some bicolored subdivision $\overline{\T}$ of type $(k,n)$.
That is, the positroid tiles for $\AA_{n, k, 2}$ are exactly  
	the Grasstopes $\gt{\hatG(\overline{\T})}$, where $\overline{\T}$ is a bicolored subdivision of type $(k,n)$.  
\end{theorem}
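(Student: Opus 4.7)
The statement has two implications, which I treat separately.

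The \emph{if} direction is a direct consequence of \cref{prop:niceRep}. Suppose $S_\M = S_{\hatG(\overline{\T})}$ and let $Y \in Z^{\circ}_{\hatG(\overline{\T})}$. Any $V \in S_{\hatG(\overline{\T})}$ with $\tilde{Z}(V) = Y$ has, by \cref{prop:niceRep}, row span equal to that of the twistor coordinate matrix $\twMtx(Y)$, which depends only on $Y$ and on the combinatorial data of $\overline{\T}$. Thus the preimage of $Y$ in $S_{\hatG(\overline{\T})}$ is unique, so $\tilde{Z}$ is injective on $S_{\hatG(\overline{\T})}$.

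The \emph{only if} direction is the substantive one. Suppose $S_\M$ is a $2k$-dimensional cell on which $\tilde{Z}$ is injective; the plan is to reconstruct a bicolored subdivision $\overline{\T}$ with $S_\M = S_{\hatG(\overline{\T})}$. Since $\tilde{Z}$ is injective and source and target both have dimension $2k$, the image $\tilde{Z}(S_\M)$ is open in $\AA_{n,k,2}(Z)$, and on a dense open subset every twistor coordinate is nonzero, defining a sign vector $\sigma \in \{\pm\}^{\binom{n}{2}}$. Via \cref{prop:twistor}, $\sigma$ is the sign pattern of the Pl\"ucker coordinates of a point in the B-amplituhedron inside $\Gr_{2,n}$; such sign patterns are governed by the oriented matroid of $n$ convex-position points in $\RR^2$. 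I would then argue that any $\sigma$ arising as a generic sign pattern of a $2k$-dimensional injective image is exactly the pattern $(-1)^{\area_\T(h \to j)}$ of \cref{thm:sign1} for a unique bicolored subdivision $\overline{\T}$, by reading off which triples of boundary indices form black triangles from the zero loci of twistor coordinates on the boundary of $S_\M$. With $\overline{\T}$ in hand, apply \cref{prop:niceRep}: the row span of $\twMtx(Y)$ lies in $S_{\hatG(\overline{\T})}$ and also maps to $Y$, so by injectivity on $S_\M$ both $S_\M$ and $S_{\hatG(\overline{\T})}$ contain this common preimage; since distinct positroid cells are disjoint, $S_\M = S_{\hatG(\overline{\T})}$.

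The main obstacle will be the reconstruction step: extracting a bicolored subdivision from the generic sign vector and verifying that the ``black triangles'' thus defined are noncrossing and exhaust the $n$-gon. I anticipate a peeling argument exploiting the simple oriented matroid structure of $\Gr_{2,n}^{>0}$ together with the exact dimension count $2k = 3k - k$, matching three nonzero entries per row modulo a gauge per triangle as in \cref{prop:signsKmatrix}. A complementary route, should the direct reconstruction prove awkward, is to enumerate $2k$-dimensional cells of $\Grk$ via their reduced plabic graphs and verify by case analysis that only those of the form $\hatG(\overline{\T})$ are compatible with a constant sign vector of twistor coordinates on an open subset of the image.
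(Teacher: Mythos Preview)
Your \emph{if} direction matches the paper's argument exactly and is correct.

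Your \emph{only if} direction, however, has a genuine gap in the final step. You write: ``by injectivity on $S_\M$ both $S_\M$ and $S_{\hatG(\overline{\T})}$ contain this common preimage.'' But injectivity of $\tilde{Z}$ on $S_\M$ only tells you that $Y$ has a \emph{unique} preimage $V$ \emph{inside} $S_\M$; it says nothing about preimages lying in other cells. The point $\twMtx(Y)$ lies in $S_{\hatG(\overline{\T})}$, and you have no mechanism to conclude that $V = \twMtx(Y)$ as elements of $\Gr_{k,n}$. Two distinct $2k$-dimensional cells can certainly map injectively onto overlapping open regions of $\AA_{n,k,2}(Z)$ without being the same cell---indeed this happens all the time (different positroid tiles overlap on chambers). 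So ``distinct positroid cells are disjoint'' does not close the argument. The reconstruction step (extracting $\overline{\T}$ from the generic sign vector) is also only sketched and would require substantial work, but even granting it, the conclusion does not follow.

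The paper instead proves the contrapositive: if $S_\M$ is $2k$-dimensional but \emph{not} of the form $S_{\hatG(\overline{\T})}$, then some twistor coordinate $\lrangle{Yij}$ vanishes identically on $\gto{\M}$, forcing $\dim \gto{\M} \le 2k-1$ and hence non-injectivity. The two cases are (i) $\M$ has a coloop $c$, in which case $\lrangle{Ycj}\equiv 0$ for all $j$; and (ii) $\M$ is coloopless, in which case $S_\M$ is T-dual (\cref{prop:posetiso}) to an $(n-1)$-dimensional loopless cell $S_\pi \subset \Gr^{\ge 0}_{k+1,n}$ whose plabic graph $G$ is not a tree, hence (having $n$ faces) is disconnected; the boundary face shared by two components yields indices $i,j$ with $\lrangle{Yij}\equiv 0$. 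Your ``complementary route'' at the end is in the right spirit, but the key organizing idea---pass to the T-dual cell in $\Gr^{\ge 0}_{k+1,n}$ and use that $(n-1)$-dimensional non-tree plabic graphs are disconnected---is what makes the case analysis tractable.
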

\begin{corollary}	
Whether or not $\overline{\tilde{Z}(S_{\pi})}$
is a positroid tile is independent of 
	$Z$. 
\end{corollary}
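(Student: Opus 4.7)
\begin{pf}[Proof proposal]
The plan is to deduce the corollary as an essentially immediate consequence of \cref{thm:allGTs}, whose statement already does all the real work. By \cref{def:facetG} and the definition of positroid tile preceding it, $\overline{\tilde{Z}(S_\pi)}$ is a positroid tile for $\mathcal{A}_{n,k,2}(Z)$ exactly when two conditions hold: (i) $\dim S_\pi = 2k$, and (ii) $\tilde{Z}$ is injective on $S_\pi$. Condition (i) is purely combinatorial: it depends only on $\pi$ (equivalently, only on the matroid $\mathcal{M}$), and not on $Z$ at all. Thus the only potential $Z$-dependence lies in (ii).

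Now I would apply \cref{thm:allGTs}, which fixes an arbitrary $Z \in \Mat_{n,k+2}^{>0}$ and asserts that among $2k$-dimensional positroid cells, $\tilde{Z}$ is injective on $S_{\mathcal{M}}$ if and only if $S_{\mathcal{M}} = S_{\hatG(\overline{\T})}$ for some bicolored subdivision $\overline{\T}$ of type $(k,n)$. The right-hand side of this biconditional makes no reference to $Z$; it is a condition purely on the cell (or equivalently, on the positroid $\mathcal{M}$, or on $\pi$). Hence, for any two choices $Z, Z' \in \Mat_{n,k+2}^{>0}$, applying the theorem with $Z$ and with $Z'$ yields the same set of $2k$-dimensional positroid cells on which the respective map is injective, namely $\{S_{\hatG(\overline{\T})}\}$ as $\overline{\T}$ ranges over bicolored subdivisions of type $(k,n)$.

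Combining this with the $Z$-independence of (i), we conclude that the collection of cells $S_\pi$ for which $\overline{\tilde{Z}(S_\pi)}$ is a positroid tile does not depend on the choice of $Z$, which is the statement of the corollary. The main (and only) obstacle is therefore \cref{thm:allGTs} itself; once that theorem is in hand, no further work is required, and I would present the corollary as a one-paragraph remark immediately following the theorem.
\end{pf}
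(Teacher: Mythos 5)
Your proposal is correct and matches the paper's reasoning: the corollary is stated as an immediate consequence of \cref{thm:allGTs}, since the characterization of injectivity (being of the form $S_{\hatG(\overline{\T})}$) makes no reference to $Z$, and the dimension condition is likewise $Z$-independent. No further comment is needed.
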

\begin{proof}[Proof of \cref{thm:allGTs}]
	This proof uses some facts from \cref{sec:Tduality1}.
	We first show that all cells $S_{\hatG(\overline{\T})}$ are positroid tiles. The cell $S_{\hatG(\overline{\T})} \subset \Grk$ is $2k$-dimensional because it is T-dual to an $(n-1)$-dimensional cell in $\Gr_{k+1,n}^{\geq 0 }$ (see \cref{rk:treesTilings}) and T-duality preserves codimension (see \cref{prop:posetiso}).
Say $V, V' \in S_{\hatG(\T)}$ are represented by matrices $C, C'$, and suppose $Y:=CZ, Y':=C'Z$ represent the same subspace. Then by \cref{prop:niceRep} $V$ and $V'$ are represented by the twistor coordinate matrices $N$ and $N'$ of $Y$ and $Y'$, respectively. But the twistor coordinates of $Y$ and $Y'$ are the same up to a global scalar, so $V=V'$.

Now, suppose a $2k$-dimensional cell $S_\M$ is not equal to $S_{\hatG(\overline{\T})}$ for any $\overline{\T}$.
	We will show $\tilde{Z}$ is not injective on $S_\M$.

First, suppose $M$ has a \emph{coloop} $c$; that is, $p_I$ is identically 0 on $S_\M$ for all $I\in \binom{[n]}{k}$ that do not contain $c$. Then the twistor coordinate $\lrangle{Y c j}$ is identically zero on $\gto{\M}$ for all $j$. Indeed, in the sum
\[\lrangle{Y cj}= \sum_{I \in \binom{[n]}{k}} p_I(C) \lrangle{Z_{i_1} \dots Z_{i_k} Z_c Z_j},
\]
$p_I(C)$ is zero for $c \notin I$ and $\lrangle{Z_{i_1} \dots Z_{i_k} Z_c Z_j}$ is zero for $c \in I$. In particular, $\gto{\M}$ is contained in the hypersurface $\{Y \in \Gr_{k,k+2}:\lrangle{Yc (c+1)}=0\}$, and so has dimension at most $2k-1$. So $\tilde{Z}$ is not injective on $S_\M$.

Now, if $\M$ does not have a coloop, then $S_\M$ is $T$-dual to an $(n-1)$-dimensional cell $S_\pi$ of $\Gr^{\geq 0}_{k+1, n}$ by \cref{prop:posetiso}. Because $S_\M$ is not of the form $S_{\hatG(\T)}$, a plabic graph $G$ with trip permutation $\pi$ is not a tree and so has at least one internal face. Since $G$ has $n$ faces total, $G$ is not connected.

Let $G$ be a plabic graph with trip permutation $\pi$, and say $[i, j-1]$, $[j, l]$ are the boundary vertex sets of two connected components of $G$. There is a single boundary face $f$ which is adjacent to $i-1$, $i$, $j-1$ and $j$. In the plabic graph $\hatG$ for $S_\M$ (constructed in \cref{prop:TdualityPlabicGraphs}), notice that $i$ and $j$ are adjacent to the same black vertex, $\hat{b}(f)$. After adding bivalent white vertices to $\hatG$ so that every boundary vertex is adjacent to a white vertex, it is clear that all matchings of $\hatG$ have either $i$ or $j$ in the boundary. This means that if $I$ contains neither $i$ nor $j$, then $p_I$ is identically zero on $S_\M$. Just as in the coloop case, $\lrangle{Y i j}$ is identically zero on $\gto{\M}$, because all terms of 
\[\sum_{I \in \binom{[n]}{k}} p_I(C) \lrangle{Z_{i_1} \dots Z_{i_k} Z_i Z_j}\]
	vanish for $C \in S_\M$. So 
	$\gto{\M}$ is contained in a hypersurface and hence  $\dim \gto{\M} \leq 2k-1$.
\end{proof}

\begin{rmk}
As conjectured in \cite{Lukowski:2019sxw}, the number of positroid tiles 
	for $\AA_{n, k, 2}$ is sequence A175124 in the OEIS \cite{OEIS}, a refinement of the \emph{large Schr\"oder numbers}
	(see \cref{sec:appendixB}).
\end{rmk}

Refining \eqref{eq:inclusion}, we will now give an explicit description of 
each open positroid tile as a subset of 
$\Gr_{k,k+2}$ where certain twistor
coordinates have a definite sign.  In fact, since 
there are generally multiple bicolored triangulations represented by of one bicolored subdivision $\overline{\T}$, \cref{thm:surjectivity}
gives multiple  descriptions 
of each open positroid tile -- one for each bicolored triangulation represented by $\overline{\T}$.

\begin{theorem}[Sign characterization of positroid tiles]\label{thm:surjectivity}
Fix $k<n$, $m=2$, and $Z\in \Mat^{>0}_{n,k+2}$.
	Let $\T$ be a bicolored triangulation of type $(k,n)$. 
	Then we have 
\begin{align*}
\gto{\hatG(\overline{\T})}&= \{Y \in \Gr_{k, k+2} \ \vert \  
	  \sgn \lrangle{Y i j}= (-1)^{\area(i \to j)} 
	\text{ for all black arcs } i \to j \text{ of } \T \text{ with } i <j \}
\end{align*}
Moreover, 
if	$Y\in Z^{\circ}_{\hat{G}(\T)}$, then 
	$C':=\twMtx(Y)$ (cf. \cref{defn:twistorMtx}) 
	lies in the positroid cell $S_{\hat{G}(\T)}$,
	and $Y$ and 
	$C'Z$ represent the same element of $\Gr_{k,k+2}$. 
\end{theorem}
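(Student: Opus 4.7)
The plan is to prove the two inclusions separately. The forward inclusion $\gto{\hat{G}(\overline{\T})} \subseteq \{Y : \sgn\lrangle{Yij}=(-1)^{\area(i\to j)} \text{ for all black arcs } i\to j \text{ of } \T \text{ with } i<j\}$ is immediate from \cref{thm:sign1}, since every black arc of $\T$ is compatible with $\T$, so the sign equality is a special case of \eqref{signcoord}. The substantive step is the reverse inclusion, which I would prove constructively so as to simultaneously establish the ``moreover'' clause: given $Y$ in the right-hand side, set $C':=\twMtx(Y)$ as in \cref{defn:twistorMtx} and prove (a) that $C' \in S_{\hat G(\T)}$, and (b) that $\tilde Z(C')=Y$ in $\Gr_{k,k+2}$.

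For (a), I would compare $C'$ to the parametrization in \cref{prop:signsKmatrix}. For each black triangle $T_i=\{a_i<b_i<c_i\}$, row $i$ of $C'$ has three nonzero entries $\lrangle{Yb_ic_i},\,-\lrangle{Ya_ic_i},\,\lrangle{Ya_ib_i}$ in columns $a_i,b_i,c_i$; these are nonzero because the sign hypothesis forces the twistor coordinates on the three sides of $T_i$ to be nonzero. Using the combinatorial identity $\area(a_i\to c_i)=\area(a_i\to b_i)+\area(b_i\to c_i)+1$ (the triangle $T_i$ itself lies to the left of $a_i\to c_i$ but not of $a_i\to b_i$ or $b_i\to c_i$), a short sign-check shows that after rescaling row $i$ by $(-1)^{\area(b_i\to c_i)}$ the three entries have signs $+,\,(-1)^{\area(a_i\to b_i)},\,(-1)^{\area(a_i\to b_i)+\area(b_i\to c_i)}$, exactly matching \eqref{eq:M}. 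Choosing $\alpha_i,\beta_i,\gamma_i$ to be the absolute values realizes the rescaled $C'$ as $M_\T(\Aalpha,\Bbeta,\Ggamma)$, whence $C' \in S_{\hat G(\T)}$.

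The main obstacle is (b). The crux is the Pl\"ucker identity among $k+3$ vectors in $\R^{k+2}$:
\[
\sum_{j=1}^{k+3}(-1)^{j+1}\det(v_1,\ldots,\widehat{v_j},\ldots,v_{k+3})\,v_j = 0.
\]
Applying this with $v_1,\ldots,v_k$ the rows of $Y$ and $v_{k+1},v_{k+2},v_{k+3}=Z_{a_i},Z_{b_i},Z_{c_i}$, the coefficients of $Z_{a_i},Z_{b_i},Z_{c_i}$ are $\pm\lrangle{Yb_ic_i},\,\mp\lrangle{Ya_ic_i},\,\pm\lrangle{Ya_ib_i}$ with a common overall sign, while the coefficients of the $y_j$ are scalars. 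Rearranging gives
\[
\lrangle{Yb_ic_i}\,Z_{a_i}-\lrangle{Ya_ic_i}\,Z_{b_i}+\lrangle{Ya_ib_i}\,Z_{c_i}\ \in\ \rowspan(Y).
\]
But the left-hand side is exactly the $i$-th row of $C'Z$. Hence $\rowspan(C'Z)\subseteq\rowspan(Y)$, and since both have dimension $k$ (the former because $C'\in\Grk$ has rank $k$ and $\tilde Z$ is well-defined into $\Gr_{k,k+2}$), the two subspaces coincide. This gives $\tilde Z(C')=Y$, so $Y\in\gto{\hat G(\overline\T)}$, completing the reverse inclusion and the ``moreover'' clause.
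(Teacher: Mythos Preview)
Your proof is correct and follows the same overall architecture as the paper: the forward inclusion via \cref{thm:sign1}, and the reverse inclusion by showing that the twistor coordinate matrix $C'=\twMtx(Y)$ lies in $S_{\hat G(\T)}$ and maps to $Y$ under $\tilde Z$. Part (a) of your argument is essentially equivalent to the paper's (which compares signs to the matrix from \cref{prop:niceRep} rather than directly to \cref{prop:signsKmatrix}, but this is the same computation).

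The genuine difference is in part (b). The paper establishes $\rowspan(C'Z)\subseteq\rowspan(Y)$ by proving a separate proposition (\cref{prop:kindaInverse}) via a rather lengthy four-step computation: it writes $Y=CZ$ for an arbitrary full-rank $C$, expresses each row of $C'$ as an explicit linear combination of rows of $C$ plus kernel vectors $\omega^S$ of $Z$, and then verifies entry-by-entry that this combination recovers the twistor coordinate matrix. Your argument replaces all of this with the single Cramer identity $\sum_{j=1}^{k+3}(-1)^{j+1}\det(v_1,\ldots,\widehat{v_j},\ldots,v_{k+3})\,v_j=0$ applied to $y_1,\ldots,y_k,Z_{a_i},Z_{b_i},Z_{c_i}$, which immediately gives that row $i$ of $C'Z$ lies in $\rowspan(Y)$. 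This is considerably shorter and more transparent; it also makes manifest that the inclusion $\rowspan(C'Z)\subseteq\rowspan(Y)$ holds for \emph{any} $Y\in\Gr_{k,k+2}$, with no sign hypotheses needed at this step. The paper's longer computation does have one collateral benefit: in the course of proving \cref{prop:niceRep} it extracts the explicit scalar $Q$ relating the edge weights to the twistor coordinates (used later in \cref{prop:tori}), but for the present theorem your route is strictly more efficient.
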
 

In the proof of \cref{thm:surjectivity}, we use the notation $N_i(A):=\#\{a \in A: a<i \}$ and $N_{i, j}(A):=\#\{a \in A: i<a<j \}$. We will need the following lemmas.

\begin{lemma} \label{lemma:kernelZ}
	Let $S \in {[n] \choose k+3}$, and define $\omega^S \in \mathbb{R}^n$ as
		\[\omega_i^S = \begin{cases}
		(-1)^{N_i(S)} \lrangle{Z_{S \setminus \{i\}}} & \text{ if } i \in S\\
		0 & \text{ else }.
	\end{cases}
	\]
Then $\omega^S$ is in the left kernel of $Z$.
\end{lemma}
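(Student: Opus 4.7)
The plan is to recognize this as a classical Plücker-type syzygy: since any $k+3$ vectors in $\mathbb{R}^{k+2}$ (namely the rows $Z_{s_1}, \ldots, Z_{s_{k+3}}$ where $S = \{s_1 < \cdots < s_{k+3}\}$) must satisfy an alternating linear relation coming from cofactor expansion, the claim will follow directly by identifying $\omega_i^S$ with the alternating-sign cofactors.

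First I would unpack the sign. Writing $S = \{s_1 < s_2 < \cdots < s_{k+3}\}$, one immediately has $N_{s_j}(S) = j-1$, so
\[
\omega^S_{s_j} = (-1)^{j-1} \langle Z_{s_1}, \ldots, \widehat{Z_{s_j}}, \ldots, Z_{s_{k+3}}\rangle,
\]
and $\omega_i^S = 0$ for $i \notin S$. Proving $\omega^S Z = 0$ then amounts to showing, coordinate by coordinate, that for every column index $c \in [k+2]$,
\[
\sum_{j=1}^{k+3} (-1)^{j-1} \langle Z_{S \setminus \{s_j\}}\rangle \, (Z_{s_j})_c = 0.
\]

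The key step is to produce this identity from a degenerate determinant. For fixed $c$, consider the $(k+3) \times (k+3)$ matrix $M_c$ obtained from the $(k+3) \times (k+2)$ submatrix $Z_S$ (rows of $Z$ indexed by $S$) by appending the $c$-th column of $Z_S$ as an extra last column. Since $M_c$ has two identical columns, $\det(M_c) = 0$. Expanding $\det(M_c)$ by cofactors along the appended column yields exactly
\[
\sum_{j=1}^{k+3} (-1)^{j+(k+3)} (Z_{s_j})_c \, \langle Z_{S \setminus \{s_j\}}\rangle = 0,
\]
which, after multiplying through by the overall sign $(-1)^{k+3}$ and negating, is the desired per-coordinate identity. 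Running this argument for each $c \in [k+2]$ gives $\omega^S Z = 0$.

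There is no real obstacle here; the only thing to watch is bookkeeping of signs to confirm that the Laplace-expansion sign $(-1)^{j+(k+3)}$ matches the $(-1)^{j-1}$ coming from $N_{s_j}(S)$ up to a global sign (which is irrelevant for the kernel condition). One can also phrase the same argument intrinsically: the expression $\sum_j (-1)^{j-1} \langle Z_{S \setminus \{s_j\}}\rangle \, Z_{s_j}$ is an alternating $(k+3)$-linear function of the rows $Z_{s_1}, \ldots, Z_{s_{k+3}}$ with values in $\mathbb{R}^{k+2}$, hence must vanish because $k+3 > k+2 = \dim \mathbb{R}^{k+2}$. Either viewpoint yields the lemma immediately.
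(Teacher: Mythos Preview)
Your proposal is correct and follows essentially the same approach as the paper: both recognize each coordinate of $\omega^S Z$ as the Laplace expansion of a $(k+3)\times(k+3)$ determinant formed from the rows $Z_S$ with one column of $Z$ repeated, hence zero. Your sign bookkeeping and the alternative ``alternating multilinear'' remark are fine.
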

\begin{proof} We have that 
	\begin{equation*}
		 (\omega^S )^T \cdot Z=\sum_{i=1}^n Z_i \omega^S_i=\sum_{i \in S} (-1)^{N_i(S)} Z_i \, \lrangle{Z_{S \setminus \{i\}}}=\sum_{i \in S} \epsilon_{\{i\},S \setminus \{i\}} Z_i \, \lrangle{Z_{S \setminus \{i\}}}.
	\end{equation*}

	From the rightmost expression, one can see that the $j$th coordinate of $\omega^S \cdot Z$ is the determinant of the submatrix of $Z$ using rows $S$ and columns $1, \dots, j, j, \dots, k+2$, written using Laplace expansion along column $j$. Therefore it is zero.
\end{proof}

\begin{proposition}\label{prop:kindaInverse}
	Let $Y \in \Gr_{k, k+2}$ and let $\T$ be a bicolored triangulation of type $(k,n)$. Let $\twMtx=\twMtx(Y)$ be the twistor coordinate matrix of $Y$ and let $Y':=\twMtx Z$. Then 
	\[\rowspan(Y') \subseteq \rowspan(Y).\]
\end{proposition}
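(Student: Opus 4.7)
The plan is to reduce the containment $\rowspan(Y') \subseteq \rowspan(Y)$ to a single $3$-term Pl\"ucker identity, applied to each row of $Y' = \twMtx(Y)\,Z$ separately.

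First I would write out the $i$-th row of $Y'$ explicitly. By \cref{defn:twistorMtx}, for the black triangle $T_i = \{a_i<b_i<c_i\}$ the $i$-th row of $\twMtx(Y)$ has only three nonzero entries, so the $i$-th row of $Y'$ equals
\begin{equation*}
v_i \;=\; \langle Y b_i c_i\rangle\, Z_{a_i}\;-\;\langle Y a_i c_i\rangle\, Z_{b_i}\;+\;\langle Y a_i b_i\rangle\, Z_{c_i}\;\in\;\mathbb{R}^{k+2}.
\end{equation*}
Now, $\rowspan(Y)$ is a codimension-$2$ subspace of $\mathbb{R}^{k+2}$, so a vector $v\in\mathbb{R}^{k+2}$ lies in $\rowspan(Y)$ if and only if the $(k+2)\times(k+2)$ determinant $\langle Y,v,w\rangle$ vanishes for every $w\in\mathbb{R}^{k+2}$. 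Because $Z$ has full column rank (its maximal minors are positive), its rows $Z_1,\dots,Z_n$ span $\mathbb{R}^{k+2}$, so it is enough to verify
\begin{equation*}
\langle Y, v_i, Z_j\rangle \;=\; 0 \qquad\text{for every } i \text{ and every } j\in[n].
\end{equation*}

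Expanding by multilinearity in the $v_i$-row of the determinant yields
\begin{equation*}
\langle Y, v_i, Z_j\rangle \;=\; \langle Y b_i c_i\rangle\langle Y a_i j\rangle\;-\;\langle Y a_i c_i\rangle\langle Y b_i j\rangle\;+\;\langle Y a_i b_i\rangle\langle Y c_i j\rangle,
\end{equation*}
which is exactly the $3$-term Pl\"ucker relation $p_{ab}p_{cd}-p_{ac}p_{bd}+p_{ad}p_{bc}=0$ evaluated at $(a,b,c,d)=(a_i,b_i,c_i,j)$, under the identification $p_{\alpha\beta}:=\langle Y\alpha\beta\rangle$. By \cref{prop:twistor}, the twistor coordinates of $Y$ coincide, up to a common scalar, with the Pl\"ucker coordinates of the corresponding element of $\mathcal{B}_{n,k,2}(W)\subseteq \Gr_{2,n}$, so they satisfy all Pl\"ucker relations of $\Gr_{2,n}$. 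Since $\langle Y\alpha\beta\rangle$ is antisymmetric in $\alpha,\beta$ (and vanishes when the two indices coincide), the relation is valid for any four indices regardless of their order and regardless of whether $j$ equals one of $a_i,b_i,c_i$. Thus $\langle Y, v_i, Z_j\rangle = 0$ for all $j$, giving $v_i\in\rowspan(Y)$, as required.

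The only non-routine step is recognizing the displayed combination as a Pl\"ucker relation in twistor coordinates, which is immediate from \cref{prop:twistor}; so I do not anticipate any real obstacle beyond writing out the linear expansion carefully.
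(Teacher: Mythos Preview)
Your proof is correct and considerably cleaner than the paper's argument. The paper first writes $Y = CZ$ for some full-rank $k\times n$ matrix $C$, then for each row constructs an explicit linear combination of the rows of $C$ together with certain kernel vectors $\omega^S$ of $Z$ (defined in \cref{lemma:kernelZ}), and finally carries out a four-step combinatorial sign computation to check that this combination has exactly the three nonzero entries $\lrangle{Yb_ic_i}$, $-\lrangle{Ya_ic_i}$, $\lrangle{Ya_ib_i}$. You bypass all of this: you test membership of each row $v_i$ in $\rowspan(Y)$ directly via the codimension-two criterion $\lrangle{Y,v_i,Z_j}=0$, and the resulting expression is immediately a three-term Pl\"ucker relation. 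The paper's route is more constructive in that it exhibits the twistor matrix row as an explicit element of $\rowspan(C)+\ker(Z)$, but this extra information is not used downstream.

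One small imprecision: you appeal to \cref{prop:twistor} by saying the twistor coordinates are Pl\"ucker coordinates of the corresponding element of $\mathcal{B}_{n,k,2}(W)$. The proposition you are proving is stated for arbitrary $Y\in\Gr_{k,k+2}$, not only for $Y$ in the amplituhedron, so the corresponding $X$ need not lie in $\mathcal{B}_{n,k,2}(W)$. What you actually want from \cref{prop:twistor} is that $f_Z:\Gr_2(W)\to\Gr_{k,k+2}$ is an isomorphism, so every $Y$ has a preimage $X\in\Gr_2(W)\subset\Gr_{2,n}$ whose Pl\"ucker coordinates satisfy the relations; this is what you use, and it is fine.
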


\begin{proof}
	We start by writing $Y=CZ$, where $C$ is a full-rank $k \times n$ matrix (we can always do this because the linear map $Z: \R^n \to \R^{k+2}$ is surjective).
	
		Let $C_1,\ldots, C_k$ be the rows of the matrix $C$.
	We will replace each row $C_i$ with a linear combination of the 
	rows of $C$ \emph{and} a linear combination of elements of 
	$\ker(Z)$ to obtain a new matrix $C'$; by construction, the rowspan of $C'Z$ is contained in the rowspan of $CZ$. We will show that this new matrix $C'$ is equal to $\twMtx$.
	
	Specifically, let $T_{i}=\{a<b<c\}$ be a black triangle in $\mathcal{T}$. The $i$th row $C'_i$ of $C'$ is
	\begin{equation} \label{eq:C'asComega}
		C'_i:=\sum_{j=1}^k {\lambda}_{j} C_{j}+\sum_{S \in {[n]\choose k+3}} \rho_{S} \omega^S, \text{ where } 
	\end{equation}
	\begin{equation*}
		\lambda_{j}=\sum_{J \in {[n] \choose k}} (-1)^{j+1+N_a(J)+N_{b,c}(J)} p_{J \setminus \{a\}}(C_{\hat{j}}) \langle Z_{J \cup \{b,c \}} \rangle \text{ and } 
		\rho_{S}=(-1)^{N_a(S)+N_{b,c}(S)} p_{S \setminus \{a,b,c\}}(C),
	\end{equation*}
	where $C_{\hat{j}}$ denotes the
	matrix obtained from $C$ by removing row $j$, and we 
	make the convention that $p_{A \setminus B}(C)=0$ if 
	$B$ is not contained in $A$, and $\langle Z_{A \cup B} \rangle=0$ if $A$ intersects $B$.

	{\bf Step 1.}
	We first show that $\langle Y b c\rangle = 
	C'_{i a}$. 
	By \eqref{eq:C'asComega}, we have 
	\begin{equation} \label{eq:C'asComega2}
		C'_{ia}:=\sum_{j=1}^k {\lambda}_{j} C_{ja}+\sum_{S \in {[n]\choose k+3}} \rho_{S} \omega^S_a. 
	\end{equation}

	Let us expand $\langle Y b c\rangle$ as:
	\begin{equation} \label{eq:setp1expansion}
		\langle Y b c\rangle=\sum_{J \in {[n] \choose k}}(-1)^{N_{b,c}(J)} p_J(C) \langle Z_{J \cup \{b,c \}} \rangle.
	\end{equation}
	Call the terms in this sum with $a \in J$ ``type A" and the other terms ``type B." 
	
	When $a\in J$, we can compute $p_J(C)$ by Laplace expansion around column $a$:
	\begin{equation*}
		p_J(C)=\sum_{j=1}^k (-1)^{j+1+ N_a(J)} p_{J \setminus \{a\}}(C_{\hat{j}}) \, C_{j a}.
	\end{equation*}
	Inserting this into the type A terms and summing over $J$, we obtain the first term in the right hand side of \eqref{eq:C'asComega2}.
	
	For the type B terms, we can change the summation index 
	in \eqref{eq:setp1expansion}
	from $J$ to $S=J \cup \{a,b,c\}$,  obtaining:
	\begin{equation*}
		\sum_{S \in {[n] \choose k+3}}(-1)^{N_{b,c}(S \setminus \{a,b,c\})} p_{S \setminus \{a,b,c \}}(C) \langle Z_{S \setminus \{a\}} \rangle
	\end{equation*}
	Since $a<b<c$, we have $N_{b,c}(S \setminus \{a,b,c\})=N_{b,c}(S)$.
This gives the second term in the right hand side of \eqref{eq:C'asComega2}.
	Hence, summing the terms of type A and type B we get exactly $C'_{i a}$.
	
	{\bf Step 2.} 
	We will show that 
	$\langle Y a c \rangle = 
	-C'_{i b}$.
	
	Let us consider the first term (`type A') in the right hand side of \eqref{eq:C'asComega}. We observe that:
	\begin{equation*}
		\sum_{j=1}^k (-1)^{j+1+N_a(J)} p_{J \setminus \{a\}}(C_{\hat{j}}) C_{j b}=p_J(C^{a \rightarrow b})=(-1)^{N_{a,b}(J)}p_{J \setminus \{a\} \cup \{b\}}(C),
	\end{equation*}
	where $C^{a \rightarrow b}$ is the matrix $C$ with column $a$ substituted with column $b$. Noting that $N_{a,b}(J)+N_{b,c}(J)=N_{a,c}(J)$ as terms with $b \in J$ do not contribute, type A reads:
	\begin{equation*}
		\sum_{J \in {[n] \choose k}}(-1)^{N_{a,c} (J)} p_{J \setminus \{a\} \cup \{b\}}(C) \langle Z_{J \cup \{b,c\}} \rangle
	\end{equation*}
	Finally, we change summation index into $J'=J \setminus \{a\} \cup \{b\}$ and use $N_{a,c}(J')=N_{a,c}(J' \setminus \{b\} \cup \{a\})+1$, as $b \in J'$ and $a<b<c$, to obtain:
	\begin{equation} \label{eq:2ndTypeA}
		-\sum_{J' \in {[n] \choose k}: b \in J'}(-1)^{N_{a,c}(J')} p_{J'}(C) Z_{J' \cup \{a,c\}}.
	\end{equation}
	
	Let us consider the second term (`type B') in the right hand side of \eqref{eq:C'asComega}. 
	Using $N_b(S)-N_a(S)=N_{a,b}(S)-1$ and $N_{a,b}(S)+N_{b,c}(S)=N_{a,c}(S)-1$, as $a,b \in S$, type B reads:
	\begin{equation*}
		\sum_{S \in {[n] \choose k+3}}(-1)^{N_{a,c}(S)} p_{S \setminus \{a,b,c \}}(C) \langle Z_{S \setminus \{b\}} \rangle
	\end{equation*}
	Finally, we perform the change of summation index into $J=S \setminus \{a,b,c\}$ and note that $N_{a,c}(S)=N_{a,c}(J \cup \{a,b,c\})=N_{a,c}(J)+1$, as $b \not \in J$ and $a<b<c$. We obtain:
	\begin{equation} \label{eq:2ndTypeB}
		-\sum_{J \in {[n] \choose k}: b \not \in J}(-1)^{N_{a,c}(J)} p_{J}(C) \langle Z_{J \cup \{a,c\}} \rangle.
	\end{equation}
	Hence adding together \eqref{eq:2ndTypeA} and \eqref{eq:2ndTypeB} we immediately get $-\langle Y a c\rangle= C'_{i b}$.
	
	{\bf Step 3.}  Showing that 
	$\langle Y a b \rangle = 
	C'_{i c}$ is similar to the previous case.

	{\bf Step 4.}
	We will show that $C'_{i \ell}=0$ for $\ell \not \in \{a,b,c\}$. 
	
	Let us consider the first term (`type A') in the right hand side of \eqref{eq:C'asComega}. We observe that:
	\begin{equation*}
		\sum_{j=1}^k (-1)^{j+1+N_a(J)} p_{J \setminus \{a\}}(C_{\hat{j}}) C_{j \ell}=p_J(C^{a \rightarrow \ell})=(-1)^{\tilde{N}_{a,\ell}(J)}p_{J \setminus \{a\} \cup \{\ell\}}(C),
	\end{equation*}
	where $\tilde{N}_{a,\ell}(J)$ is defined as $N_{a,\ell}(J)$ if $a<\ell$ and $N_{\ell,a}(J)$ if $\ell<a$. Then type A reads:
	\begin{equation*}
		\sum_{J \in {[n] \choose k}}(-1)^{N_{b,c} (J)+\tilde{N}_{a,\ell}(J)} p_{J \setminus \{a\} \cup \{\ell\}}(C) Z_{J \cup \{b,c\}}.
	\end{equation*}
	By changing the summation index into $J'=J \setminus \{a\} \cup \{\ell\}$ and noting that $N_{b,c}(J' \setminus \{\ell\} \cup \{a\})=N_{b,c}(J' \setminus \{\ell\})$ and $\tilde{N}_{a,\ell}(J' \setminus \{\ell\} \cup \{a\})=\tilde{N}_{a,\ell}(J')$, we obtain:
	\begin{equation} \label{eq:4thTypeA}
		\sum_{J' \in {[n] \choose k}}(-1)^{N_{b,c}(J' \setminus \{\ell\})+\tilde{N}_{a,\ell}(J')} p_{J'}(C) \langle Z_{J' \setminus \{\ell\}\cup \{a,b,c\}} \rangle.
	\end{equation}
	
	The Type B term can be rewritten as:
	\begin{equation*}
		-\sum_{S \in {[n] \choose k+3}}(-1)^{N_{b,c}(S)+\tilde{N}_{a,\ell}(S)} p_{S \setminus \{a,b,c \}}(C) \langle Z_{S \setminus \{\ell\}} \rangle
	\end{equation*}
	using $(-1)^{N_a(S)+N_\ell(S)}=(-1)^{\tilde{N}_{a,\ell}(S)+1}$. Indeed, $N_a(S)-N_\ell(S)=N_{\ell,a}(S)+1$ if $\ell<a$ and $N_\ell(S)-N_a(S)=N_{a,\ell}(S)+1$ if $\ell>a$, since $\ell,a \in S$.  
	Finally, we perform the change of variables $J'=S \setminus \{a,b,c\}$ and note that  $N_{b,c}(J' \cup \{a,b,c\})=N_{b,c}(J')$ and $\tilde{N}_{a,\ell}(J' \cup \{a,b,c \})=\tilde{N}_{a,\ell}(J' \cup \{b,c \})$, as $a<b<c$, obtaining:
	\begin{equation}\label{eq:4thTypeB}
		-\sum_{J' \in {[n] \choose k}}(-1)^{N_{b,c}(J')+\tilde{N}_{a,\ell}(J' \cup \{b,c \})} p_{J'}(C) \langle Z_{J' \setminus \{\ell\}\cup \{a,b,c\}} \rangle.
	\end{equation}
	In order to complete the proof, we need to show that the sum of type A in \eqref{eq:4thTypeA} with type B in \eqref{eq:4thTypeB} is zero. Therefore it is enough to show that: 
	\begin{equation}\label{eq:lastequation}
		(-1)^{N_{b,c}(J')+\tilde{N}_{a,\ell}(J' \cup \{b,c \})}=(-1)^{N_{b,c}(J' \setminus \{\ell\})+\tilde{N}_{a,\ell}(J')},
	\end{equation}
	recalling that the only terms contributing have $\ell \in J'$ and $a,b,c \not \in J'$. If $\ell<b$, then  $\tilde{N}_{a,\ell}(J' \cup \{b,c \})=\tilde{N}_{a,\ell}(J')$ and $N_{b,c}(J' \setminus \{\ell\})=N_{b,c}(J')$. If $b<\ell<c$, then $\tilde{N}_{a,\ell}(J' \cup \{b,c \})=\tilde{N}_{a,\ell}(J')+1$ and $N_{b,c}(J' \setminus \{\ell\})=N_{b,c}(J')-1$. Finally, if $\ell>c$ then $\tilde{N}_{a,\ell}(J' \cup \{b,c \})=\tilde{N}_{a,\ell}(J')+2$ and $N_{b,c}(J' \setminus \{\ell\})=N_{b,c}(J')$. Therefore \eqref{eq:lastequation} holds for all three cases and the proof that $C'_{i \ell}=0$ when $\ell \not \in \{a,b,c\}$ is complete. 
\end{proof}

\begin{proof}[Proof of \cref{thm:surjectivity}]
	By \eqref{eq:inclusion}, we just need to show the inclusion
		$$Z^{\circ}_{\hat{G}(\T)} \supseteq \{Y\in \Gr_{k,k+2} \ \vert \ 
	\eqref{signcoord} \text{ holds for all black chords }h\to j
	\text{ of }\T.\}$$
	We will do this using the twistor coordinate matrix $C':=\twMtx(Y)$ for $Y$ in the right-hand set. 
	First, we show that $C' \in S_{\hat{G}(\T)}$. 
	The nonzero entries of $C'$ correspond  to the edges of $\hatG(\T)$. By \cref{thm:Kasteleyn} and \cref{rm:Kasteleyn}, whether or not $C'$ represents an element of $S_{\hat{G}(\mathcal{T})}$ is just a question of whether or not the nonzero entries have the correct signs. By assumption, the nonzero entries of $C'$ have the same signs as the nonzero entries of the matrix $C''$ from \cref{prop:niceRep}. Since the matrix $C''$ represents an element of $S_{\hat{G}(\mathcal{T})}$, so does $C'$.
	
	Now, let $Y':=C'Z$. By \cref{prop:kindaInverse}, $\rowspan Y' \subseteq \rowspan Y$. Because $C'$ is an element of $\Gr_{k,n}^{\geq 0}$, in fact $Y'$ has rank $k$, so the two rowspans are equal. Thus, $Y=C'Z$, which shows $Y \in \gto{\hat{G}(\T)}$.

\end{proof}

\begin{remark}
In the previous proof, the only place that we used the fact that the twistor coordinates $\lrangle{Yhj}$ associated to 
black arcs
	had particular signs  was in showing that the matrix $C'$ that we constructed has maximal minors all nonnegative (or 
	all nonpositive).  We will use this observation in \cref{GTcluster}, when we show that each positroid tile is the totally positive part of a \emph{cluster variety}.
\end{remark}

\begin{cor}\label{cor:bijection}
Let $\T$ be a bicolored triangulation of type $(k,n)$.
The map sending the $k\times n$ matrix 
$M:=M_{\T}(\Aalpha, \Bbeta, \Ggamma)$ from \eqref{eq:M}
representing a point of 
 $S_{\hat{G}(\mathcal{T})} \cong (\R_{>0})^{2k}$
to $Y:=MZ \in Z^{\circ}_{\hatG(\T)}$ is a bijection
 from $S_{\hat{G}(\mathcal{T})} \cong (\R_{>0})^{2k}$
to $Z^{\circ}_{\hatG(\T)}$, and we have  
	\begin{equation}\label{eq:ratio}
		\frac{M_{i,b_i}}{M_{i, a_i}}= -\frac{\lrangle{Y a_i c_i}}{\lrangle{Y b_i c_i}} \quad\text{ and } \quad
		\frac{M_{i,c_i}}{M_{i, a_i}}= \frac{\lrangle{Y a_i b_i}}{\lrangle{Y b_i c_i}}
	\end{equation}
	for all black triangles $\{a_i, b_i,c_i\}$ of $\T$.
	In particular, the $2k$ ratios of twistor coordinates
		$\{\frac{\lrangle{Y a_i c_i}}{\lrangle{Y b_i c_i}}, 
		\frac{\lrangle{Y a_i b_i}}{\lrangle{Y b_i c_i}}\}$
are algebraically independent.
\end{cor}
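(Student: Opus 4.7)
The plan is to deduce all three claims efficiently from the preceding results: Proposition 4.5 (prop:signsKmatrix), Theorem 4.7 (prop:niceRep), Theorem 4.8 (thm:allGTs), and Theorem 4.10 (thm:surjectivity). The main point is that once we have the injectivity of $\tilde{Z}$ on $S_{\hat G(\T)}$ and the formulas relating twistor coordinates to edge weights, the corollary is largely a bookkeeping exercise.

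For bijectivity, I would first invoke the remark after Proposition 4.5: rescaling so that $\alpha_i = 1$ for each $i$ produces a homeomorphism $(\R_{>0})^{2k}\xrightarrow{\sim} S_{\hat G(\T)}$ via $(\Aalpha,\Bbeta,\Ggamma)\mapsto M_{\T}(\Aalpha,\Bbeta,\Ggamma)$. By Theorem 4.8, $\tilde{Z}$ is injective on $S_{\hat G(\T)}$, and by definition its image is $Z^{\circ}_{\hat G(\T)}$. Composing gives the bijection $M \mapsto MZ$ from $S_{\hat G(\T)} \cong (\R_{>0})^{2k}$ to $Z^{\circ}_{\hat G(\T)}$ asserted in the statement.

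For the ratio formulas, I would extract them from the calculation carried out in Step~1--Step~3 of the proof of Theorem 4.7. For a black triangle $\{a_i<b_i<c_i\}$ of $\T$ with corresponding edge-weights $(\alpha_i,\beta_i,\gamma_i)$, that proof establishes the existence of a common nonzero scalar $Q_i$ with
\[
\lrangle{Y b_i c_i}=\alpha_i\,Q_i,\qquad
\lrangle{Y a_i c_i}=(-1)^{\area(a_i\to b_i)+1}\beta_i\,Q_i,\qquad
\lrangle{Y a_i b_i}=(-1)^{\area(a_i\to b_i)+\area(b_i\to c_i)}\gamma_i\,Q_i,
\]
where $Y=MZ$. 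Dividing the last two by the first and comparing with the entries $M_{i,a_i}=\alpha_i$, $M_{i,b_i}=(-1)^{\area(a_i\to b_i)}\beta_i$, $M_{i,c_i}=(-1)^{\area(a_i\to b_i)+\area(b_i\to c_i)}\gamma_i$ from \eqref{eq:M} gives \eqref{eq:ratio}. The non-vanishing of $Q_i$, needed to take ratios, follows from Theorem 4.6 applied to the black arc $b_i\to c_i$, which guarantees $\lrangle{Y b_i c_i}\neq 0$.

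For algebraic independence, the bijection in the first part, combined with the ratio formulas just derived, shows that the map
\[
\Phi:Z^{\circ}_{\hat G(\T)}\longrightarrow (\R_{>0})^{2k},\qquad
Y\longmapsto\left(\,-\tfrac{\lrangle{Y a_i c_i}}{\lrangle{Y b_i c_i}},\ \tfrac{\lrangle{Y a_i b_i}}{\lrangle{Y b_i c_i}}\,\right)_{i=1}^{k}
\]
is a bijection onto the full positive orthant. If the $2k$ ratios satisfied a nontrivial polynomial relation $P=0$ as rational functions on $\Gr_{k,k+2}$, then $P$ would vanish on the image of $\Phi$; but that image is an open subset of $\R^{2k}$, forcing $P\equiv 0$. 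Hence the $2k$ ratios are algebraically independent. No step here is a serious obstacle: the only delicate ingredient is already done in the proof of Theorem 4.7, where one must track signs carefully to obtain a single common constant $Q_i$ for all three twistor coordinates associated to a given black triangle.
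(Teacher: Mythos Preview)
Your proposal is correct and follows essentially the same route as the paper, which simply cites \cref{thm:allGTs} for injectivity, \cref{thm:surjectivity} for surjectivity, and \cref{prop:signsKmatrix} together with \cref{prop:niceRep} for \eqref{eq:ratio}. You have just unpacked these citations more carefully (and supplied the easy argument for algebraic independence, which the paper leaves implicit).
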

\begin{proof}
Injectivity follows from 
\cref{thm:allGTs}.
Surjectivity follows from 
\cref{thm:surjectivity}.
	Finally, \eqref{eq:ratio} follows from 
	\cref{prop:signsKmatrix} and	\cref{prop:niceRep}. 
\end{proof}

\section{The equivalence of the two definitions of the amplituhedron}\label{sec:equivalence}

In this section we will give an alternative description of the 
amplituhedron $\mathcal{A}_{n,k,2}(Z)$ in terms of sign flips
of twistor coordinates; this description was conjectured by 
Arkani-Hamed--Thomas--Trnka 
 \cite[(5.6)]{ATT}.
 In \cite[Section 5.4]{ATT}, they sketched
an argument that all elements of $\AA_{n, k, m}(Z)$ satisfy the sign flip
description; 
a proof using a different argument was independently given
in \cite[Corollary 3.21]{karpwilliams}.
However, the opposite inclusion remained open.  We will complete
the proof for $m=2$ using the results of the previous section. 
Finally, we will translate the sign-flip characterization
of  $\AA_{n,k,2}(Z)$ into a sign-flip characterization of the $\mathcal{B}$-amplituhedron $\mathcal{B}_{n,k,2}(W)$.

Recall the definition of $\hat{Z}_i$ from \cref{rem:twisted}.
\begin{theorem}[Sign-flip characterization of $\mathcal{A}_{n,k,2}$]\label{thm:main1}

Fix $k<n$  and $Z\in \Mat^{>0}_{n,k+2}$.
	Let 
	\begin{align*}
		\mathcal{F}^\circ_{n,k,2}(Z):=	\{Y\in Gr_{k,k+2} \ &\vert \  
	\langle Y Z_i Z_{i+1}\rangle>0 \text{ for }1 \leq i \leq n-1,
	\text{ and }   \langle Y Z_n \hat{Z}_1 \rangle >0,\\
		&\text{ and } 
		\var({\langle Y Z_1 Z_2\rangle, 
	\langle Y Z_1 Z_3\rangle,  \dots 
		\langle Y Z_1 Z_n\rangle})=k.\}
	\end{align*}
	Then $\mathcal{A}_{n,k,2}(Z) = \overline{\mathcal{F}^\circ_{n,k,2}(Z)}.$
\end{theorem}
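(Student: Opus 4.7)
The inclusion $\mathcal{A}_{n,k,2}(Z)\subseteq\overline{\mathcal{F}^\circ_{n,k,2}(Z)}$ is already known, having been sketched in \cite[Section 5.4]{ATT} and given a complete proof in \cite[Corollary 3.21]{karpwilliams}, so the real work is the reverse inclusion. Since $\mathcal{A}_{n,k,2}(Z)$ is the continuous image of the compact set $\Gr_{k,n}^{\geq 0}$, it is closed, and so it suffices to prove $\mathcal{F}^\circ_{n,k,2}(Z)\subseteq \mathcal{A}_{n,k,2}(Z)$. By a density argument --- perturbing $Y$ to remove vanishing twistor coordinates while preserving both the cyclic positivities and the variation count $k$ --- we may further assume $\langle Y Z_1 Z_j\rangle\neq 0$ for every $j\in\{2,\dots,n\}$.

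For such a $Y$, my plan is to explicitly exhibit an open positroid tile containing it. Let $j_1<\cdots<j_k$ be the positions at which the sequence $(\langle Y Z_1 Z_2\rangle,\dots,\langle Y Z_1 Z_n\rangle)$ flips sign, and let $\mathcal{T}_Y$ be the fan triangulation of $\mathbf{P}_n$ based at vertex $1$ whose black triangles are exactly $\{1,j_i,j_i+1\}$ for $i=1,\dots,k$ and whose remaining triangles $\{1,j,j+1\}$, for $j\in\{2,\dots,n-1\}\setminus\{j_1,\dots,j_k\}$, are white. This is a bicolored triangulation of type $(k,n)$; by \cref{thm:allGTs} the open positroid tile $\gto{\hatG(\overline{\mathcal{T}_Y})}$ is contained in $\mathcal{A}_{n,k,2}(Z)$, so by \cref{thm:surjectivity} it is enough to verify $\sgn\langle Y Z_a Z_b\rangle=(-1)^{\area_{\mathcal{T}_Y}(a\to b)}$ for every black arc $a\to b$ of $\mathcal{T}_Y$ with $a<b$.

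The black arcs split into three families. Boundary arcs $j_i\to j_i+1$ have $\area_{\mathcal{T}_Y}=0$ (the incident triangle sits on the ``interior'' side of the clockwise boundary walk), and $\langle Y Z_{j_i}Z_{j_i+1}\rangle>0$ by hypothesis, so the sign is $(-1)^0=1$ as required. For the diagonals $1\to j$ with $j\in\{j_i,j_i+1\}$, a short geometric check using the clockwise labeling identifies the region ``to the left'' of $1\to j$ with the sub-polygon on $\{1,2,\dots,j\}$, giving $\area_{\mathcal{T}_Y}(1\to j)=\#\{\ell:j_\ell<j\}$; since the sequence $(\langle Y Z_1 Z_j\rangle)_{j=2}^{n}$ starts positive and flips sign precisely at each $j_\ell$, we also have $\sgn\langle Y Z_1 Z_j\rangle=(-1)^{\#\{\ell:j_\ell<j\}}$, and the signs match. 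Finally, the boundary arc $1\to n$, which is a black arc precisely when $j_k=n-1$, has $\area_{\mathcal{T}_Y}(1\to n)=k$, and the twisted positivity $\langle Y Z_n\hat{Z}_1\rangle>0$ together with $\hat{Z}_1=(-1)^{k+1}Z_1$ forces $\sgn\langle Y Z_1 Z_n\rangle=(-1)^k$, again matching.

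The main obstacle is the orientation bookkeeping implicit in the clockwise labeling of $\mathbf{P}_n$: the identification of the ``left'' side of each fan diagonal, and the correct treatment of the twisted boundary edge $1\to n$, have to be carried out carefully. A secondary technical point is the density reduction, which requires showing that $\{Y\in\mathcal{F}^\circ_{n,k,2}(Z):\langle Y Z_1 Z_j\rangle\neq 0\text{ for all }j\}$ is dense in $\mathcal{F}^\circ_{n,k,2}(Z)$ while preserving the variation count $k$ --- a straightforward perturbation, but one that must be stated explicitly. Once these points are settled, the conclusion $\overline{\mathcal{F}^\circ_{n,k,2}(Z)}\subseteq \mathcal{A}_{n,k,2}(Z)$ follows from the closedness of the amplituhedron.
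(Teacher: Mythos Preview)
Your approach coincides with the paper's at its core: both pass from $Y\in\mathcal{F}^\circ_{n,k,2}(Z)$ to a fan bicolored triangulation (black triangles $\{1,j_\ell,j_\ell+1\}$ located at the sign-flip positions of $(\lrangle{YZ_1Z_j})_{j}$) and then place $Y$ in the corresponding open positroid tile. Your sign/area verifications in the non-vanishing case are correct, and your treatment of the arc $1\to n$ via $\hat Z_1=(-1)^{k+1}Z_1$ is fine (though it is already subsumed by your ``diagonal'' case with $j=n$).

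The substantive difference is the handling of vanishing $\lrangle{YZ_1Z_j}$. The paper does not reduce by density. Instead it chooses the \emph{lexicographically minimal} indices $1=i_0<i_1<\dots<i_k$ with $\sgn\lrangle{YZ_1Z_{i_\ell+1}}=(-1)^\ell$ (well-defined even when some coordinates vanish), forms the twistor coordinate matrix $\twMtx(Y)$ for that fan triangulation, and invokes \cref{prop:kindaInverse}: a vanishing $\lrangle{YZ_1Z_{i_\ell}}$ simply kills an entry of $\twMtx(Y)$, and zeroing entries of a Kasteleyn sign matrix preserves total nonnegativity. One then checks $p_{\{i_1,\dots,i_k\}}(\twMtx(Y))\neq 0$ directly, so $\twMtx(Y)\in\Grk$ and $Y=\twMtx(Y)Z\in\mathcal{A}_{n,k,2}(Z)$.

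Your density step is less innocuous than ``straightforward'' suggests. The condition $\var=k$ is not open: a perturbation sending a zero entry to the ``wrong'' sign raises $\var$ by $2$, and because the $(n-1)$ twistor coordinates $\lrangle{YZ_1Z_j}$ vary in only a $2k$-dimensional family constrained by Pl\"ucker relations, it is not automatic that a nearby $Y'$ realizing the needed sign pattern exists. The cleanest way to justify it is to show $Y\in\overline{\gto{\hatG(\T)}}$ for the fan $\T$ and then perturb into the open tile --- but that is precisely the paper's direct argument in disguise. So either supply an independent proof of the density (e.g.\ via an explicit one-parameter perturbation controlling all zero coordinates at once), or follow the paper and drop the density reduction in favor of the twistor-matrix argument.
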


\begin{proof}
	Let $\mathcal{A}^{\circ}_{n,k,2} (Z):=\tilde{Z}(\Gr_{k,n}^{>0})$. 
	By \cref{rem:PL} and \cref{rmk:closure},
	$\AA_{n, k, 2}(Z)=\overline{\mathcal{A}^{\circ}_{n,k,2}(Z)}.$
	
We first show that $\mathcal{A}^{\circ}_{n,k,2} \subseteq
	\mathcal{F}^\circ_{n,k,2}(Z).$
Suppose that $C\in \Gr_{k,n}^{>0}$ and let $Y:=CZ$.
Choose $1 \leq i \leq n-1$, and consider
	any  $J=\{ j_1<\dots <j_k \}\in {[n] \choose k}$. 
Since $Z$ has maximal minors positive,
	the sign of $\langle Z_{j_1},\dots,Z_{j_k}, Z_{i}, Z_{i+1} \rangle$
is determined by the parity of the number of swaps needed to put
the sequence $\{j_1,\dots, j_k, i, i+1\}$ into increasing order.
Clearly this number is even, so 
	$\langle Z_{j_1},\dots,Z_{j_k}, 
	Z_{i}, Z_{i+1} \rangle \geq 0$
	(with equality if $J \cap \{i,i+1\} \neq \emptyset$).
	Therefore by \cref{lem:useful}, 
	$\langle Y Z_i Z_{i+1}\rangle > 0$.
	The argument that $\langle Y Z_n \hat{Z}_1 \rangle > 0$
	is similar, using the fact that the 
	matrix with rows $Z_2,\dots,Z_n, \hat{Z}_1$ has maximal 
	minors positive.
	To see that $Y$ satisfies the sign variation condition,
	see the proof sketch in 
	\cite[Section 5.4]{ATT} or  
	\cite[Corollary 3.21]{karpwilliams}.
This implies that $\mathcal{A}^{\circ}_{n,k,2} \subseteq 
	\mathcal{F}^\circ_{n,k,2}(Z)$
and hence $\mathcal{A}_{n,k,2} \subseteq 
	\overline{\mathcal{F}^\circ_{n,k,2}(Z)}.$

For the other direction, we will show that 
$\mathcal{F}^\circ_{n,k,2}(Z) \subseteq 
\mathcal{A}_{n,k,2}(Z)$.  
Suppose $Y
\in \mathcal{F}^\circ_{n,k,2}(Z).$
We want to show that we can write 
	$\rowspan{Y}=\rowspan{CZ}$ for some $C\in \Grk$.

	Since $\lrangle{Y Z_1 Z_2}>0$, and 
$\var({\langle Y Z_1 Z_2\rangle, 
\langle Y Z_1 Z_3\rangle,  \dots 
\langle Y Z_1 Z_n\rangle})=k$, we can 
find a sequence 
 $1=i_0<i_1< \dots <i_k\leq n-1$ 
	such that $\sgn\lrangle{Y Z_1 Z_{i_\ell+1}}=(-1)^{\ell}$
	for all $\ell$; 
choose the lexicographically minimal such sequence.
Let $\T$ be the bicolored triangulation of type $(k,n)$ whose $k$ black triangles have vertices
$\{1, i_\ell, i_{\ell}+1\}$ for $1\leq \ell \leq k$.
 By \cref{prop:kindaInverse}, if we 
let $\twMtx=\twMtx(Y)$ be the twistor coordinate matrix of $Y$,
and  $Y':=\twMtx Z$, then
      $\rowspan(Y') \subseteq \rowspan(Y).$ To complete the proof,
      we need to show that $\twMtx \in \Gr_{k,n}^{\geq 0}$,
      and $Y'$ has full rank.

Using \cref{thm:Kasteleyn}
(as in the proof of \cref{thm:surjectivity}),
$\twMtx(Y)$ is a Kasteleyn matrix associated to the bipartite graph 
obtained from $\hatG(\T)$, as in \cref{fig:Kasteleyn}.
(Some of the twistor coordinates
$\lrangle{YZ_1 Z_i}$ of $Y$ may vanish, in which case we just 
erase some of the edges of the bipartite graph.)
If none of the twistor coordinates vanish, \cref{thm:surjectivity}
implies that all nonzero minors of $\twMtx(Y)$ have the same sign.
Erasing some of the edges of the bipartite graph preserves this property.
We now claim that $\twMtx$ has full-rank.
To see this, note that if we let $I:=\{i_1,\dots,i_k\}$,
then $p_I(\twMtx)\neq 0$.  This is because when we restrict to columns 
$i_1,\dots,i_k$,  the only nonzero entry in column 
$i_{\ell}$ (for $1\leq \ell \leq k$) is the entry
	$\lrangle{Y Z_1 Z_{i_\ell+1}}$ in row $\ell$, which has sign $(-1)^{\ell}$.
Therefore $\twMtx \in \Grk$, so $Y'=\twMtx Z$ has full rank. 
\end{proof}

\begin{corollary}\label{cor:cyclic}
Fix $k<n$, $m=2$, and $Z\in \Mat^{>0}_{n,k+2}$. For any $a$ with
	$1\leq a \leq n$,  we define
	\begin{align*}
		\mathcal{F}^{\circ,a}_{n,k,2}(Z)=	\{Y\in Gr_{k,k+2} \ &\vert \  
	\langle Y Z_i Z_{i+1}\rangle>0 \text{ for }1 \leq i \leq n-1,
		\text{ and }   \langle Y Z_n \hat{Z}_1 \rangle >0, \text{ and}\\
		& 
		\var({\langle Y Z_a Z_{a+1}\rangle, 
		\dots \lrangle{Y Z_a Z_n}, \lrangle{Y Z_a \hat{Z}_1},\dots,
		\lrangle{Y Z_a \hat{Z}_{a-1}}})=k.\}
	\end{align*}
	We have $\mathcal{A}_{n,k,2}(Z) = 
	\overline{\mathcal{F}^{\circ,a}_{n,k,2}(Z)}= 
	\overline{\mathcal{F}^{\circ}_{n,k,2}(Z)}.$
\end{corollary}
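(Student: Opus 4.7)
The plan is to reduce Corollary~5.2 to Theorem~5.1 via the twisted cyclic symmetry of the amplituhedron recorded in Remark~2.7. Concretely, I would first introduce the cyclically shifted matrix $Z^{(a)}$ whose rows are $Z_a, Z_{a+1}, \ldots, Z_n, \hat{Z}_1, \ldots, \hat{Z}_{a-1}$. By Remark~2.7 applied $a-1$ times, $Z^{(a)} \in \Mat^{>0}_{n,k+2}$, so Theorem~5.1 applies to $Z^{(a)}$ and yields $\AA_{n,k,2}(Z^{(a)}) = \overline{\mathcal{F}^{\circ}_{n,k,2}(Z^{(a)})}$.

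The next step is to identify $\AA_{n,k,2}(Z)$ with $\AA_{n,k,2}(Z^{(a)})$ as subsets of $\Gr_{k,k+2}$. This uses the well-known twisted cyclic action $\sigma$ on $\Grk$, which sends $(c_1,\dots,c_n)$ to $(c_2,\dots,c_n,(-1)^{k-1}c_1)$ and is a bijection $\Grk \to \Grk$. A direct computation, matching rows of $C Z$ against rows of $C' Z^{(a)}$ with $C' = \sigma^{a-1}(C)$, shows that $CZ$ and $C' Z^{(a)}$ represent the same element of $\Gr_{k,k+2}$; since $\sigma^{a-1}$ is a bijection on $\Grk$, this gives $\tilde{Z}(\Grk) = \widetilde{Z^{(a)}}(\Grk)$, hence $\AA_{n,k,2}(Z) = \AA_{n,k,2}(Z^{(a)})$.

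Finally, I would translate $\mathcal{F}^{\circ}_{n,k,2}(Z^{(a)})$ back into conditions on the original twistor coordinates $\lrangle{YZ_iZ_j}$. The consecutive positivity conditions $\lrangle{Y Z^{(a)}_i Z^{(a)}_{i+1}} > 0$ for $1 \le i \le n-1$ together with $\lrangle{Y Z^{(a)}_n \hat{Z}^{(a)}_1} > 0$ unwind, using $\hat{Z}_i = (-1)^{k+1} Z_i$ and the evenness of $\lrangle{Y \hat{Z}_i \hat{Z}_j} = \lrangle{Y Z_i Z_j}$, to exactly the original conditions $\lrangle{Y Z_j Z_{j+1}} > 0$ for $1 \le j \le n-1$ together with $\lrangle{Y Z_n \hat{Z}_1} > 0$. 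Similarly, the sign-variation condition $\var(\lrangle{Y Z^{(a)}_1 Z^{(a)}_j})_{j=2,\dots,n} = k$ becomes the variation $\var(\lrangle{Y Z_a Z_{a+1}},\dots,\lrangle{Y Z_a Z_n},\lrangle{Y Z_a \hat{Z}_1},\dots,\lrangle{Y Z_a \hat{Z}_{a-1}}) = k$. Thus $\mathcal{F}^{\circ}_{n,k,2}(Z^{(a)}) = \mathcal{F}^{\circ,a}_{n,k,2}(Z)$, and taking closures gives both equalities in the corollary (the case $a=1$ recovers Theorem~5.1).

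The main obstacle is the bookkeeping of signs: verifying that the parity $(-1)^{k+1}$ coming from the $\hat{}$ convention in Remark~2.7 matches the sign $(-1)^{k-1}$ appearing in the cyclic action on $\Grk$, and that it disappears from every condition involving two $\hat{Z}$'s. Once this accounting is done uniformly, the translation between the two sets of conditions is mechanical.
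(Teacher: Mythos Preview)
Your argument is correct and gives a clean proof, but it differs from the paper's approach. The paper does not invoke cyclic symmetry at all; instead it reruns the proof of Theorem~5.1 verbatim, replacing the base vertex $1$ by $a$: one reads off the flip positions $i_0 < i_1 < \cdots < i_k$ from the sequence $(\lrangle{Y Z_a Z_{a+1}},\ldots,\lrangle{Y Z_a \hat{Z}_{a-1}})$, builds the kermit-type bicolored triangulation with black triangles $\{a,i_\ell,i_\ell+1\}$, and checks that the associated twistor coordinate matrix lies in the corresponding positroid cell. Your route packages this cyclic relabelling once and for all at the level of the matrix $Z^{(a)}$ and the twisted shift $\sigma$ on $\Grk$, so that a single application of Theorem~5.1 (to $Z^{(a)}$) finishes the job. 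The tradeoff is that the paper's version is a one-line modification of an existing argument, whereas yours front-loads the sign bookkeeping into verifying $CZ=\sigma^{a-1}(C)\,Z^{(a)}$ and translating the defining conditions; once that is done, nothing further needs to be reproved.
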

\begin{proof}
The proof is nearly the same as the one for \cref{thm:main1}. To adapt it,
	 in the second paragraph of that proof, 
	we choose the sequence $i_0<i_1<\dots < i_k \leq n-1$ 
	based on examining the signs of 
	the sequence 
		$({\langle Y Z_a Z_{a+1}\rangle, 
		\dots \lrangle{Y Z_a Z_n}, \lrangle{Y Z_a \hat{Z}_1},\dots,
		\lrangle{Y Z_a \hat{Z}_{a-1}}})$.
		We then use the bicolored triangulation whose $k$ black triangles have vertices
		$\{a, i_{\ell}, i_{\ell}+1\}$ for $1 \leq \ell \leq k$.
\end{proof}

By combining \cref{prop:twistor} with \cref{thm:main1} (or \cref{cor:cyclic}), 
we can obtain a
sign-flip characterization of the $\mathcal{B}$-amplituhedron $\mathcal{B}_{n,k,2}(W)$ (see \cref{def:B}).

\begin{corollary} \label{cor:G}
Fix $k<n$ and $W\in \Gr_{k+2,n}^{>0}$.
Let 
	\begin{align*}
		\mathcal{G}^{\circ}_{n,k,2}(W):=	
		\{ X\in \Gr_{2}(W) \ & \vert \  
		 p_{i, i+1}(X)>0 \text{ for }1 \leq i \leq n-1,
		\text{ and }   p_{n,\hat{1}}(X)  >0,\\
		&\text{ and } 
		\var((p_{12}(X), 
		p_{13}(X),  \dots 
		p_{1n}(X))=k\}
	\end{align*}
	where for $i<j$, $p_{j \hat{i}}(X):=(-1)^k p_{ij}(X)$.  
	Then $\mathcal{B}_{n,k,2}(W) = \overline{\mathcal{G}^\circ_{n,k,2}(W)}.$
\end{corollary}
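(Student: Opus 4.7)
The plan is to deduce Corollary~\ref{cor:G} directly from Theorem~\ref{thm:main1} by transporting the sign-flip description of $\mathcal{A}_{n,k,2}(Z)$ across the isomorphism of Proposition~\ref{prop:twistor}. Given $W \in \Gr_{k+2,n}^{>0}$, I would first choose any matrix $Z \in \Mat_{n,k+2}^{>0}$ whose columns span $W$; such a $Z$ exists because one can take $Z = M^T$ for any $(k+2) \times n$ matrix $M$ representing $W$ with all maximal minors positive, which exists by hypothesis on $W$. Proposition~\ref{prop:twistor} then gives a homeomorphism $f_Z : \Gr_2(W) \to \Gr_{k,k+2}$ which restricts to a homeomorphism $\mathcal{B}_{n,k,2}(W) \to \mathcal{A}_{n,k,2}(Z)$ and satisfies $p_I(X) = \langle Y Z_{i_1} Z_{i_2} \rangle$ for $Y = f_Z(X)$ and $I = \{i_1 < i_2\}$.

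Next I would use this identity to show that $f_Z(\mathcal{G}^\circ_{n,k,2}(W)) = \mathcal{F}^\circ_{n,k,2}(Z)$. The positivity conditions $p_{i,i+1}(X) > 0$ and the sign-variation condition $\var(p_{12}(X),\dots,p_{1n}(X)) = k$ pass over verbatim, since $p_{ij}(X) = \langle Y Z_i Z_j \rangle$ under $f_Z$. For the twisted boundary condition I need to check that $p_{n,\hat{1}}(X) = \langle Y Z_n \hat{Z}_1\rangle$. This follows from the definitions $\hat{Z}_1 = (-1)^{(k+2)-1} Z_1 = (-1)^{k+1} Z_1$ (Remark~\ref{rem:twisted}, with $p = k+2$) and $p_{n,\hat{1}}(X) := (-1)^k p_{1n}(X)$: we compute
\[
\langle Y Z_n \hat{Z}_1\rangle = (-1)^{k+1}\langle Y Z_n Z_1\rangle = (-1)^{k+1}(-1)\langle Y Z_1 Z_n\rangle = (-1)^k p_{1n}(X) = p_{n,\hat{1}}(X).
\]

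Finally, since $f_Z$ is a homeomorphism, it commutes with taking closures in $\Gr_{k,k+2}$ (equivalently in $\Gr_2(W)$). Combining everything,
\[
f_Z\bigl(\overline{\mathcal{G}^\circ_{n,k,2}(W)}\bigr) = \overline{f_Z(\mathcal{G}^\circ_{n,k,2}(W))} = \overline{\mathcal{F}^\circ_{n,k,2}(Z)} = \mathcal{A}_{n,k,2}(Z) = f_Z\bigl(\mathcal{B}_{n,k,2}(W)\bigr),
\]
where the third equality is Theorem~\ref{thm:main1} and the last is Proposition~\ref{prop:twistor}. Bijectivity of $f_Z$ then yields $\mathcal{B}_{n,k,2}(W) = \overline{\mathcal{G}^\circ_{n,k,2}(W)}$, as required. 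The argument contains no real obstacle beyond carefully tracking the sign conventions in the twisted cyclic boundary term; the heavy lifting is done by Theorem~\ref{thm:main1} and Proposition~\ref{prop:twistor}.
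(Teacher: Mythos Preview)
Your proposal is correct and follows exactly the approach the paper indicates: the paper states only that Corollary~\ref{cor:G} follows by combining Proposition~\ref{prop:twistor} with Theorem~\ref{thm:main1}, and you have filled in precisely those details, including the sign check for the twisted boundary term.
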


The set $\mathcal{G}^{\circ}_{n,k,2}(W)$ should agree with the set $\mathcal{G}$ from 
\cite[Prop 3.20]{karpwilliams} when $m=2$.

\section{Cluster algebras and the amplituhedron} \label{sec:cluster}
In this section, we discuss two aspects of how 
amplituhedra and their positroid tiles are related to 
\emph{cluster algebras} \cite{ca1}.  We assume the reader
has some familiarity with the basics of cluster algebras and cluster
varieties, as in 
\cite{ca123,  GHK}.

In \cref{subsec:ClustAdj}, we will discuss
the \emph{cluster adjacency} conjecture, which says that facets of 
a positroid tile for $\AA_{n, k, m}$ should be naturally associated to 
a collection of compatible cluster variables in $\Gr_{m,n}$.  
We will prove this 
conjecture for $m=2$ in \cref{thm:cluster}.

In \cref{GTcluster}, we will prove a  related but more geometric
statement, which illustrates a new phenomenon in the setting of 
amplituhedra: we will associate a cluster variety to each 
positroid tile of $\mathcal{A}_{n,k,2}(Z) \subset \Gr_{k, k+2}$, 
and we will show that the positroid tile
is the totally positive part of that cluster variety.  
We then have the strange phenomenon that the 
amplituhedron 
$\mathcal{A}_{n,k,2}(Z)$ can be subdivided into ${n-2 \choose k}$
$2k$-dimensional 
positroid tiles,
each of which is the totally positive part of a cluster variety.
(In contrast, most other geometric objects with a cluster structure 
have a unique top-dimensional stratum which is the totally 
positive part of a cluster variety.)

\subsection{Cluster adjacency} \label{subsec:ClustAdj}
In 2013, Golden--Goncharov--Spradlin--Vergu--Volovich \cite{Golden:2013xva}  established that singularities of scattering amplitudes of planar $\mathcal{N}=4$ SYM at loop level can be described using cluster algebras. In particular, a large class of loop amplitudes can be expressed in terms of \emph{multiple polylogarithms} whose branch points are encoded in the so-called \emph{symbol alphabet}. Remarkably, elements of this alphabet were observed to be 
$\mathcal{X}$-cluster variables for $\Gr_{4,n}$. This enabled the powerful program of \emph{cluster bootstrap} which pushed both the computation and the understanding of the mathematical structure of scattering amplitudes beyond the frontiers, see \cite{Caron-Huot:2020bkp} for a recent review. 
In 2017 Drummond--Foster--G\"urdo\u gan \cite{Drummond:2017ssj} enhanced the connection with cluster algebras by observing phenomena they called \emph{cluster adjacencies}, related to compatibility of cluster variables. Shortly thereafter, they conjectured that the terms in tree-level $\mathcal{N}=4$ SYM amplitudes coming from the BCFW recursions are rational functions whose poles correspond to compatible cluster variables of the cluster algebra associated to $\Gr_{4,n}$ \cite{Drummond:2018dfd}. In \cite{Mago:2019waa}, this conjecture was 
extended to all (rational) \emph{Yangian invariants}, i.e. the `building blocks' of tree-level amplitudes and leading singularities of planar $\mathcal{N}=4$ SYM. 

These conjectures can be reformulated  in terms of the geometry of the 
amplituhedron $\AA_{n,k,m}(Z)$ and the {facets} of its positroid tiles.  This version of cluster adjacency for the $m=2$ amplituhedron
was studied in \cite{Lukowski:2019sxw}, and  
for the $m=4$ amplituhedron in \cite{Gurdogan:2020tip},
where the authors made connections with \emph{leading} and \emph{Landau singularities}.

For each positroid tile $Z_{\hat{G}(\T)}$ of $\AA_{n,k,2}(Z)$, the corresponding Yangian invariant is a rational function\footnote{Within the framework of \emph{positive geometries}, this is the \emph{canonical function} of  
$Z_{\hat{G}(\T)}$
\cite{Arkani-Hamed:2017tmz}.} in the twistor coordinates.
A defining property of this function is that it has a simple pole at $\langle Yij\rangle=0$ if and only if there is a facet of $Z_{\hat{G}(\T)}$ lying on the hypersurface $\{\langle Yij\rangle=0\}$. Let us consider the collection 
$\{\langle Yij\rangle\}_{\hat{G}(\T)}$
of twistor coordinates corresponding to such poles, and identify it via 
\cref{prop:twistor} with a collection of 
 Pl\"ucker coordinates $\{p_{ij}(X)\}_{\hat{G}(\T)}$ in the Grassmannian $\Gr_{2,n}(\C)$ 
 (with $Y$ the row span of $X^{\perp}Z$).  These Pl\"ucker coordinates are cluster variables of the 
 type $A_{n-3}$ cluster algebra associated to $\Gr_{2,n}(\C)$ \cite{Fomin_2003}. 
 In this cluster algebra, $p_{a b}$ and $p_{c d}$
are \emph{compatible} cluster variables if the arcs $a \to b$ and $c \to d$ in the polygon $\mathbf{P}_n$ do not cross.
The \emph{$m=2$ cluster adjacency conjecture} of
{\L}ukowski--Parisi--Spradlin--Volovich  
\cite{Lukowski:2019sxw} 
says that the cluster variables of $\Gr_{2,n}(\C)$ associated to the facets of 
a positroid tile of $\AA_{n,k,2}$ are compatible.
We generalize this conjecture as follows.

\begin{conj}\label{conj:adjacency}
	Let $Z_{\hat{G}(\T)}$ be a positroid tile of $\mathcal{A}_{n,k,2}(Z)$.
Each facet lies on a hypersurface
$\langle Yij\rangle=0$,
and the collection of Pl\"ucker coordinates 
	$\{p_{ij}\}_{\hat{G}(\T)}$ 
	corresponding to facets is a collection of compatible cluster variables
for $\Gr_{2,n}(\C)$.

	Moreover, if $p_{hl}$ is
	 compatible with 
	$\{p_{ij}\}_{\hat{G}(\T)}$, then 
	$\lrangle{Yhl}$  has a fixed sign on $Z^{\circ}_{\hat{G}(\T)}$. 
\end{conj}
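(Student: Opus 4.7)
The plan is to identify the facets of a positroid tile $Z_{\hat{G}(\T)}$ with the hypersurfaces $\lrangle{Yij}=0$ indexed by those facet-defining arcs of $\T$ (\cref{def:arcs}) that lie strictly inside $\mathbf{P}_n$, and then to read off cluster compatibility in $\Gr_{2,n}(\C)$ directly from the non-crossing geometry of the bicolored subdivision $\overline{\T}$.

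First I would identify the facets. By \cref{cor:bijection} the open tile $Z^\circ_{\hat{G}(\T)}$ is parametrized by $(\R_{>0})^{2k}$ via ratios of twistor coordinates, and by \cref{thm:surjectivity} it is cut out inside $\Gr_{k,k+2}$ by sign conditions on the twistor coordinates attached to the black arcs of any triangulation $\T'$ representing $\overline{\T}$. Taking closures, the boundary of $Z_{\hat{G}(\T)}$ is a union of pieces along which some such twistor coordinate vanishes. For each internal edge $i \to j$ of $\overline{\T}$, I would fix a triangulation $\T'$ of $\overline{\T}$ in which $i \to j$ is a side of some black triangle, and exhibit the codimension-one positroid subcell of $S_{\hat{G}(\T)}$ obtained from \cref{prop:signsKmatrix} by setting the corresponding edge weight to zero; injectivity of $\tilde Z$ on this subcell would follow from \cref{prop:niceRep} applied to the two smaller bicolored subdivisions into which $\overline{\T}$ decomposes when one cuts along $i \to j$. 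This exhibits each internal edge of $\overline{\T}$ as a genuine facet lying on $\{\lrangle{Yij}=0\}$. Conversely, a black arc of $\T'$ lying strictly inside a black polygon of $\overline{\T}$ gets erased by flipping to an equivalent triangulation (\cref{lem:equiv}), so it does not contribute a facet; and boundary arcs $i\to i{+}1$ never contribute, since $\lrangle{Y\,i\,i{+}1}>0$ throughout $\mathcal{A}_{n,k,2}(Z)$ by the first inclusion established in the proof of \cref{thm:main1}.

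Once the facets are matched bijectively with internal edges of $\overline{\T}$, both claims reduce to simple combinatorics. Facet-defining arcs are edges of the polygonal subdivision $\overline{\T}$, hence pairwise non-crossing chords of $\mathbf{P}_n$, and in the type-$A_{n-3}$ cluster algebra on $\Gr_{2,n}(\C)$ non-crossing of arcs is precisely compatibility of the corresponding Pl\"ucker cluster variables; this gives cluster adjacency. For the \emph{moreover} clause, suppose $p_{hl}$ is compatible with every facet Pl\"ucker. Because $\overline{\T}$ has no two adjacent regions of the same color, every internal edge of $\overline{\T}$ appears (in one of its two orientations) as a facet-defining arc, so the chord $h \to l$ crosses no edge of $\overline{\T}$; this is precisely the condition that $h\to l$ is compatible with $\T$ in the sense of \cref{def:arcs}. \cref{thm:sign1} then yields $\sgn\lrangle{Yhl}=(-1)^{\area_\T(h\to l)}$ on all of $Z^\circ_{\hat{G}(\T)}$, the required constant sign.

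The main obstacle is the first step. The sign description of \cref{thm:surjectivity} depends on a choice of triangulation representing $\overline{\T}$, so a priori there are many candidate boundary hypersurfaces coming from black arcs, whereas the Grasstope itself depends only on $\overline{\T}$. Ruling out the ``spurious'' candidates from black arcs interior to a black polygon, by flipping triangulations via \cref{lem:equiv}, and matching the remaining facet-defining arcs with codimension-one positroid subcells of $S_{\hat{G}(\T)}$ on which $\tilde Z$ stays injective, is the technical heart of the argument; modulo this identification, both the cluster compatibility and the constant-sign statement follow essentially for free from \cref{thm:sign1} together with the non-crossing geometry of $\overline{\T}$.
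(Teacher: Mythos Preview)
Your high-level architecture matches the paper's: identify the facets of $Z_{\hat G(\T)}$ with certain arcs of $\overline{\T}$, observe that these arcs are pairwise non-crossing (hence the associated $p_{ij}$ are compatible in $\Gr_{2,n}$), and then for the ``moreover'' clause use that any Pl\"ucker compatible with all of them corresponds to an arc compatible with $\T$, so \cref{thm:sign1} applies. The paper proves the conjecture as \cref{thm:cluster} in exactly this shape, invoking \cref{th:facetsGT} for the facet identification and \cref{thm:inequalitiesMatch} (whose part (2) is essentially \cref{thm:sign1}) for the constant-sign statement.

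There is one concrete error. Boundary arcs $i\to i{+}1$ bounding a black polygon \emph{do} give facets of $Z_{\hat G(\T)}$; they are the facets lying on the boundary of the amplituhedron. Your appeal to the proof of \cref{thm:main1} only yields $\lrangle{Y\,i\,i{+}1}>0$ on $\mathcal{A}^\circ_{n,k,2}(Z)$, not on the closed amplituhedron, and the closed tile certainly meets $\{\lrangle{Y\,i\,i{+}1}=0\}$. This error is harmless for the conclusions (the extra Pl\"uckers $p_{i,i+1}$ are frozen, hence compatible with everything, and impose no constraint in the ``moreover'' clause), but the facet bijection you state is incomplete.

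The genuine difference from the paper is in how the facet identification is carried out. You propose to work directly on the amplituhedron side: start from the sign description in \cref{thm:surjectivity}, exhibit codimension-one positroid subcells by setting an edge weight to zero, argue injectivity by cutting $\overline{\T}$ along the arc, and flip away spurious black arcs internal to a black polygon. The paper instead proves \cref{th:facetsGT} by transporting the problem through T-duality: by \cref{prop:posetiso} the codimension-one subcells of $\overline{S_{\hat G(\T)}}$ are in bijection with those of $\overline{S_{G(\T)}}$, and the latter are completely classified by \cref{prop:treefacet} (edges of the plabic tree, i.e.\ facet-defining arcs). This gives for free the enumeration of \emph{all} facets, which is the step your sketch leaves most underdeveloped. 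Your direct route could in principle be made to work, but the piece you flag as the ``main obstacle'' --- showing you have found all facets and that the candidates from arcs interior to a black polygon do not give codimension-one boundary pieces --- is exactly what the paper sidesteps by going through the hypersimplex.
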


We will prove \cref{conj:adjacency} in \cref{thm:cluster}. 

We now generalize this conjecture for other $m$.
The relevant cluster algebra is the homogeneous coordinate ring of $\Gr_{m,n}(\C)$
\cite{Scott}.  
Each cluster variables is a polynomial
 $Q(p_I)$ in the ${n \choose m}$ Pl\"ucker coordinates. 
Each facet of a positroid tile $Z_{\pi}$ of $\AA_{n,k,m}$ 
lies on a hypersurface defined by the vanishing of some (often non-linear)
polynomial $Q(\langle Y Z_I \rangle)$ in the ${n \choose m}$ twistor coordinates, 
where  we write $\lrangle{Y Z_I}$ for $\lrangle{Y Z_{i_1} \dots Z_{i_m}}$.

\begin{conj}[Cluster adjacency for  $\mathcal{A}_{n,k,m}$]\label{conj:cluster}
Let $Z_{\pi}$ be a positroid tile of the amplituhedron $\AA_{n,k,m}(Z)$ and let
\begin{equation*}
\Facet(Z_{\pi}):=\{Q(p_I) \ \vert \ 
	       \text{a facet of $Z_{\pi}$ lies on 
	       the hypersurface $Q(\lrangle{Y Z_I})=0$}\},
\end{equation*}
	where $Q$ is a polynomial in the ${n \choose m}$ Pl\"ucker coordinates.
 Then 
 \begin{enumerate}
	 \item  Each $Q\in \Facet(Z_{\pi})$ is a cluster variable for $\Gr_{m,n}(\C)$.
	 \item 
		 $\Facet(Z_{\pi})$ consists of compatible cluster variables.
 	\item If $\tilde Q$ is a cluster variable compatible with
 	$\Facet(Z_{\pi})$, 
 	the polynomial $\tilde Q(\lrangle{YZ_I})$ in twistor coordinates has a fixed sign
 	on $Z^{\circ}_{\pi}$.
 \end{enumerate}

\end{conj}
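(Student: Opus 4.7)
The plan is to prove the conjecture in the $m=2$ case (which is precisely \cref{conj:adjacency}) by leveraging the explicit classification of positroid tiles and their sign descriptions from \cref{secSigns}; for general $m$ the statement seems out of reach with current technology, and I will indicate where it breaks.

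First, I would identify $\Facet(Z_{\hat G(\T)})$ in combinatorial terms. By \cref{thm:allGTs} every positroid tile for $\AA_{n,k,2}(Z)$ is $Z_{\hat G(\overline\T)}$ for a bicolored subdivision $\overline\T$ of type $(k,n)$, and by \cref{thm:surjectivity} the open tile is the locus in $\Gr_{k,k+2}$ where the twistors $\lrangle{Yhj}$ indexed by black arcs of $\T$ carry the prescribed signs. The (real) codimension-one degenerations of $S_{\hat G(\overline\T)}$ correspond to collapsing black polygons across a single edge of $\overline\T$, and one checks that $\tilde Z$ sends these to the sign-equality hypersurfaces $\{\lrangle{Yhj}=0\}$ exactly when $h\to j$ is a \emph{facet-defining} arc in the sense of \cref{def:arcs}, i.e. an arc bounding a black polygon of $\overline\T$ on its left. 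Thus $\Facet(Z_{\hat G(\overline\T)})=\{p_{hj}:h\to j \text{ is facet-defining for }\overline\T\}$, which already gives part (1) of the conjecture since Plücker coordinates are cluster variables in $\Gr_{2,n}(\C)$.

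Part (2) is then immediate: the facet-defining arcs are interior edges of a polygon subdivision of $\mathbf P_n$, so any two are non-crossing; in the type $A_{n-3}$ cluster algebra on $\Gr_{2,n}(\C)$, two Plücker coordinates $p_{ab}$ and $p_{cd}$ are compatible iff the chords $a\to b$ and $c\to d$ do not cross, so $\Facet(Z_{\hat G(\overline\T)})$ is a compatible family. For part (3), suppose $p_{hl}$ is compatible with every element of $\Facet(Z_{\hat G(\overline\T)})$; then the chord $h\to l$ crosses no arc of $\overline\T$, so it either coincides with an edge of $\overline\T$ or lies in the closure of a single (black or white) polygon of $\overline\T$. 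In either case $h\to l$ is compatible with $\overline\T$ in the sense of \cref{def:arcs}, and we may refine to a bicolored triangulation $\T'$ with $\overline{\T'}=\overline\T$ that contains $h\to l$ as a diagonal. Applying \cref{thm:sign1} to $\T'$ gives $\sgn\lrangle{Yhl}=(-1)^{\area_{\T'}(h\to l)}$ uniformly on $Z^{\circ}_{\hat G(\overline\T)}$, as required.

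The hard part, and the main obstacle to extending this argument to general $m$, is that essentially every input used above is missing: there is no known classification of positroid tiles for $\AA_{n,k,m}$ analogous to \cref{thm:allGTs}, no sign characterization analogous to \cref{thm:surjectivity}, and no convenient combinatorial model for their facets. Moreover, the cluster algebra on $\Gr_{m,n}(\C)$ is of infinite type for almost all $(m,n)$ with $m\geq 3$, and it contains non-Plücker cluster variables, so even formulating which polynomials $Q(\lrangle{YZ_I})$ can appear as facet-definers requires new combinatorial structures beyond the bicolored-polygon picture used here.
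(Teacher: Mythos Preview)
Your approach to the $m=2$ case is essentially the same as the paper's, and your assessment that general $m$ is out of reach is correct. The paper likewise only proves the $m=2$ statement (\cref{thm:cluster}), and does so by the same logic: identify $\Facet(Z_{\hat G(\overline\T)})$ with the set of $p_{hj}$ for facet-defining arcs $h\to j$ of $\overline\T$ (whence parts (1) and (2) follow from non-crossing of edges in a polygon subdivision), and then invoke \cref{thm:sign1}/\cref{thm:inequalitiesMatch} for part (3).

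The one place where your sketch is too thin is the sentence ``one checks that $\tilde Z$ sends these to the sign-equality hypersurfaces $\{\lrangle{Yhj}=0\}$ exactly when $h\to j$ is a facet-defining arc.'' This is the nontrivial step. In the paper it is the content of \cref{th:facetsGT}, and it requires more than observing which codimension-one cells sit in $\overline{S_{\hat G(\overline\T)}}$: one must show that (i) the image $\gt{\hat G'}$ really lies in the hypersurface $\{\lrangle{Yhj}=0\}$, (ii) $\tilde Z$ remains injective on $S_{\hat G'}$ so that the image has the right codimension, (iii) the image lies in the boundary $\partial\gt{\hat G(\overline\T)}$, and (iv) no other codimension-one cells produce additional facets. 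The paper handles (iv) by passing through T-duality (\cref{prop:posetiso}, \cref{prop:treefacet}) to exploit the known facet description of the dual positroid polytope; your phrase ``collapsing black polygons across a single edge'' does not by itself rule out, for instance, that a codimension-one cell with a coloop could map to a full-dimensional facet, or that some facet is not the image of any codimension-one cell. So the plan is sound, but this is the place where real work is hidden.
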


Positroid tiles for $m=4$ 
are not yet 
 characterized.\footnote{conjecturally they are images of positroid cells with \emph{intersection number} one \cite{Gurdogan:2020tip}, which correspond to `rational' Yangian invariants \cite{Mago:2019waa}.} 
	In general, the polynomials appearing in the sets $\Facet(Z_{\pi})$ are unknown.  Moreover, for $n \geq 8$, there is no classification of the cluster variables of $\Gr_{4,n}$. Also note that 
	Part
	$(1)$ of \cref{conj:cluster} is in a similar spirit to \cite[Conjecture 19.8]{lam} .

\subsection{Positroid tiles are totally positive parts of cluster varieties}\label{GTcluster}

In this subsection, we build a cluster variety $\mcv_{\overline{\T}}$ in $\Gr_{k, k+2}(\C)$ for each positroid tile $\gt{\hatG(\overline{\T})}$ of $\AA_{n, k, 2}$. Each bicolored triangulation represented by $\overline{\T}$ gives a \emph{seed torus} of $\mcv_{\overline{\T}}$. We will show that the positroid tile 
$\gto{\hatG(\overline{\T})}$ 
is exactly the \emph{totally positive part} of $\mcv_{\overline{\T}}$.

Fix a bicolored subdivision $\overline{\T}$ of type $(k,n)$, with black polygons $P_1, \dots, P_r$. For each black polygon $P_i$, fix an arc $h_i \to j_i$ with $h_i < j_i$ in the boundary of $P_i$. We call this the \emph{distinguished boundary arc} of $P_i$. 
   We will build $\mcv_{\overline{\T}}$ by defining seeds in the field of rational functions on $\Gr_{k, k+2}(\C)$.

\begin{defn}[Cluster variables]
	Let $a \to b$ with $a<b$ be an arc which is contained in a black polygon $P_i$ and is not the distinguished boundary arc $h_i \to j_i$. We define
	\[x_{ab}:=\frac{(-1)^{\area(a \to b)}\lrangle{Y ab}}{(-1)^{\area(h_i\to j_i)}\lrangle{Y h_i j_i}}.\]
	This is a rational function on $\Gr_{k, k+2}(\C)$ and is regular away from the hypersurface $\{\lrangle{Y h_i j_i}~=~0\}$.
\end{defn}

\begin{defn}[Seeds]
	Let $\T$ be a bicolored triangulation represented by $\overline{\T}$. The \emph{quiver} $Q_{\T}$ is obtained as follows:
	\begin{itemize}
		\item Place a frozen vertex on each boundary arc of $P_1, \dots, P_r$ and a mutable vertex on every other black arc of $\T$.
		\item If arcs $a \to b$, $b \to c$, $c \to a$ form a triangle, put arrows between the corresponding vertices, going clockwise around the triangle. Then delete the frozen vertex on the distinguished boundary arc (and all arrows involving this vertex) and arrows connecting two frozen vertices.
	\end{itemize}
	
	We label the vertex of $Q_\T$ on arc $a \to b$ of $\T$ with the function $x_{ab}$. The collection of vertex labels is the \emph{(extended) cluster} $\textbf{x}_{\T}$. The pair 
	$(Q_{\T},\textbf{x}_\T)$ 
	is the \emph{seed} $\Sigma_\T$.
\end{defn}

Note that there are no frozen variables corresponding to the distinguished boundary arcs, and the cluster $\mathbf{x}_\T$ has size $2k$. Note also that $\Sigma_\T$ does not depend on the triangulation of the white polygons of $\overline{\T}.$ See \cref{fig:seed} for an example. 

\begin{figure}
	\includegraphics[width=0.3\textwidth]{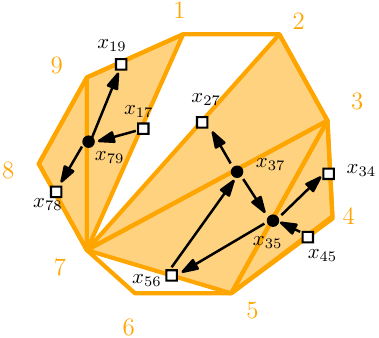}
	\caption{\label{fig:seed} In orange, a bicolored triangulation $\T$. In black, the seed $\Sigma_\T$. The distinguished boundary arcs are $2\to3$ and $8 \to 9$. }
\end{figure}

Now we show that each seed gives a \emph{seed torus} in $\Gr_{k, k+2}(\C)$.

\begin{prop}\label{prop:tori} Let $\T$ be a bicolored triangulation represented by $\overline{\T}$.
	Consider the Zariski-open subset
	\[\mcv_\T:= \left \{Y \in \Gr_{k, k+2}(\C):  \prod_{a \to b \text{ black arc of }\T}\lrangle{Y a b} \neq 0 \right\}.\]
	This is birational to an algebraic torus of dimension $2k$, with 
	field of rational functions $\C(\mathbf{x}_\T)$, 
	the field of rational functions in the cluster $\mathbf{x}_\T$.
\end{prop}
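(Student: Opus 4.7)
Plan: I would prove that the map $\phi : \mcv_\T \to (\mathbb{C}^*)^{2k}$ defined by $Y \mapsto (x_{ab}(Y))_{ab \in \mathbf{x}_\T}$ is a birational equivalence, from which both conclusions (birationality to an algebraic torus and equality of function fields) follow at once. First, observe that $\phi$ is a well-defined morphism on $\mcv_\T$: every $x_{ab}$ is a ratio whose numerator and denominator are twistor coordinates $\lrangle{Y\cdot\cdot}$ indexed by black arcs of $\T$, and these are all nonzero on $\mcv_\T$ by definition. Also $\mcv_\T$ is Zariski open in $\Gr_{k,k+2}(\mathbb{C})$, so $\dim_{\mathbb{C}} \mcv_\T = 2k$, matching the target dimension.

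The heart of the argument is constructing an explicit rational inverse $\psi$. Given $(x_{ab}) \in (\mathbb{C}^*)^{2k}$, with the convention $x_{h_\ell j_\ell} := 1$ for the distinguished boundary arcs, I would form the $k \times n$ matrix $N = N(x_{ab})$ whose row $i$, indexing the black triangle $T_i = \{a_i < b_i < c_i\}$ contained in black polygon $P_{\ell(i)}$, has exactly three nonzero entries
\[
N_{i, a_i} = \sigma^{(i)}_a\, x_{b_i c_i}, \qquad N_{i, b_i} = \sigma^{(i)}_b\, x_{a_i c_i}, \qquad N_{i, c_i} = \sigma^{(i)}_c\, x_{a_i b_i},
\]
with signs $\sigma^{(i)}_\bullet \in \{\pm 1\}$ determined by the area statistics of $\T$. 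Then set $\psi(x_{ab}) := \rowspan(N(x_{ab})\, Z)$, defined wherever $N(x_{ab})\, Z$ has full row rank $k$.

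To verify $\psi \circ \phi = \id$, I would work on the Zariski-dense subset $\gto{\hatG(\T)} \subset \mcv_\T$; this is dense because a real $2k$-dimensional open subset of a complex variety of complex dimension $2k$ cannot be contained in any proper Zariski-closed subvariety. The signs $\sigma^{(i)}_\bullet$ should be chosen so that, when $x_{ab} = x_{ab}(Y)$, the entries of $N(\phi(Y))$ agree with those of the twistor coordinate matrix $\twMtx(Y)$ of \cref{defn:twistorMtx} after rescaling row $i$ by $\lrangle{Y h_{\ell(i)} j_{\ell(i)}}$, so $N(\phi(Y))$ and $\twMtx(Y)$ share a row span. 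By \cref{prop:niceRep}, $\rowspan(\twMtx(Y)\, Z) = Y$, and hence $\psi(\phi(Y)) = Y$ on this dense set. The identity principle for rational maps then upgrades this to equality on all of $\mcv_\T$, which forces $\phi$ to be birational. Consequently $\mcv_\T$ is birational to $(\mathbb{C}^*)^{2k}$ and $\mathbb{C}(\mcv_\T) = \phi^* \mathbb{C}((\mathbb{C}^*)^{2k}) = \mathbb{C}(\mathbf{x}_\T)$.

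The main obstacle is the sign bookkeeping: the definition of $x_{ab}$ absorbs a sign $(-1)^{\area(a \to b)}$, and the twistor coordinate matrix carries its own pattern of signs from \eqref{eq:Cmatrix}, so the $\sigma^{(i)}_\bullet$ must be chosen precisely to reconcile the two. Once one writes $\lrangle{Y ab} = (-1)^{\area(a \to b) + \area(h_\ell \to j_\ell)}\, x_{ab}\, \lrangle{Y h_\ell j_\ell}$, the signs are forced and the matching of $N(\phi(Y))$ with $\twMtx(Y)$ up to row scalings is immediate; the rest is supplied by \cref{prop:kindaInverse} and \cref{prop:niceRep} with no further work.
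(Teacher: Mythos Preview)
Your proposal is correct and follows essentially the same strategy as the paper: both construct the rational inverse to $\phi$ by sending a tuple of cluster-variable values to the rowspan of $NZ$, where $N$ is a $k\times n$ matrix built from those values with signs dictated by the area statistics (equivalently, a row-rescaling of the twistor coordinate matrix).

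The one packaging difference worth noting: the paper works over $\C$ throughout, verifying both compositions directly---it computes $\lrangle{Yab}=(-1)^{\area(a\to b)}q_{ab}\,\mathcal{Q}_P$ via the Deodhar-stratum parametrization to get $\phi\circ\psi=\id$ on an explicit open set $O$, and then checks injectivity on $\phi^{-1}(O)$. You instead verify only $\psi\circ\phi=\id$, and only on the real open tile $\gto{\hatG(\T)}$, then pass to all of $\mcv_\T$ by Zariski density and invoke the dimension match to force birationality. Your route is marginally slicker (no $\mathcal{Q}_P$ computation), but it leans on two standard facts you should state explicitly: that a nonempty Euclidean-open subset of the real locus of an irreducible complex variety is Zariski-dense, and that a one-sided rational inverse between irreducible varieties of equal dimension in characteristic $0$ is automatically two-sided.
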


\begin{proof}
The main idea is that \cref{cor:bijection}---which gave a 
	 bijection between
	$$\gto{\hatG(\overline{\T})}= \{Y \in \Gr_{k, k+2}(\R): \text{for all arcs } i \to j \text{ of } \T \text{ with } i <j,  (-1)^{\area(i \to j)} \lrangle{Y i j}>0 \}$$ and $(\R_{>0})^{2k}$---extends directly to give a birational morphism from
	$\mcv_\T$ to $(\C^*)^{2k}$.
        When we let the
	edge weights $\alpha_i, \beta_i, \gamma_i$ (used to define 
	matrix $M$ in \eqref{eq:M}) range over all nonzero complex
	numbers, the set of $k \times n$ matrices we get 
	sweeps out the \emph{open Deodhar stratum}\footnote{Parameterizations 
 of Deodhar strata in flag varieties are given 
	in \cite{MR}; in the Grassmannian, these can be equivalently 
	parameterized using weighted networks, as shown in \cite{talaska_williams}.}  
	$D_{\T}$  
	as opposed to the positroid cell $S_{\hat{G}(\T)}$.
	That is, the stratum $D_\T \subset \Gr_{k, n}(\C)$ consists of subspaces represented by the matrices $M_\T(\Aalpha, \Bbeta, \Ggamma)$ of \eqref{eq:M},
 where $(\Aalpha, \Bbeta, \Ggamma)$ vary over $(\C^*)^{3k}$ rather than $(R_{>0})^{3k}$.

	Let us define the map
	\begin{align*}
		\mcv_\T &\to (\C^*)^{2k}\\
		Y & \mapsto \mathbf{x}_\T(Y).
	\end{align*}

To see that the map is surjective onto a Zariski-open subset of $(\C^*)^{2k}$, consider some $2k$-tuple of nonzero complex numbers $\mathbf{q}_\T$. Define a weight vector $(\Aalpha, \Bbeta, \Ggamma)$ for $\hatG(\T)$, where for a triangle $\{a_i, b_i, c_i\}$, the weights are \[\alpha_i=q_{b_ic_i}, \quad \beta_i=q_{a_ic_i}, \quad \gamma_i=q_{a_ib_i}.\] 
(As usual, if $a \to b$ is a distinguished boundary arc, we take $q_{ab}=1$.) Let $C:=M_\T(\Aalpha, \Bbeta, \Ggamma).$ 

The matrix $C$ lies in the Deodhar stratum $D_\T$ and so has full rank. Let $Y:= CZ$. Consider an arc $a \to b$ of $\T$ which is in a black polygon $P$. From the proof of \cref{prop:niceRep}, we have $$\lrangle{Yab}=(-1)^{\area(a\to b)} q_{ab} \cdot \mathcal{Q}_{P}$$ where $\mathcal{Q}_P$ is a polynomial with positive coefficients in the $q_{ij}$'s and the minors of $Z$, and depends only on the polygon $P$. $\mathcal{Q}_P$ is generically
nonzero, in which case it is easy to check that $x_{ab}(Y)=q_{ab}$. Moreover, in this case, $Y$ is a full-rank matrix, since it has at least one non-zero twistor coordinate.

Now, suppose $\mathbf{q}_\T$ lies in the open subset $O$ of $(\C^*)^{2k}$ where the polynomials $\mathcal{Q}_P$ are nonzero for all polygons $P$. Then $Y$, as defined above, lies in $\mcv_\T$ and maps to $\mathbf{q}_\T$.

The map is injective on the preimage of $O$. Indeed,  
pick $Y, Y' \in \mcv_\T$ which map to $\mathbf{q}_\T \in O$. Consider the twistor coordinate matrices $C:=\twMtx(Y)$ and $C':=\twMtx(Y')$. \cref{prop:kindaInverse} works equally well for matrices with complex entries, so the rowspans of $CZ$ and $C'Z$ are contained in $Y$ and $Y'$, respectively. On the other hand, the rows of $C$ and $C'$ can both be rescaled to obtain the matrix $M_\T(\Aalpha, \Bbeta, \Ggamma)$ defined above, so the rowspans of $CZ$ and $C'Z$ are the same. Finally, because of the assumption $\mathbf{q}_\T \in O$, the matrix $CZ$ has some nonzero twistor coordinate and so in particular is full rank. This shows the rowspan of $CZ$ is equal to $Y$ and to $Y'$.
\end{proof}

Next, we verify that the seeds given by different bicolored triangulations are related by mutation.

\begin{prop}\label{prop:mutation}
	Let $\T$ be a bicolored triangulation represented by $\overline{\T}$ and let $a \to b$ correspond to a mutable vertex of $\Sigma_\T$. Let $\T'$ be related to $\T$ by flipping the arc $a \to b$. Then $\Sigma_\T$ and $\Sigma_{\T'}$ are related by mutation at $x_{ab}$.
	
	The seeds which can be obtained from $\Sigma_\T$ by an arbitrary sequence of mutations are exactly the seeds $\Sigma_{\T'}$ where $\T'$ is represented by $\overline{\T}$.
\end{prop}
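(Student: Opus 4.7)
The plan is to reduce the mutation claim to a local computation inside one black polygon, which then boils down to a signed form of the three-term Pl\"ucker identity in $\Gr_{2,n}$ together with the standard type-$A$ cluster combinatorics of triangulations of a polygon. Since the mutable vertices of $\Sigma_\T$ correspond to internal diagonals of the black polygons, flipping such an arc $a \to b$ only alters the triangulation of the black polygon $P_i$ containing it; the distinguished boundary arcs and their normalizing denominators, as well as the triangulations of all other polygons, are unchanged. In particular, $\Sigma_\T$ and $\Sigma_{\T'}$ agree outside of $P_i$, so it suffices to analyze the portion of the seed supported on $P_i$.

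Write the quadrilateral around $a \to b$ as $v_1 < v_2 < v_3 < v_4$ in the $[n]$-order (possible since the arcs of the quadrilateral are pairwise non-crossing), so that the flip exchanges the diagonal $v_1 \to v_3$ with $v_2 \to v_4$. The twistor coordinates $\lrangle{Y v_s v_t}$ are, up to projective rescaling, the $2\times 2$ minors of the fixed matrix whose rows span $Y \cap \langle Z_{v_1}, Z_{v_2}, Z_{v_3}, Z_{v_4}\rangle^{\perp_Z}$, hence satisfy
\begin{equation*}
\lrangle{Y v_1 v_3}\lrangle{Y v_2 v_4} = \lrangle{Y v_1 v_2}\lrangle{Y v_3 v_4} + \lrangle{Y v_1 v_4}\lrangle{Y v_2 v_3}.
\end{equation*}
Using the additivity $\area(h \to j) = \area(h \to i) + \area(i \to j) + 1$ whenever $h<i<j$ span a triangle in some triangulation of $\overline{\T}$ (a direct consequence of \cref{def:arcs}, by decomposing the left region of $h\to j$), applied once to each of the two triangulations of the quadrilateral, one checks the key parity identity
\begin{equation*}
\area(v_1 \to v_3) + \area(v_2 \to v_4) \equiv \area(v_1 \to v_2) + \area(v_3 \to v_4) \equiv \area(v_1 \to v_4) + \area(v_2 \to v_3) \pmod 2.
\end{equation*}
Dividing the Pl\"ucker identity by the square of the normalizing denominator $(-1)^{\area(h_i \to j_i)}\lrangle{Y h_i j_i}$ and absorbing this common sign yields the Ptolemy relation $x_{v_1 v_3} x_{v_2 v_4} = x_{v_1 v_2} x_{v_3 v_4} + x_{v_1 v_4} x_{v_2 v_3}$, which is the cluster-variable part of mutation at $x_{v_1 v_3}$.

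For the quiver, the restriction of $Q_\T$ to arcs of $P_i$ is by construction the standard Fomin--Zelevinsky quiver of the triangulation of $P_i$, with the frozen vertex on the distinguished boundary arc $h_i \to j_i$ (and all arrows incident to it) removed. Since quiver mutation at an interior vertex is supported on the two triangles of $\T$ adjacent to the flipped diagonal and the removed frozen vertex lies on the boundary of $P_i$, mutating $Q_\T$ at $x_{v_1 v_3}$ produces exactly $Q_{\T'}$; this is the classical $A_{n-3}$ statement applied polygon by polygon. Together with the previous paragraph, this proves the first assertion.

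For the second assertion, one direction is immediate from what we just proved. For the reverse, I would invoke the classical fact that any two triangulations of a convex polygon are connected by diagonal flips: applied inside each black polygon, any $\T'$ representing $\overline{\T}$ is reached from $\T$ by a finite sequence of flips, each of which is a mutation by the first part. Flips inside white polygons do not change $\hat G(\T)$ (\cref{lem:equiv}) and do not affect any cluster variables or the quiver, so they are invisible to the seed, and the mutation class of $\Sigma_\T$ coincides with $\{\Sigma_{\T'} : \T' \text{ represents } \overline{\T}\}$. The main obstacle will be the sign bookkeeping for the three area parities; once this is verified, the rest reduces cleanly to type-$A_{n-3}$ cluster combinatorics.
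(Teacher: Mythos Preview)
Your approach is essentially the same as the paper's: verify the exchange relation via the three-term Pl\"ucker identity for twistor coordinates, invoke type $A$ combinatorics for the quiver, and use flip-connectedness of triangulations for the second assertion. Your extra care with the parity identity $\area(v_1\to v_3)+\area(v_2\to v_4)\equiv\area(v_1\to v_2)+\area(v_3\to v_4)\equiv\area(v_1\to v_4)+\area(v_2\to v_3)\pmod 2$ makes explicit a sign check that the paper leaves implicit; the paper simply asserts that the Ptolemy relation in the $x$-variables ``follows easily from the 3-term Pl\"ucker relations,'' and your additivity argument fills in exactly that gap.

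One small point: your justification of the Pl\"ucker relation among twistor coordinates (``the $2\times 2$ minors of the fixed matrix whose rows span $Y\cap\langle Z_{v_1},\dots,Z_{v_4}\rangle^{\perp_Z}$'') is a bit obscure and the notation $\perp_Z$ is undefined. It is cleaner to cite \cref{prop:twistor}, which identifies $\lrangle{Yij}$ with a Pl\"ucker coordinate $p_{ij}(X)$ on $\Gr_{2,n}$, whence the three-term relation is immediate. This does not affect correctness.
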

In light of \cref{prop:mutation}, we can make the following definition.
\begin{definition}
Let $\T$ be a bicolored triangulation and $\overline{\T}$
the corresponding bicolored subdivision.
We let $\mathcal{A}(\overline{\T})$ denote the cluster algebra
 $\mathcal{A}(Q_{\T},\textbf{x}_\T)$.
\end{definition}
\begin{proof}[Proof of \cref{prop:mutation}]
On the level of quivers, the first statement follows immediately from the well-known combinatorics of type A cluster algebras. 
	
	Say the arc $a \to b$ is in triangles $\{a<u< b\}$ and $\{a<b< v\}$ in $\T$, so $a \to b$ is flipped to $u \to v$ (the argument is analogous if instead $v<a$). We need to check that, in the field of rational functions on $\Gr_{k, k+2}(\C)$, we have
	\[x_{ab}x_{uv}=x_{au}x_{bv}+x_{av}x_{ub}\]
	(where $x_{h_i j_i}$ is defined to be $1$). This follows easily from the 3-term Pl\"ucker relations for the corresponding twistor coordinates.
	
	The second statement follows  from the fact that triangulations are
	flip-connected.
\end{proof}

Together, \cref{prop:tori} and \cref{prop:mutation} tell us that the union of the seed tori is a cluster variety in $\Gr_{k, k+2}(\C)$.

\begin{thm}\label{thm:clustvar}
	Let $\overline{\T}$ be a bicolored subdivision of type $(k, n)$. Then 
	\[\mcv_{\overline{\T}}:= \bigcup_\T \mcv_\T\]
	is a cluster variety in $\Gr_{k, k+2}(\C)$, where the union is over bicolored triangulations represented by $\overline{\T}$.
	We call $\mcv_{\overline{\T}}$ the \emph{amplituhedron (cluster) 
	variety}\footnote{This is closely related to the 
	amplituhedron variety defined in \cite{LamStanley}.}  of 
	 $Z_{\hat{G}(\T)}$.

	Moreover, the positive part 
	\[\mcv_{\overline{\T}}^{>0}:=\{Y \in \mcv_{\overline{\T}}: x_{ab}(Y)>0 \text{ for all cluster variables } x_{ab}\}\]
	 is equal to the positroid tile $\gto{\hatG(\overline{\T})}$.
\end{thm}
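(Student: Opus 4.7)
The plan is to address Part (1) by directly packaging \cref{prop:tori} and \cref{prop:mutation}, and to address Part (2) in two inclusions, with the substantive inclusion handled through the explicit inverse construction from the proof of \cref{prop:tori}.

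For Part (1), I would observe that \cref{prop:tori} realizes each $\mcv_\T$ as a seed torus of dimension $2k$ with seed $\Sigma_\T$, and \cref{prop:mutation} identifies the collection of seeds $\{\Sigma_{\T'}\}$ coming from bicolored triangulations $\T'$ representing $\overline{\T}$ as exactly the full mutation-equivalence class of $\Sigma_\T$, with the mutation birational maps reducing to three-term Pl\"ucker relations among twistor coordinates. Gluing the open tori $\mcv_\T$ inside the ambient $\Gr_{k,k+2}(\C)$ along these mutations therefore realizes $\mcv_{\overline{\T}}$ as the cluster variety $\mathcal{A}(Q_{\T},\mathbf{x}_{\T})$.

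For the inclusion $\gto{\hatG(\overline{\T})} \subseteq \mcv_{\overline{\T}}^{>0}$, I would fix any bicolored triangulation $\T$ representing $\overline{\T}$ and invoke \cref{thm:surjectivity}: every $Y$ in the open positroid tile satisfies $\sgn\lrangle{Yij} = (-1)^{\area_{\T}(i \to j)}$ on all black arcs of $\T$, so $Y \in \mcv_\T$. Each cluster variable $x_{ab}(Y)$ is then a ratio of two twistor coordinates whose numerator and denominator simultaneously carry the sign $(-1)^{\area(\cdot)}$, giving $x_{ab}(Y) > 0$, and hence $Y \in \mcv_\T^{>0} \subseteq \mcv_{\overline{\T}}^{>0}$.

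For the reverse inclusion, suppose $Y \in \mcv_\T^{>0}$ for some $\T$. I would set $\mathbf{q}_\T := \mathbf{x}_\T(Y) \in (\R_{>0})^{2k}$ and, following the surjectivity construction in the proof of \cref{prop:tori}, form the matrix $C := M_\T(\Aalpha, \Bbeta, \Ggamma)$ with weights $\alpha_i = q_{b_ic_i},\, \beta_i = q_{a_ic_i},\, \gamma_i = q_{a_ib_i}$ (taking $q = 1$ on distinguished arcs). By \cref{prop:signsKmatrix} and the remark following it, after rescaling rows by $\pm 1$, $C$ represents a point of the positroid cell $S_{\hatG(\T)}$, so $CZ \in \gto{\hatG(\overline{\T})}$. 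The proof of \cref{prop:tori} yields $\mathbf{x}_\T(CZ) = \mathbf{q}_\T$ and the formula $\lrangle{CZ\, h_i j_i} = (-1)^{\area(h_i \to j_i)}\,\mathcal{Q}_{P_i}(\mathbf{q}_\T)$ for each black polygon $P_i$. Since $CZ \in \gto{\hatG(\overline{\T})}$, \cref{thm:surjectivity} forces $\lrangle{CZ\, h_i j_i} \neq 0$, so $\mathcal{Q}_{P_i}(\mathbf{q}_\T) \neq 0$ for each $i$; that is, $\mathbf{q}_\T$ lies in the Zariski-open locus $O$ on which the cluster chart $\mathbf{x}_\T$ is injective, and hence $Y = CZ \in \gto{\hatG(\overline{\T})}$. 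The main obstacle is precisely this reverse direction: a direct sign analysis from positivity of the cluster variables only pins down $\sgn\lrangle{Yab}$ relative to $\sgn\lrangle{Yh_i j_i}$ within each black polygon $P_i$, leaving an a priori independent sign ambiguity across different polygons. The constructive route bypasses this by producing an explicit matrix $C \in \Gr_{k,n}^{\geq 0}$ whose image already lies in the open positroid tile with all the correct global signs, reducing the problem to verifying $\mathbf{q}_\T \in O$ so that the birationality of the cluster chart forces $Y = CZ$.
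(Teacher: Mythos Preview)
Your proposal is correct and follows essentially the same route as the paper. For the reverse inclusion, the paper forms the twistor coordinate matrix $\twMtx(Y)$, rescales row $i$ by $(-1)^{\area(h_\ell\to j_\ell)}/\lrangle{Yh_\ell j_\ell}$ (where $P_\ell$ is the black polygon containing triangle $T_i$), checks that the resulting entries have the Kasteleyn signs so that the matrix lies in $S_{\hatG(\T)}$, and then invokes \cref{prop:kindaInverse} directly to conclude $Y=C'Z$. Your matrix $M_\T(\Aalpha,\Bbeta,\Ggamma)$ with $\alpha_i=q_{b_ic_i},\ \beta_i=q_{a_ic_i},\ \gamma_i=q_{a_ib_i}$ is, up to row signs, the very same rescaled twistor coordinate matrix; your final step of appealing to the injectivity of the cluster chart on the preimage of $O$ is itself proved in \cref{prop:tori} using $\twMtx$ and \cref{prop:kindaInverse}, so you are invoking the same tool one level of packaging removed.
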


\begin{proof}
	The first assertion follows from the definition of cluster variety and Propositions~\ref{prop:tori} and \ref{prop:mutation}. 
	
	For the second statement, note that by \cref{thm:surjectivity}, points of $\gto{\hatG(\overline{\T})}$ are in the positive part $\mcv_{\overline{\T}}^{>0}$. To see the opposite inclusion, take a point $Y$ in the positive part and choose a bicolored triangulation $\T$ represented by $\overline{\T}$. Let $C:=\twMtx(Y)$ be the twistor coordinate matrix of $Y$.
	
	If row $i$ of $C$ corresponds to a triangle in $\T$ lying in polygon $P_i$, rescale row $i$ by $(-1)^{\area(h_i \to j_i)}/\lrangle{Yh_ij_i}$. Call the resulting matrix $C'$. Because $x_{ab}>0$ for all arcs $a \to b$ of $\T$, the entry $\lrangle{Yab}$ of $C$ has been rescaled to a real number with sign $(-1)^{\area(a \to b)}$. By the same argument as the last paragraph of \cref{thm:surjectivity}, $C'$ (and thus $C$) represents an element of $S_{\hatG(\T)}$. This, together with \cref{prop:kindaInverse}, implies that $Y\in \gto{\hatG(\overline{\T})}$.
\end{proof}

\begin{theorem}
	The cluster algebra 
	$\mathcal{A}(\overline{\T})$ equals the upper cluster algebra 
	$\overline{\mathcal{A}}(\overline{\T})$.
	 If the bicolored subdivision $\overline{\T}$ has 
	black polygons $P_1,\dots,P_r$, where $P_i$ has $n_i$ vertices, then 
	$\mathcal{A}(\overline{\T})$ is a finite type cluster algebra of Cartan-Killing type
	$A_{n_1-2} \times \dots \times A_{n_r-2}$.
\end{theorem}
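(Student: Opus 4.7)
The plan is to exploit the fact that the quiver $Q_\T$ decomposes into independent pieces, one per black polygon, reducing the theorem to a classical statement about cluster algebras from triangulated polygons.

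First I would observe that $Q_\T$ has no arrows between vertices corresponding to arcs in two different black polygons $P_i$ and $P_j$. This is immediate from the construction: arrows in $Q_\T$ come from triangles of a bicolored triangulation $\T$ representing $\overline{\T}$, and each such triangle is entirely contained in a single black polygon of $\overline{\T}$. Hence $Q_\T = \bigsqcup_{i=1}^{r} Q_{\T,i}$, where $Q_{\T,i}$ involves only arcs inside $P_i$. The seed $\Sigma_\T$ then splits as a disjoint union of subseeds $\Sigma_{\T,i}$. By \cref{prop:mutation}, any flip of a diagonal lives inside a single polygon $P_i$, so mutations respect this decomposition and $\mathcal{A}(\overline{\T}) \cong \bigotimes_{i=1}^{r} \mathcal{A}(\Sigma_{\T,i})$; the same reasoning gives the analogous tensor decomposition for the upper cluster algebra.

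Next I would identify each factor $\mathcal{A}(\Sigma_{\T,i})$ with a finite-type cluster algebra of type $A$. Inside $P_i$, the mutable vertices of $Q_{\T,i}$ sit on the internal diagonals of the triangulation of $P_i$, the frozen vertices sit on the boundary arcs of $P_i$ (with the distinguished one removed together with its incident arrows and any frozen–frozen arrows), and the arrows go clockwise around each triangle. This is, up to the deletion of one frozen vertex, precisely the standard quiver associated to a triangulation of a convex $n_i$-gon, whose cluster algebra is the classical type-$A$ finite type cluster algebra of \cite{Fomin_2003}. Deleting a single frozen vertex does not change the mutable subquiver and therefore does not change the Cartan–Killing type; moreover the exchange relations $x_{ab}x_{uv}=x_{au}x_{bv}+x_{av}x_{ub}$ verified in \cref{prop:mutation} are exactly the Ptolemy relations for polygon triangulations, so the identification is compatible with mutation.

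Finally, to establish the equality $\mathcal{A}(\overline{\T}) = \overline{\mathcal{A}}(\overline{\T})$, I would invoke the theorem of Berenstein–Fomin–Zelevinsky that any cluster algebra admitting an acyclic seed equals its upper cluster algebra. Each $\Sigma_{\T,i}$ admits an acyclic seed (for instance, the seed arising from the ``fan'' triangulation of $P_i$ with all diagonals emanating from a single vertex), hence the equality holds for each factor, and combining with the tensor decomposition above gives the equality for $\mathcal{A}(\overline{\T})$.

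The step I expect to be the main obstacle is the careful bookkeeping in the second paragraph: matching $Q_{\T,i}$ after the deletion recipe with the standard polygon-triangulation quiver, and confirming that the various conventions (arrow orientations, frozen boundary, and the absence of the distinguished boundary arc) give the expected Cartan–Killing labels. Once this identification is in place, the decomposition into independent polygons together with the general BFZ theorem delivers both assertions of the theorem.
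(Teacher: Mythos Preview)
Your approach is essentially the same as the paper's: decompose the quiver into independent pieces indexed by the black polygons, identify each piece with the standard type-$A$ quiver of a triangulated polygon, and then invoke the Berenstein--Fomin--Zelevinsky result for acyclic seeds.

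One technical point you omit but the paper includes: the BFZ result you cite (from \cite{ca3}) requires not just an acyclic seed but also that the extended exchange matrix has full $\Z$-rank (this guarantees the coprimality of exchange polynomials needed for the upper-bound invariance). The paper explicitly checks this full-rank condition for each type-$A$ piece, citing an argument analogous to \cite[Proof of Theorem 5.3.2]{ca45}, and then applies \cite[Proposition~1.8 and Remark~1.22]{ca3}. Your third paragraph should add this verification; once you do, the argument is complete and matches the paper's.
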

\begin{proof}
The quiver we are associating to each bicolored triangulation is a disjoint union
quivers associated to a triangulated $n_i$-gon, or equivalently to 
$\C[\Gr_{2,n_i-2}]$. Notice that for each one of these quivers, the corresponding exchange matrix has full $\Z$-rank (the argument is very similar to the one in 
\cite[Proof of Theorem 5.3.2]{ca45}).

It is well known that the quiver associated to a triangulated
$r$-gon has Cartan-Killing type $A_{r-2}$ \cite{ca1}.  This implies that 
	$\mathcal{A}(\overline{\T}) 
	= \mathcal{A}(Q_{\T},\textbf{x}_\T)$ 
has type $A_{n_1-2} \times \dots \times A_{n_r-2}$.

Because our quiver is just a disjoint union of type $A$ quivers (one from each $P_i$), our
cluster algebra has an acyclic seed.  
Moreover, since the exchange matrix corresponding to each of these type $A$ quivers is full rank,
	$\mathcal{A}(\overline{\T})$ also has a full $\Z$-rank exchange matrix.

Using \cite[Proposition 1.8 and Remark 1.22]{ca3}, the fact that 
	$\mathcal{A}(\overline{\T})$ has an acyclic seed and also has a full rank exchange
	matrix implies that the upper cluster algebra 
$\overline{\mathcal{A}}({\T})$ equals the cluster algebra
	$\mathcal{A}(\overline{\T})$. 
\end{proof}

\begin{remark}
Given what we've proved, one can make an argument as in the proof of \cite[Theorem 2.10]{ca3} that 
	$\mathcal{A}(\overline{\T})$ is the coordinate ring of 
	the amplituhedron variety 
	$\mcv_{\overline{\T}}$ and 
	also the closely related variety
	$$V_{\overline{\T}}:=\{Y \in \Gr_{k,k+2}(\C)  \ \vert \ \lrangle{Yij} \neq 0 \text{ for }
	h\to j \text{ a boundary arc of a black polygon of $\overline{\T}$.}\}.$$
\end{remark}

\section{Background on the hypersimplex, T-duality, and positroid tilings}\label{sec:background2}

In \cite{LPW}, a surprising parallel was found between the 
amplituhedron map $\tilde{Z}$ on $\Gr^{\geq 0}_{k, n}$ and the moment map $\mu$ on $\Gr^{\geq 0}_{k+1, n}$. A correspondence called \emph{T-duality} was used to relate Grasstopes in the amplituhedron $\AA_{n, k, 2}$ to positroid polytopes in the hypersimplex $\Delta_{k+1, n}$.
In the second part of this paper, we further explore this relationship and prove some of the conjectures of \cite{LPW}. We present relevant background here\footnote{We
 use `$k+1$' instead of `$k$' here in order to match conventions of later sections.}.

\subsection{The hypersimplex $\Delta_{k+1,n}$ and positroid polytopes.}

Throughout, for $x \in \R^n$ and $I \subset [n]$, we use the notation $x_I:=\sum_{i \in I} x_i$. 

\begin{definition}[The hypersimplex]
Let $e_I := \sum_{i \in I} e_i \in \R^n$, where
$\{e_1, \dotsc, e_n\}$ is the standard basis of $\RR^n$.
The \emph{$(k+1,n)$-hypersimplex} 
$\Delta_{k+1,n}$
 is the convex hull of the points $e_I$ where $I$ runs over $\binom{[n]}{k+1}$.
\end{definition}

\begin{remark}\label{rem:hypercube} The hypersimplex $\Delta_{k+1,n}$ is obtained by intersecting the unit hypercube $\mbox{\mancube}_{n}$ with the hyperplane $x_{[n]}=k+1$.
Alternatively, under the projection $P:(x_1,\dots,x_n) \mapsto (x_1,\dots,x_{n-1})$,
$\Delta_{k+1,n}$ is linearly equivalent to
$$\tilde{\Delta}_{k+1,n} := \{(x_1,\dots,x_{n-1}) \ \vert \
0 \leq x_i \leq 1; k \leq x_{[n-1]} \leq k+1\} \subset \RR^{n-1}.$$
That is,
$\tilde{\Delta}_{k+1,n}$ is the slice of $\mbox{\mancube}_{n-1}$ between the hyperplanes
$x_{[n-1]} = k$ and $x_{[n-1]}= k+1$.
\end{remark}

The torus $T = (\C^*)^n$ acts on $\Gr_{k+1,n}$ by scaling the columns of 
a matrix representative $A$.  (This is really an $(n-1)$-dimensional torus
since the Grassmannian is a projective variety.)  
We let $TA$ denote the orbit of $A$ under the action of $T$, and 
$\overline{TA}$ its closure.  

The \emph{moment map} from the Grassmannian $\Gr_{k+1,n}$ to $\R^n$
is defined as follows.
\begin{definition}[The moment map]
	Let $A$ be a $(k+1) \times n$ matrix representing a point of 
	$\Gr_{k+1,n}$.
	The \emph{moment map} 
	$\mu: \Gr_{k+1,n} \to \R^n$ is defined by 
	$$\mu(A) = \frac{ \sum_{I \in \binom{[n]}{k+1}} |p_I(A)|^2 e_I}
	{\sum_{I \in \binom{[n]}{k+1}} |p_I(A)|^2}.$$
\end{definition}
It is well-known that the image of the Grassmannian $\Gr_{k+1,n}$ under the moment
map is the hypersimplex
$\Delta_{k+1,n}$. 
If one restricts the moment map to $\Gr_{k+1,n}^{\geq 0}$ then the image is again the hypersimplex
$\Delta_{k+1,n}$ 
\cite[Proposition 7.10]{tsukerman_williams}.

In general, it follows from classical work of 
Atiyah \cite{A:82} and Guillemin-Sternberg \cite{GS} that the image
$\mu(\overline{TA})$ is a convex polytope, whose vertices
are the images of the torus-fixed points, i.e. the vertices are the
points $e_I$ such that $p_I(A) \neq 0$ and $p_J(A)=0$ for $J \not = I$. This motivates the notion of \emph{matroid polytope}.  
Recall that any full
rank $(k+1)\times n$ matrix $A$ gives rise to a matroid 
$\M(A)=([n],\B)$, where $\B = \{I \in \binom{[n]}{k+1} \ \vert \ p_I(A) \neq 0\}$.

\begin{definition}\label{def:mpolytope}
Given a matroid $\M=([n],\B)$, the (basis) \emph{matroid polytope} $\Gamma_\M$ of $\M$ is the convex hull of the indicator vectors of the bases of~$\M$:
\[
\Gamma_\M := \convex\{e_B : B \in \B\} \subset \RR^n.
\]
\end{definition}



%

Matroid polytopes also have a straightforward description in terms of inequalities.

\begin{proposition}[\cite{welsh}]\label{prop:inequalitiesmatroids}
Let $\M = ([n], \B)$ be any matroid of rank $k+1$, and let $r_\M:2^{[n]} \to \Z_{\geq 0}$ be its rank function. Then the matroid polytope $\Gamma_\M$ can be described as
	\begin{align*}
		\Gamma_\M &= \{ {\bf x} \in \RR^n : x_{[n] }= k+1, \, x_A \leq r_\M(A) \, \text{ for all $A \subset [n]$} \}\\
		\Gamma_\M &= \{ {\bf x} \in \RR^n :x_{[n] }= k+1, \, x_A \geq k+1- r_\M([n]\setminus A) \, \text{ for all $A \subset [n]$} \}.
	\end{align*}
\end{proposition}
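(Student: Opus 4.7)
First I would establish that the two inequality descriptions are equivalent on the affine hyperplane $\{x_{[n]} = k+1\}$. Given $x_{[n]} = k+1$, for any $A \subseteq [n]$ we have $x_A = (k+1) - x_{[n]\setminus A}$, so the inequality $x_A \leq r_\M(A)$ is equivalent to $x_{[n]\setminus A} \geq (k+1) - r_\M(A)$. Letting $A$ range over all subsets on one side and setting $B = [n]\setminus A$ on the other shows the two systems cut out the same polytope. Hence it suffices to prove the first equality; call the right-hand polytope $P$.

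For the inclusion $\Gamma_\M \subseteq P$, every vertex $e_B$ with $B \in \B$ satisfies $(e_B)_{[n]} = |B| = k+1$, and for any $A \subseteq [n]$, $(e_B)_A = |B \cap A| \leq r_\M(A)$ because $B \cap A$ is an independent subset of $A$. Since the defining conditions of $P$ are linear, $\Gamma_\M = \conv\{e_B : B \in \B\} \subseteq P$ follows by convexity.

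For the reverse inclusion $P \subseteq \Gamma_\M$, both sets are bounded polytopes: the constraints $x_{\{i\}} \leq r_\M(\{i\}) \leq 1$ and $x_{[n] \setminus \{i\}} \leq r_\M([n] \setminus \{i\}) \leq k+1$, combined with $x_{[n]} = k+1$, pin each coordinate to $[0,1]$. So it suffices to show that every linear functional $c \in \R^n$ is maximized on $P$ at some $e_B$ with $B \in \B$. I would invoke Edmonds' greedy algorithm for matroids: order coordinates so that $c_{i_1} \geq c_{i_2} \geq \dots \geq c_{i_n}$, form the greedy basis $B^* \in \B$ by scanning in this order and adjoining $i_j$ whenever it is independent of the previously chosen elements, and exhibit a dual LP certificate supported on the chain $\emptyset \subsetneq S_1 \subsetneq \dots \subsetneq S_n = [n]$ with $S_j = \{i_1, \dots, i_j\}$, taking multipliers $y_{S_j} := c_{i_j} - c_{i_{j+1}}$ (with the convention $c_{i_{n+1}} := 0$) on the submodular inequalities $x_{S_j} \leq r_\M(S_j)$, together with an unrestricted multiplier for the equality $x_{[n]} = k+1$. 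Submodularity of $r_\M$ together with the nonnegativity of the consecutive differences of $c$ (after a suitable constant shift using the equality constraint) yields dual feasibility, while the greedy characterization $|B^* \cap S_j| = r_\M(S_j)$ at every step supplies complementary slackness. Therefore $e_{B^*}$ attains $\max_P c \cdot x$, showing every vertex of $P$ lies in $\{e_B : B \in \B\}$, hence $P \subseteq \Gamma_\M$.

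The main obstacle is the optimality of the greedy basis for $c \cdot x$ on $P$, which is the substantive content of Edmonds' polymatroid theorem; the remainder of the argument is a routine convexity and duality computation, and the passage between the two inequality descriptions is purely formal on $\{x_{[n]} = k+1\}$.
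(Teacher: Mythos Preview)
The paper does not prove this proposition; it is quoted as a standard result with a citation to Welsh's matroid theory text, so there is no ``paper's own proof'' to compare against.

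Your argument is the classical Edmonds approach and is correct. One minor remark: in your dual certificate, dual feasibility follows purely from the telescoping structure of the chain $S_1 \subsetneq \dots \subsetneq S_n$ together with $y_{S_j} = c_{i_j} - c_{i_{j+1}} \geq 0$ and the free multiplier $\lambda = c_{i_n}$; submodularity of $r_\M$ is not needed there. What genuinely uses the matroid structure is the complementary slackness step, namely that the greedy basis $B^*$ satisfies $|B^* \cap S_j| = r_\M(S_j)$ for every $j$, which is exactly the statement that the greedy set is a maximal independent subset of each prefix. With that correction your outline is complete.
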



Here, we are interested in \emph{positroid polytopes}, that is, matroid polytopes $\Gamma_\M$ where $\M$ is a positroid. They arise as $\mu(\overline{TA})$ where $A$ is a totally nonnegative matrix. Of more interest to us, they can also be obtained as moment map images of positroid cells. 
%

\begin{proposition}\cite[Proposition 7.10]{tsukerman_williams}\label{prop:moment}
Let $\M$ be the positroid associated to the positroid cell $S_{\pi}$.  Then 
$\Gamma_{\M} = \mu(\overline{S_{\pi}}) = \overline{\mu(S_{\pi})}$.
\end{proposition}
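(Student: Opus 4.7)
The plan is to separate the statement into two equalities and prove each one. The easier part is $\mu(\overline{S_\pi}) = \overline{\mu(S_\pi)}$. Since $\Gr^{\geq 0}_{k+1,n}$ is compact (closed and bounded in the Pl\"ucker embedding), the closure $\overline{S_\pi}$ is compact. As $\mu$ is continuous, $\mu(\overline{S_\pi})$ is then compact, hence closed, so $\overline{\mu(S_\pi)} \subseteq \mu(\overline{S_\pi})$. The reverse inclusion $\mu(\overline{S_\pi}) \subseteq \overline{\mu(S_\pi)}$ is immediate from continuity of $\mu$.

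For the main equality $\mu(\overline{S_\pi}) = \Gamma_\M$, I would prove the two inclusions separately. The inclusion $\mu(\overline{S_\pi}) \subseteq \Gamma_\M$ is essentially formal: for any $A \in \overline{S_\pi}$, the set $\{I \in \binom{[n]}{k+1} : p_I(A) \neq 0\}$ is contained in the set $\B$ of bases of the positroid $\M$, since vanishing of a Pl\"ucker coordinate is a closed condition and it vanishes identically on $\overline{S_\pi}$ whenever it vanishes on $S_\pi$. By the defining formula of the moment map, $\mu(A)$ is a convex combination of vectors $e_I$ with $I \in \B$, and hence $\mu(A) \in \Gamma_\M$.

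For the reverse inclusion $\Gamma_\M \subseteq \mu(\overline{S_\pi})$, I would pick any point $A$ in the open cell $S_\pi$, so that $\{I : p_I(A) \neq 0\}$ is exactly $\B$. Then $\M(A) = \M$, so by the classical Atiyah/Guillemin--Sternberg convexity theorem $\mu(\overline{TA}) = \Gamma_{\M(A)} = \Gamma_\M$, where $T = (\C^*)^n$ acts by scaling columns. The key additional ingredient is that the positive real torus $T_{>0} = (\R_{>0})^n$ also acts on $\Gr_{k+1,n}$ and preserves $S_\pi$, because rescaling columns by positive reals preserves the signs (and in particular the vanishing pattern) of all Pl\"ucker coordinates. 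Hence $T_{>0}\cdot A \subseteq S_\pi$, and so $\overline{T_{>0}\cdot A} \subseteq \overline{S_\pi}$. The standard description of the moment map of a projective toric variety says that $\mu$ restricts to a diffeomorphism from $T_{>0}\cdot A$ onto the relative interior of the moment polytope $\Gamma_\M$, so $\mu(\overline{T_{>0}\cdot A}) = \Gamma_\M$. Combining these gives $\Gamma_\M \subseteq \mu(\overline{S_\pi})$, finishing the proof.

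The main subtlety, rather than a serious obstacle, is justifying that the positive real torus orbit already surjects under $\mu$ onto $\Gamma_\M$ (rather than merely the complex torus orbit). This is a well-known feature of the symplectic/toric picture: the real positive slice of a complex torus orbit is a real form whose moment map image coincides with that of the full complex orbit, and in fact $\mu\!\restriction_{T_{>0}\cdot A}$ is a diffeomorphism onto $\mathrm{relint}(\Gamma_\M)$. Once this is invoked, all other steps are bookkeeping via \cref{prop:inequalitiesmatroids} and the continuity/compactness arguments above.
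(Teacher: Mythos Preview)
The paper does not prove this proposition; it merely cites it as \cite[Proposition 7.10]{tsukerman_williams}. So there is no ``paper's own proof'' to compare against directly.

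That said, your argument is correct and is essentially the standard one. The equality $\mu(\overline{S_\pi}) = \overline{\mu(S_\pi)}$ follows from compactness of $\Gr^{\geq 0}_{k+1,n}$ and continuity of $\mu$, exactly as you say. The inclusion $\mu(\overline{S_\pi}) \subseteq \Gamma_\M$ is immediate from the moment map formula. For the reverse inclusion, your use of the positive real torus $T_{>0} = (\R_{>0})^n$ acting on $S_\pi$ is the right idea: this action preserves each positroid cell, and the fact that $\mu|_{T_{>0}\cdot A}$ is a homeomorphism onto $\mathrm{relint}(\Gamma_{\M(A)})$ is a classical toric statement (often attributed to Atiyah, Guillemin--Sternberg, or in the algebraic setting to Jurkiewicz and others). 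This is precisely the approach taken in the cited reference. One small remark: you do not actually need the full Atiyah/Guillemin--Sternberg theorem for complex torus orbits, since the real positive torus statement alone suffices; invoking the complex version is harmless but unnecessary.
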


We will be particularly interested in the  cells 
on which the moment map is injective. 

\begin{definition}[Positroid Polytopes]\label{def:Gamma}
Given a positroid cell $S_{\pi}$ of 
$\Gr_{k+1,n}^{\ge 0}$, we let 
$\Gamma^{\circ}_{\pi} = \mu(S_{\pi})$ and 
$\Gamma_{\pi} = \overline{\mu(S_{\pi})}$, and we refer to 
$\Gamma^{\circ}$ and $\Gamma_{\pi}$ as \emph{open positroid polytopes} and 
\emph{positroid polytopes}, respectively.
	We call 
	$\Gamma_{\pi}$ a 
  \emph{positroid tile} 
	for $\Delta_{k+1,n}$ 
	if $\dim(S_\pi) =n-1$, and 
	 $\mu$ is injective on $S_\pi$.
\end{definition}



\begin{theorem}[Characterization of positroid tiles of $\Delta_{k+1,n}$]\label{prop:homeo2}\cite[Propositions 3.15, 3.16]{LPW}
Consider a positroid cell $S_{G} \subset \Gr_{k+1,n}^{\geq 0}$, with $G$ a reduced plabic graph.
	Then the moment map is injective on $S_G$ if and only if $G$ is a forest.  When $G$ is a forest, $\mu$ is moreover a stratification-preserving
	 homeomorphism from $\overline{S_{G}}$ to  the polytope
	$\Gamma_G \subset \mathbb{R}^n$.
We have $\dim S_{G}
=\dim \Gamma_{G} = n-c,$ where $c$ is the number of connected components of $G$. 

In particular, given an $(n-1)$-dimensional cell $S_{G} \subset \Gr_{k+1,n}^{\geq 0}$, $\Gamma_G$ is a positroid tile for $\Delta_{k+1,n}$ if and only if $G$ is a tree.
\end{theorem}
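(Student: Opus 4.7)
The plan is to break the theorem into four pieces: a dimension formula $\dim S_G = n - c$ when $G$ is a forest, injectivity of $\mu$ on $S_G$ in that case, non-injectivity when $G$ contains a cycle, and the passage from injectivity to the stratification-preserving homeomorphism. Throughout, let $c$ denote the number of connected components of $G$, $f$ the number of interior faces, and $F$ the total number of faces.

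For the dimension claim, I would combine Postnikov's face formula $\dim S_G = F - 1$ with a disk-Euler count: a plabic graph with $c$ components and $f$ interior faces has $F = n - c + 1 + f$, so $\dim S_G = n - c + f$. On the polytope side, planarity forces each component of $G$ to occupy a consecutive cyclic arc of boundary vertices, so the associated positroid $\M_G$ is a direct sum of $c$ positroids, one per component, and $\Gamma_G$ is (affinely) a product of $c$ matroid polytopes; one checks that each component-matroid is connected when that component is a tree, so $\dim \Gamma_G = n - c$ for a forest.

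For injectivity when $G$ is a forest, I would first reduce to a single tree using the block-diagonal structure $C = \mathrm{diag}(C_1, \dots, C_c)$ on $S_G \cong S_{G_1} \times \cdots \times S_{G_c}$: the Pl\"ucker coordinates factorize as $p_{B_1 \sqcup \cdots \sqcup B_c}(C) = \prod_i p_{B_i}(C_i)$, which yields $\mu(C) = \mu_1(C_1) + \cdots + \mu_c(C_c)$ under $\R^n = \bigoplus \R^{n_i}$, so injectivity on $S_G$ follows coordinate-wise from injectivity on each $S_{G_i}$. For $G$ a plabic tree, I would parameterize $S_G$ by positive edge weights modulo the gauge action rescaling the edges at each internal vertex. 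The key combinatorial observation is that in a tree, any basis $I$ of $\M_G$ arises from a \emph{unique} almost-perfect matching (propagate inward from the leaves), so each nonzero $p_I$ is a \emph{single monomial} in the $w_e$. Consequently the ratios $|p_I|^2/|p_J|^2$ appearing in $\mu$ are Laurent monomials in the $w_e^2$, and injectivity of $\mu$ modulo gauge reduces, after taking logarithms, to showing that a certain integer matrix of matching-exponents has full column rank modulo the gauge subspace. This I would prove by leaf-induction: the weight of a leaf edge can be recovered from a single Pl\"ucker ratio, and removing the leaf produces a smaller tree to which the inductive hypothesis applies.

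The non-injectivity direction is a pure dimension count: if $G$ has a cycle, then $f \geq 1$, so by Step 1 we have $\dim S_G = n - c + f > n - c \geq \dim \Gamma_G$, and a continuous surjection onto a lower-dimensional image cannot be injective. Finally, the previous steps together give a continuous bijection $\mu: \overline{S_G} \to \Gamma_G$ from a compact space to a Hausdorff space (the image being $\Gamma_G$ by \cref{prop:moment}), hence a homeomorphism; stratification preservation follows from the standard correspondence between boundary cells $S_{G'} \subset \partial S_G$ (obtained from $G$ by edge deletions/contractions, which preserve the forest property) and the faces of $\Gamma_G$, each of which is the matroid polytope of a corresponding minor of $\M_G$. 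The main obstacle is Step 2: controlling the gauge carefully on a plabic tree while extracting edge weights from the moment map image. A slicker alternative would be to observe that, for a plabic tree, $S_G$ is precisely the totally positive part of a single $(\C^*)^n$-orbit closure in $\Gr_{k+1,n}$, and invoke Atiyah--Guillemin--Sternberg for the resulting toric variety.
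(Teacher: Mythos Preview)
This theorem is not proved in the present paper; it is quoted from \cite[Propositions 3.15, 3.16]{LPW}. So there is no in-paper argument to compare against directly, but your ``slicker alternative'' at the end is in fact the standard route: for $G$ a forest the dimension count forces $S_G$ to be a single orbit of the positive column-rescaling torus $T=(\R_{>0})^n/\R_{>0}$ (a $T$-orbit in $\Gr_{k+1,n}$ for a matroid with $c$ connected components has dimension $n-c$, and $S_G$ is $T$-stable, connected, and of that same dimension), after which one invokes the classical fact that the moment map is a homeomorphism from the positive part of a $T$-orbit closure onto its matroid polytope. Your Steps 1, 3, 4 are fine.

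The gap is in your primary Step 2. The moment map is not a vector of Pl\"ucker ratios: its $j$th coordinate is $\mu(C)_j=\bigl(\sum_{I\ni j}|p_I|^2\bigr)\big/\bigl(\sum_I|p_I|^2\bigr)$, a genuine sum over all bases containing $j$. Knowing that each $p_I$ is a monomial in the $w_e$ does not let you ``take logarithms'' of $\mu$ and reduce to a rank computation; your leaf-induction recovers edge weights from individual ratios $p_I/p_J$, but you never say how any single such ratio can be read off from the $n$ real numbers $\mu(C)_1,\dots,\mu(C)_n$. What the monomial/full-rank structure actually buys you is precisely that $S_G$ is a single $T$-orbit, and one then still needs a separate injectivity argument on that orbit. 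If you want to make this elementary rather than citing Atiyah--Guillemin--Sternberg, the clean way is: in logarithmic coordinates on the torus, $\mu$ is (half) the gradient of the strictly convex potential $t\mapsto\log\sum_I e^{2\langle e_I,t\rangle}$, and gradients of strictly convex functions are injective. That is the missing link between your full-rank exponent matrix and injectivity of $\mu$.
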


%
%
%

\subsection{T-duality and positroid tilings}

Recall the definition of positroid tiling from \cref{def:tri}.  Specializing to 
 $\mathcal{A}_{n,k,2}(Z)$, we get the following.
\begin{definition}[Positroid tilings of $\mathcal{A}_{n,k,2}$]\label{def:dissectionAmp}
	Let $\mathcal{C} = \{\gt{\pi}\}$ be a collection of Grasstopes, with $\{S_{\pi} \}$ positroid cells of $\Gr^{\geq 0}_{k,n}$.  
We say that 
	$\mathcal{C}$ is 
	a \emph{positroid tiling} of $\mathcal{A}_{n,k,2}(Z)$
	if:
	\begin{itemize}
		\item each Grasstope $\gt{\pi} $ is a positroid tile (i.e. $\widetilde{Z}$ is injective on $S_{\pi}$ and $\dim \gt{\pi} = 2k$);
		\item pairs of distinct open Grasstopes $\gto{\pi}$ and $\gto{\pi'}$ in the collection are disjoint; 
		\item $\cup_{\pi} \gt{\pi} = \mathcal{A}_{n,k,2}(Z)$. 
	\end{itemize}
\end{definition}

\begin{remark}
Alternatively, one could define a positroid tiling as coming
	from a collection $\{S_{\pi}\}$ of cells
	such that $\{Z_{\pi}\}$ is a positroid tiling (as above)
	for \emph{all} choices of $Z$.
We use \cref{def:dissectionAmp} here
since some objects we define will be sensitive to the choice of $Z$. 
\end{remark}

In the case of the hypersimplex, a positroid tiling is as follows.

\begin{definition}[Positroid tilings of $\Delta_{k+1,n}$]\label{def:dissectionSimp}
	Let $\mathcal{C} = \{\Gamma_{\pi}\}$ be a collection 
	of positroid polytopes, with $\{S_{\pi}\}$ positroid cells of $\Gr^{\geq 0}_{k+1,n}$. We say that 
	$\mathcal{C}$ is a \emph{positroid tiling} of $\Delta_{k+1,n}$ if:
	\begin{itemize}
		\item each $\Gamma_\pi$ is a positroid tile ($\mu$ is injective on $S_{\pi}$, and $\dim S_{\pi} = n-1$);
		\item pairs of distinct open positroid polytopes $\Gamma^{\circ}_{\pi}$ and $\Gamma^{\circ}_{\pi'}$ in the collection are disjoint;
		\item $\cup_{\pi} \Gamma_{\pi} = \Delta_{k+1,n}$.
	\end{itemize}
\end{definition}

\begin{remark}
``Positroid tiling" differs slightly from ``positroid triangulation" in \cite{LPW}.
\end{remark}

By \cref{prop:homeo2},  the positroid tiles of $\Delta_{k+1, n}$ are the
positroid polytopes $\Gamma_G$ where $G$ is a plabic tree. 
And by \cref{thm:allGTs},  the positroid tiles of $\AA_{n, k, 2}(Z)$ are 
the Grasstopes $\gt{\hatG(\T)}$ for $\T$ a bicolored subdivision of type $(k,n)$.
In \cite{LPW}, it was conjectured that positroid tiles 
and the two notions of positroid tiling are 
related by a very simple correspondence, called \emph{T-duality}.

\begin{definition}[T-duality on decorated permutations]\label{hatmap}
	Let  $\pi=a_1 a_2\dots a_n$ be a loopless decorated permutation (written
	in one-line notation). The \emph{T-dual} decorated permutation is
	$\hat{\pi} : i \mapsto \pi(i-1)$, so that
	$\hat{\pi} = a_n a_1 a_2\dots a_{n-1}$.
	Any fixed points in $\hat{\pi}$ are declared to be loops.
\end{definition}

\begin{rmk}
This map was previously defined in \cite[Definition 4.5]{Karp:2017ouj}) and 
was studied in \cite{LPW}, where it was used to 
	draw parallels between the hypersimplex $\Delta_{k+1,n}$ 
	and the 
	$m=2$ amplituhedron $\mathcal{A}_{n,k,2}(Z)$.  
	The T-duality map was 
	also studied in \cite{posquotients,
	GalCritVar}.
The map $\pi \to \hat{\pi}$ is an $m=2$ version 
of a map that appeared in \cite{abcgpt} for the case $m=4$.
\end{rmk}

\begin{lem}[\protect{\cite[Lemma 5.2]{LPW}}]\label{lem:tDualityBiject}
	The T-duality map $\pi \mapsto \td{\pi}$ is a bijection from loopless decorated permutations of type $(k+1, n)$ to coloopless decorated permutations of type $(k, n)$. That is, the map $S_\pi \mapsto S_{\td{\pi}}$ is a bijection from the set of loopless cells in $\Gr_{k+1,n}^{\geq 0}$ to the set of coloopless cells in $\Grk$.
\end{lem}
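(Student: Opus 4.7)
The plan is to reduce the lemma to an elementary counting identity about permutations in $S_n$. Because any fixed point of a loopless decorated permutation is necessarily a coloop, and any fixed point of a coloopless one is necessarily a loop, the decoration data is forced on both sides. Thus loopless decorated permutations of type $(k+1,n)$ are in bijection with ordinary $\pi \in S_n$ satisfying $\#\{i : \pi(i) < i\} + \#\{i : \pi(i) = i\} = k+1$, i.e.\ $\#\{i : \pi(i) \leq i\} = k+1$; and coloopless decorated permutations of type $(k,n)$ correspond to $\sigma \in S_n$ with $\#\{i : \sigma(i) < i\} = k$ (fixed points no longer contributing, since they are now loops).

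Next I would observe that the underlying map $\pi \mapsto \hat\pi$, where $\hat\pi(i) = \pi(i-1)$ with indices taken cyclically mod $n$, is right-composition with a cyclic permutation, hence already a bijection of $S_n$ with itself. Moreover, the T-duality rule ``declare all fixed points of $\hat\pi$ as loops'' matches precisely the coloopless convention on the target, so the only remaining content is to verify that the statistic defining the type transforms correctly.

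The core computation is
\[
\#\{i \in [n] : \hat\pi(i) < i\} \;=\; \#\{i \in [2,n] : \pi(i-1) \leq i-1\} \;=\; \#\{j \in [n-1] : \pi(j) \leq j\},
\]
since $i = 1$ would require $\pi(n) < 1$, which never occurs. Using that $\pi(n) \leq n$ holds automatically, the right-hand side equals $\#\{j \in [n] : \pi(j) \leq j\} - 1$. Therefore $\#\{i : \hat\pi(i) < i\} = k$ iff $\#\{j : \pi(j) \leq j\} = k+1$. This simultaneously shows that $\hat\pi$ has type $(k,n)$ whenever $\pi$ has type $(k+1,n)$, and, running the equivalence in reverse combined with the bijectivity of the cyclic shift on $S_n$, that the inverse operation (shift in the opposite direction, and re-declare fixed points as coloops) sends coloopless type $(k,n)$ permutations to loopless type $(k+1,n)$ ones.

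The only real subtlety is the cyclic wrap-around at the boundary $n \leftrightarrow 1$: one must check that $i = 1$ contributes nothing on the $\hat\pi$-side because $\pi(n) \geq 1$, while on the $\pi$-side the index $j = n$ disappears from the sum but is exactly accounted for by the trivial inequality $\pi(n) \leq n$. This is where an argument could easily slip by one; the rest is formal bookkeeping.
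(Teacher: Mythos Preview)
Your argument is correct. The paper does not give its own proof of this lemma; it simply quotes it as \cite[Lemma 5.2]{LPW}. Your approach is the natural elementary one: once the decorations are seen to be forced on both sides (loopless forces fixed points to be coloops, coloopless forces them to be loops), the statement reduces to the identity
\[
\#\{i\in[n]:\hat\pi(i)<i\}=\#\{j\in[n]:\pi(j)\le j\}-1,
\]
which you verify by the change of variable $j=i-1$ together with the boundary observations that $i=1$ contributes nothing on the left (since $\pi(n)\ge 1$) while $j=n$ always contributes on the right (since $\pi(n)\le n$). Combined with the fact that right-composition by a cyclic shift is a bijection of $S_n$, this gives both directions at once. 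There is nothing to compare against in the present paper; your proof is complete and would serve as a self-contained replacement for the citation.
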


The philosophy of \cite{LPW} is that if the moment map behaves well on $S_\pi$, then the $\tilde{Z}$-map behaves well on $S_{\td{\pi}}$. For example, if the image of $S_\pi$ is a positroid tile for $\Delta_{k+1,n}$, then the image of $S_{\td{\pi}}$ is a positroid tile for $\mathcal{A}_{n,k,2}(Z)$ 
\cite[Proposition 6.6.]{LPW}. Moreover, there is a main conjecture involving positroid tilings:

\begin{conj}[\protect{\cite[Conjecture 6.9]{LPW}}]\label{conj:triangCorresp}
	A collection $\{\Gamma_\pi\}$ of positroid polytopes in $\Delta_{k+1, n}$ gives a positroid tiling of $\Delta_{k+1, n}$ if and only if for all $Z \in \Mat_{n,k+2}^{>0}$, the collection $\{\gt{\td{\pi}}\}$ of Grasstopes gives a positroid tiling of $\AA_{n, k, 2}(Z)$. 
\end{conj}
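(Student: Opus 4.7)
The plan is to use the common atomic refinement provided by the $w$-simplices $\simp{w}$ of $\Delta_{k+1,n}$ and the $w$-chambers $\asimp{w}$ of $\AA_{n,k,2}(Z)$, both indexed by permutations $w\in S_n$ with $w_n=n$ and $k$ descents. The crucial ingredient is the biconditional
\[ \simp{w} \subset \Gamma_\pi \ \Longleftrightarrow\ \asimp{w} \subset \gt{\td{\pi}}, \]
proved in \cref{sec:wsimplices}, together with the decompositions $\Delta_{k+1,n}=\bigcup_w \simp{w}$ and $\AA_{n,k,2}(Z)=\bigcup_w \asimp{w}$, and the identities $\Gamma_\pi = \bigcup_{\simp{w}\cap\Gamma^\circ_\pi \neq \emptyset}\simp{w}$, $\gt{\td\pi} = \bigcup_{\asimp{w}\cap \gto{\td\pi}\neq \emptyset}\asimp{w}$ recorded in \cref{tab:PPer}. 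These tools reduce the tiling condition on each side to one and the same combinatorial statement.

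First I would prove an \emph{atomic criterion}: a collection $\{\Gamma_\pi\}$ of positroid tiles is a positroid tiling of $\Delta_{k+1,n}$ if and only if every $w$-simplex $\simp{w}$ is contained in exactly one $\Gamma_\pi$ of the collection. The ``only if'' direction uses that $\simp{w}^\circ$ is connected and hence meets at most one disjoint $\Gamma^\circ_\pi$, while covering $\Delta_{k+1,n}$ forces at least one such $\Gamma_\pi$; the ``if'' direction reverses this to recover pairwise disjointness of open tiles and the covering property. Next I would prove the analogue on the amplituhedron side for each fixed $Z$: $\{\gt{\td\pi}\}$ is a positroid tiling of $\AA_{n,k,2}(Z)$ if and only if every realizable $w$-chamber $\asimp{w}(Z)$ lies in exactly one $\gt{\td\pi}$. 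Once both atomic criteria are in place, the displayed biconditional transports them to each other: the assignment $w\mapsto \{\pi : \simp{w}\subset \Gamma_\pi\}$ is a singleton for every admissible $w$ if and only if the assignment $w\mapsto \{\td\pi : \asimp{w}\subset \gt{\td\pi}\}$ is. In the forward direction this yields a tiling of $\AA_{n,k,2}(Z)$ for every $Z$; in the reverse direction, the hypothesis ``for all $Z$'' supplies enough realizations of the various $\asimp{w}(Z)$ to pin down the combinatorial condition on every admissible $w$, and hence to conclude a tiling of $\Delta_{k+1,n}$.

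The main obstacle I anticipate is realizability: for a fixed $Z$ some chambers $\asimp{w}(Z)$ may be empty, so the atomic criterion on the amplituhedron side only constrains realizable chambers. I will need to verify that every $w\in S_n$ with $w_n=n$ and $k$ descents is realizable for some $Z$ (the content of the realizability discussion in \cref{sec:wsimplices2}), so that the ``for all $Z$'' quantifier genuinely detects any failure of the hypersimplex tiling through an appropriate choice of $Z$. A secondary, more bookkeeping-level issue is to confirm that the collection $\{\gt{\td\pi}\}$ actually satisfies the structural preconditions for being a positroid tiling---each $\gt{\td\pi}$ must be a full-dimensional Grasstope on which $\tilde{Z}$ is injective---which follows from the classification of positroid tiles in \cref{thm:allGTs} combined with the T-duality preservation of dimensions, cell structure, and injectivity established in \cref{sec:Tduality1} and \cref{sec:Tduality2}.
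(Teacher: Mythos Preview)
Your proposal is correct and follows essentially the same route as the paper's proof of \cref{thm:simpTriangGiveAmp}: both directions are reduced, via \cref{prop:simplexContainment} together with \cref{thm:wSimpCover}, \cref{lem:disjointOrContained}, and \cref{cor:GTunionWSimp}, to the statement that each $w$-simplex (respectively, each nonempty $w$-chamber) lies in exactly one tile of the collection, with \cref{thm:nonempty} supplying the realizability needed for the reverse implication and \cref{cor:GTsInBijection} ensuring that the T-dual Grasstopes are indeed positroid tiles. Your explicit ``atomic criterion'' is a clean packaging of exactly the argument the paper carries out.
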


In \cref{sec:Tplabic}, we will prove a number of additional results on T-duality, 
upgrading it to a map on plabic graphs.
We will also prove  \cref{conj:triangCorresp} in \cref{thm:simpTriangGiveAmp}.


\section{T-duality on decorated permutations and plabic graphs}\label{sec:Tduality1}

In this section we prove that T-duality is a poset isomorphism and can be extended to a map on plabic graphs and plabic tilings. 

We refer the reader to \cref{sec:appendix} for the definition of decorated permutations, their affinizations, loops, coloops, etc., as well as details on plabic graphs and trips.

\subsection{T-Duality as a poset isomorphism}
Here we show that the bijection from \cref{lem:tDualityBiject}
 is a poset isomorphism. Abusing notation, in this subsection we use $\pi$, $\nu$ to denote bounded affine permutations rather than decorated permutations.

\begin{prop}[T-duality as a poset isomorphism]\label{prop:posetiso}
T-duality is a codimension-preserving poset isomorphism between 
loopless cells of $Gr^{\geq 0}_{k+1,n}$
and coloopless cells of $\Grk$.  That is, for $\pi, \nu$ loopless decorated permutations of type $(k+1, n)$, $S_\nu \subset \overline{S_\pi}$ if and only if $S_{\td{\nu}} \subset \overline{S_{\td{\pi}}}$. Furthermore,
	$\codim S_{\nu} = \codim S_{\td{\nu}}$. 
\end{prop}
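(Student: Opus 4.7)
The plan is to translate T-duality into the language of bounded affine permutations and then reduce to a standard fact about length-zero elements of the extended affine symmetric group.

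The first step is to lift T-duality to bounded affine permutations. Let $\pi$ be a loopless decorated permutation of type $(k+1,n)$ with bounded affine lift $f\colon\Z\to\Z$, so $i<f(i)\le i+n$ and $\sum_{i=1}^n(f(i)-i)=(k+1)n$. I claim that the T-dual $\hat\pi$ of \cref{hatmap} lifts to
\[
\hat f(i)\;:=\;f(i-1).
\]
A direct check---crucially using looplessness to force $f(i-1)\in[i,i+n)$---shows that $\hat f$ is a bounded affine permutation of type $(k,n)$, that it is coloopless, and that reducing mod $n$ recovers the decorated permutation $\hat\pi$. Writing $s\colon i\mapsto i-1$ for the cyclic shift in the extended affine symmetric group $\widetilde S_n$, T-duality is thus literally right multiplication: $\hat f = f\circ s$.

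The second step is to invoke the well-known identification (Rietsch \cite{rietsch}, Postnikov \cite{postnikov}, Knutson--Lam--Speyer) of the closure order on positroid cells with the opposite of affine Bruhat order on bounded affine permutations, under which $\codim S_\nu$ equals the affine length $\ell(\nu)$. The element $s$ is a length-zero generator of the normal subgroup $\Omega\cong\Z$ of $\widetilde S_n = W_{\mathrm{aff}}\rtimes\Omega$. Right multiplication by $s$ therefore carries the coset $W_{\mathrm{aff}}\cdot\rho^{k+1}$ (which contains the type-$(k+1,n)$ bounded affine permutations) bijectively onto $W_{\mathrm{aff}}\cdot\rho^{k}$, preserves affine length, and is an isomorphism of Bruhat order restricted to these cosets. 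Combined with the first step this immediately yields
\[
S_\nu\subseteq\overline{S_\pi}\iff S_{\hat\nu}\subseteq\overline{S_{\hat\pi}}\quad\text{and}\quad\codim S_\nu=\codim S_{\hat\nu}.
\]

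The only nontrivial step is the first paragraph: writing down the correct lift of T-duality and verifying its properties. The delicate point is that looplessness of $f$ is exactly what places $f(i-1)$ in the window $[i,i+n)$ (with $f(i-1)=i$ corresponding to loops of $\hat f$), so no extra ``mod $n$'' shift is needed and $\hat f$ is automatically coloopless. Once this bookkeeping is in place, the abstract structure of $\widetilde S_n$ does the rest.
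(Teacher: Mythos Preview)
Your proof is correct and follows the same core idea as the paper: lift T-duality to right composition with the shift $\delta\colon i\mapsto i-1$ on bounded affine permutations, then use that this operation preserves affine length and Bruhat order. The paper argues the Bruhat statement hands-on via cover relations $\pi\gtrdot\nu\Leftrightarrow\hat\pi\gtrdot\hat\nu$ and then extends along maximal chains (checking that intermediate elements stay loopless), whereas you invoke the structural fact that $s$ lies in the length-zero subgroup $\Omega$ so that right multiplication by $s$ is automatically a Bruhat automorphism between cosets---this packaging is slightly cleaner and sidesteps the chain argument entirely.

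One small slip: $\Omega$ is not the normal subgroup in $\widetilde S_n=W_{\mathrm{aff}}\rtimes\Omega$; it is $W_{\mathrm{aff}}$ that is normal. This does not affect your argument, since what you actually use is that conjugation by $s$ permutes the simple reflections (hence right multiplication preserves length and covers), which is true.
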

\begin{proof}
We will work with the poset $\Bdkn$ of bounded affine permutations with respect to the Bruhat order \cite{KLS}, which is dual to the poset $Q(k,n)$ (see \cref{def:positroid}). In $\Bdkn$, $\pi \gtrdot \nu$ if $\pi= \tau \circ \nu$ for some transposition $\tau$ and $\inv(\pi)=\inv(\nu)+1$.

Let $\delta: \Z \to \Z$ be the map $i \mapsto i-1$. For loopless $\pi \in \Bound(k+1, n)$, the T-dual of $\pi$ is $\td{\pi}= \pi \circ \delta$. Fix loopless $\pi, \nu \in\Bound(k+1, n)$. 
Note that $\pi$ and $\pi \circ \delta$ have the same length. Further, $\widehat{\tau \circ \nu}=\tau \circ \nu\circ \delta= \tau \circ \td{\nu}$. So $\pi \gtrdot \nu$ if and only if $\td{\pi} \gtrdot \td{\nu}$. 

To extend this beyond cover relations, notice that if $\nu \in \Bound(k+1, n)$ has $\nu(i)=i$ or $\nu(i)= i+n$, then for all $\pi \in \Bound(k+1, n)$ with $\pi > \nu$, we have $\pi(i)=\nu(i)$. In matroidal terms, 
if the positroid $\M_\nu$ has a loop (resp. coloop) at $i$, then so does $\M_\pi$ for all $\M_\pi \subset \M_\nu$. 

Now, $\pi \geq \nu$ if and only if there exists a maximal chain $\pi \gtrdot \pi_1 \gtrdot \cdots \gtrdot \pi_r \gtrdot \nu$. Since $\pi$ is loopless, the observation in the previous paragraph shows that $\pi_i$ is loopless for $i=1, \dots, r$. Since T-duality and its inverse preserve cover relations, we have such a chain if and only if we have the chain $\td{\pi} \gtrdot \td{\pi}_1 \gtrdot \cdots \gtrdot \td{\pi}_r \gtrdot \td{\nu}$ in $\Bdkn$, which is equivalent to $\td{\pi} \geq \td{\nu}$. 

The codimension statement follows from the fact that the codimension of $S_\pi$ in $\overline{S_\nu}$ is the length of any maximal chain from $\pi$ to $\nu$ in $\Bdkn$.
\end{proof}

T-duality can also be defined for arbitrary even $m$, as in \cite[Equation 5.13]{LPW}, and is also of interest for understanding the $m=4$ amplituhedron. As is clear from \cite[Equation 5.13]{LPW}, the T-duality map for even $m$ is a composition of  the ``$m=2$" T-duality map $m/2$ times. \cref{prop:posetiso} also gives us information about this composition.

\begin{definition}
Let $L^r\Gr^{\geq 0}_{k,n}$ be the set of cells $S_{\pi} \subset \Gr^{\geq 0}_{k,n}$ such that $\pi(i) \geq i+r$ for all $i$. Analogously, let us define $CL^{-r}\Gr^{\geq 0}_{k,n}$ to be the set of cells $S_{\nu} \subset \Gr^{\geq 0}_{k,n}$ such that $\nu(i) \leq i+n-r$ for all $i$. Each is ordered by inclusion on the closures of cells.
\end{definition}
\begin{rmk}
The composition of T-duality $r$ times is a well-defined map from $L^r\Gr^{\geq 0}_{k+r,n}$ to $CL^{-r}\Gr^{\geq 0}_{k,n}$. Indeed, if $\pi(i) \geq i+r$, then applying T-duality $s$ times gives a loopless bounded affine permutation for $s=1, \dots, r-1$. 
Moreover, it is easy to see that applying T-duality $r$ times to such a $\pi$ gives a bounded affine permutation $\nu$ with $\nu(i) \leq i+n-r$.
\end{rmk}

\begin{rmk}
The bounded affine permutations labelling cells in $L^r\Gr^{\geq 0}_{k,n}$ ($CL^{-r}\Gr^{\geq 0}_{k,n}$) can be equivalently described in terms of the sets $\tilde{S}(-a,b)$ defined in \cite[Section 2]{parityduality}.
\end{rmk}

From Proposition \ref{prop:posetiso} we immediately have the following:
\begin{prop}
	The composition of T-duality $r$ times gives a poset isomorphism between 
 $L^r\Gr^{\geq 0}_{k+r,n}$ and $CL^{-r}\Gr^{\geq 0}_{k,n}$.  
	\end{prop}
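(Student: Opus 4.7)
The plan is to iterate Proposition \ref{prop:posetiso} exactly $r$ times: since one-step T-duality is a codimension-preserving poset isomorphism from loopless cells of $\Gr_{k+1,n}^{\geq 0}$ onto coloopless cells of $\Gr_{k,n}^{\geq 0}$, the $r$-fold composition ought to be a codimension-preserving poset isomorphism between the appropriate sub-posets, and I just need to pin down the source and target of each intermediate step.

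The remark immediately preceding the proposition already ensures that the $r$-fold T-duality is a well-defined map from $L^r\Gr^{\geq 0}_{k+r,n}$ into $CL^{-r}\Gr^{\geq 0}_{k,n}$: writing $\td{\pi}^s(i) = \pi(i-s)$, if $\pi \in L^r$, then $\td{\pi}^s(i) \geq (i-s) + r = i + (r-s)$, so $\td{\pi}^s$ lies in $L^{r-s}\Gr^{\geq 0}_{k+r-s,n}$ and is strictly loopless for $s < r$, which legitimises the next application of Proposition \ref{prop:posetiso}. Using the bounded-affine inequality $\pi(i) \leq i+n$, the terminal image satisfies $\td{\pi}^r(j) = \pi(j-r) \leq j + n - r$, so it indeed lies in $CL^{-r}\Gr^{\geq 0}_{k,n}$.

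For bijectivity, I would construct the inverse stepwise via $\nu \mapsto \check{\nu}$, $\check{\nu}(j) := \nu(j+1)$. If $\nu \in CL^{-r}\Gr^{\geq 0}_{k,n}$, then $\check{\nu}(j) \leq (j+1) + n - r$, so $\check{\nu} \in CL^{-(r-1)}\Gr^{\geq 0}_{k+1,n}$ and remains strictly coloopless at every intermediate stage (until the $CL$-index reaches $0$). After $r$ such steps we obtain $\check{\nu}^r(j) = \nu(j+r) \geq j+r$, placing it in $L^r\Gr^{\geq 0}_{k+r,n}$, and this is patently a two-sided inverse of $r$-fold T-duality. Since each single step is a poset isomorphism by Proposition \ref{prop:posetiso}, restricted to a sub-poset consisting entirely of loopless cells (so the restriction is still a poset isomorphism onto its image), composing the $r$ restrictions yields the desired codimension-preserving poset isomorphism.

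The only even mildly delicate point is keeping track at each of the $r$ intermediate stages of which permutations are loopless (needed for the forward direction) and which are coloopless (needed to invert); both conditions follow cleanly from the strict bounds $\td{\pi}^s(i) \geq i + (r-s) > i$ for $s<r$ and $\check{\nu}^s(j) \leq j + n - (r-s) < j+n$ for $s<r$, so I expect no genuine obstacle.
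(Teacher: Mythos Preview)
Your proposal is correct and follows the paper's approach. The paper treats this proposition as an immediate consequence of Proposition~\ref{prop:posetiso} (writing only ``From Proposition~\ref{prop:posetiso} we immediately have the following''), having already noted in the preceding remark that the $r$-fold composition is well-defined from $L^r\Gr^{\geq 0}_{k+r,n}$ to $CL^{-r}\Gr^{\geq 0}_{k,n}$; you have simply spelled out the iteration, the explicit inverse, and the intermediate looplessness/colooplessness checks that the paper leaves implicit.
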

	
\subsection{T-duality as a map on plabic graphs}\label{sec:Tplabic}

T-duality extends to an operation on particular plabic graphs. 

\begin{definition} A reduced plabic 
	graph is called \emph{black-trivalent} (resp. \emph{white-trivalent}) if all of its interior black (resp. white) vertices are trivalent.
\end{definition}

Note that in particular, black-trivalent (white-trivalent) graphs have no black (white) lollipops, so their trip permutations are loopless (coloopless).

Starting from a black-trivalent graph $G$ with trip permutation $\pi$, we now give an explicit construction of a white-trivalent graph $\hatG$ with trip permutation $\td{\pi}$.
This construction appeared first in \cite{Gal18} using plabic tilings. That the construction is bijective (up to certain moves) can be deduced from \cite{Gal18} (see \cite[Proposition 7.15]{GalashinPostWilliams}, \cite{BW},
\cite[Proposition 8.3]{GalCritVar}). Our phrasing of the bijection does not require passing to a plabic tiling, and so streamlines somewhat the presentation of the aforementioned references.

\begin{definition}[T-duality on plabic graphs]\label{defn:plabicTDual} Let $G$ be a reduced black-trivalent plabic graph. The \emph{T-dual} of $G$, denoted $\hat{G}$, is the graph obtained as follows:
\begin{enumerate}
\item In each face $f$ of $G$, place a black vertex $\hat{b}(f)$.
\item``On top of" each black vertex $b$ of $G$, place a white vertex $\hat{w}(b)$;
\item For each black vertex $b$ of $G$ in face $f$, put an edge $\hat{e}$ connecting $\hat{w}(b)$ and $\hat{b}(f)$;
\item Put $\hat{i}$ on the boundary of $G$ between vertices $i-1$ and $i$ and draw an edge from $\hat{i}$ to $\hat{b}(f)$, where $f$ is the adjacent boundary face.
\end{enumerate}
\end{definition}

\vspace{-.4cm}
\begin{figure}[h]
\includegraphics[height=2in]{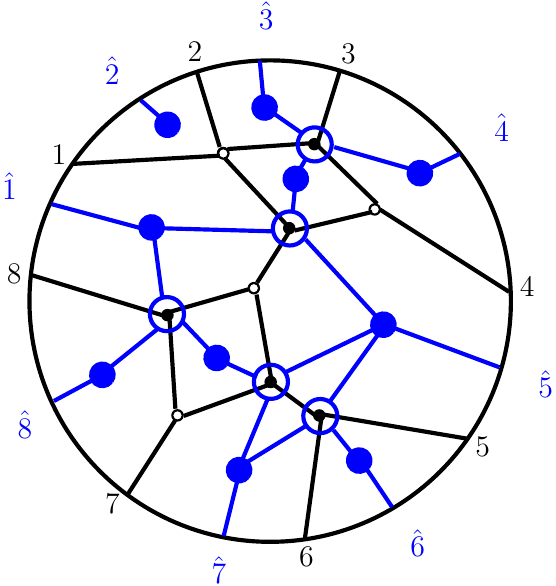}
	\caption{In black: a plabic graph $G$ of type $(4,8)$ with trip permutation $(2,4,7,1,8,5,3,6)$.
	In blue: the T-dual plabic graph $\hat{G}$ of type $(3,8)$ with trip permutation
	$(6,2,4,7,1,8,5,3)$, which is built using \cref{defn:plabicTDual}.}
	\label{fig:TdualityonPlabic}
\end{figure}

\begin{prop} \label{prop:TdualityPlabicGraphs}
Let $G$ be a reduced black-trivalent plabic graph with trip permutation $\pi$. Then $\hat{G}$ is a reduced white-trivalent plabic graph with trip permutation $\hat{\pi}$. 
\end{prop}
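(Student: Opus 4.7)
The proposition has three assertions — $\hat G$ is a plabic graph embedded in the disk, it is white-trivalent, and its trip permutation equals $\hat\pi$ — and the bulk of the proof is the trip permutation computation; reducedness will fall out at the end.

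The structural claims are essentially immediate from the construction. Each edge $\hat e$ joining $\hat w(b)$ to $\hat b(f)$ can be drawn as a short curve leaving $b$ into the interior of $f$ and terminating at $\hat b(f)$; since each edge of $\hat G$ is confined to the closure of a single face of $G$, no two edges of $\hat G$ cross. The coloring is proper since every edge of $\hat G$ joins a white vertex (or boundary vertex) to a black vertex. An interior white vertex $\hat w(b)$ has one incident edge for each face of $G$ containing $b$; since $G$ is reduced and $b$ is trivalent, the three faces around $b$ are pairwise distinct (any coincidence would yield a reducing move at $b$), so $\hat w(b)$ is trivalent.

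The main task is to show that the trip of $\hat G$ starting at $\hat i$ ends at $\hat j$ where $j = \pi(i-1)$. I would pair this trip $\hat T$ with the trip $T$ of $G$ starting at boundary vertex $i-1$, maintaining the invariant: whenever $\hat T$ enters an interior vertex $\hat b(f)$ from a neighbor along $\partial f$, the current edge of $T$ in $G$ has right-hand face equal to $f$. The initial configuration satisfies this invariant because the unique neighbor of $\hat i$ is $\hat b(f_i)$, and as $T$ leaves $i-1$ its right-hand face is $f_i$. Propagating the invariant requires two local checks. First, a right turn at $\hat b(f)$ in $\hat G$ takes us to the edge leading to $\hat w(b')$, where $b'$ is the next black vertex of $G$ encountered by $T$ as it travels along $\partial f$ — any white vertices of $G$ between $b$ and $b'$ are traversed by $T$ via left turns which preserve the right-hand face, so they are invisible to $\hat T$. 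Second, a left turn at $\hat w(b')$ in $\hat G$ takes us to the edge entering $\hat b(f')$, where $f'$ is the new right-hand face of $T$ after its maximally-right turn at $b'$; this works because the cyclic order of edges at $\hat w(b')$ agrees with the cyclic order of faces around $b'$ in $G$. Iterating, $\hat T$ terminates at the boundary vertex $\hat j$ lying in the boundary face $f_j$ that is $T$'s right-hand face upon exit at $\pi(i-1)$, and the labeling convention for boundary vertices of $\hat G$ forces $j = \pi(i-1)$.

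Reducedness of $\hat G$ then follows either by appealing to \cref{prop:posetiso}: since T-duality preserves codimension, $\dim S_{\hat\pi} = \dim S_\pi$, and applying Euler's formula to count the faces of $\hat G$ (boundary faces of $G$ plus interior faces contribute the $\hat b(f)$'s, and interior black vertices of $G$ contribute the $\hat w(b)$'s) yields exactly the face count of a reduced plabic graph with trip permutation $\hat\pi$. Alternatively, any reducing configuration in $\hat G$ (e.g.\ a parallel pair of trips crossing twice, or a bad lollipop) would, via the local correspondence above, pull back to a reducing configuration in $G$, contradicting the hypothesis. The main obstacle I expect is the careful bookkeeping in the local step of the trip analysis: one must verify that the cyclic orders at $\hat b(f)$ and $\hat w(b)$ inherited from the planar embedding match the face-orders around $\partial f$ and around $b$ respectively with the correct orientation, so that ``right turn'' and ``left turn'' in $\hat G$ are the right combinatorial operations to realize the desired correspondence with $T$.
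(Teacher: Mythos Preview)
Your strategy matches the paper's: follow the trip $T$ in $G$ and the trip $\hat T$ in $\hat G$ in parallel, linking them through the face on one side of $T$, and then deduce reducedness by a face count using Euler's formula together with \cref{prop:posetiso}.

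Two points need correction. First, the orientation is flipped: under the paper's conventions a maximal left turn at a white vertex leaves all remaining edges of that vertex to the \emph{right} of the trip, so it is the \emph{left}-hand face that is preserved through white vertices, and the boundary face $f_i$ between $i-1$ and $i$ lies to the left of $T$ as it leaves $i-1$. Your claim ``left turns which preserve the right-hand face'' is false as stated; the invariant should track the left face throughout (the paper does exactly this, phrasing the argument in terms of the face to the left of $\gamma:i\to\pi(i)$, and pairing it with the trip in $\hat G$ starting at $\widehat{i+1}$). Second, in the reducedness sketch, ``$\dim S_{\hat\pi}=\dim S_\pi$'' is not what codimension preservation gives; the correct relation is $\dim S_{\hat\pi}=\dim S_\pi-(n-2k-1)$, and it is this shifted value that the face count must match. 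The paper observes that the faces of $\hat G$ biject with the \emph{white} vertices of $G$ (after normalizing $G$ to be bipartite with each boundary vertex adjacent to a white vertex) and then uses Euler's formula for $G$, together with $E=3B+n$ and $W-B=k+1$, to verify $W=\dim S_{\hat\pi}+1$.
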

\begin{proof}
First observe that since $G$ is black-trivalent,  
 $\hat{G}$ is white-trivalent
	(see \cref{fig:tdualmisc}).
\begin{figure}[h]
	\centering
\includegraphics[height=.8in]{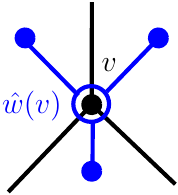}
	\caption{Black trivalent vertices of $G$ correspond to white trivalent vertices of $\hatG$.}
	\label{fig:tdualmisc}
\end{figure}

	We now show that if $G$ has the trip $\gamma: i \rightarrow \pi(i)$, then $\hat{G}$ has the trip $\hat{\gamma}: \widehat{i+1 }\rightarrow \widehat{\pi(i)}$.

Say $\gamma$ starts at $i$. Let $v$ be the first black vertex $\gamma$ meets. By the rules of the road, there is one edge $e$ attached to $v$ at the left of $\gamma$ (as $G$ is black-trivalent). Note that vertex $v$ is in the boundary face $f$ containing boundary vertices $i$ and $i+1$. This is because before meeting $v$, $\gamma$ meets only white vertices, and by the rules of the road there are no edges involving these vertices lying to the left of $\gamma$. So $\hat{w}(v)$ is also connected to $\hat{b}(f)$. And by definition, $\widehat{i+1}$ is connected to $\hat{b}(f)$. Note that at the vertex $\hat{b}(f)$, if we start at the edge to $\widehat{i+1}$ and go counterclockwise, we see the edge to $\hat{w}(v)$. This means $\hat{\gamma}$ starts at $\widehat{i+1}$, goes to $\hat{b}(f)$, then to $\hat{w}(v)$ (see \cref{fig:tdualsequences}). Now, let $g$ be the face of $G$ which contains $e$ and the edge of $\gamma$ following $v$. Clearly $\hat{b}(g)$ is connected to $\hat{w}(v)$. At the vertex $\hat{w}(v)$, if we start at the edge to $\hat{b}(f)$ and go clockwise, we see the edge to $\hat{b}(g)$. This means that $\gamma$ goes from $\hat{w}(v)$ to $\hat{b}(g)$.

\begin{figure}[h]
\includegraphics[height=.8in]{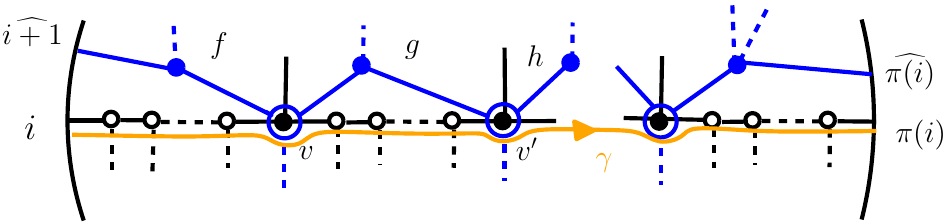}
	\caption{Black edges and vertices are in $G$; blue are $\hatG$. In orange, the trip $\gamma: i \to \pi(i)$ in $G$. The trip $\hat{\gamma}$ follows the solid blue edges.}
	\label{fig:tdualsequences}
\end{figure}

Now, let $v'$ be the next black vertex $\gamma$ meets. Again, the edges involving any white vertices on $\gamma$ between $v,v'$ must lie to the right of $\gamma$, and there is exactly one edge $e'$ at $v'$ to the left of $\gamma$. So the face $g$ also contains $v'$. Let $h$ be the face of $G$ which contains $e'$ and the edge of $\gamma$ following $v'$. Then $\hat{\gamma}$ goes from $\hat{b}(g)$ to $\hat{w}(v')$ to $\hat{b}(h)$ (see \cref{fig:tdualsequences}). Continuing in this way, we see that if $\gamma$ passes through a black vertex $v$, then $\hat{\gamma}$ passes through $\hat{w}(v)$ and then goes to $\hat{b}(f)$, where $f$ is the face to the left of $\gamma$ containing $v$ and the edge of $\gamma$ following $v$. If $v$ is the last black vertex on $\gamma$, then $f$ is the boundary face touching $\pi(i)-1$ and $\pi(i)$. Note that at the vertex $\hat{b}(f)$, if we start at the edge to $\hat{w}(v)$ and go counterclockwise, we see the edge to $\widehat{\pi(i)}$. So $\gamma$ will turn maximally right at $\hat{b}(f)$ to go to $\widehat{\pi(i)}$.

If $\gamma$ meets no black vertices, there are no edges of $G$ at the left of $\gamma$. This means $\pi(i)=i+1$. The boundary face $f$ between $i$ and $\pi(i)$ contains only white vertices, so there will be a loop in $\hat{G}$ at boundary vertex $\widehat{i+1}$. Clearly $\hat{\pi}(i+1)=i+1=\pi(i)$ as desired.

To show that $\hatG$ is reduced, it suffices to show $\hatG$ has $\dim(S_{\hat{\pi}})+1$  faces
	\cite[Corollary 7.4.26 and Corollary 7.10.5]{ca7}. Note that \cref{defn:plabicTDual} does not depend on the white vertices of $G$, so we may assume that $G$ is bipartite and has a white vertex adjacent to every boundary vertex. With this assumption, it is not hard to see that the faces of $\hatG$ are in bijection with white vertices of $G$.

Let $B, W, F, E$ denote the number of white vertices, black vertices, faces (excluding the infinite face), and edges (excluding edges between two boundary vertices) of $G$. Say that $G$ is of type $(k+1, n)$. Since
T-duality preserves codimension, we have
\[\dim(S_{\hat{\pi}})= \dim(S_{\pi})-n+2k+1.\]
As $G$ is reduced, $F=\dim(S_\pi)+1$. So to show $W=\dim(S_{\hat{\pi}})+1$, it suffices to show that $W=F-n+2k+1$. This follows immediately from
\[E=3B +n, \quad F=1-(W + B) + E, \quad W-B=k+1.\]
The first equation holds because every edge between two internal vertices contains a unique black vertex, and all black vertices are trivalent. The second equation follows from Euler's formula for planar graphs. The third holds because $G$ is type $(k+1, n)$.

\end{proof}

\begin{rmk}
	It is straightforward to check that exchanging the roles of black and white vertices in \cref{defn:plabicTDual} gives a map from white-trivalent plabic graphs to black-trivalent graphs. This shows that T-duality is a bijection between black-trivalent graphs of type $(k+1, n)$ and white trivalent graphs of type $(k, n)$ (where we consider both sets of graphs up to edge contraction and bivalent vertex addition/removal).
\end{rmk}

The map $G \to \hatG$ can also be phrased in terms of plabic tilings\footnote{We caution the reader that plabic tilings and positroid tilings are very different objects, despite having a word in common.} \cite{OPS}, which are dual to plabic graphs. Our notion of plabic tiling is slightly looser than that in \cite{OPS}.

\begin{definition}[Plabic tilings] \label{def:plabictilings}
Let $G$ be any connected reduced plabic graph with $n$ boundary vertices, and let $\mathbf{P}_n$ be a convex $n$-gon, whose vertices are labelled from $1$ to $n$ in clockwise order. The \emph{plabic tiling} $\T(G)$ dual to $G$ is a tiling of $\mathbf{P}_n$ by coloured polygons (bigons allowed) such that: i) it is the planar dual of $G$; ii) each black (white) vertex of $G$ is dual to a black (white) polygon in $\mathcal{T}(G)$; iii) vertex $i$ of $\mathbf{P}_n$ is dual to the face of $G$ touching boundary vertices $i-1$ and $i$. We consider two plabic tilings $\T(G)$ and $\T'(G')$ \emph{equivalent} if $G$ and $G'$ are move-equivalent. 

Conversely, if $\T$ is a plabic tiling, the dual plabic graph 
$G(\T)$ is obtained from $\T$ by placing a black vertex in each black polygon,
a white vertex in each white polygon, and connecting two vertices whenever they correspond
to two polygons which share an edge.
\end{definition}
\begin{figure}[h]
\includegraphics[height=1.4in]{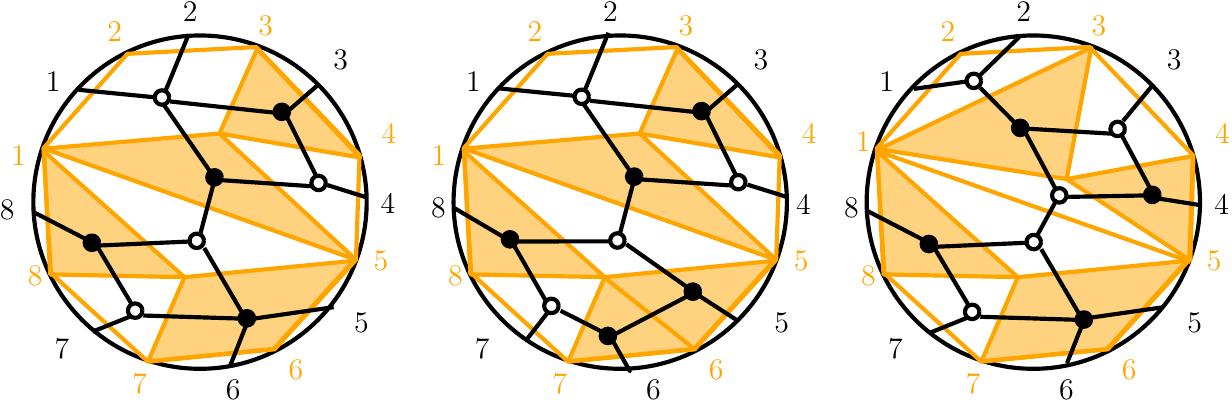}
	\caption{Three equivalent plabic tilings $\T$ (in orange), 
	and the corresponding dual plabic graphs $G(\T)$ (in black).
	The center plabic tiling is dual to a black-trivalent plabic graph.}
	\label{fig:plabicTiling}
\end{figure}

\cref{fig:plabicTiling} shows three move-equivalent plabic graphs and the
corresponding plabic tilings.

\begin{rmk} \label{rmk:subdivisionAreTiling}
	A bicolored subdivision or triangulation $\T$ of type $(k,n)$ is a plabic tiling whose dual plabic graph $G(\T)$ is a tree plabic graph of type $(k+1, n)$. All tree plabic graphs of type $(k+1, n)$ arise in this way.
\end{rmk}

The construction of $\hatG$ from $G$ of \cref{prop:TdualityPlabicGraphs} can also be phrased in terms of plabic tilings as follows. 
(This is equivalent to the construction in the proof of 
\cite[Proposition 8.3]{GalCritVar}, though the description there uses horizontal 
sections of fine zonotopal tilings.)

\begin{proposition}[T-duality and plabic graphs] \label{prop:tdualityplabictilings}
	Let $G$ be a connected reduced black-trivalent plabic graph and let $\mathcal{T}=\mathcal{T}(G)$ be the dual plabic tiling. Then the T-dual plabic graph $\hat{G}=\hat{G}(\T)$ 
	is obtained as follows: 
\begin{enumerate}
	\item Place a black vertex at each vertex of each black triangle in $\mathcal{T}$.
	\item Place a white vertex in the middle of each black triangle of $\mathcal{T}$ and connect it to the vertices of the triangle.
	\item Add an  edge of $\hat{G}$ from boundary vertex $i$ on the disc to the black vertex on boundary vertex $i$ of $\mathcal{T}$.
\end{enumerate}
\end{proposition}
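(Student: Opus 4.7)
The plan is to show that the construction described in the proposition coincides with \cref{defn:plabicTDual} applied to $G$, so the proof reduces to a careful translation through the duality between $G$ and $\T(G)$.

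First I would lay out the dictionary given by this duality. Since $G$ is black-trivalent, every black polygon of $\T(G)$ has exactly three vertices, i.e.\ is a triangle. Black polygons of $\T(G)$ are in bijection with interior black vertices of $G$, white polygons with interior white vertices, and vertices of $\T(G)$ with faces of $G$ (with the boundary face of $G$ between boundary vertices $i-1$ and $i$ corresponding to vertex $i$ of $\mathbf{P}_n$, as in \cref{def:plabictilings}). Incidence translates as well: a black vertex $b$ of $G$ lies on the boundary of a face $f$ if and only if the vertex of $\T(G)$ dual to $f$ is a corner of the black triangle dual to $b$.

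With this dictionary in hand, I would check the four ingredients of $\hat{G}$ in \cref{defn:plabicTDual} one at a time. The black vertex $\hat{b}(f)$ placed in each face $f$ of $G$ corresponds to a black vertex placed at the dual vertex of $\T(G)$, i.e.\ at a vertex of a black triangle, matching step (1) of the proposition. The white vertex $\hat{w}(b)$ placed on each interior black vertex $b$ of $G$ corresponds to a white vertex placed in the interior of the black triangle dual to $b$, matching step (2). The edges $\hat{e}$ from $\hat{w}(b)$ to $\hat{b}(f)$ for each face $f$ incident to $b$ become the three edges from the center of a black triangle to its three vertices. Finally, the boundary edges from $\hat{i}$ to $\hat{b}(f)$, where $f$ is the boundary face between $i-1$ and $i$, are exactly the boundary edges in step (3) of the proposition, since this face is dual to boundary vertex $i$ of $\mathbf{P}_n$.

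The only subtlety is that for step (1) to match, every vertex of $\T(G)$ must be a corner of at least one black triangle, i.e.\ every face of $G$ must contain a black vertex on its boundary. If $G$ has edges joining two white vertices, this could fail; but one can harmlessly subdivide such edges by inserting bivalent black vertices (equivalently, pass to a move-equivalent bipartite representative of $G$), which does not alter the tiling $\T(G)$ nor, by the move invariance noted in \cref{rmk:hatGplabic} and \cref{lem:equiv}, the resulting $\hat{G}$. The main obstacle is really just bookkeeping of this duality; once it is set up, the verification is immediate.
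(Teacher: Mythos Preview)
The paper does not actually give a proof of this proposition; it is stated and immediately followed by a figure and a remark, the result being treated as evident from the duality between $G$ and $\T(G)$. Your approach---setting up the planar-duality dictionary and translating each ingredient of \cref{defn:plabicTDual} through it---is exactly the right way to make this explicit, and your matching of black vertices, white vertices, internal edges, and boundary edges is correct.

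One small correction in your handling of the subtlety: inserting bivalent \emph{black} vertices on white--white edges would destroy the black-trivalence hypothesis (and would produce black bigons in $\T(G)$ rather than triangles, so step~(2) of the proposition would not place white vertices there). The clean fix is instead to contract adjacent white vertices via move~(M2), which preserves black-trivalence and yields a bipartite representative in which every face has a black vertex on its boundary; this is presumably what you meant by ``pass to a move-equivalent bipartite representative.'' Note also that a boundary face of $G$ consisting only of white vertices corresponds to a boundary vertex of $\mathbf{P}_n$ lying in no black triangle, in which case both constructions produce a black lollipop at the corresponding boundary vertex of $\hat{G}$ (cf.\ the last paragraph of the proof of \cref{prop:TdualityPlabicGraphs}).
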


\begin{figure}[h]
\includegraphics[height=1.4in]{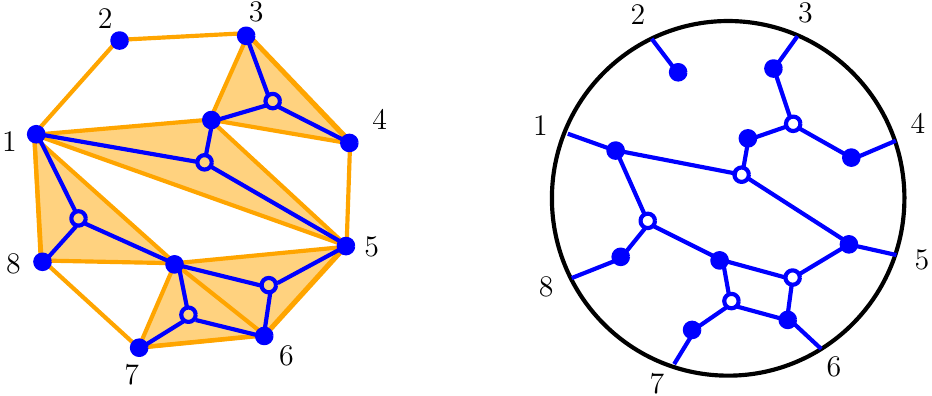}
	\caption{Left: In orange, the plabic tiling $\T$ dual to the black-trivalent graph in the center of \cref{fig:plabicTiling}. In blue,
	the result of operations (1), (2) of \cref{prop:tdualityplabictilings}.
	 At the right $\hat{G}(\T)$.}
	\label{fig:ghatplabicTiling}
\end{figure}

\begin{rmk}\label{rk:treesTilings}
The construction $\hat{G}(\T)$ from \cref{prop:tdualityplabictilings}
	generalizes the construction from
 \cref{def:param} (viewing a bicolored triangulation as a special case of a plabic tiling).
So \cref{prop:tdualityplabictilings} shows that the plabic graph $\hatG(\T)$ from \cref{def:param} is T-dual to the plabic tree $G(\T)$.
\end{rmk}

\section{T-duality, positroid tiles and cluster adjacency} \label{sec:Tduality2}
In this section, we show T-duality gives a bijection between positroid tiles of $\Delta_{k+1, n}$ and positroid tiles for $\AA_{n, k,2}(Z)$ (\cref{cor:GTsInBijection}). We then investigate parallels between the inequalities cutting out positroid polytopes $\Gamma_\pi$ and the T-dual Grasstopes $\gto{\hat{\pi}}$; for positroid tiles, both pieces of data are encoded by the same bicolored subdivision (\cref{thm:inequalitiesMatch}). We establish a similar parallel for facets of positroid tiles (\cref{th:facetsGT}), and use this to prove the \emph{$m=2$ cluster adjacency conjecture} of 
\cite{Lukowski:2019sxw}
in \cref{thm:cluster}.

\subsection{T-duality, Inequalities and Signs}
In this subsection, we will see how bicolored triangulations encode positroid tiles of both $\Delta_{k+1, n}$ and $\AA_{n, k, 2}(Z)$.

\cref{thm:allGTs} and \cref{prop:homeo2} characterize positroid tiles of $\AA_{n,k,2}(Z)$ 
and $\Delta_{k+1,n}$ in terms of bicolored subdivisions and tree plabic graphs, respectively.
These results with \cref{rk:treesTilings} imply that positroid tiles of 
$\AA_{n,k,2}(Z)$ and $\Delta_{k+1,n}$ are in bijection, and that both can be read off easily from 
bicolored subdivisions of type $(k,n)$ (see \cref{fig:unpuncTduality}).

\begin{corollary}\label{cor:GTsInBijection}
A positroid polytope $\Gamma_G$ is a positroid tile of $\Delta_{k+1,n}$ if and only if the T-dual Grasstope $Z_{\hat{G}}$ is a positroid tile of $\AA_{n,k,2}(Z)$.
We read $\Gamma_G$ and $Z_{\hat{G}}$ off of the same bicoloredsubdivision $\overline{\T}$ as follows:
\begin{itemize}
   \item Choose any triangulation $\T$ of $\overline{\T}$.  
	\item We let $G: = G(\T)$ be the dual plabic tree, as in 
\cref{def:plabictilings}.
	\item We let $\hat{G}:=\hat{G}(\T)$ be the graph from \cref{def:param} 
		(equivalently, in \cref{prop:tdualityplabictilings}).
\end{itemize}
\end{corollary}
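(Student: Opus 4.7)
The plan is to deduce this corollary by assembling the three characterizations already established in the paper. The key inputs are: (i) \cref{prop:homeo2}, which says $\Gamma_G$ is a positroid tile of $\Delta_{k+1,n}$ if and only if $G$ is a reduced plabic tree of type $(k+1,n)$; (ii) \cref{thm:allGTs}, which says $Z_\pi$ is a positroid tile of $\AA_{n,k,2}(Z)$ if and only if $S_\pi = S_{\hat G(\overline{\T})}$ for some bicolored subdivision $\overline{\T}$ of type $(k,n)$; and (iii) the dictionary between trees and bicolored subdivisions provided by \cref{rmk:subdivisionAreTiling} and \cref{rk:treesTilings}, together with the fact that T-duality is a bijection (\cref{lem:tDualityBiject}, \cref{prop:TdualityPlabicGraphs}).

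For the forward direction, suppose $\Gamma_G$ is a positroid tile of $\Delta_{k+1,n}$. By \cref{prop:homeo2}, $G$ is a reduced plabic tree of type $(k+1,n)$. By \cref{rmk:subdivisionAreTiling}, after (if necessary) replacing $G$ by a move-equivalent graph, its dual plabic tiling $\T(G)$ is a bicolored triangulation $\T$ of type $(k,n)$, and $G = G(\T)$. Then \cref{rk:treesTilings} identifies the graph $\hat G(\T)$ of \cref{def:param} with the T-dual graph in the sense of \cref{prop:TdualityPlabicGraphs}, so $\hat G$ equals $\hat G(\overline{\T})$ up to move-equivalence. By \cref{thm:allGTs}, $Z_{\hat G}$ is a positroid tile of $\AA_{n,k,2}(Z)$.

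For the converse, suppose $Z_{\hat\pi}$ is a positroid tile of $\AA_{n,k,2}(Z)$. By \cref{thm:allGTs}, $S_{\hat\pi} = S_{\hat G(\overline{\T})}$ for some bicolored subdivision $\overline{\T}$ of type $(k,n)$; pick any triangulation $\T$ of $\overline{\T}$. By \cref{rmk:subdivisionAreTiling}, the dual plabic graph $G(\T)$ is a plabic tree of type $(k+1,n)$, and by \cref{rk:treesTilings} combined with \cref{prop:TdualityPlabicGraphs}, $G(\T)$ has trip permutation $\pi$ (so $S_\pi = S_{G(\T)}$). Invoking \cref{prop:homeo2}, $\Gamma_\pi = \Gamma_{G(\T)}$ is a positroid tile of $\Delta_{k+1,n}$.

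The only subtlety I anticipate is bookkeeping: one must check that the two ``$\;\widehat{\phantom{M}}\;$'' operations (the one on decorated permutations from \cref{hatmap} and the one on plabic graphs from \cref{defn:plabicTDual}) really do agree on the trip permutation, and that different triangulations $\T$ of a fixed $\overline{\T}$ all give move-equivalent $\hat G(\T)$'s; both facts are already recorded (in \cref{prop:TdualityPlabicGraphs} and \cref{lem:equiv}), so this step is routine. The construction of $G$ and $\hat G$ from the same $\overline{\T}$ described in the corollary is then just the reading-off procedure produced by the argument above.
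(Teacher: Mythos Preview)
Your proof is correct and follows essentially the same approach as the paper: the paper explains (in the paragraph immediately preceding the corollary) that the result follows directly by combining \cref{thm:allGTs}, \cref{prop:homeo2}, and \cref{rk:treesTilings}, which is exactly what you do. Your additional bookkeeping remarks about \cref{lem:equiv} and \cref{prop:TdualityPlabicGraphs} are accurate and make explicit what the paper leaves implicit.
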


\begin{figure}[h]
	\includegraphics[width=0.8\textwidth]{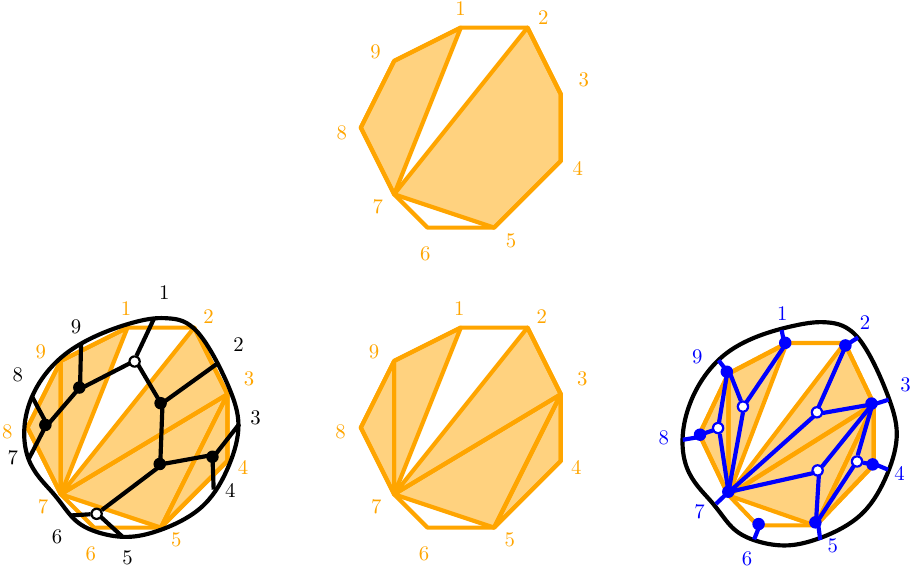}
	\caption{In the top row: a bicolored subdivision of type $(5, 9)$ $\overline{\T}$. 
		In the bottom row: a bicolored triangulation $\T$ obtained by triangulating 
		$\overline{\T}$, with the dual graph $G(\T)$ to its left, and the T-dual graph $\hatG(\T)$ to 
		its right.}
	\label{fig:unpuncTduality}
\end{figure}

From a bicolored subdivision $\overline{\T}$, we can 
obtain inequality descriptions of the positroid tile $\Gamma_{G(\T)} \subset \Delta_{k+1, n}$ and the T-dual positroid tile $\gto{\hatG(\T)} \subset \AA_{n, k, 2}(Z)$.

Given two positive numbers $a,b\in [n]$, the 
\emph{cyclic interval} $[a,b]$ is defined to be
$$[a,b]:=\begin{cases}
	\{a,a+1,\dots,b-1,b\}&\text{ if }a\leq b \\
	\{a,a+1,\dots,n,1,\dots,b\} &\text{ otherwise.}
\end{cases}$$

\begin{thm}[Inequalities and signs via T-duality]\label{thm:inequalitiesMatch}
	Let $\overline{\T}$ be a bicolored subdivision and let $h \to j$ be a compatible arc, with $h<j$. Let $G(\T)$ denote the tree plabic graph dual to $\T$, and $\hatG(\T)$ the T-dual. Then: 
\begin{align*}
	&\mbox{(1)} &\area(h \to j)+1 > x_{[h, j-1]} > \area(h \to j) \quad \text{for }&x \in \Gamma^{\circ}_{G(\T)}&\\
	&\mbox{(2)}& \sgn \langle Y h j\rangle=(-1)^{\area(h \to j)} \quad \text{for }&Y \in \gto{\hatG(\T)}.&
\end{align*}	
	The inequalities given by the arcs of any triangulation $\T'$ of $\T$ cut out $\Gamma^{\circ}_{G(\T)}$ and $\gto{\hatG(\T)}$.

\end{thm}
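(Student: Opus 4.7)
The plan is to dispatch part (2) immediately and concentrate effort on part (1) and the sufficiency statement. Part (2) is an immediate consequence of \cref{thm:sign1}: any arc $h \to j$ compatible with $\T$ satisfies the hypothesis of that theorem (whether it is a black arc of $\T$, a diagonal of a white polygon of $\overline{\T}$, or an arc of $\overline{\T}$ itself), so $\sgn \lrangle{Y h j} = (-1)^{\area(h \to j)}$ on $\gto{\hatG(\T)}$, and the conclusion extends by continuity to the closure.

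For part (1), my plan is to reduce to a rank-function computation. By \cref{prop:moment}, $\Gamma_{G(\T)}$ is the matroid polytope of $\M := \M_{G(\T)}$, and by \cref{prop:inequalitiesmatroids} the inequalities $x_{[h, j-1]} \leq r_\M([h, j-1])$ and $x_{[h, j-1]} \geq (k+1) - r_\M([j, h-1])$ hold on $\Gamma_{G(\T)}$. Part (1) will then follow from the combinatorial claim
\[
r_\M([h, j-1]) = \area(h \to j) + 1 \quad \text{and} \quad r_\M([j, h-1]) = k - \area(h \to j),
\]
together with strictness on the interior $\Gamma^\circ_{G(\T)}$. Since bases of $\M$ are boundaries of perfect matchings of the plabic tree $G(\T)$, the key step is to show $\max_{M} |\partial M \cap [h, j-1]| = \area(h \to j) + 1$. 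I would argue by induction on the number of black polygons: a compatible arc $h \to j$ partitions $\overline{\T}$ cleanly into a left region carrying $\area(h \to j)$ black polygons and a right region carrying the remaining $k - \area(h \to j)$. When $h \to j$ bounds a polygon of $\overline{\T}$ this split is dual to cutting a single edge of $G(\T)$, and matchings factor multiplicatively across the cut, which makes the inductive step direct. When $h \to j$ lies strictly inside a polygon $P$ of $\overline{\T}$, I would refine $\overline{\T}$ by inserting $h \to j$ as a diagonal of $P$, check that this refinement leaves $\M_{G(\T)}$ unchanged on the relevant subsets, and thereby reduce to the previous case. The main obstacle I anticipate is verifying strictness on $\Gamma^\circ_{G(\T)}$: one must rule out the possibility that $x_{[h,j-1]}$ is pinned to its bound on some face containing an interior point. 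This is cleanest to handle by exhibiting, for each compatible arc $h \to j$, two matchings of $G(\T)$ whose boundaries realize distinct values of $|\partial M \cap [h, j-1]|$, so that the corresponding vertex points $e_I$ of $\Gamma_{G(\T)}$ give both strict inequalities at once.

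For the final sufficiency statement, I would combine two observations. On the amplituhedron side, \cref{thm:surjectivity} already shows that the sign conditions indexed by the black arcs of any bicolored triangulation representing $\overline{\T}$ suffice to cut out $\gto{\hatG(\T)}$; since any triangulation $\T'$ of $\overline{\T}$ contains all arcs of $\overline{\T}$ and in particular all facet-defining ones, the sign conditions indexed by arcs of $\T'$ are at least as restrictive and still cut out the Grasstope. On the hypersimplex side, the facets of $\Gamma_{G(\T)}$ arise exactly from facet-defining arcs of $\overline{\T}$ in the sense of \cref{def:arcs}, and these lie in every triangulation $\T'$ of $\overline{\T}$; together with the rank computation above they cut out $\Gamma_{G(\T)}$, while the inequalities coming from the extra diagonals of $\T'$ are consistent but redundant. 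This parallelism between the two cutting-out statements is precisely the hypersimplex/amplituhedron correspondence the theorem is designed to highlight.
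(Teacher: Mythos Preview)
Your proposal is correct and follows the paper's approach: part~(2) is \cref{thm:sign1}, part~(1) is proved as \cref{lem:treeInequalities} by cutting the tree along the edge dual to $h \to j$ and analyzing matchings on the two components (with \cref{lem:kInTermsOfTriangles} converting vertex counts into areas), and the sufficiency statement is obtained by combining \cref{thm:surjectivity} with the facet description \cref{prop:treefacet}. One small caveat: the facets of $\Gamma_{G(\T)}$ do not come \emph{only} from facet-defining arcs in the sense of \cref{def:arcs}---\cref{prop:treefacet} also lists $x_i \geq 0$ for boundary vertices $i$ adjacent to a white vertex---but these are precisely the lower bounds in (1) for the boundary arcs $i \to i+1$, which lie in every $\T'$, so your conclusion is unaffected.
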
 
\begin{example}
Consider the bicolored subdivision $\overline{\T}$ in \cref{fig:unpuncTduality}. 
We have 
\begin{align*}
	& 5 > x_{[1,7]} > 4, && 4 > x_{[1,6]} > 3, && 3 > x_{[2,5]} > 2, && \quad \text{for }  x \in \Gamma^{\circ}_{G(\T)}\\
&\langle Y 18 \rangle > 0,&& \langle Y 17 \rangle < 0, && \langle Y 26 \rangle > 0, && \quad \text{for } Y \in \gto{\hat{G}(\T)}.
\end{align*}
\end{example}

To prove 
\cref{thm:inequalitiesMatch}, 
we need a few results on positroid polytopes $\Gamma_G$. 

\begin{lem}\label{lem:kInTermsOfTriangles}
	Let $G$ be a bipartite plabic graph and let $\T$ be the dual plabic tiling. Let $W(G)$ and $B(G)$ denote the set of white and black vertices of $G$, respectively. Then \[|W(G)|-|B(G)|+|\{\text{bdry vt of } G\text{ adjacent to a black vt}\}|= \area(\T)-\punc(\T)+1\]
	where $\area(\T)$ is the number of black triangles in any triangulation of $\T$ and $\punc(\T)$ is the number of internal vertices of $\T$. 
\end{lem}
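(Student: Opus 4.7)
The plan is to prove this identity by Euler's formula on the plabic tiling $\T$, exploiting the duality between polygons/edges of $\T$ and vertices/edges of $G$. Since $G$ is bipartite, all internal vertices are black or white, so by duality the polygons of $\T$ are in bijection with internal vertices of $G$: there are $|B(G)|$ black polygons and $|W(G)|$ white polygons. Moreover, internal vertices of $\T$ are dual to internal faces of $G$, giving $\punc(\T) = F_{\text{int}}(G)$. A key observation is that, by the definition of $\T(G)$, the boundary edge $(i,i+1)$ of $\mathbf{P}_n$ is dual to the edge of $G$ incident to boundary vertex $i$; hence the $n$ boundary edges of $\T$ are in bijection with the $n$ boundary vertices of $G$, and a boundary edge is adjacent to a black polygon of $\T$ iff the corresponding boundary vertex of $G$ is adjacent to a black internal vertex. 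Writing $B_\partial$ for the quantity on the left-hand side, this means there are exactly $B_\partial$ boundary edges of $\T$ that lie on a black polygon.

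First I would count the total ``edge incidences'' of black polygons. Since $G$ is bipartite, every internal edge of $\T$ separates one black polygon from one white polygon, while each boundary edge of $\T$ lies on a single polygon. Hence
\[
  \sum_{P \text{ black}} s(P) \;=\; E_{\text{int}}(\T) + B_\partial,
\]
where $s(P)$ is the number of edges of the polygon $P$. Triangulating each black $s$-gon produces $s-2$ triangles, so
\[
  \area(\T) \;=\; \sum_{P \text{ black}} \bigl(s(P) - 2\bigr) \;=\; E_{\text{int}}(\T) + B_\partial - 2|B(G)|.
\]

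Next I would apply Euler's formula to $\T$ as a polygonal subdivision of $\mathbf{P}_n$: with $V(\T) = n + \punc(\T)$, $F(\T) = |B(G)| + |W(G)|$ and $E(\T) = E_{\text{int}}(\T) + n$, the relation $V - E + F = 1$ gives
\[
  E_{\text{int}}(\T) \;=\; \punc(\T) + |B(G)| + |W(G)| - 1.
\]
Substituting this into the formula for $\area(\T)$ above and rearranging yields exactly
\[
  |W(G)| - |B(G)| + B_\partial \;=\; \area(\T) - \punc(\T) + 1,
\]
which is the claim.

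The argument is essentially mechanical once the duality dictionary between $G$ and $\T$ is set up; the only mild subtlety is identifying $B_\partial$ with the number of boundary edges of $\T$ that border a black polygon, which uses the convention that each boundary vertex of $G$ has a unique incident edge and the definition of $\T$ linking boundary vertices of $\mathbf{P}_n$ to boundary faces of $G$. No induction is needed.
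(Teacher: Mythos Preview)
Your proof is correct and is essentially the same argument as the paper's, told on the dual side: the paper sums $\deg(v)-2$ over black vertices of $G$ and applies Euler's formula to $G$, while you sum $s(P)-2$ over black polygons of $\T$ and apply Euler's formula to $\T$; under the planar duality between $G$ and $\T$ these computations line up term by term. Your identification of $B_\partial$ with the number of boundary edges of $\T$ bordering a black polygon is exactly the dual of the paper's observation that $\sum_{v\in B(G)}\deg(v)=|E(G)|-|\{\text{bdry vt adjacent to a white vt}\}|$.
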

\begin{proof}
	Let $E$ denote the edges of $G$ involving at least one internal vertex. Each black vertex of $G$ is dual to a black polygon of $\T$ with $\deg(v)$ many sides, so we have
	\begin{align*}
		\area(\T)&= \sum_{v \in B(G)} \left( \deg(v)-2 \right) = \sum_{v \in B(G)} \deg(v) - 2|B(G)|\\
		&= |E(G)|-|\{\text{bdry vt adjacent to a white vt}\}|-2|B(G)|
	\end{align*}
	where the last equality follows from the fact that every edge of $G$ contains a unique black vertex, except edges between a boundary vertex and a white vertex. The claim follows from this formula together with Euler's formula for planar graphs.
\end{proof}

\begin{prop}\label{lem:treeInequalities}
	Let $\T$ be a bicolored subdivision and $G(\T)$ the dual bipartite tree plabic graph. For all arcs $h \to j$ compatible with $\T$, points of $\Gamma_{G(\T)}$ satisfy 
	\[\area(h \to j) +1 \geq x_{[h, j-1]}\geq \area(h \to j).
	\]
\end{prop}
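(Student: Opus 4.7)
The goal is to bound $x_{[h, j-1]}$ for $x \in \Gamma_\M$, where $\M$ is the positroid of $G(\T)$. By the matroid inequality description in \cref{prop:inequalitiesmatroids}, these bounds reduce to two rank-function inequalities:
$$r_\M([h, j-1]) \leq \area(h \to j) + 1 \quad \text{and} \quad r_\M([j, h-1]) \leq k - \area(h \to j) + 1.$$
The first yields the upper bound on $x_{[h, j-1]}$, while the second---combined with $x_{[n]} = k+1$---yields the lower bound. Since the arc $j \to h$ is also compatible with $\T$, with $\area(j \to h) = k - \area(h \to j)$, the second inequality follows from the first applied to the reversed arc. So I focus on proving $r_\M([h, j-1]) \leq \area(h \to j) + 1$.

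I first reduce to the case where $\T$ is a bicolored triangulation in which $h \to j$ is a diagonal. Since $h \to j$ is compatible with $\T$, I can refine $\T$ to such a triangulation $\T'$. Refining each polygon into same-colored smaller triangles introduces monochromatic edges in the dual plabic graph, and contraction of monochromatic edges is a standard positroid-preserving plabic graph move. Hence $G(\T)$ and $G(\T')$ define the same positroid $\M$, and I may assume $\T = \T'$.

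Now $h \to j$ is dual to a bridge edge $e$ of the plabic tree $G(\T)$. Cutting at $e$ splits $\T$ into bicolored triangulations $\T_L$ (on the sub-polygon with vertex set $[h, j]$, containing $a := \area(h \to j)$ black triangles) and $\T_R$ (on $[j, h]$, containing $k - a$ black triangles). Correspondingly, $G(\T)$ decomposes into subtrees $G_L, G_R$ which, after promoting the cut endpoint to a new boundary vertex $u$, are the dual plabic trees of $\T_L$ and $\T_R$, respectively; in particular, the positroid $\M_L$ of $G(\T_L)$ has rank $a + 1$. By \cref{thm:Kasteleyn}, each basis $I$ of $\M$ corresponds to an almost-perfect matching $M$ of $G(\T)$ with $\partial M = I$. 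Restricting $M$ to $G_L$ (with a canonical extension near $u$) yields an almost-perfect matching $M_L$ of $G(\T_L)$ whose boundary is a basis of $\M_L$, hence of size $a + 1$. Since the case analysis on whether $e \in M$ or not controls whether $u \in \partial M_L$, we conclude that $|I \cap [h, j-1]|$ lies in $\{a, a+1\}$, whence $r_\M([h, j-1]) \leq a + 1$, as desired.

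The main obstacle lies in the matching bookkeeping at the cut edge: one must carefully set up the bipartite structure of $G(\T)$ (with auxiliary bivalent vertices added as needed) so that the restriction of $M$ to $G_L$ extends uniquely to an almost-perfect matching of $G(\T_L)$, and verify the identification of $G_L$ (with its promoted boundary vertex) with $G(\T_L)$. Once this local analysis at the cut is in place, the rank of $\M_L$ being $a + 1$ follows from $G(\T_L)$ being the dual plabic tree of a bicolored triangulation of type $(a, j - h + 1)$, and the dichotomy $|I \cap [h, j-1]| \in \{a, a+1\}$ closes the argument.
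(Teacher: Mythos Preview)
Your proof is correct and follows essentially the same approach as the paper: cut the tree $G(\T)$ at the edge $e$ dual to $h\to j$, identify the subtree on the $[h,j-1]$ side (plus a new boundary vertex) with the dual plabic tree of the sub-triangulation $\T_L$, and use the resulting matching structure to pin down $|I\cap[h,j-1]|\in\{a,a+1\}$ for every basis $I$.

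The organizational differences are minor. You refine to a triangulation up front and read off $\rank(\M_L)=a+1$ directly from the type of $\T_L$; the paper instead keeps the subdivision, splits into two cases (whether $h\to j$ is already an arc of $\T$ or merely compatible), computes the rank as $|W(G^w)|-|B(G^w)|$, and then invokes \cref{lem:kInTermsOfTriangles} to translate this into $\area$. Your shortcut is cleaner, but note that the paper's case distinction does not actually disappear in your version: it resurfaces as the bipartiteness bookkeeping you flag in your last paragraph. When the two triangles adjacent to $h\to j$ have the same color, the dual edge $e$ is monochromatic, and making the graph bipartite replaces $e$ by two edges with a bivalent vertex in between---exactly the situation the paper handles as its second case. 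So the ``canonical extension near $u$'' and the case analysis on whether $e\in M$ need the same care either way.

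One small redundancy: once you establish $|I\cap[h,j-1]|\in\{a,a+1\}$ for every basis, convexity of $\Gamma_\M$ gives both inequalities immediately, so the initial symmetry reduction (swapping $h\leftrightarrow j$) is not needed.
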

\begin{proof}
	Let $G$ be the graph obtained from $G(\T)$ by adding bivalent white vertices so that every boundary vertex is adjacent to a white vertex. Note that $G$ is bipartite and represents the same positroid $\M$ as $G(\T)$. In particular, the boundaries of matchings of $G$ give the bases of $\M$. Let $W(G)$ and $B(G)$  denote the sets of white and black vertices of $G$, respectively.
	
	Note that if $j=h+1$, the inequality is clear. 

	We first deal with the case where $h \to j$ is an internal arc of $\T$. 
	Let $e$ be the edge of $G$ which is dual to $h \to j$, and say the vertices of $e$ are a white vertex $w$ and black vertex $b$. If we remove the edge $e$, $G \setminus e$ has two connected components, $G^w$ containing $w$ and $G^b$ containing $b$. Notice that both connected components are again bipartite plabic trees. Let $I^w$ and $I^b$ denote the boundary vertices of $G^w$ and $G^b$, respectively. Because vertex $i$ of $\T$ lies between boundary vertices $i-1, i$ of $G$,  $\{I^w, I^b\}= \{[h, j-1], [j, h-1]\}$.  
	
	Now, we would like to compute the ranks of $I_w, I_b$. That is, for a matching $M$ of $G$, we need to compute the maximum size of $\partial M \cap I^w$ and $\partial M \cap I^b$. 
	
	Let $M$ be a matching of $G$. If $M$ does not contain $e$, then $M$ restricts to a matching of $G^w$ and $G^b$. It is easy to see that \begin{align*}
		|\partial M \cap I^w|=&| W(G^w)|- | B(G^w)|\\
			|\partial M \cap I^b|=& | W(G^b)|- | B(G^b)|.
	\end{align*}

If $M$ does contain $e$, then choose a path $P$ from boundary to boundary which uses $e$ and alternates between edges in $M$ and edges not in $M$. Such a path can be constructed greedily because $G$ is a tree. Orient $P$ so it sees first $w$ and then $b$. The edges of $P$ in $M$ are exactly the ones oriented from a white vertex to a black vertex. The first edge of $P$ touches a boundary vertex in $I^w$ and is oriented to a white vertex, so is not in $M$. The last edge of $P$ touches a boundary vertex in $I^b$ and is in $M$. Define a new matching $N$ of $G$ by $N:= (M \setminus P) \cup (P \setminus M)$. The boundary $\partial N$ contains one more element of $I^w$ than $\partial M$, and one fewer element of $I^b$. The matching $N$ does not contain $e$, so using the previous computation, we see that 
\begin{align*}
	|\partial M \cap I^w|=& | W(G^w)|- | B(G^w)|-1\\
	|\partial M \cap I^b|=& |W(G^b)|- | B(G^b)|+1.
\end{align*}
	We conclude that $\rank(I^w) = |W(G^w)|-| B(G^w)|$ and $\rank(I^b)=| W(G^b)|- | B(G^b)|+1$. 

From  \cref{prop:inequalitiesmatroids} and the fact that the rank of $\M$ is $|W(G)|- | B(G)|= |W(G^w)|- |B(G^w)| + |W(G^b)|-| B(G^b)| $, we see that the points of $\Gamma_{G(\T)}$ satisfy
\begin{align*}
	| W(G^w) |-| B(G^w)|-1 \leq & x_{I^w} \leq | W(G^w)|-| B(G^w)|\\
	| W(G^b) |-| B(G^b)| \leq& x_{I^b} \leq | W(G^b)| - |B(G^b)|+1.
\end{align*}

All that remains is to rewrite the right hand sides of these inequalities in terms of area. Cut $\T$ along the arc $h \to j$, to get two smaller bicolored subdivisions $\T^w$ and $\T^b$ containing the polygons dual to $w$ and $b$, respectively. Notice that the graph $G(\T^w)$ dual to $\T^w$ can be obtained from $G^w$ by adding a boundary vertex adjacent to $w$. Similarly, $G(\T^b)$ is obtained from $G^b$ by adding a boundary vertex adjacent to $b$. So, using \cref{lem:kInTermsOfTriangles}, 
\begin{align*}
	|W(G^w)|- |B(G^w)|&= \area(\T^w)+1\\
	|W(G^b)|- |B(G^b)|+1&= \area(\T^b)+1.
\end{align*}

Now, choose $v \in \{b, w\}$ so that $I^v=[h, j-1]$. Since $\T^v$ is exactly the part of $\T$ to the left of $h \to j$, the proposition now follows.

We now consider the case where $h \to j$ is not an arc of $\T$. In this case, let $\T'$ be the plabic tiling obtained from $\T$ by adding the arc $h \to j$. Let $G$ be the tree plabic graph dual to $\T'$, which we make bipartite by adding an appropriately colored bivalent vertex $v$ to the edge dual to $h \to j$. We also add bivalent white vertices to $G$ to make all boundary vertices adjacent to a white vertex. Let $e$ and $f$ denote the edges containing $v$, and say $e$ is to the left of $h \to j$. Similar to the first case, removing edges $e, f$ and vertex $v$ from $G$ gives a graph with two connected components $G^e$, $G^f$ which contain vertices adjacent to $e$ and $f$, respectively. Notice that the boundary vertices $I^e$ of $G^e$ are exactly $[h, j-1]$. The rest of the argument is very similar to the first case.

\end{proof}

Recall that an arc $h \to j$ of a bicolored subdivision is facet-defining if it bounds a black polygon on its left. 
\begin{prop} \label{prop:treefacet}
	Let $\T$ be a bicolored subdivision, and let $G$ be the dual plabic tree of type $(k+1, n)$. Then $\Gamma_{G}$ is cut out of $\R^n$ by the equality $x_{[n]}=k+1$ and the following inequalities, each of which defines a facet:
	\begin{enumerate}
		\item $x_i \geq 0$ for $i$ a boundary vertex adjacent to a white vertex
		\item $x_{[h, j-1]} \geq \area(h \to j)$ for $h \to j$ a facet-defining arc of $\T$. 
	\end{enumerate}
\end{prop}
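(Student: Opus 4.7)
The plan is to establish three things: (A) the listed relations hold on $\Gamma_G$, (B) each listed inequality defines a facet, and (C) together with $x_{[n]}=k+1$ they cut out $\Gamma_G$. Item (A) is immediate: the equality holds since $\Gamma_G \subseteq \Delta_{k+1,n}$; $x_i \geq 0$ holds because all vertices of $\Gamma_G$ are $\{0,1\}$-vectors; and the area bounds are exactly \cref{lem:treeInequalities}.

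For (B), consider a facet-defining arc $h \to j$ with dual edge $e=\{w,b\}$ in $G$, where $b$ is the black vertex for the black polygon on the left. Removing $e$ partitions $G$ into two plabic trees $G^b, G^w$ whose boundary vertex sets are $[h,j-1]$ and $[j,h-1]$. The proof of \cref{lem:treeInequalities} shows that matchings $M$ of $G$ achieving $|\partial M \cap [h,j-1]|=\area(h \to j)$ are exactly those containing $e$, and these decompose uniquely as $\{e\}\cup M_b \cup M_w$ with $M_b, M_w$ matchings of $G^b, G^w$ not covering $b, w$ respectively. Hence the equality face is affinely isomorphic to a product of two positroid polytopes of plabic trees, whose dimensions sum to $(j-h-1)+(n-j+h-1)=n-2$, so it is a facet. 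For a vertex inequality $x_i \geq 0$ with $i$ adjacent to a white vertex $w$, the face of equality is the polytope of the deletion $\M(G)\setminus i$; because $w$ has degree at least $2$ in the non-degenerate tree $G$, $i$ is not a coloop, and this deletion is a connected positroid of rank $k+1$ on $n-1$ elements, giving a polytope of dimension $n-2$.

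For (C), we use the standard fact that every facet of a positroid polytope is cut out either by a singleton inequality or by a rank inequality $x_A \leq r_{\M(G)}(A)$ for some cyclic interval $A$ (for instance, by the characterization of positroid flacets in the work of Ardila--Rincón--Williams). For $A=[h,j-1]$ consider the chord $h \to j$. If it is compatible with $\T$, then exactly one of $h\to j$ and $j\to h$ is facet-defining: in either case the corresponding lower bound appears in (2), and the other bound on $x_A$ is recovered via $x_{[n]}=k+1$, matching $r_\M(A)$ as computed in \cref{lem:treeInequalities}. If $h\to j$ crosses $\T$, then one argues by decomposing the chord along its crossings with arcs of $\T$: the rank $r_\M(A)$ can be written in terms of areas of the compatible sub-chords produced by the crossings, so the inequality is a direct combination of facet inequalities coming from those sub-chords. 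The remaining singleton inequalities $x_i \geq 0$ for $i$ adjacent to a black vertex are implied by the facet-defining arcs bounding the black polygon containing $i$, and $x_i \leq 1$ is active only at coloops, which do not occur here.

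The main obstacle is (C), in particular handling non-compatible chords. The hard step is verifying that the rank function of $\M(G)$ decomposes cleanly along crossings of a chord with $\T$; this requires careful use of submodularity of matroid rank together with the tree structure of $G$, and it is the step where the combinatorics of the bicolored subdivision must be leveraged most directly.
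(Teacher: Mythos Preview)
Your approach differs substantially from the paper's, and part (C) contains a genuine gap. The paper's proof is much more direct: since the moment map is a stratification-preserving homeomorphism from $\overline{S_G}$ to $\Gamma_G$ (\cref{prop:homeo2}), the facets of $\Gamma_G$ are in bijection with the codimension-one cells of $\overline{S_G}$, which by \cite[Corollary 18.10]{postnikov} are exactly the cells $S_{G\setminus e}$ as $e$ ranges over edges of $G$. Because $G$ is a tree, every $G\setminus e$ is reduced, so this immediately enumerates \emph{all} facets---one per edge---and one then just identifies the supporting hyperplane of each: edges from a boundary vertex $i$ to a white vertex yield $x_i=0$; edges from $i$ to a black vertex yield $x_i=1$, i.e.\ $x_{[i+1,i-1]}=\area(i{+}1\to i)$; and internal edges dual to a facet-defining arc $h\to j$ yield $x_{[h,j-1]}=\area(h\to j)$, by the rank computation in the proof of \cref{lem:treeInequalities}. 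No separate argument that ``these are all the facets'' is needed.

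The gap in your (C) is the non-compatible chord case. You assert that $r_{\M(G)}([h,j-1])$ ``decomposes cleanly along crossings'' so that the corresponding rank inequality is a combination of facet inequalities from sub-chords, but you do not prove this, and it is not automatic: matroid rank is submodular, not additive along such cuts, and you would need a specific argument exploiting the tree structure of $G$ to show the rank inequality is redundant. The paper never confronts this issue because it enumerates facets via the cell stratification rather than via the Ardila--Rinc\'on--Williams description of positroid facets. A smaller point: your claim that ``$x_i\le 1$ is active only at coloops'' is incorrect---when $i$ is adjacent to a black vertex, $x_i\le 1$ \emph{is} a facet; it simply appears in your list (2) via the boundary arc $(i{+}1)\to i$.
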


\begin{proof}
	
	Recall from \cref{prop:homeo2} that the moment map $\mu$ is a stratification-preserving homemorphism on the closure of $S_{G}$. So the facets of $\Gamma_G$ are exactly the positroid polytopes $\Gamma_{G'}$ where $S_{G'}$ is a positroid cell contained in $\overline{S_G}$ with codimension 1. From \cite[Corollary 18.10]{postnikov}, each such cell is indexed by a reduced plabic graph $G'$ obtained from $G$ by removing a single edge (if the edge removed is between a boundary vertex and an internal vertex $v$, we also add a lollipop which is the opposite color of $v$). 
	
	Because $G$ is a tree, $G'=G\setminus e$ is reduced for all edges $e$. If $e$ is between a boundary vertex $i$ and a white vertex, then $G'$ has a black lollipop at $i$. Thus $S_{G'}$ has a loop at $i$, and $\Gamma_{G'}$ is contained in the hyperplane $x_i=0$. Clearly $\Gamma_{G}$ lies on the positive side of this hyperplane, which explains the facet inequalities of type $1$. 
	
	If $e$ is an edge between a boundary vertex $i$ and a black vertex, then $e$ is dual to the arc $(i+1) \to i$ of $\T$, which is a facet-defining arc. Then $G'$ has a white lollipop at $i$, so $\Gamma_{G'}$ is contained in the hyperplane $x_i=1$. Since we also have $x_{[n]}=k+1$, $\Gamma_{G'}$ is also contained in the hyperplane $x_{[i+1, i-1]}= k=\area(i+1 \rightarrow i)$. 
	
	Now, consider the case when $e$ is an edge between two internal vertices of $G$. The edge $e$ is dual to the arc $h \to j$ of $\T$, which bounds a black polygon on the left. The proof of \cref{lem:treeInequalities} shows that $\Gamma_{G'}$ is contained in the hyperplane $x_{[h, j-1]}= \area(h \to j)$. This covers all edges of $G$, so we have described all facets. The directions of the facet inequalities follow immediately from \cref{lem:treeInequalities}.
\end{proof}

We can now prove \cref{thm:inequalitiesMatch}.

\begin{proof}[Proof of \cref{thm:inequalitiesMatch}]
	(1) follows from \cref{lem:treeInequalities} and (2) follows from \cref{thm:sign1}. 
	The statement about inequalities cutting out $\gto{\hatG(\T)}$ and $\Gamma^\circ_{G(\T)}$ follow from 
	\cref{thm:surjectivity}, \cref{prop:treefacet} and the fact that $x_{[n]}=\area(h \to j)+ \area(j \to h)+1$.
\end{proof}

We next generalize \cref{thm:inequalitiesMatch} by providing inequalities for full-dimensional positroid polytopes and Grasstopes from statistics of plabic tilings. We first generalize the definition of compatible arcs from \cref{def:arcs}.
\begin{definition}[Statistics of plabic tilings]	\label{def:arcsPT}
	Let $\T$ be a plabic tiling in a convex $n$-gon
	$\mathbf{P}_n$
	and $ h, j$ a pair of vertices of $\mathbf{P}_n$. We say that the arc $h\to j$
	is \emph{compatible} with $\T$ if 
	the arc either bounds or lies entirely inside a single polygon
	of $\T$. 
	When $h\to j$ is compatible with $\T$, we let 
	$\area(h\to j) = \area_{\T}(h\to j)$ 
	denote the number of black triangles to the left of $h\to j$ 
	in any triangulation of $\T$.   We call the internal vertices of $\mathcal{T}$ \emph{punctures}. We also let
	 $\punc(h\to j)=\punc_{\T}(h\to j)$ denote the number of punctures of $\mathcal{T}$ to the left of the arc $h\to j$. Note that black bigons do not contribute to the area.
\end{definition}

For example, the tiling $\T$ in \cref{fig:plabicTiling} has two punctures, and $1 \rightarrow 3, 1 \rightarrow 5, 5 \rightarrow 7$ are compatible arcs. We have $\area(1 \rightarrow 3)=0$, $\area(1 \rightarrow 5)=2$, $\area(5 \rightarrow 7)=1$, $\punc(1 \rightarrow 3)=\punc(5 \rightarrow 7)=0$, $\punc(1 \rightarrow 5)=1$.

\begin{thm}\label{prop:inequalitiesMatchGen}
	Let $\T$ be a plabic tiling and let $h \to j$ be a compatible arc, with $h<j$. Let $G(\T)$ denote the plabic graph dual to $\T$, and $\hatG(\T)$ the T-dual. Then: 
\begin{align*}
	&\mbox{(1)}& \area(h \to j)-\punc(h \to j)+1 > x_{[h, j-1]} > \area(h \to j)-\punc(h \to j) \quad \text{for } &x \in \Gamma^{\circ}_{G(\T)}&\\
	&\mbox{(2)}& \sgn \langle Y h j\rangle=(-1)^{\area(h \to j)-\punc(h \to j)} \quad \quad \quad \text{for }&Y \in \gto{\hatG(\T)}.&
\end{align*}	
\end{thm}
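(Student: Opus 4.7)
My plan is to adapt the proof of \cref{thm:inequalitiesMatch}, in which the puncture-free case was handled by invoking \cref{lem:treeInequalities} and \cref{thm:sign1}.  Both parts rest on a common construction: since the arc $h \to j$ is compatible with $\T$, slicing $\T$ along $h \to j$ produces two plabic tilings $\T^L$ and $\T^R$ on smaller polygons with vertex sets $\{h, \dots, j\}$ and $\{j, \dots, h\}$ respectively.  The positroid of $G(\T)$ restricts on $[h, j-1]$ to the positroid of $G(\T^L)$, and symmetrically on the right, while the local statistics satisfy $\area_{\T^L}(\T^L) = \area_\T(h \to j)$ and $\punc_{\T^L}(\T^L) = \punc_\T(h \to j)$ (with the analogous identity on the right).

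For (1), I would apply \cref{lem:kInTermsOfTriangles} to $G(\T^L)$ to compute its rank as $\area_\T(h \to j) - \punc_\T(h \to j) + 1$, then invoke \cref{prop:inequalitiesmatroids} to deduce $x_{[h, j-1]} \leq \area(h \to j) - \punc(h \to j) + 1$ on $\Gamma_{G(\T)}$.  Strict inequality on the open positroid polytope holds because equality places $x$ on a proper boundary stratum, corresponding to the positroid cell obtained by deleting the edge of $G(\T)$ dual to $h \to j$ (or, when $h \to j$ is a diagonal strictly inside a polygon rather than an edge of $\T$, a short degeneration of $G(\T)$ in the spirit of \cref{prop:treefacet}).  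The lower bound follows by symmetry: apply the upper bound on the right, then combine with $x_{[n]} = k+1$ and the global identity $\area(h \to j) + \area(j \to h) - \punc(h \to j) - \punc(j \to h) = k$, which itself falls out of \cref{lem:kInTermsOfTriangles} applied to $G(\T)$ as a whole.

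For (2), I would follow the template of \cref{thm:sign1}.  First, triangulate $\T$ to obtain a plabic tiling $\T'$ (same underlying tiling, all black polygons triangulated) whose dual graph is black-trivalent, so that $\hatG(\T')$ is defined and move-equivalent to $\hatG(\T)$; in particular $\gto{\hatG(\T')} = \gto{\hatG(\T)}$.  By \cref{lem:useful}, it suffices to show $\sgn \lrangle{Z_{i_1}, \dots, Z_{i_k}, Z_h, Z_j} = (-1)^{\area - \punc}$ for every $I = \{i_1 < \dots < i_k\}$ with $p_I$ nonzero on $S_{\hatG(\T')}$ and $h, j \notin I$.  As in \cref{thm:sign1}, this sign equals $(-1)^{|I \cap (h, j)|}$, so the statement reduces to the congruence $|I \cap (h, j)| \equiv \area_\T(h \to j) - \punc_\T(h \to j) \pmod{2}$.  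Unfolding the construction of $\hatG(\T')$, an almost perfect matching with boundary $I$ pairs each of the black triangles of $\T'$ to the left of $h \to j$ with one of its three vertices; the internal black vertices of $\hatG$ sitting at punctures to the left of $h \to j$ must be covered by exactly one such pairing each, so the number of these pairings hitting a boundary vertex strictly in $(h, j)$ is exactly $\area - \punc$.  Existence of such an $I$ is established by induction on the number of black triangles of $\T'$, as in the last paragraph of the proof of \cref{thm:sign1}, with an extra case when $h \to j$ is adjacent to a puncture (handled by first consuming the puncture via one matched edge).

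The main obstacle is expected to be the bookkeeping in (1): when $h \to j$ is a diagonal strictly inside a polygon $P$ of $\T$ rather than an edge of $\T$, cutting splits $P$ into two polygons of the same color on either side, and the relationship between $G(\T)$ and $G(\T^L) \sqcup G(\T^R)$ involves more than simply deleting an edge.  One must keep careful track of which boundary faces and bivalent vertices must be introduced in order to apply \cref{lem:kInTermsOfTriangles} to $G(\T^L)$ uniformly across all cases (edge versus diagonal, black-polygon versus white-polygon diagonal, diagonals that hug a puncture, etc.), so that the same area-minus-puncture count emerges regardless.  Once this uniform identification of $\rank(\M^L)$ with $\area - \punc + 1$ is in place, the rest of the argument parallels \cref{thm:inequalitiesMatch} essentially verbatim.
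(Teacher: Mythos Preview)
Your proposal is correct and follows essentially the same route as the paper's proof. For (2) you match the paper exactly: both arguments reduce to the observation that in any almost-perfect matching of $\hatG(\T)$ avoiding $h,j$, the boundary vertices hit inside $(h,j)$ number precisely $\area(h\to j)-\punc(h\to j)$, because the $\area$ many white vertices and the $\punc$ many internal black vertices (punctures) to the left of the arc must all be covered. For (1) the paper also removes the edge dual to $h\to j$ and computes the rank of $[h,j-1]$ via matchings (using ``swivels'' rather than your restriction-to-interval phrasing) before invoking \cref{lem:kInTermsOfTriangles}; the bookkeeping obstacle you flag---that cutting introduces an extra boundary vertex, so $G(\T^L)$ is not literally the component of $G(\T)\setminus e$---is real and is exactly the adjustment the paper alludes to with its parenthetical ``or a graph which differs from $G(\T)$ only by uncontracting an edge and adding a bivalent vertex.''
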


	Note that compatible arcs depend on the tiling $\T$, while $\Gamma^{\circ}_{G(\T)}$ and $\gto{\hatG(\T)}$ depend only on $\overline{\T}$. Any arc compatible with any tiling equivalent to $\T$ gives inequalities for $\Gamma^{\circ}_{G(\T)}$ and $\gto{\hatG(\T)}$ via \cref{prop:inequalitiesMatchGen}  .

\begin{proof}
	The proof of (1) proceeds similarly as in \cref{lem:treeInequalities}, where we compute the rank of $[h, j-1]$. The arc $h \to j$ is dual to an edge $e$ of $G(\T)$ (or a graph which differs from $G(\T)$ only by uncontracting an edge and adding a bivalent vertex). Removing $e$ gives two connected components, the boundary vertices of which are $[h, j-1]$ and $[j, h-1]$. Again, any matching of $G(\T)$ will either use $e$ or differs from a matching using $e$ by a ``swivel" (see \cite[Appendix B]{MullerSpeyer}) along one of the two boundary faces containing $e$ (which changes the boundary's intersection with $[h, j-1]$ by precisely 1) followed by swivels at faces contained in one of the connected components (which do not change the boundary's intersection with $[h, j-1]$). In this way we compute the rank of $[h, j-1]$ and $[j, h-1]$. One must apply \cref{lem:kInTermsOfTriangles} to obtain the ranks in terms of $\area$ and $\punc$.
	
	The proof of (2) proceeds similarly as in \cref{thm:sign1}. Any almost-perfect matching $M$ of $\hatG(\T)$ which does not have $h$ or $j$ in $\partial M$ will have $|\partial M \cap [h+1, j-1]|= \area(h \to j)-\punc(h \to j)$. Indeed, there are exactly $\area(h \to j)$ internal white vertices to the left of $h \to j$, which must be covered by an edge of $M$, and exactly $\punc(h \to j)$ many internal black vertices, which also must be covered. This leaves $\area(h \to j)- \punc(h \to j)$ edges of $M$ which cover a boundary vertex.
\end{proof}

\begin{example}
For the plabic tiling $\T$ in \cref{fig:plabicTiling}, \cref{prop:inequalitiesMatchGen} tells us that
\begin{align*}
&1 > x_{[1,2]} > 0, && 2 > x_{[1,4]} > 1, && 2 > x_{[5,6]} >1, && \quad \text{for }  x \in \Gamma_{G(\T)}; \\
&\langle Y 13 \rangle > 0,&& \langle Y 15 \rangle < 0, && \langle Y 57 \rangle < 0, && \quad \text{for } Y \in \gto{\hat{G}(\T)}.
\end{align*}
\end{example}

\subsection{T-Duality, Facets and Cluster Adjacency}
From a bicolored subdivision $\T$, we can also read off the facets of both $\Gamma_{G(\T)}$ and $\gt{\hatG(\T)}$ (see \cref{def:facetG}).

\begin{thm}[Facets via T-duality] \label{th:facetsGT}
	Let $\T$ be a bicolored triangulation and let $h \to j$ be a facet-defining arc of $\T$. Let $G:=G(\T)$ be the plabic tree dual to $\T$ and let $G'$ be the plabic forest obtained from $G$ by deleting the edge dual to $h \to j$. Let $\hatG$ and $\hatG'$ denote their T-duals.
	\begin{enumerate}
		\item  The positroid polytope $\Gamma_{G'}$ is a facet of $\Gamma_G$, and lies on the hyperplane
		$$x_{[h, j-1]}= \area(h \to j). $$
		\vspace{-0.5cm} \item The Grasstope $\gt{\hatG'}$ is a facet of $\gt{\hatG}$, and lies on the hypersurface
		$$ \lrangle{Y h j}=0. $$
	\end{enumerate}
 Moreover, if we let $h \to j$ range over the facet-defining arcs of $\T$ which are not on the boundary of $\textbf{P}_n$, we obtain all facets of $\Gamma_G$ and $\gt{\hatG}$ in the interior of $\Delta_{k+1, n}$ and $\AA_{n, k, 2}(Z)$.
\end{thm}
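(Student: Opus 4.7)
Part~(1) is essentially a compilation of earlier results. Since $\T$ is a bicolored triangulation, the dual plabic graph $G=G(\T)$ is a tree (\cref{rk:treesTilings}), so $G'=G\setminus e$ is a plabic forest with $\dim S_{G'}=\dim S_G-1=n-2$, sitting in $\overline{S_G}$ with codimension one. By \cref{prop:homeo2}, $\mu$ is a stratification-preserving homeomorphism on $\overline{S_G}$, so $\Gamma_{G'}$ is a facet of $\Gamma_G$; its defining hyperplane $x_{[h,j-1]}=\area(h\to j)$ was computed in the proof of \cref{prop:treefacet}.

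For Part~(2), I would first invoke \cref{prop:posetiso}: T-duality is a codimension-preserving poset isomorphism, so $S_{\hat G'}\subset\overline{S_{\hat G}}$ has codimension one. Combined with \cref{rmk:closure}, this gives $\gt{\hat G'}=\tilde Z(\overline{S_{\hat G'}})\subseteq\tilde Z(\overline{S_{\hat G}})=\gt{\hat G}$ and $\gt{\hat G'}\subseteq\partial\gt{\hat G}$. It then remains to prove (a) $\lrangle{Yhj}\equiv 0$ on $\gt{\hat G'}$, and (b) $\dim\gt{\hat G'}=2k-1$; maximality in the facet definition will then be automatic by the codimension count.

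The main obstacle is (a). My plan is to establish the combinatorial claim that \emph{every basis $I$ of the matroid of $S_{\hat G'}$ satisfies $I\cap\{h,j\}\neq\emptyset$}, equivalently, every almost-perfect matching of the plabic graph $\hat G'$ has boundary meeting $\{h,j\}$. Granting this, for $C\in S_{\hat G'}$ we have $p_I(C)=0$ whenever $I\cap\{h,j\}=\emptyset$; expanding via \eqref{eq:expand},
\begin{equation*}
\lrangle{Yhj} \;=\; \sum_{I\in\binom{[n]}{k}} p_I(C)\,\lrangle{Z_I Z_h Z_j},
\end{equation*}
every term vanishes: those with $I\cap\{h,j\}=\emptyset$ because $p_I(C)=0$, and those with $I\cap\{h,j\}\neq\emptyset$ because the determinant has a repeated row. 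The matching claim itself I would verify by analyzing $\hat G'$ through \cref{defn:plabicTDual}: deleting the edge $e$ of $G$ dual to $h\to j$ merges the two boundary faces of $G$ dual to the polygon vertices $h$ and $j$ into a single face $f_0$, so in $\hat G'$ the boundary vertices $\hat h$ and $\hat j$ together with the now-bivalent image of the dualized black vertex of $T=\{h,j,c\}$ all meet at the common internal black vertex $\hat b(f_0)$, and a careful degree/parity count at $\hat b(f_0)$ rules out matchings whose boundary avoids $\{h,j\}$.

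Finally, (b) follows from a dimension argument: $\gt{\hat G'}\subseteq\gt{\hat G}\cap\{\lrangle{Yhj}=0\}$ is contained in a hypersurface section of $\gt{\hat G}$, which has codimension exactly one in $\gt{\hat G}$ by \cref{thm:sign1} (the twistor coordinate $\lrangle{Yhj}$ has a definite nonzero sign on the open stratum $\gt{\hat G}^{\circ}$), so $\dim\gt{\hat G'}\le 2k-1$; the opposite inequality is obtained by producing an explicit $(2k-1)$-dimensional subfamily, e.g.\ by adapting the Kasteleyn parameterization of \cref{prop:signsKmatrix} for $S_{\hat G(\T)}$ and degenerating the weight of the edge $(W_T,B_c)$ to zero, which by \cref{prop:niceRep} corresponds precisely to sending $\lrangle{Yhj}\to 0$ and, via the twistor-coordinate inverse, lands injectively in $\gt{\hat G'}$. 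For the \emph{moreover} statement, \cref{prop:treefacet} identifies every non-boundary facet of $\Gamma_G$ as coming from an internal edge of $G$---equivalently a non-boundary facet-defining arc of $\T$---and the codimension-preserving T-duality of \cref{prop:posetiso} together with (a) and (b) transports this classification verbatim to the non-boundary facets of $\gt{\hat G}$.
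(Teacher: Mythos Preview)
Your overall architecture matches the paper's, but there is a genuine gap in step~(b), and your argument for~(a) is unnecessarily convoluted.

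For~(a), the paper's route is much cleaner than yours. Rather than recomputing $\widehat{G'}$ from scratch via \cref{defn:plabicTDual}, one observes (using \cref{prop:TdualityPlabicGraphs}) that $\hatG'$ is simply $\hatG$ with the edge $(B_c,W_r)$ deleted, where $T_r=\{h,j,c\}$ is the unique black triangle of $\T$ containing the arc $h\to j$. After this deletion the white vertex $W_r$ has only two remaining edges, to $B_h$ and $B_j$, so \emph{every} almost-perfect matching of $\hatG'$ uses one of them and hence has $h$ or $j$ in its boundary. Your ``degree/parity count at $\hat b(f_0)$'' is left vague and is harder to make precise than this one-line observation.

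The real gap is in~(b). You appeal to the ``twistor-coordinate inverse'' of \cref{prop:niceRep} to get injectivity of $\tilde Z$ on your degenerated $(2k{-}1)$-parameter family, but that proposition is proved only for $Y\in\gto{\hatG}$. Once the weight on $(W_{T_r},B_c)$ is set to zero the point lies in $S_{\hatG'}$, and you must check that the twistor coordinate matrix $\twMtx(Y)$ still has full rank---equivalently, that for every black arc $a\to b$ of $\T$ with $\{a,b\}\neq\{h,j\}$ the twistor $\lrangle{Yab}$ remains nonzero on $\gto{\hatG'}$. The paper proves exactly this, by exhibiting (via induction on the number of black triangles) a matching of $\hatG$ that avoids the deleted edge $(B_c,W_r)$ and whose boundary avoids $\{a,b\}$. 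Without this step you have neither injectivity nor the lower bound $\dim\gt{\hatG'}\ge 2k-1$, and the facet claim does not follow.

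A smaller point: the containment $\gt{\hatG'}\subseteq\partial\gt{\hatG}$ does \emph{not} follow from the poset inclusion $S_{\hatG'}\subset\overline{S_{\hatG}}$ and \cref{rmk:closure} alone; a priori the $\tilde Z$-image of a codimension-one subcell could meet the interior of $\gt{\hatG}$. You need~(a) together with the definite-sign result of \cref{thm:sign1} (as the paper argues) to force $\gt{\hatG'}$ into the boundary.
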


\begin{proof}
	$(1)$ follows immediately from \cref{prop:treefacet} and its proof.
	
	For $(2)$, we first show that $\gt{\hatG'}$ is contained in the hypersurface $\{\lrangle{Y h j}=0\}$. The arc $h \to j$ is in a unique triangle $T_r$ of $\T$; say its third vertex is $i$. Using \cref{prop:TdualityPlabicGraphs}, it is not hard to see that $\hatG'$ is obtained from $\hatG$ by deleting the edge $e$ from $B_i$ to $W_r$. This means that every almost perfect matching of $\hatG'$ must use either the edge from $B_h$ to $W_r$ or the edge from $B_j$ to $W_r$, so $h$ or $j$ is in the boundary. From \cref{lem:expansion}, we immediately conclude that $\lrangle{Yh j}$ is identically zero on $\gto{\hatG'}$ and thus on $\gt{\hatG'}$.
	
	Now, we show that $\tilde{Z}$ is injective on $\gt{\hatG'}$, by showing \cref{prop:niceRep} holds for $\gt{\hatG'}$ and then applying the first paragraph in the proof of \cref{thm:allGTs}. Consider $Y \in \gt{\hatG'}$, let $C:=\twMtx(Y)$ be the twistor coordinate matrix of $Y$ and let $Y':= C Z$. We would like to show that $C \in S_{\hatG'}$ and that $\rowspan Y'=\rowspan Y$; by \cref{prop:kindaInverse}, it suffices to show the former. Note that the Kasteleyn matrix $K'$ for $\hatG'$ is obtained from the Kasteleyn matrix $K$ for $\hatG$ by setting the parameter in row $r$ and column $i$ to 0. So we only need to show that for all arcs $a \to b$ of $\T$ with $\{a, b\} \neq \{h, j\}$, $\lrangle{Y' ab}$ is nonzero. 
	
	Pick such an arc $a \to b$ of $\T$. It suffices to show that there is a matching of $\hatG$ which does not use $e$ and does not have $a$ or $b$ in its boundary. We will argue by induction on the number of black triangles of $\T$. The base case, with 1 triangle, is clear by inspection.
	The arc $a \to b$ bounds some black triangle $T_s$ of $\T$, with third vertex $c$. Cut $\T$ along the arcs $a \to b$, $b \to c$ and $c \to a$ to obtain bicolored triangulations $\T_1, \T_2, \T_3$ of smaller polygons (one of which may be empty), which each contain a single edge of $T_s$. One will have $h \to j$ as a facet-defining arc. By induction, $\hatG(\T_i)$ has an almost-perfect matching $M_i$ whose boundary avoids the appropriate vertices of $T_s$ and does not use the edge $e$. Take the union of these matchings, together with the edge $f$ from $W_s$ to $B_c$. This gives a matching of $\hatG$ whose boundary avoids $a, b$. Note that since $\{a, b\} \neq \{h, j\}$, the edge $f$ is different from $e$, so this matching does not use $e$.

	Now we check that $\gt{\hatG'}$ is a facet of $\gt{\hatG}$. We first show that the hypersurface $H:=\{\lrangle{Yhj}=0\}$ intersects $\gt{\hatG}$ only on the boundary of $\gt{\hatG}$, which shows $\gt{\hatG'}$ is contained in the boundary as well. Recall that the open positroid tile $\gto{\hatG}$ is dense in $\gt{\hatG}$ and moreover, $(-1)^{\area(h \to j)}\lrangle{Y hj}$ is positive on $\gto{\hatG}$ (\cref{thm:sign1}). This implies that $(-1)^{\area(h \to j)}\lrangle{Y hj}$ is positive on the interior of $\gt{\hatG}$. Indeed, if the hypersurface $H$ intersected the interior of $\gt{\hatG}$, one could find an open set in the interior where $(-1)^{\area(h \to j)}\lrangle{Y hj}$ is negative. (This is because $\lrangle{Yhj}$ is linear in the Pl\"ucker coordinates, so $\lrangle{Yhj}$ takes both positive and negative values on any open set in $\Gr_{k, k+2}$ containing a point of $H$). But such a set cannot be in $\overline{\gto{\hatG}}$.
	
	Now we verify that $\gt{\hatG'}$ has the correct codimension. From the proof of \cref{prop:treefacet}, $S_{G'}$ is codimension 1 in $\overline{S_G}$. Since T-duality is a rank-preserving poset isomorphism, we also have that $S_{\hatG'}$ is contained in $\overline{S_{\hatG'}}$ and has codimension 1; that is, $S_{\hatG'}$ has dimension $2k-1$. Because $\tilde{Z}$ is injective on $S_{\hatG'}$, $\gto{\hatG'}$ and $\gt{\hatG'}$ also have dimension $2k-1$.
	
	To see the last statement of the proposition, note that any codimension 1 cell $S_H \subset \overline{S_{\hatG}}$ with a coloop $q$ will have $\gt{H}$ contained in the hypersurface $\{\lrangle{Y q(q+1)}=0\}$. So the facets $\gt{H}$ avoiding the amplituhedron boundaries must come from coloopless cells $S_H$. These coloopless cells are T-dual to loopless codimension 1 cells contained in $\overline{S_{G}}$. As $h \to j$ varies over all facet-defining arcs of $\T$, $S_{G'}$ varies over all such loopless cells, by \cref{prop:treefacet}. So the facets $\gt{H}$ avoiding the amplituhedron boundary are of the form $\gt{\hatG'}$ for some arc $h \to j$. From the proof above, we see that $\gt{\hatG'}$ is not contained in an amplituhedron boundary precisely when $h \to j$ is not a boundary arc of $\T$.
\end{proof}

\begin{example}
Consider the bicolored subdivision $\T$ in \cref{fig:unpunctured}. The facet-defining arcs not on the boundary of $\textbf{P}_9$ are $1\to 7$, $2\to 7$ and $4\to 6$, with $\area(1\to 7) =\area(2\to 7) = 3$, $\area(4\to 6) = 0$. The corresponding internal facets lie on the following hyperplanes:
\begin{align*}
&\langle Y 17 \rangle= 0,&& \langle Y 27 \rangle= 0, && \langle Y 46 \rangle= 0, && \quad \mbox{for } Z_{\hat{G}(\T)};\\
&x_{[1,6]}=3, && x_{[2,6]}=3, && x_{[4,5]}=0, && \quad \mbox{for }  \Gamma_{G(\T)}.
\end{align*}
One facet-defining arc at the boundary of $\textbf{P}_9$ is $2\to 3$, with $\area(2\to 3) =0$. This gives an external facet lying on $\langle Y 23 \rangle= 0$ for $Z_{\hat{G}(\T)}$ and $ x_2=0 $ for $\Gamma_{G(\T)}$. 
\end{example}

We can now prove \cref{conj:adjacency}, which extends the \emph{$m=2$ cluster adjacency conjecture}  of
{\L}ukowski--Parisi--Spradlin--Volovich  
\cite{Lukowski:2019sxw}.

\begin{theorem}[Cluster adjacency for $\AA_{n,k,2}$]\label{thm:cluster}
Let $Z_{\hat{G}(\T)}$ be a positroid tile of $\AA_{n,k,2}(Z)$.
	 $$\text{Set }\Facet(Z_{\hat{G}(\T)}):=\{p_{ij} \ \vert \ 
	       \text{ there is a facet of $Z_{\hat{G}(\T)}$ on 
	       the hypersurface $\lrangle{Yij}=0$.}\}. \text{ Then:}$$
	    	 \begin{enumerate}
	 \item 
		 $\Facet(Z_{\hat{G}(\T)})$ consists of compatible cluster variables
       for $\Gr_{2,n}$.
 	\item If $p_{h\ell}$ is
	 compatible with 
	$\Facet(Z_{\hat{G}(\T)})$, then
	$\lrangle{Yh\ell}$  has a fixed sign on $Z^{\circ}_{\hat{G}(\T)}$. 
 \end{enumerate}
\end{theorem}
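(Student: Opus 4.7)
The plan is to derive both parts directly from the characterization of facets (Theorem \ref{th:facetsGT}) together with the sign description on compatible arcs (Theorem \ref{thm:sign1}). The main point is that ``compatibility with $\overline{\T}$'' as defined geometrically in Definition \ref{def:arcs} matches up exactly with cluster-theoretic compatibility of Pl\"ucker coordinates in $\Gr_{2,n}$.

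First I would identify the set $\Facet(Z_{\hat{G}(\T)})$ explicitly. By Theorem \ref{th:facetsGT}, every interior facet of $Z_{\hat{G}(\T)}$ lies on a hypersurface $\lrangle{Yhj}=0$ where $h \to j$ is a facet-defining arc of $\T$, i.e.\ an arc bounding a black polygon of $\overline{\T}$. The remaining (boundary) facets lie on hypersurfaces $\lrangle{Y i(i{+}1)}=0$; the corresponding $p_{i,i+1}$ are frozen variables of the $\Gr_{2,n}$ cluster algebra and are automatically compatible with every cluster variable. Thus $\Facet(Z_{\hat{G}(\T)})$ is indexed by a subset of the arcs of the bicolored subdivision $\overline{\T}$ together with (a subset of) the boundary arcs.

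For part (1), I would invoke the well-known fact that in the type-$A_{n-3}$ cluster algebra on $\Gr_{2,n}(\C)$, Pl\"ucker coordinates $p_{ab}$ are the cluster (and frozen) variables, and $p_{ab}$ is compatible with $p_{cd}$ exactly when the chords $a\to b$ and $c\to d$ of $\mathbf{P}_n$ do not cross in the interior. Since the facet-defining arcs of $\T$ are edges of the subdivision $\overline{\T}$ of $\mathbf{P}_n$, no two of them cross; combined with the frozen compatibility of boundary arcs, every pair in $\Facet(Z_{\hat{G}(\T)})$ is compatible, proving (1).

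For part (2), suppose $p_{h\ell}$ is compatible with every element of $\Facet(Z_{\hat{G}(\T)})$. Then the arc $h \to \ell$ does not cross any facet-defining arc of $\T$, hence does not cross any edge of $\overline{\T}$. This forces $h \to \ell$ either to lie on the boundary of a polygon of $\overline{\T}$ or to sit entirely inside one such polygon, which is precisely the definition of being \emph{compatible with $\T$} in Definition \ref{def:arcs}. Theorem \ref{thm:sign1} then gives
\[
\sgn \lrangle{Yh\ell} = (-1)^{\area_{\T}(h \to \ell)} \quad \text{for all } Y \in Z^{\circ}_{\hat{G}(\T)},
\]
a constant sign, which is exactly the conclusion of (2).

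The only subtlety (and hence the ``main obstacle,'' though it is a minor one) is to verify carefully that the combinatorial notion of compatibility for the bicolored subdivision agrees with the non-crossing condition for $\Gr_{2,n}$ cluster variables, and to handle the boundary facets (corresponding to frozen variables) uniformly. Both points are essentially tautological once one unpacks the definitions, so the proof reduces to a direct citation of Theorems \ref{th:facetsGT} and \ref{thm:sign1}.
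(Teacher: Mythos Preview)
Your proposal is correct and follows essentially the same approach as the paper: part (1) comes from Theorem~\ref{th:facetsGT} and the fact that facet-defining arcs of $\overline{\T}$ do not cross, and part (2) comes from observing that cluster-compatibility with $\Facet(Z_{\hat{G}(\T)})$ forces $h\to\ell$ to be compatible with $\T$, after which the sign statement is Theorem~\ref{thm:sign1} (the paper cites the equivalent Theorem~\ref{thm:inequalitiesMatch}). Your explicit remark that every interior arc of $\overline{\T}$ is facet-defining (so non-crossing with $\Facet$ implies non-crossing with all of $\overline{\T}$) is a useful clarification that the paper leaves implicit.
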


\begin{proof}
The first part follows directly from Theorem \ref{th:facetsGT} as $Z_{\hat{G}(\T)}$ has a facet on $\{\langle Y ij\rangle=0\}$ if and only if $i \to j$ is a facet defining arc in $\T$, and the facet-defining arcs do not cross. The second part follows from Theorem \ref{thm:inequalitiesMatch}.
\end{proof}

Using Theorems \ref{thm:inequalitiesMatch} and \ref{th:facetsGT},
we can translate the cluster adjacency theorem for the $m=2$ amplituhedron into a 
cluster adjacency theorem for the hypersimplex.

\begin{theorem}[Cluster adjacency for $\Delta_{k+1,n}$]\label{thm:clusterHyp}
Let $\Gamma_{G(\T)}$ be a positroid tile of $\Delta_{k+1,n}$.
	 $$\text{Set }\Facet(\Gamma_{G(\T)}):=\{p_{ij} \ \vert \ 
	       \text{ there is a facet of $\Gamma_{G(\T)}$ on 
	       the hyperplane $x_{[i,j-1]}=a_{i,j}$}\},$$
	       where $a_{i,j}$ are some non-negative integers. Then: 
	       \begin{enumerate}
	       \item $\Facet(\Gamma_{G(\T)})$ consists of compatible cluster variables
       for $\Gr_{2,n}$. 
       \item  If $p_{h\ell}$ is
	 compatible with 
	$\Facet(\Gamma_{G(\T)})$, then $x_{[h,\ell-1]}>\area(h \to \ell)$ in $\Gamma^\circ_{G(\T)}$.
\end{enumerate}	        
\end{theorem}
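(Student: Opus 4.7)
The plan is to deduce this result from the corresponding amplituhedron statement (\cref{thm:cluster}) combined with the T-duality matching of facets (\cref{th:facetsGT}) and the inequality/sign correspondence (\cref{thm:inequalitiesMatch}). The key combinatorial input is that, for a positroid tile $\Gamma_{G(\T)}$ of $\Delta_{k+1,n}$, the facet-defining data is encoded by arcs of $\overline{\T}$ in exactly the same way as for the T-dual amplituhedron tile $\gt{\hatG(\T)}$, so that compatibility of Pl\"ucker coordinates translates into non-crossing of chords of $\mathbf{P}_n$.

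First I would characterize $\Facet(\Gamma_{G(\T)})$ in terms of arcs of $\overline{\T}$. By \cref{prop:treefacet} together with the relation $x_{[n]}=k+1$, every facet of $\Gamma_{G(\T)}$ lies on a hyperplane of the form $x_{[h, j-1]} = \area(h \to j)$ where $h \to j$ is a facet-defining arc of $\overline{\T}$: the boundary facet $x_i = 0$ rewrites as $x_{[i,i]} = 0 = \area(i \to i+1)$; the boundary facet $x_i=1$ rewrites as $x_{[i+1, i-1]} = k = \area(i+1 \to i)$; and interior facets come directly from internal facet-defining arcs via \cref{th:facetsGT}(1). Hence $\Facet(\Gamma_{G(\T)}) = \{p_{hj} : h \to j \text{ is a facet-defining arc of } \overline{\T}\}$. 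Part (1) then follows because the facet-defining arcs of $\overline{\T}$ are pairwise non-crossing chords in $\mathbf{P}_n$ (they are among the arcs of $\overline{\T}$, together with boundary edges of $\mathbf{P}_n$), and in the type-$A_{n-3}$ cluster algebra of $\Gr_{2,n}$ from \cite{Fomin_2003}, $p_{ij}$ and $p_{kl}$ are compatible cluster variables exactly when the chords $\{i,j\}$ and $\{k,l\}$ do not cross.

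For part (2), suppose $p_{h\ell}$ (with $h < \ell$) is compatible with every element of $\Facet(\Gamma_{G(\T)})$. Then the chord $\{h, \ell\}$ does not cross any facet-defining arc of $\overline{\T}$. Now every internal arc of $\overline{\T}$ separates a black polygon from a white one, so oriented with the black polygon on its left it is facet-defining; consequently the underlying chord of every internal arc of $\overline{\T}$ lies in $\Facet(\Gamma_{G(\T)})$. Hence $\{h,\ell\}$ does not cross any arc of $\overline{\T}$, which means the arc $h \to \ell$ is compatible with $\overline{\T}$ in the sense of \cref{def:arcs} (it either is an arc of $\overline{\T}$ or lies entirely inside a single black or white polygon). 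Then \cref{thm:inequalitiesMatch}(1) immediately yields $x_{[h,\ell-1]} > \area(h \to \ell)$ on $\Gamma^\circ_{G(\T)}$. The main subtlety is the initial bookkeeping step of bringing the boundary facets $x_i=0$ and $x_i=1$ into the uniform form $x_{[h,j-1]} = \area(h \to j)$ (and keeping the identification with Pl\"ucker coordinates consistent between the hypersimplex and amplituhedron sides); after this, the proof is essentially a direct translation, via T-duality, of the cluster adjacency argument for $\AA_{n,k,2}(Z)$.
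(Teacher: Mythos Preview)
Your argument is correct and follows essentially the same route the paper indicates (the paper merely says the result is obtained by translating \cref{thm:cluster} via \cref{thm:inequalitiesMatch} and \cref{th:facetsGT}, and you have spelled this out). One small terminological slip: the boundary arcs $i\to i{+}1$ giving the facets $x_i=0$ are not technically \emph{facet-defining} in the sense of \cref{def:arcs} (there is no polygon to their left), but since they are boundary chords of $\mathbf{P}_n$ they cross nothing, so the rest of your argument goes through unchanged.
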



\section{Eulerian numbers: $w$-simplices in $\Delta_{k+1, n}$ and $w$-chambers in $\AA_{n, k, 2}$}\label{sec:wsimplices}

In this section we study the amplituhedron chambers of $\AA_{n, k, 2}(Z)$. Because positroid tiles in $\AA_{n, k, 2}(Z)$ are defined by sign conditions, the decomposition of $\AA_{n, k, 2}(Z)$ into chambers refines every positroid tiling. 
Separately, 
the hypersimplex $\Delta_{k+1, n}$ has a well-known decomposition 
into simplices 
which refines every positroid tiling. 
Both 
decompositions have chambers/maximal simplices which are naturally
indexed by permutations
of $n-1$ with $k$ descents.  
We use this correspondence in \cref{sec:wsimplices2} to 
establish results on tilings. 

We begin by reviewing the decomposition of the hypersimplex $\Delta_{k+1, n}$. 
It is well-known that the volume of the hypersimplex  $\Delta_{k+1,n}$
is the \emph{Eulerian number} $E_{k, n-1}$ \cite{stanley_ec1}, which counts the 
 permutations on 
$n-1$ letters with $k$ descents.
A triangulation
of $\Delta_{k+1,n}$ into unit simplices
indexed by such permutations
was first discovered by Stanley \cite{StanleyTriangulation}. Sturmfels \cite{SturmfelsGrobner} later gave an \emph{a priori} different triangulation of $\Delta_{k+1, n}$. Lam and Postnikov \cite{LamPost} then gave two other triangulations, and showed that all four triangulations coincide. 
After defining some permutation statistics, we will 
define this triangulation.

\begin{defn}
Let $w \in S_n$. We call a letter $i\geq 2$ in $w$ a \emph{left descent}
 (or  a \emph{left descent top}) if $i$ occurs to the left of $i-1$ in $w$.  In other words,
 $w^{-1}(i) < w^{-1}(i-1)$.
 And we say that $i\in [n]$ in $w$ is a \emph{cyclic left descent} if either
 $i\geq 2$ is a left descent of $w$ or if $i=1$ and $1$ occurs to the left of $n$ in $w$, 
 that is, $w^{-1}(1) < w^{-1}(n)$.
We let $\cdes(w)$ denote the set of cyclic left descents of $w$, and $\des(w)$ the set of left descents.
	We frequently refer to cyclic left descents as simply \emph{cyclic descents}.
\end{defn}

\begin{remark}
Left and right descents and descent sets are discussed extensively 
in \cite[Chapter 1]{bjornerbrenti}.
Left descents are sometimes called \emph{recoils} in the literature.
\end{remark}

Let $D_{k+1, n}$ be the set of permutations $w \in S_n$ with $k+1$ cyclic descents and $w_n=n$. Note that $|D_{k+1, n}|=E_{k+1, n-1}$.

\begin{defn}[$w$-simplices] \label{defn:wsimplexHSimplex}
 For $w \in D_{k+1, n}$, let $w^{(a)}$ denote the cyclic rotation of $w$ ending at $a$.
	We define 
	\[I_r=I_r(w):=\cdes(w^{(r-1)}).
	\]
The \emph{w-simplex} $\simp{w} \subseteq \Delta_{k+1, n}$ is the simplex with vertices $e_{I_1}, \dots, e_{I_n}$.
\end{defn}

\begin{example}\label{ex:descents}
	Let $w=324156$ in one-line notation. Then $w$ has cyclic descents $\{1, 2, 3\}=I_1$. The rotation of $w$ ending at $1$ is $563241$, which has cyclic descents $I_2=\{ 2, 3, 5\}$. The rotation of $w$ ending at $2$ is $415632$, which has cyclic descents $I_3=\{1,3,4\}$.
\end{example}
Notice that $r$ is always in $I_r$ and $r-1$ is never in $I_r$.

The following triangulation of the hypersimplex first appeared in \cite{StanleyTriangulation},
though the description there was slightly different.
\begin{proposition}[ \cite{StanleyTriangulation}]
	The $w$-simplices $\{\Delta_w: w \in D_{k+1, n}\}$ are the maximal simplices of a triangulation of the 
 hypersimplex $\Delta_{k+1,n}$.
Moreover,
projecting $\{\Delta_{w}: w\in S_n\}$ into $\RR^{n-1}$ (see \cref{rem:hypercube}),
we obtain the maximal simplices in a triangulation of the hypercube $\mbox{\mancube}_{n-1}$ 
which refines the subdivision of the hypercube into hypersimplices. 
\end{proposition}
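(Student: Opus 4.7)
My plan is to identify the paper's $w$-simplices with the maximal simplices of Stanley's original triangulation \cite{StanleyTriangulation} of the hypersimplex, and then deduce the hypercube statement from this. First I would set up the bijection
\[
D_{k+1,n} \longrightarrow \{\sigma \in S_{n-1} : \sigma \text{ has exactly } k \text{ descents}\}, \qquad w \mapsto \sigma := w_1 w_2 \cdots w_{n-1}.
\]
This is well-defined and a bijection because, when $w_n = n$, the cyclic descent $1$ is automatic in $\cdes(w)$ (since $1$ is left of $n$) and $n$ itself is never a cyclic descent, so $|\cdes(w)| - 1 = |\des(\sigma)|$; in particular $w \in D_{k+1,n}$ iff $\sigma$ has $k$ descents, matching the count $E_{k,n-1}$.

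The heart of the proof is to verify, vertex-by-vertex, that $\Delta_w$ coincides with Stanley's simplex $\Delta_\sigma^{\mathrm{Stan}}$. Stanley's simplex has $n$ vertices $e_{S_1}, \dots, e_{S_n}$ where the $(k+1)$-subsets $S_r \subseteq [n]$ are built from the permutation $\sigma$ by a cyclic prefix-sum recipe. I would establish the identity
\[
S_r \;=\; \cdes\bigl(w^{(r-1)}\bigr) \;=\; I_r
\]
by induction on $r$. Starting from $I_1 = \cdes(w) = \{1\} \cup \des(w)$, which matches $S_1$ in Stanley's construction, one tracks how the cyclic descent set changes when passing from $w^{(r-1)}$ to $w^{(r)}$: moving the first letter of $w^{(r-1)}$ to the end flips the cyclic-descent status of exactly one pair of consecutive values of $[n]$, which exactly mirrors Stanley's rule for updating $S_{r-1}$ to $S_r$. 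The worked example ($w = 324156$) confirms this mechanism and suggests a case split on whether the letter being moved is an ascent top or descent top at its current position. Once the vertex sets agree, the two simplices are equal.

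Given this identification, the first part of the proposition is immediate from Stanley's theorem. For the second part, note that each $w \in S_n$ with $w_n = n$ lies in exactly one $D_{k+1,n}$, so $\bigsqcup_k D_{k+1,n}$ has cardinality $(n-1)!$, the number of top-dimensional simplices in any unimodular triangulation of $\mbox{\mancube}_{n-1}$. Since every vertex $e_{I_r}$ of $\Delta_w$ satisfies $(e_{I_r})_{[n]} = k+1$, the projection of $\Delta_w$ to $\mathbb{R}^{n-1}$ sits inside the slice $k \le x_{[n-1]} \le k+1$ of $\mbox{\mancube}_{n-1}$, i.e.\ inside $\tilde\Delta_{k+1,n}$ (cf.\ \cref{rem:hypercube}). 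Stacking the hypersimplex triangulations from Part 1 therefore yields a triangulation of the hypercube that refines its subdivision into hypersimplices.

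The main obstacle will be the combinatorial identity $I_r = S_r$: Stanley's vertex recipe is typically phrased via shuffles of a sorted prefix, while ours is phrased via cyclic descents of a rotated permutation. The bridge between these two descriptions requires a careful bookkeeping of how $\cdes$ transforms under a one-position cyclic rotation (noting the special role of $1$ and $n$ in the definition of cyclic descent), and an inductive argument to propagate this through all $n$ rotations.
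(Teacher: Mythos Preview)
The paper does not actually prove this proposition; it is stated as a citation to \cite{StanleyTriangulation}. The only work the paper does is in \cref{rmk:circuitDefn}, where it identifies its $\Delta_w$ with the simplex $\Delta_{(w)}$ of Lam--Postnikov \cite{LamPost}, and then relies on \cite{LamPost} for the fact that this triangulation coincides with Stanley's (and with Sturmfels's sorted description, which the paper uses later). So the paper's route is: $\Delta_w = \Delta_{(w)}^{\mathrm{LP}}$ by a one-line check on the circuit description, and $\Delta_{(w)}^{\mathrm{LP}} = \Delta_\sigma^{\mathrm{Stan}}$ by \cite{LamPost}.

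Your proposal takes a genuinely different route: you want to match $\Delta_w$ directly with Stanley's simplex $\Delta_\sigma^{\mathrm{Stan}}$ by a vertex-by-vertex induction on cyclic rotations. This is reasonable and would give a self-contained argument, but two points are worth flagging. First, you correctly identify the main obstacle as translating Stanley's prefix-sum recipe into the cyclic-descent update $I_{r-1} \mapsto I_r$; this is doable but is exactly the content of \cite{LamPost}, so you are essentially reproving their identification rather than bypassing it. Second, your hypercube argument has a small gap: knowing that each projected slice $\tilde\Delta_{k+1,n}$ is triangulated does not by itself give a triangulation of $\mbox{\mancube}_{n-1}$, because you must also check that the induced triangulations agree on the shared facet $\{x_{[n-1]} = k+1\}$ between consecutive slices. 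This compatibility holds for Stanley's triangulation (and is part of what is cited), but your ``stacking'' sentence does not address it.
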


\begin{rmk}\label{rmk:circuitDefn}
The $w$-simplex $\simp{w}$ as defined above agrees with the simplex denoted $\Delta_{(w)}$ in \cite[Section 2.4]{LamPost}. In particular, the directed circuit the authors use to define $\Delta_{(w)}$ is given by $e_{I_1} \to e_{I_{w_1+1}} \to  e_{I_{w_2+1}} \to \dots \to e_{I_{w_{n-1}+1}} \to e_{I_1}$. Another way to say this is $I_{w_{i}+1}$ is equal to $(I_{w_{i-1}+1} \setminus \{w_i\} )\cup \{w_i +1\}$.
\end{rmk}


It follows from the results of \cite{LamPost} that every full-dimensional positroid polytope also has a triangulation into $w$-simplices. Indeed, the triangulation of $\Delta_{k+1, n}$ given by $w$-simplices is the simultaneous refinement of all positroid subdivisions of $\Delta_{k+1, n}$.

We now turn to the amplituhedron side. We define some special chambers in $\AA_{n, k, 2}(Z)$ whose sign vectors are obtained from cyclic descents of permutations. We later will show that these are precisely the realizable sign chambers (\cref{thm:wSimpCover}, \cref{thm:nonempty}).


Recall that for $v\in \R^n$, $\flip(v)$ records where 
coordinates of $v$ change sign (\cref{def:flip}).

\begin{defn}[$w$-chambers]\label{def:ampchamber}
	Let $w \in D_{k+1,n}$ and let the vertices of $\simp{w}$ be $e_{I_1}, \dots, e_{I_n}$, as in Definition \ref{defn:wsimplexHSimplex}. Then the \emph{open amplituhedron $w$-chamber} $\asimpo{w}:=\AA_{n, k, 2}^w(Z)$ consists of $Y \in \AA_{n, k, 2}(Z)$ such that $\lrangle{Yij} \neq 0$ for $i \neq j$ and for $a=1, \dots, n$,
\[
	\flip(	\langle Y a \hat{1} \rangle , \langle Y a \hat{2} \rangle, \dots, \langle Y a \widehat{a-1} \rangle, \langle Y a a\rangle, \langle Y a a+1 \rangle, \dots, \langle Y a n \rangle)=
	I_a \setminus \{a\}.
\]
Equivalently, $\AA_{n, k, 2}^w(Z)$ consists of $Y \in \Gr_{k,k+2}$ such that
	\begin{align*} \sgn \langle Y a j \rangle& = (-1)^{|I_a \cap [a, j-1]|-1}
	\quad	\text{ for }j>a\\
		\sgn \langle Y a \hat{j} \rangle&= (-1)^{|I_a \cap [a, j-1]|-1}
	\quad	\text{ for }j<a.
	\end{align*}
 
 The \emph{closed amplituhedron $w$-chamber} is the closure $\asimp{w}:= \overline{\AA_{n, k, 2}^w(Z)}$. 
	Abusing notation, we will often refer to closed amplituhedron $w$-chambers
	as simply \emph{$w$-chambers}.
\end{defn}


\begin{rmk} \label{rmk:emptyWs}
One might hope that the structure of $\asimp{w}$ does not depend on the choice of $Z \in \Mat^{>0}_{n, k+2}$.  However, even the property  
	that $\asimp{w}$ is nonempty depends on $Z$.
	More precisely, while we know that 
	each $\asimp{w}$ is nonempty for some choice of $Z$ (\cref{thm:nonempty}), it  may be empty for other choices of $Z$ (see \cref{subsec:wChambersEmpty}). 
\end{rmk}

Because the positroid tiles of $\AA_{n, k, 2}(Z)$ can be described entirely in terms of signs of twistor coordinates and the signs of twistor coordinates in $\asimp{w}$ are constant, we have the following lemma. It is the analogue of the fact that for a tree positroid polytope $\Gamma_\pi$, either $\simp{w} \cap \Gamma_\pi^\circ =\emptyset$ or $\simp{w} \subseteq \Gamma_\pi$.

\begin{lem}\label{lem:disjointOrContained}
	Let $\gto{\pi}$ be a positroid tile for $\AA_{n, k, 2}(Z)$ and let $\asimp{w}$ be a non-empty $w$-chamber. Then either $\asimp{w} \cap \gto{\pi}= \emptyset$ or $\asimp{w} \subset \gt{\pi}$.
\end{lem}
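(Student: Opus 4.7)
The plan is to use the sign-based descriptions of both objects. Recall from \cref{thm:surjectivity} (applied to any triangulation $\T$ of $\overline{\T}$, where $\pi$ is the trip permutation of $\hatG(\overline{\T})$) that
\[
\gto{\pi} \;=\; \{Y \in \Gr_{k,k+2} : \sgn \lrangle{Y\,i\,j} = (-1)^{\area(i\to j)} \text{ for every black arc } i\to j \text{ of } \T\},
\]
so $\gto{\pi}$ is carved out of $\Gr_{k,k+2}$ by finitely many strict sign inequalities on twistor coordinates. Meanwhile, by \cref{def:ampchamber}, on the open chamber $\asimpo{w}$ every twistor coordinate $\lrangle{Y\,i\,j}$ with $i\neq j$ has a fixed nonzero sign depending only on $w$.

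First I would analyze the open chamber. Since each twistor coordinate takes a fixed sign on $\asimpo{w}$, there are two mutually exclusive possibilities: either (a) for every black arc $i\to j$ of $\T$ the sign of $\lrangle{Y\,i\,j}$ on $\asimpo{w}$ equals $(-1)^{\area(i\to j)}$, or (b) there exists a black arc $i\to j$ of $\T$ on which the sign of $\lrangle{Y\,i\,j}$ on $\asimpo{w}$ is the \emph{opposite} of $(-1)^{\area(i\to j)}$. In case (a), the sign characterization immediately gives $\asimpo{w}\subseteq \gto{\pi}$, and taking closures yields $\asimp{w}=\overline{\asimpo{w}}\subseteq \overline{\gto{\pi}}=\gt{\pi}$.

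It remains to handle case (b), which I claim forces $\asimp{w}\cap \gto{\pi}=\emptyset$. Suppose for contradiction that there is some $Y\in \asimp{w}\cap \gto{\pi}$. Since $\asimp{w}=\overline{\asimpo{w}}$, we may choose a sequence $Y_n\in \asimpo{w}$ with $Y_n\to Y$. Because $Y\in \gto{\pi}$, every defining strict inequality $(-1)^{\area(i\to j)}\lrangle{Y\,i\,j}>0$ holds; by continuity of the twistor coordinates these strict inequalities persist for all sufficiently large $n$, so $Y_n\in \gto{\pi}$ eventually. This contradicts case (b), in which some black-arc twistor on $\asimpo{w}$ has the wrong sign and therefore $\asimpo{w}\cap \gto{\pi}=\emptyset$. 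Hence $\asimp{w}\cap \gto{\pi}=\emptyset$, completing the dichotomy.

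The main technical point is simply the clean compatibility between the two sign descriptions — the positroid tile is defined by strict sign inequalities on certain twistor coordinates, and the $w$-chamber assigns a fixed nonzero sign to every twistor coordinate. There is no obstruction here beyond invoking \cref{thm:surjectivity} and \cref{def:ampchamber}; the only subtlety is the standard closure-and-continuity argument in case (b), which relies on the fact that the defining inequalities for $\gto{\pi}$ are \emph{open} conditions.
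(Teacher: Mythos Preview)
Your proof is correct and follows exactly the approach the paper intends: the paper does not spell out a formal proof but simply remarks, just before the lemma, that it holds ``because the positroid tiles of $\AA_{n, k, 2}(Z)$ can be described entirely in terms of signs of twistor coordinates and the signs of twistor coordinates in $\asimp{w}$ are constant.'' Your argument makes this precise, and in particular your closure-and-continuity step in case~(b) fills in the one detail the paper leaves implicit (passing from $\asimpo{w}\cap\gto{\pi}=\emptyset$ to $\asimp{w}\cap\gto{\pi}=\emptyset$ using that $\gto{\pi}$ is open).
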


Despite the subtleties regarding the nonemptiness of $\asimp{w}$, the closed $w$-chambers always cover $\AA_{n, k, 2}(Z)$, in direct analogy to $w$-simplices in $\Delta_{k+1, n}$.

\begin{thm}[$\AA_{n, k, 2}$ is the union of $w$-chambers] \label{thm:wSimpCover}
	Fix $k<n$ and $Z \in \Mat^{>0}_{n, k+2}$. Then 
	\[\AA_{n, k, 2}(Z)= \bigcup_{w \in D_{k+1,n}} \asimp{w}.\]
 \end{thm}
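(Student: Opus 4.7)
The plan is to reduce to points $Y \in \mathcal{A}_{n,k,2}(Z)$ for which all twistor coordinates $\lrangle{Yij}$ are nonzero, then assign to each such $Y$ a unique $w \in D_{k+1,n}$ with $Y \in \asimpo{w}$, and finally close everything up. First I would note that points of $\mathcal{A}_{n,k,2}(Z)$ on which some twistor coordinate vanishes form a proper closed subset of the (irreducible) image $\tilde{Z}(\Gr^{\geq 0}_{k,n})$, so the subset $\mathcal{A}^\star \subset \mathcal{A}^\circ_{n,k,2}(Z)$ of points with all twistors nonzero is dense. Since each $\asimp{w}$ is closed by definition, it suffices to prove $\mathcal{A}^\star \subseteq \bigcup_{w \in D_{k+1,n}} \asimpo{w}$.

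Given $Y \in \mathcal{A}^\star$, for each $a \in [n]$ set
\[
I_a := \{a\} \cup \flip\bigl(\lrangle{Y a \hat{1}}, \dots, \lrangle{Y a \widehat{a{-}1}}, 0, \lrangle{Y a (a{+}1)}, \dots, \lrangle{Y a n}\bigr).
\]
By \cref{cor:cyclic}, applied at each index $a$, the cyclic sequence of signs around the zero entry at position $a$ has exactly $k$ sign changes, so $|I_a \setminus \{a\}| = k$. Since $\lrangle{Y a \widehat{a{-}1}} \neq 0$ sits adjacent to the zero entry at position $a$, no flip occurs at position $a{-}1$, giving $a{-}1 \notin I_a$; by construction $a \in I_a$ and $|I_a| = k+1$.

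The key combinatorial step is to show that the tuple $(I_1, \dots, I_n)$ is realized as $\bigl(\cdes(w^{(a-1)})\bigr)_{a=1}^{n}$ for a unique $w \in D_{k+1,n}$. My plan here is to use \cref{prop:twistor} and \cref{cor:G} to transport $Y$ to a point $X$ in the $\mathcal{B}$-amplituhedron $\mathcal{B}_{n,k,2}(W) \subset \Gr_{2,n}$: represent $X$ as a $2\times n$ matrix whose columns $c_1, \dots, c_n \in \mathbb{R}^2$ satisfy $\sgn p_{ij}(X) = \sgn \lrangle{Y ij}$. Because all Pl\"uckers are nonzero, no two columns are collinear, and the $2n$ rays $\{\pm c_i\}$ acquire a cyclic order on $S^1$; the sign-flip number for each sequence $s'_a$ translates directly into the combinatorics of this cyclic arrangement. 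The structural conditions $\sgn\lrangle{Y i (i+1)} > 0$ and $\sgn\lrangle{Y n \hat{1}} > 0$ force the arrangement to be of the specific form that one encodes by a permutation $w \in S_n$ with $w_n = n$; reading off how the rays $c_i$ interleave gives the positions $w^{-1}(i)$. A direct calculation then shows $\cdes(w^{(a-1)})$ agrees with the $I_a$ defined above, and that $w$ has exactly $k+1$ cyclic descents, so $w \in D_{k+1,n}$.

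From this, the very definition of $\asimpo{w}$ yields $Y \in \asimpo{w}$, so $\mathcal{A}^\star \subseteq \bigcup_{w\in D_{k+1,n}}\asimpo{w}$, and taking closures gives $\mathcal{A}_{n,k,2}(Z) = \overline{\mathcal{A}^\star} \subseteq \bigcup_w \asimp{w}$; the reverse containment is immediate. I expect the main obstacle to be the combinatorial realization step above: verifying that the $n$-tuple of sign-flip sets $I_a$ is always consistent with being the cyclic-descent sequence of a single permutation, rather than exhibiting contradictory local data. The $\Gr_{2,n}$ viewpoint---that twistor sign data of $Y \in \mathcal{A}^\star$ is equivalent to the combinatorics of $n$ generic points in the plane---should be the right bookkeeping device, but one must carefully match the "angular ordering" with the definition of $\cdes$ in terms of rotations $w^{(a-1)}$, and track the $(-1)^k$ sign from the $\hat{\,}$-convention in \cref{rem:twisted} to line up signs correctly for all $a$.
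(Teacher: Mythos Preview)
Your approach is correct and takes a genuinely different route from the paper. The paper also reduces to the dense subset where all twistors are nonzero and defines the same sets $I_a$, but then shows that any two $I_a, I_b$ form (in some order) a \emph{sorted pair} via an extensive case analysis using the 3-term Pl\"ucker relations among twistors; a short lemma then upgrades pairwise sortedness to a sorted collection, and Sturmfels' characterization of the Stanley triangulation produces the desired $w \in D_{k+1,n}$. Your route instead reads off $w$ directly from the angular order of the (suitably re-signed) columns of $X \in \Gr_{2,n}$, which sidesteps the sorted-collection machinery altogether. This is precisely the inverse of a construction the paper uses later (for a different purpose, proving each $w$-chamber is realizable): there one starts from $w$, places vectors $v_1, v_{w_1+1}, \dots, v_{w_{n-1}+1}$ on a convex curve, flips signs by $(-1)^{|I_1 \cap [1,b-1]|-1}$, and verifies the resulting point lies in $\hat\Delta^\circ_w(\mathcal{G})$ via the recursion $I_{w_i+1} = (I_{w_{i-1}+1}\setminus\{w_i\})\cup\{w_i+1\}$. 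That computation, read backwards, is exactly the ``direct calculation'' you need for $I_a = \cdes(w^{(a-1)})$; it is doable but does require tracking the $(-1)^k$ from the hat-convention carefully for general $a$, as you anticipate. Your approach is arguably more conceptual---rank-2 oriented matroid data \emph{is} a cyclic arrangement, so once $w$ is defined globally there is no consistency question---while the paper's route has the advantage of tying the amplituhedron chambers directly to the classical sorted-set description of the Stanley--Sturmfels triangulation.
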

To prove \cref{thm:wSimpCover},  we use a characterization of the simplices $\simp{w}$  given by Sturmfels \cite{SturmfelsGrobner}, involving sorted collections. We follow the presentation of \cite[Section 2.2]{LamPost}.

\begin{defn}
	Let $(J_1, \dots, J_t)$ be a tuple of distinct elements of $\binom{[n]}{k+1}$, where
	we write  $J_s=\{j_{s1} < j_{s2}< \dots < j_{s(k+1)}\}$. We call $(J_1, \dots, J_t)$  a \emph{sorted collection} if $j_{11}\leq j_{21} \leq \cdots \leq j_{t1} \leq j_{12} \leq j_{22} \leq \dots \leq j_{t(k+1)}$. If $(J_1, J_2)$ is a sorted collection, we call them a \emph{sorted pair}.
\end{defn}

The $w$-simplices of $\Delta_{k+1, n}$ are exactly the simplices with vertices $e_{J_1}, \dots, e_{J_n}$ for $(J_1, \dots, J_n)$ a sorted collection. To see if a collection is sorted, one need only check pairs of elements.

\begin{lem}
\label{lem:sortedPairwiseEnough}
Given $\{J_1, \dots, J_t\} \subset \binom{[n]}{k+1}$, suppose that for all $a \neq b$, either $(J_a, J_b)$  or $(J_b, J_a)$ is a sorted pair. Then $J_1, \dots, J_t$ can be ordered to give a sorted collection.
\end{lem}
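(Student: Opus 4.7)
The plan is to show that the pairwise-sortability hypothesis defines a strict total order on $\{J_1,\dots,J_t\}$, and that listing the sets in this order automatically produces a sorted collection.

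First I would reformulate sortedness via partial sums. Writing $f_{J,J'}(m) := |J\cap[m]| - |J'\cap[m]|$, the interleaving definition is equivalent to saying that when the multiset $J \cup J'$ is arranged in weakly increasing order, the $J$-elements sit in odd positions and the $J'$-elements in even positions; equivalently, $(J,J')$ is sorted iff $f_{J,J'}(m) \in \{0,1\}$ for every $m\in[n]$. From this it follows immediately that $(J,J')$ and $(J',J)$ can both be sorted only when $J=J'$, so declaring $J\prec J'$ iff $(J,J')$ is sorted and $J\neq J'$ yields a relation that, under the hypothesis, is trichotomous.

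Next I would verify transitivity of $\prec$. Given $J_a\prec J_b$ and $J_b\prec J_c$, one has
\[
f_{J_a,J_c}(m)=f_{J_a,J_b}(m)+f_{J_b,J_c}(m)\in\{0,1,2\}\quad\text{for all } m.
\]
By the pairwise hypothesis either $J_a\prec J_c$ (giving $f_{J_a,J_c}\in\{0,1\}$) or $J_c\preceq J_a$ (giving $f_{J_a,J_c}\le 0$). In the latter case $f_{J_a,J_c}\equiv 0$, so $J_a=J_c$, but then $J_a\prec J_b\prec J_a$ forces $J_a=J_b$, a contradiction. Thus $\prec$ is a strict total order on the collection. \emph{I expect this transitivity step, where pairwise-sortability must be leveraged globally, to be the main conceptual obstacle}; the rest is essentially bookkeeping.

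Finally I would sort the $J_s$ in $\prec$-order as $K_1\prec K_2\prec\cdots\prec K_t$ with $K_s=\{k_{s,1}<\cdots<k_{s,k+1}\}$, and check directly that this is a sorted collection. For any $s<s'$, since $(K_s,K_{s'})$ is sorted we have $k_{s,i}\le k_{s',i}$ and $k_{s',i}\le k_{s,i+1}$ for every $i$. Chaining these, and using $(K_1,K_t)$ sorted for the crucial ``wrap-around'' inequality $k_{t,i}\le k_{1,i+1}$, gives
\[
k_{1,i}\le k_{2,i}\le\cdots\le k_{t,i}\le k_{1,i+1}\le k_{2,i+1}\le\cdots
\]
for every $i$, which is exactly the definition of a sorted collection.
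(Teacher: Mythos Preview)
Your proof is correct and follows essentially the same approach as the paper: both arguments show that the ``sorted pair'' relation is a total order on $\{J_1,\dots,J_t\}$ (with transitivity being the only nontrivial step, proved by leveraging the pairwise hypothesis), and then list the sets in that order. Your partial-sum reformulation $f_{J,J'}(m)\in\{0,1\}$ is a clean device that makes transitivity transparent, and you are more explicit than the paper in verifying that the total order actually produces a sorted collection, but the overall structure is the same.
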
 

\begin{proof}
First, notice that if $(J_a, J_b)$ is a sorted pair and $(J_b, J_c)$ is a sorted pair, then $(J_a, J_c)$ is a sorted pair. Indeed, if $a \neq b$, 
then there exists $i$ such that $j_{ai}<j_{bi} \leq j_{ci}$. 
It follows that $(J_c, J_a)$ is not a sorted pair, so $(J_a, J_c)$ must be. 
So on $\{J_1, \dots, J_t\}$, the property of being a sorted pair is reflexive, antisymmetric, 
and transitive, which means it is a partial order. 
We've assumed every pair is comparable, so we have a total order.  The result follows.
\end{proof}

\begin{proof}[Proof of \cref{thm:wSimpCover}]
	Let $Y \in \AA_{n, k, 2}$ be a point whose twistor coordinates are all nonzero. We will show that $Y$ lies in $\asimp{w}$ for some $w$. The points with nonzero twistor coordinates form a dense subset of $\AA_{n, k, 2}$ (their complement, a union of hypersurfaces, has codimension 1), so this will show the desired equality.
	\[
		\text{Set }L_a:=\flip(\langle Y a \hat{1} \rangle , \langle Y a \hat{2} \rangle, \dots, \langle Y a ~\widehat{a-1} \rangle, \langle Y a ~a\rangle, \langle Y a ~a+1 \rangle, \dots, \langle Y a ~n \rangle).
	\]
	By \cref{cor:cyclic}, we have $|L_a| = k$.
	Choose $a<b$. We will show that $I_a:=L_{a} \cup \{a\}$ and $I_b:=L_{b} \cup \{b\}$ are distinct and, for some ordering, form a sorted pair. 
We temporarily abuse notation by omitting the $Y$'s and hats from our notation;
	if $a >i$, we write $\lrangle{a i}$ for $\lrangle{Y a \hat{i}}$.
	
	Certain 3-term Pl\"ucker relations constrain sign flips, as noted in \cite[Section 5]{ATT}. For $j \in [a-2] \cup [b+1, n]$, we have the relation
	\begin{equation} \label{eqn:smallj}
		\lrangle{j ~j+1}\lrangle{a ~b}= \lrangle{a ~j}\lrangle{b ~j+1}- \lrangle{b ~j}\lrangle{a ~j+1}
	\end{equation}
and for $j\in [a+1, b-2]$ we have
\begin{equation} \label{eqn:medj}
	\lrangle{j ~j+1}\lrangle{b ~ a}= \lrangle{b ~j}\lrangle{a ~j+1} - \lrangle{a ~j}\lrangle{b ~j+1}.
\end{equation}

Because $\sgn \lrangle{j ~ j+1}= +$ for all $j$, the sign of the left hand sides of \cref{eqn:smallj,eqn:medj} does not depend on $j$. This means that if $\sgn \lrangle{a~ b}= +$, then for $j \in [a-2] \cup [b+1, n]$
\begin{equation}\label{eqn:forbidSign1}
	\begin{pmatrix}
	\sgn\lrangle{a ~ j}&  \sgn\lrangle{a ~ j+1} \\
	\sgn\lrangle{b ~ j}& \sgn\lrangle{b ~ j+1} \\
\end{pmatrix}\neq
\begin{pmatrix}
	\delta & \epsilon\\
	\epsilon & - \delta
\end{pmatrix}
\end{equation}
for any $\delta, \epsilon \in \{+, -\}$. Similarly, if $\sgn \lrangle{a~ b}= -$, then for $j \in [a-2] \cup [b+1, n]$ 
\begin{equation}\label{eqn:forbidSign2}
	\begin{pmatrix}
	\sgn\lrangle{a ~ j}&  \sgn\lrangle{a ~ j+1} \\
	\sgn\lrangle{b ~ j}& \sgn\lrangle{b ~ j+1} \\
\end{pmatrix}\neq 
\begin{pmatrix}
	\delta & -\epsilon\\
	\epsilon & \delta
\end{pmatrix}.
\end{equation}
If $\sgn\lrangle{b ~ a}=+$ (respectively, $-$), then for $j \in [a+1, b-1]$, the sign pattern in \cref{eqn:forbidSign2} (respectively, \cref{eqn:forbidSign1}) never occurs.

Suppose $j$ is a value where the sign pattern in \cref{eqn:forbidSign1} is forbidden. If there is a sign flip after $\lrangle{a ~ j}$ and not after $\lrangle{b ~j}$, then $\sgn\lrangle{a ~j}= \sgn\lrangle{b ~j}$; if there is a sign flip after $\lrangle{b ~ j}$ and not after $\lrangle{a ~j}$, then $\sgn\lrangle{a ~j} \neq \sgn\lrangle{b ~j}$. When the sign pattern in \cref{eqn:forbidSign2} is forbidden, there are analogous statements with conclustions swapped. This means that for any interval $I \subset [n]$ where one of the patterns is forbidden, $| L_a \cap I|$ and $|L_b \cap I|$ differ by at most one and either $(L_a \cap I,L_b \cap I)$ or $(L_b \cap I,L_a \cap I)$ is sorted\footnote{We extend the definition of sorted in the obvious way to sets whose sizes differ by at most one. In particular, if $(I, J)$ are sorted, we must have $|I|\geq |J|$.}. 

Which one of $(L_a \cap I,L_b \cap I)$ and $(L_b \cap I,L_a \cap I)$ is sorted gives us additional information.

Let $P:= L_a \cap [a-2]$, $Q:=L_b \cap [a-2]$. Suppose $P \neq Q$, and consider the smallest $j$ so that there is a sign flip after exactly one of $\lrangle{a ~  j}$ and $\lrangle{b ~j} $. Clearly, $(P, Q)$ is sorted if and only if there is a sign flip after $\lrangle{a ~ j}$ and not after $\lrangle{b ~j}$. If the latter occurs, then $\sgn\lrangle{a b}=+$ and $\sgn\lrangle{a ~1}= \sgn\lrangle{b ~1}$, or $\sgn\lrangle{a b}=-$ and $\sgn\lrangle{a ~1}\neq \sgn\lrangle{b ~1}$; in short, $ \sgn\lrangle{a~b} \cdot \sgn \lrangle{a~ 1} =\sgn \lrangle{b~1}$. Analogously, if $(Q, P)$ is sorted, then $ \sgn\lrangle{a~b}\cdot \sgn \lrangle{a~ 1} \neq \sgn \lrangle{b~1}$.

Let $T:= L_a \cap [a+1, b-2]$ and $U = L_b \cap [a+1, b-2]$, and suppose $T \neq U$. By essentially identical reasoning as in the previous paragraph, if $(T, U)$ is sorted, then $\sgn\lrangle{b~a} \cdot \sgn\lrangle{a ~a+1} \neq \sgn \lrangle{b~a+1}$. Since $\lrangle{a ~ a+1}>0$ by assumption, the latter condition implies we have a sign flip between $\lrangle{b ~a}$ and $\lrangle{b ~ a+1}$, so $a \in L_b$. Similar reasoning gives that if $(U, T)$ is sorted, then $a \notin L_b$.

Let $V:=L_a \cap [b+1, n]$ and $W:=L_b \cap [b+1, n]$ and suppose that $V \neq W$. Repeating the arguments of the previous paragraphs gives that if $(V, W)$ is sorted, then $b \notin L_a$, and if $(W, V)$ is sorted, then $b \in L_a$.

Now, there are two cases: $|L_a \cap [b, n]|$ and $|L_b \cap [b,n]|$ have the same parity or they have opposite parity. They are similar, so we will assume we are in the first case, and leave the second to the reader.

Suppose $|L_a \cap [b, n]|$ and $|L_b \cap [b,n]|$ have the same parity. Note that $(-1)^{|L_i \cap [i+1, j-1]|}$ is $\sgn \lrangle{i ~ j}$, and, since $b \notin L_b$, $L_b \cap [b, n]$ and $L_b \cap [b+1, n]$ are equal. So
\begin{align*}
	\sgn \lrangle{b ~n}&= (-1)^{|L_a \cap [b, n]|}\\
	&=(-1)^{|L_a \cap [a+1, b-1]|}(-1)^{|L_a \cap [a+1, n]|}\\
	&= \sgn \lrangle{a ~b} \cdot \sgn \lrangle{a ~1}
\end{align*}
and thus $(P,Q)$ is sorted. We will show that $(I_a, I_b)$ is sorted and $I_a \neq I_b$.

If $b \in L_a$, then $|V|$ and $|W|$ have different parity. In particular, $V$ and $W$ are not equal, so $(W, V)$ is sorted and $|W|=|V|+1$. The two sets interweave like 
\[w_1 \leq v_1 \leq w_2 \leq v_2 \leq \cdots \leq w_r \leq v_r \leq w_{r+1}.\]
Since $V=I_a \cap [b+1, n]$ and $W=I_b \cap [b+1, n]$, we also have that $I_a$ and $I_b$ are distinct. Note that $I_b \cap [b, n]= W \cup \{b\}$ and $I_a \cap [b, n]= V \cup \{b\}$ and we have
\[b\leq b<w_1 \leq v_1 \leq w_2 \leq v_2 \leq \cdots \leq w_r \leq v_r \leq w_{r+1}, \]
so $(I_b \cap [b, n],I_a \cap [b, n])$ form a sorted pair.
If $b \notin L_a$, then $|V|$ and $|W|$ have the same parity. The pair $(V, W)$ is sorted, and $I_a \cap [b, n]=V$, while $I_b \cap [b,n]= W \cup \{b\}$. So $(I_b \cap [b, n],I_a \cap [b, n])$ are a sorted pair in this case as well, since we have
\[b< v_1 \leq w_1 \leq v_2 \leq w_2 \leq \cdots \leq v_r \leq w_r.\]
Note that $b$ is in $I_b$ but not in $I_a$, so we also have that $I_a$ and $I_b$ are distinct in this case.

Now we turn to the sets $T$ and $U$. Because $\sgn\lrangle{a ~b} = (-1)^{k} \sgn \lrangle{b ~ a}$, we have 
\begin{align*}
	(-1)^{|L_a \cap [a, b-1]|} &= (-1)^{k}(-1)^{|L_b \cap [b, a-1]|}\\
	&=(-1)^{|L_b \cap [1, n]| + |L_b \cap [b, a-1]| }\\
&=	(-1)^{|L_b \cap [a, b-1]|}.
\end{align*}

Note that $|T| \leq |L_a \cap [a, b-1]|\leq 1+ |T|$, since $a \notin L_a$. If $a \in L_b$, then $(T, U)$ is sorted and $|L_b \cap [a, b-1]|= 1 + |U|$, since $b-1 \notin L_b$. If $T$ and $U$ have the same cardinality, then $|L_a \cap [a, b-1]|$ must be equal to $1+ |T|$ in order to have the same parity as $1+|U|$. Thus $b-1 \in L_a $. This means that $(I_a \cap [a, n], I_b \cap [a, n])$ is a sorted pair, as we have
\[a \leq a < t_1 \leq u_1 \leq \cdots \leq t_j \leq u_j<b-1<b \leq \dots \leq v_r \leq w_r.\]
If $|T|=|U|+1$, we conclude by similar reasoning that $b-1 \notin L_a$, and again $(I_a \cap [a, n], I_b \cap [a, n])$ is a sorted pair, as we have
\[a \leq a < t_1 \leq u_1 \leq \cdots \leq t_j \leq u_j\leq t_{q+1}<b \leq \dots \leq v_r \leq w_r.\]

If $a \notin L_b$, then $(U, T)$ is sorted and $L_b \cap [a, b-1] = U$, since $a$ and $b-1$ are not in $L_b$. A parity argument as in the last paragraph shows that if $|U|=|T|$, then $b-1 \notin L_a$; if $|U|= |T|+1$, then $b-1 \in L_a$. Either way, $(I_a \cap [a, n], I_b \cap [a, n])$ is a sorted pair; we see 
\[a \leq u_1 \leq t_1 \leq \cdots \leq u_j \leq t_j<b \leq \dots \leq v_r \leq w_r\]
in the first case and 
\[a \leq u_1 \leq t_1 \leq \cdots \leq u_j \leq t_j \leq u_{q+1}<b-1<b \leq \dots \leq v_r \leq w_r\]
in the second.

Finally, we deal with $P$ and $Q$. Recall that $(P, Q)$ are sorted. Since $|L_a \cap [b, n]|$ and $|L_b \cap [b,n]|$ have the same parity and $|L_a \cap [a, b-1]|$ and $|L_b \cap [a,b-1]|$ have the same parity, $|L_a \cap [1, a-1]|$ and $|L_b \cap [1,a-1]|$ have the same parity. Since $a-1 \notin L_a$, we have that $P= L_a \cap [1, a-1]$. On the other hand $|Q| \leq |L_b \cap [1, a-1]| \leq |Q|+1$. If $|P|=|Q|$, then for parity reasons $Q=L_b \cap [1, a-1]$ and thus $a-1 \notin L_b$. So $(I_a \cap [1, a-1], I_b \cap [1, a-1])$ are a sorted pair, as we have
\[r_1 \leq s_1 \leq \cdots \leq r_i \leq s_i . \]
Similarly, if $|P|=|Q|+1$, then $a-1 \in L_b$ and $(I_a \cap [1, a-1], I_b \cap [1, a-1])$ again are a sorted pair, since we have
\[r_1 \leq s_1 \leq \cdots \leq r_i \leq s_i \leq r_{i+1} \leq a-1. \]

Since $(I_a \cap [1, a-1], I_b \cap [1, a-1])$ is a sorted pair ending in an element of $I_b$ and $(I_a \cap [a, n], I_b \cap [a, n])$ is a sorted pair, it follows that $(I_a, I_b)$ is a sorted pair.
\end{proof}

Using \cref{thm:wSimpCover}, we can conclude that positroid tiles are unions of amplituhedron $w$-chambers, just as tree positroid polytopes are unions of hypersimplex $w$-simplices. More precisely, we have the following corollary, which we sharpen further in \cref{prop:simplexContainment}.

\begin{cor}[Positroid tiles are unions of $w$-chambers] \label{cor:GTunionWSimp}
	Let $\gt{\pi}$ be a positroid tile for $\AA_{n, k, 2}(Z)$. Then 
	
	\[\gt{\pi}= \bigcup_{\substack{{\asimp{w}: }\\{\asimpo{w} \cap \gto{\pi} \neq \emptyset}}} \asimp{w}.
	\]
\end{cor}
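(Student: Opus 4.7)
The plan is to prove the two inclusions separately, leveraging \cref{thm:wSimpCover}, \cref{lem:disjointOrContained}, and a density argument.

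For the containment $\supseteq$, suppose $w$ is such that $\asimpo{w} \cap \gto{\pi} \neq \emptyset$. Since $\asimpo{w} \subseteq \asimp{w}$, we also have $\asimp{w} \cap \gto{\pi} \neq \emptyset$, so by \cref{lem:disjointOrContained} we immediately conclude $\asimp{w} \subseteq \gt{\pi}$. Hence every $\asimp{w}$ in the union on the right is contained in $\gt{\pi}$.

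For the reverse containment, the goal is to show that every $Y \in \gt{\pi}$ lies in some $\asimp{w}$ for which $\asimpo{w} \cap \gto{\pi}$ is nonempty. The strategy is to approximate $Y$ by points in $\gto{\pi}$ with all twistor coordinates nonzero, then use finiteness of $D_{k+1,n}$ to extract a single $w$. More precisely, since $\gto{\pi} = \tilde{Z}(S_\pi)$ is dense in $\gt{\pi} = \overline{\tilde{Z}(S_\pi)}$, one can choose a sequence $Y_m \to Y$ with $Y_m \in \gto{\pi}$. The locus $\{Y \in \Gr_{k,k+2} : \lrangle{Yij} = 0\}$ is a proper Zariski-closed subset, so the set of points in $\gto{\pi}$ where some twistor coordinate vanishes has empty interior in $\gto{\pi}$; we may therefore perturb each $Y_m$ slightly so that all its twistor coordinates are nonzero while staying in $\gto{\pi}$. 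By \cref{thm:wSimpCover}, each such $Y_m$ lies in some open amplituhedron chamber $\asimpo{w_m}$. Since $D_{k+1,n}$ is finite, a subsequence has $w_m = w$ constant; then $Y_m \in \asimpo{w} \cap \gto{\pi}$ for all large $m$, which shows $\asimpo{w} \cap \gto{\pi} \neq \emptyset$, while passing to the limit gives $Y \in \asimp{w}$.

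The main potential subtlety is the density/genericity step: one must verify that inside the full-dimensional semialgebraic set $\gto{\pi}$, the twistor hypersurfaces $\{\lrangle{Yij}=0\}$ genuinely have positive codimension, so that approximating points with all nonzero twistor coordinates exist. This follows because $\gto{\pi}$ has dimension $2k$ (\cref{thm:allGTs}) and each $\{\lrangle{Yij}=0\}$ is a hypersurface in $\Gr_{k,k+2}$, so its intersection with $\gto{\pi}$ is nowhere dense in $\gto{\pi}$. Everything else is essentially formal, relying on the continuity of $\tilde Z$ and the defining sign conditions for the chambers.
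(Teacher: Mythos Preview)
Your proof is correct and follows essentially the same approach the paper leaves implicit: the corollary is stated without proof, immediately after \cref{thm:wSimpCover} and \cref{lem:disjointOrContained}, as a direct consequence of those two results, which are exactly the ingredients you use.

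Two minor remarks. First, when you invoke \cref{thm:wSimpCover} to place each $Y_m$ in an \emph{open} chamber $\asimpo{w_m}$, you are really using the proof of that theorem (which shows that any point with all twistor coordinates nonzero lies in some $\asimpo{w}$), not its statement (which only asserts that the closed chambers cover). Second, the subsequence extraction is not needed: since the set of $Y\in\gto{\pi}$ with all twistor coordinates nonzero is dense in $\gt{\pi}$ and is contained in $\bigcup_{\asimpo{w}\cap\gto{\pi}\neq\emptyset}\asimpo{w}$, taking closures (and using that the union is finite) gives $\gt{\pi}\subseteq\bigcup\asimp{w}$ directly. Your version is perfectly fine, just slightly longer. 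The density step is easy here because, by \cref{thm:surjectivity}, $\gto{\pi}$ is an open subset of $\Gr_{k,k+2}$, so each twistor hypersurface meets it in a nowhere dense set.
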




\section{T-duality and positroid tilings}\label{sec:wsimplices2}

In this section we show one of our main results: we 
prove that a collection $\{\Gamma_{\pi}\}$ 
of positroid polytopes is a positroid
tiling of $\Delta_{k+1,n}$ if and only if 
for all $Z\in \Mat_{n,k+2}^{>0}$, the collection of 
T-dual Grasstopes $\{Z_{\hat{\pi}}\}$ is a positroid tiling
of $\AA_{n,k,2}(Z)$.
Along the way  we show that realizable amplituhedron chambers are exactly counted by Eulerian numbers.  
We also explore the phenomena that $w$-chambers can be empty, 
and  define the $\mathcal{G}$-amplituhedron -- a $Z$-independent analogue of the amplituhedron in $\Gr_{2,n}$. Finally, we introduce the 
\emph{total amplituhedron} $\mathcal{G}_n^{(2)} \subset \Gr_{2,n}$ which is
the amplituhedron-analogue of the hypercube, and discuss positroid tilings based on descents/sign-flips. 

\subsection{Positroid tilings of $\Delta_{k+1, n}$ and $\AA_{n,k,2}$.}
Recall that $w$-simplices in $\Delta_{k+1, n}$ are indexed by $D_{k+1,n}$.
One main tool is the following.

\begin{prop}\label{prop:simplexContainment} Fix $k<n$ and $Z \in \Mat^{>0}_{n,k+2}$. Suppose $w \in D_{k+1,n}$ and that $\asimp{w} \neq \emptyset$. For any tree positroid polytope $\Gamma_{\pi}$, $\simp{w} \subset \Gamma_{\pi}$ if and only if $\asimp{w} \subset \gt{\td{\pi}}$.
\end{prop}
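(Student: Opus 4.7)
\smallskip
\noindent\textbf{Plan.} We first reduce to the case where $\pi$ is the trip permutation of a plabic tree $G(\T)$ for a bicolored subdivision $\overline{\T}$ of type $(k,n)$ (using \cref{prop:homeo2} and \cref{rmk:subdivisionAreTiling}); then $\hat{\pi}$ corresponds to $\hatG(\T)$ by \cref{cor:GTsInBijection}. Using \cref{thm:inequalitiesMatch}, both containments translate into conditions indexed by the compatible arcs $h\to j$ (with $h<j$) of $\overline{\T}$. Specifically, since $\Gamma_\pi$ is convex and $\simp{w}=\conv\{e_{I_1},\ldots,e_{I_n}\}$,
\[
\simp{w}\subseteq\Gamma_\pi \iff \area(h\to j)\leq |I_a\cap[h,j-1]|\leq \area(h\to j)+1 \text{ for every compatible arc and every }a\in[n].
\]
On the other hand, by \cref{lem:disjointOrContained} and the nonemptiness of $\asimp{w}$, $\asimp{w}\subseteq\gt{\hat{\pi}}$ iff $\asimpo{w}\subseteq\gto{\hat{\pi}}$, and using the sign formula of \cref{def:ampchamber} and the sign characterization of \cref{thm:inequalitiesMatch}(2),
\[
\asimp{w}\subseteq\gt{\hat{\pi}} \iff |I_h\cap[h,j-1]|\equiv \area(h\to j)+1 \pmod 2 \text{ for every compatible arc}.
\]

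\smallskip
\noindent The crux of the proof is the following combinatorial lemma: for any $w\in D_{k+1,n}$ and $1\leq h<j\leq n$, the values $|I_a\cap[h,j-1]|$ (as $a$ ranges over $[n]$) take exactly two consecutive integer values $N_0$ and $N_0+1$, with
\[
|I_h\cap[h,j-1]|=N_0+1=\max_a|I_a\cap[h,j-1]|, \qquad |I_j\cap[h,j-1]|=N_0=\min_a|I_a\cap[h,j-1]|.
\]
This follows from \cref{rmk:circuitDefn}: traversing the $I_a$ in the order given by $w$, consecutive sets differ by removing $w_i$ and adding $w_i+1$, so $|I_a\cap[h,j-1]|$ increases by one precisely at the unique $i$ with $w_i=h-1$ (adding $h$ to the intersection) and decreases by one precisely at the unique $i$ with $w_i=j-1$ (removing $j-1$), remaining constant otherwise. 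This pins the image of the statistic to two consecutive integers and places the ``ascent'' at $I_h$ and the ``descent'' at $I_j$.

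\smallskip
\noindent Given the lemma, the condition for $\simp{w}\subseteq\Gamma_\pi$ becomes $N_0(h,j)=\area(h\to j)$ for every compatible arc of $\overline{\T}$, and the condition for $\asimp{w}\subseteq\gt{\hat{\pi}}$ becomes $N_0(h,j)\equiv \area(h\to j)\pmod 2$. The forward direction is immediate. For the reverse direction, the subtlety is to upgrade the parity agreement on every compatible arc to an integer agreement. The plan is to use the observation that $N_0(h,j)=|I_j\cap[h,j-1]|$ is directly realized as a coordinate sum of a vertex of $\simp{w}$, together with the fact that the family of integers $\{\area(h\to j)\}$ ranging over the facet-defining arcs of $\overline{\T}$ rigidly determines the bicolored subdivision: one shows that the bicolored subdivision whose facet-defining arcs carry the areas $N_0(h,j)$ (read off from $w$) is well-defined and agrees with $\overline{\T}$ on parities for all compatible arcs only when the areas actually coincide. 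The hard part will be making this rigidity step precise; once it is, $N_0=\area$ on every compatible arc and we conclude $\simp{w}\subseteq\Gamma_\pi$.
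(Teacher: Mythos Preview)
Your setup and the combinatorial lemma are correct and essentially match the paper's approach: the paper also reduces the question to showing that parity agreement $|I_h\cap[h,j-1]|\equiv\area(h\to j)+1\pmod 2$ on all arcs of a bicolored triangulation $\T$ forces the integer equality $|I_h\cap[h,j-1]|=\area(h\to j)+1$. Your observation that the statistic $a\mapsto|I_a\cap[h,j-1]|$ takes exactly two consecutive values, maximized at $a=h$, is exactly the ``alcove'' fact the paper invokes from \cite{LamPost}.

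The genuine gap is the backward direction. You explicitly write that ``the hard part will be making this rigidity step precise,'' and indeed the sketch you give---that the family $\{\area(h\to j)\}$ over facet-defining arcs ``rigidly determines the bicolored subdivision'' and that a putative subdivision built from the $N_0(h,j)$ must then coincide with $\overline{\T}$---is not a proof. There is no reason the numbers $N_0(h,j)$ (for $h\to j$ ranging over arcs of $\overline{\T}$) should themselves arise as the area function of \emph{some} bicolored subdivision with the same underlying arcs, which is what your rigidity argument would need. Parity agreement on a single arc places no integer constraint by itself; one must use the \emph{global} structure of $\T$ to bootstrap parity to equality, and your proposal does not explain how.

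The paper fills this gap with a short induction on $n$: after a cyclic shift one may assume $\T$ contains the arc $1\to(n-1)$; chopping off the triangle $\{1,n-1,n\}$ gives a bicolored triangulation $\T'$ on $[n-1]$, and one constructs from $w$ a permutation $v\in S_{n-1}$ (delete $n$ and move $n-1$ to the end) so that the pair $(\T',\simp{v})$ again satisfies the parity hypotheses. The induction hypothesis then gives integer equality for all arcs of $\T'$, and the remaining arcs $1\to n$, $(n-1)\to n$ are trivial. The two cases (chopped triangle white or black) are handled by checking that $v$ lands in the correct $D_{k',n-1}$, using that the parity hypothesis on $1\to(n-1)$ versus $1\to n$ tells you whether $n-1$ is a left descent of $w$. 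This is the missing idea in your proposal.
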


\begin{proof}
	Fix a bicolored triangulation $\T$ so that $G(\T)$ is a plabic tree with trip permutation $\pi$ and $\hatG(\T)$ has trip permutation $\td{\pi}$. 
	From \cref{thm:inequalitiesMatch}, $\gto{\td{\pi}}$ consists of $Y \in \Gr_{k, k+2}$ such that for all arcs $a \to b$ of $\T$ 
	\[ \begin{cases}
			\sgn\lrangle{Ya b }= (-1)^{\area(a \to b)}  &\text{if }a < b\\
			\sgn\lrangle{Ya \hat{b} }= (-1)^{\area(a \to b)}  &\text{if }a > b
	\end{cases}
\] 
and $\Gamma_\pi$ consists of the points $x \in \R^n$ satisfying \[\area(a \to b) \leq x_{[a, b-1]}  \leq \area(a \to b)+1 \]
for all arcs $a \to b$ of $\T$. (In fact, to cut out $\gto{\td{\pi}}$, it suffices to consider arcs with $a<b$.)
	
Suppose $\simp{w} \subset \Gamma_\pi$. Then the vertices $e_{I_1}, \dots, e_{I_n}$ of $\simp{w}$ satisfy the defining inequalities of $\Gamma_\pi$. In particular, for each arc $a \to b$ of $\T$,
\[\area(a\to b)\leq |I_a \cap [a, b-1]|\leq \area(a \to b) +1.\]

By \cref{rmk:circuitDefn}, there is another vertex $e_{I_r}$ of $\simp{w}$ satisfying $I_r = I_a \setminus \{a\} \cup \{a-1\}$. This vertex also satisfies the defining inequalities of $\Gamma_\pi$. Moreover, $|I_r \cap [a, b-1]|$ is 1 smaller than $|I_a \cap [a, b-1]|$, so we must have 
\[|I_a \cap [a, b-1]|=\area(a \to b) +1.\]

Consider $Y \in \asimpo{w}$. By definition, for $a<b$, $\sgn \lrangle{Ya b}= (-1)^{|I_a \cap [a, b-1]|-1}$. By the above computation, $\sgn \lrangle{Ya b}=(-1)^{\area( a \to b)}$ for every arc $a \to b$, so we have shown $\asimpo{w} \subset \gto{\td{\pi}}$. Taking closures gives the desired containment.

Now, suppose $\asimp{w} \subset \gt{\td{\pi}}$. This means that for all arcs $a \to b$ of $\T$, $\area(a \to b)+1$ is the same parity as $|I_a \cap [a, b-1]|$. We will show that for all $q$, 
\[ \area(a \to b) \leq |I_q \cap [a, b-1]| \leq \area(a \to b) +1.\]

From the alcove description of $w$-simplices in \cite[Section 2.3]{LamPost}, there is some $d$ so that $\simp{w}$ lies between the hyperplanes $\{x_{[a, b-1]}= d-1\}$ and $ \{x_{[a, b-1]} = d\}$. As noted above, there is a vertex $e_{I_r}$ of $\simp{w}$ satisfying $I_r = I_a \setminus \{a\} \cup \{a-1\}$. Since $|I_r \cap [a, b-1]|=|I_a \cap [a, b-1]| -1 $, we conclude that $d$ is $|I_a \cap [a, b-1]|$. Thus, it suffices to show that
\begin{equation}\label{eq:areaEquality}
	\area(a \to b) + 1= |I_a \cap [a, b-1]|.
\end{equation}
This is proved in the following lemma.
\end{proof}

\begin{lem}
	Let $\T$ be a bicolored triangulation of type $(k,n)$ and let $\simp{w} \subset \Delta_{k+1, n}$ be a $w$-simplex with vertices $e_{I_1}, \dots, e_{I_n}$. Suppose for all arcs $a \to b$ of $\T$, \[\area(a \to b) + 1 \equiv |I_a \cap [a, b-1]| \pmod{2}.\]
	Then $\area(a \to b)+1 = |I_a \cap [a, b-1]| $ for all arcs $a\to b$ of $\T$.
\end{lem}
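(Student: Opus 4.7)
My plan is to prove $|I_a \cap [a, b-1]| = \area(a \to b) + 1$ by induction on the length $\ell$ of the cyclic interval $[a, b-1]$, simultaneously for every arc $a \to b$ of $\T$; using cyclic intervals is painless and handles arcs with $a > b$ (such as the wrap-around boundary edge $n \to 1$) uniformly. For $\ell = 1$ the arc is a boundary edge of $\mathbf{P}_n$, so $\area(a \to b) = 0$, and $|I_a \cap \{a\}| = 1$ since $a \in I_a$ (noted right after \cref{defn:wsimplexHSimplex}); equality holds.

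For $\ell \geq 2$, $a \to b$ is a diagonal of $\T$ and there is a unique triangle $T = (a, c, b)$ of $\T$ lying immediately to the left, with $c$ strictly between $a$ and $b$ cyclically. The arcs $a \to c$ and $c \to b$ of $\T$ have strictly shorter cyclic intervals, so by induction
\begin{equation*}
|I_a \cap [a, c-1]| = \area(a \to c) + 1, \qquad |I_c \cap [c, b-1]| = \area(c \to b) + 1.
\end{equation*}
Let $\epsilon = 1$ if $T$ is black and $\epsilon = 0$ otherwise, so $\area(a \to b) = \area(a \to c) + \area(c \to b) + \epsilon$. Combining the parity hypothesis for $a \to b$ with the first inductive equality yields $|I_a \cap [c, b-1]| \equiv \area(c \to b) + \epsilon \pmod{2}$. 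On the other hand, I run the alcove/adjacent-vertex argument of the proof preceding the lemma, but applied to the sub-arc $c \to b$ rather than $a \to b$: by \cite[Section 2.3]{LamPost}, $\simp{w}$ lies in a strip $d_{c,b} - 1 \leq x_{[c, b-1]} \leq d_{c, b}$ between consecutive integer hyperplanes, and by \cref{rmk:circuitDefn} the vertex $e_{I_c}$ has a circuit-neighbor $e_{I_{c-}}$ in $\simp{w}$ with $I_{c-} = (I_c \setminus \{c\}) \cup \{c-1\}$, so $|I_{c-} \cap [c, b-1]| = |I_c \cap [c, b-1]| - 1$ and therefore $d_{c, b} = |I_c \cap [c, b-1]| = \area(c \to b) + 1$. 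In particular $|I_a \cap [c, b-1]| \in \{\area(c \to b), \area(c \to b) + 1\}$, and the parity constraint picks out exactly $|I_a \cap [c, b-1]| = \area(c \to b) + \epsilon$. Summing with $|I_a \cap [a, c-1]| = \area(a \to c) + 1$ gives the desired equality.

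The only delicate step is this ``strip plus parity'' argument: one must apply the alcove/neighbor-vertex machinery to the sub-arc $c \to b$, and verify via \cref{rmk:circuitDefn} that $I_c$ really does have a circuit-neighbor of the form $(I_c \setminus \{c\}) \cup \{c-1\}$ (using $c \in I_c$ and $c-1 \notin I_c$). Once the two candidate values are identified, parity does all the remaining work. No tools beyond \cref{rmk:circuitDefn} and the alcove description already cited in the proof of \cref{prop:simplexContainment} are needed.
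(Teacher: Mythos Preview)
Your proof is correct and takes a genuinely different route from the paper's.

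The paper inducts on $n$: after cyclically rotating so that $\T$ contains the arc $1 \to (n-1)$, it chops off the triangle on vertices $\{1,n-1,n\}$ to obtain a smaller triangulation $\T'$ of an $(n-1)$-gon, and constructs a new permutation $v \in S_{n-1}$ (by deleting $n$ from $w$ and moving $n-1$ to the end) so that $\simp{v} \subset \Delta_{k'+1,n-1}$.  It then checks, in two cases depending on the color of the chopped triangle, that the vertex sets $J_a$ of $\simp{v}$ are related to the $I_a$ in a controlled way and that the parity hypothesis descends to $\T'$ and $\simp{v}$.  Your approach instead fixes $\T$ and $\simp{w}$ once and for all and inducts on the length of $[a,b-1]$: you peel off the triangle $(a,c,b)$ immediately to the left of $a \to b$, invoke the inductive hypothesis on the sub-arcs $a \to c$ and $c \to b$, and then use the alcove-strip argument from the proof of \cref{prop:simplexContainment} (applied to the interval $[c,b-1]$, with the circuit-neighbor $(I_c \setminus \{c\}) \cup \{c-1\}$ from \cref{rmk:circuitDefn}) to pin $|I_a \cap [c,b-1]|$ to one of two consecutive integers, after which parity finishes.

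Your argument is shorter and more local: it avoids constructing a new permutation, relating its cyclic-descent sets to those of $w$, and verifying that the parity hypothesis is inherited.  The paper's approach has the virtue of actually reducing the ambient dimension, which might be more natural if one wanted to generalize to other $m$; but for this lemma your method is the more economical one.

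One tiny slip: when $\ell = n-1$ the arc $a \to b$ is a boundary edge traversed backwards (e.g.\ $(a{+}1) \to a$), not a diagonal.  This does not affect your argument, since there is still a unique triangle of $\T$ on its left (the one containing the boundary edge $(a,a{+}1)$), and the rest goes through verbatim.
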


\begin{proof}
	We use induction on $n$. The base cases are $n=3$ and $k=0, 1$, which are clear.
	
	Without loss of generality, we may assume that $\T$ contains the arc $1 \to (n-1)$. Indeed, $\T$ contains some arc $(r+1) \to (r-1)$. We can rotate $\T$ by $r$ to obtain a new triangulation  with an arc $1 \to (n-1)$.  We can also apply the corresponding cyclic shift $e_i \mapsto e_{i -r}$ to $\simp{w}$ to obtain a new simplex $\simp{u}$. The vertex $e_{I_p}$ of $\simp{w}$ is mapped to vertex $e_{J_{p-r}}$ of $\simp{u}$, where $J_{p-r} =\{i -r: i \in I_p\}$. If the proposition is true for the new triangulation and $\simp{u}$, it is easy to see (by shifting back) that it is true for $\T$ and $\simp{w}$.
	
	Let $\T'$ be the bicolored triangulation of type $(k', n-1)$ obtained by chopping the triangle with vertices $1, n-1, n$ off of $\T$. Note that $k'=k$ if this triangle is white, and $k'=k-1$ otherwise. Let $v \in S_{n-1}$ be the permutation obtained from $w$ by deleting $w_n=n$ and moving $n-1$ to the end.

	\textbf{Case I:} Suppose the triangle deleted from $\T$ is white, so $k'=k$. Then $\area_\T(1 \to (n-1))$ is equal to $\area_\T(1 \to n)$, so the assumption on parities means that $I_1 \cap [1, n-1]$ has the same size as $I_1 \cap [1, n-2]$. That is, $n-1 \notin I_1$, which means that $n-1$ appears to the right of $n-2$ in $w$. Deleting $w_n$ and moving $n-1$ to the end results in a permutation with the same number of cyclic descents as $w$, meaning that $\simp{v} \subset \Delta_{k', n-1}$. 
	
	The vertices of $\simp{v}$ are $e_{J_1}, \dots, e_{J_{n-1}}$, where
	\[J_a = \begin{cases}
		I_a & \text{if } n \notin I_a\\
		I_a \setminus \{n \}\cup \{n-1\} & \text{if } n \in I_a.
	\end{cases}
	\]
	
	For the moment, we will denote cyclic intervals in $[n-1]$ by $[a, b]'$.  
	
	Let $a \to b$ be an arc of $\T'$. Because $b \neq n$, $[a, b-1]$ either contains both $n-1$ and $n$, or neither. So $J_a \cap [a, b-1]'$ and $I_a \cap [a, b-1]$ have the same cardinality. Also, $\area_{\T'}(a \to b)$ is equal to $\area_{\T}(a \to b)$, so $\T'$ and $\asimp{v}$ satisfy the assumptions of the proposition. By induction, we can conclude that $|J_a \cap [a, b-1]'|= \area_{\T'}(a \to b)+1$. In light of the equalities in this paragraph, this means that for all arcs $a \to b$ of $\T$ where $a, b$ are not $n$, we have $|I_a \cap [a, b-1]|= \area_{\T}(a \to b)+1$. It remains to check that a similar equality for the arcs $1 \to n$, $(n-1) \to n$ and their reverses, which are trivial.
	
	\textbf{Case II:} Suppose the triangle deleted from $\T$ is black, so $k'=k-1$. Then $\area_\T(1 \to (n-1))$ is equal to $\area_\T(1 \to n)-1$. The assumption on parities implies that $I_1 \cap [1, n-1]$ and $I_1 \cap [1, n-2]$ are different sizes, so $n-1 \in I_1$. This means that $n-1$ appears to the left of $n-2$ in $w$, and $v$ has one fewer left descent than $w$. So $\simp{v} \subset \Delta_{k', n-1}$ as desired.
	
	The vertices of $\simp{v}$ are $e_{J_1}, \dots, e_{J_{n-1}}$, where
	\[J_a = \begin{cases}
		I_a \setminus \{n \}& \text{if } n \in I_a\\
		I_a \setminus \{n-1\} & \text{if } n-1 \in I_a, n \notin I_a.
	\end{cases}
	\]
	
	Let $a \to b$ be an arc of $\T'$. Again, the cyclic interval $[a, b-1]$ either contains both $n-1$ and $n$, or contains neither. If $[a, b-1]$ contains neither, then clearly $|J_a \cap [a, b-1]'|= |I_a \cap [a, b-1]|$; in this case, $\area_{\T'}(a \to b)=\area_{\T}(a \to b)$ as well. If $[a, b-1]$ contains both, then $|J_a \cap [a, b-1]'|= |I_a \cap [a, b-1]|-1$ and $\area_{\T'}(a \to b)=\area_{\T}(a \to b)-1$. So again, $\T'$ and $\simp{v}$ satisfy the assumptions of the proposition. As in Case I, we can conclude that for all arcs $a \to b$ of $\T$ where $a, b$ are not $n$, we have $|I_a \cap [a, b-1]|= \area_{\T}(a \to b)+1$. The equalities for the arcs $1 \to n$, $(n-1) \to n$, and their reverses are clear.
\end{proof}

\begin{rmk} 
	\cref{prop:simplexContainment} motivates the intuition that the $w$-simplex $\Delta_w \subset \Delta_{k+1,n}$ and the
	$w$-chamber $\hat{\Delta}_w(Z) \subset \AA_{n,k,2}(Z)$ are `T-dual' to each other. In \cref{prop:wSimplChambInt} we will show that any $w$-simplex is the intersection of $n$ distinguished positroid polytopes $\{\Gamma_{\pi}\}$, and the corresponding $w$-chamber is the intersection of the $n$  T-dual Grasstopes $\{Z_{\hat{\pi}}\}$. 
\end{rmk}

To prove the correspondence between positroid tilings, we also need the following crucial result, whose proof we delay to the following subsection.

\begin{theorem}[$w$-chambers are realizable]\label{thm:nonempty}
	For each $w\in D_{k+1,n}$, there exists some $Z\in \Mat_{n,k+2}^{>0}$ such that the 
	amplituhedron $w$-chamber
	$\asimp{w}$  in $\AA_{n,k,2}(Z)$ is nonempty.
\end{theorem}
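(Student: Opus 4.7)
The plan is to construct, for each $w \in D_{k+1,n}$, an explicit pair $(V, Z)$ with $V \in \Gr^{\geq 0}_{k,n}$ and $Z \in \Mat^{>0}_{n,k+2}$ such that $Y := \tilde{Z}(V) \in \hat{\Delta}_w(Z)$. I would proceed in three steps.

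First, I would choose a bicolored subdivision $\overline{\T}_w$ of type $(k,n)$ such that $\Delta_w \subset \Gamma_{G(\T_w)}$. Such $\overline{\T}_w$ exists because positroid tilings of $\Delta_{k+1,n}$ into tree positroid tiles exist (for instance, the BCFW tilings proved in \cite{LPW}), and Stanley's triangulation into $w$-simplices refines every positroid subdivision of $\Delta_{k+1,n}$; therefore some tree positroid tile $\Gamma_{G(\T_w)}$ must contain $\Delta_w$. By Proposition~\ref{prop:simplexContainment}, any non-empty $\hat{\Delta}_w(Z)$ is then forced to lie inside $Z_{\hat{G}(\T_w)}$, so it suffices to produce a point of $Z_{\hat{G}(\T_w)}$ with the sign-flip pattern prescribed by $w$.

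Second, I would parameterize $Z^{\circ}_{\hat{G}(\T_w)}$ using Corollary~\ref{cor:bijection}: the map $(\boldsymbol\alpha, \boldsymbol\beta, \boldsymbol\gamma) \mapsto M_{\T_w}(\boldsymbol\alpha, \boldsymbol\beta, \boldsymbol\gamma)\,Z$ is a bijection from $(\R_{>0})^{2k}$ onto $Z^{\circ}_{\hat{G}(\T_w)}$. For every black arc $i \to j$ of $\T_w$, Theorem~\ref{thm:sign1} guarantees $\sgn \langle Yij\rangle = (-1)^{\area(i\to j)}$; moreover, the proof of Proposition~\ref{prop:simplexContainment} shows that these signs are exactly the ones demanded by the $w$-chamber condition at the relevant positions. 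So the remaining task is to control the signs of the twistor coordinates $\langle Y i j\rangle$ for pairs $(i,j)$ that are not black arcs of $\T_w$.

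Third, I would choose the edge parameters $(\boldsymbol\alpha, \boldsymbol\beta, \boldsymbol\gamma)$ and the positive matrix $Z$ so that these remaining twistor coordinates also have the signs prescribed by $I_a = \cdes(w^{(a-1)})$. Using the expansion formula \eqref{eq:expand}, each such twistor coordinate is a polynomial in the parameters and in the maximal minors of $Z$, with signs controlled by the bijection of Corollary~\ref{cor:bijection} through the relations \eqref{eq:ratio}. I expect the main obstacle to lie here: one must simultaneously satisfy the cyclic sign-flip conditions at all $n$ rotations. I would tackle this by one of two routes. The first is a direct degeneration: take $Z$ close to a moment curve with carefully spaced parameters (or a more degenerate configuration where one black triangle's contribution dominates), so that one can arrange the signs of the ``non-black-arc'' twistor coordinates inductively, using an induction on the number $k$ of black triangles of $\T_w$ with base case $k=1$ (where $\AA_{n,1,2}(Z)$ is a convex $n$-gon and the statement is elementary). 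The second is a continuity argument using Theorem~\ref{thm:main1} / Corollary~\ref{cor:cyclic}: start from a point $Y_0$ in a known non-empty $w_0$-chamber (produced for instance from a BCFW tiling), then continuously deform $(V, Z)$ inside $\Gr^{\geq 0}_{k,n} \times \Mat^{>0}_{n,k+2}$ and track the combinatorial type (cyclic descent data) of the resulting sign-flip pattern; each wall-crossing changes one $I_a$ in a controlled way, and one shows every $w \in D_{k+1,n}$ is reachable. Either way, the hardest step is verifying that the combinatorics of the sign-flip pattern required by $w$ can actually be realized, which I would address by reducing to the structure of $\overline{\T}_w$ and the decomposition of $\Delta_w$ inside $\Gamma_{G(\T_w)}$.
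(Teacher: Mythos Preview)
Your proposal has a genuine gap at Step~3, which you yourself flag as the main obstacle but do not resolve. You correctly observe that the black-arc twistor signs on $Z^{\circ}_{\hat G(\T_w)}$ are automatically those required by $w$ (this is the content of the proof of Proposition~\ref{prop:simplexContainment}). But the $w$-chamber condition constrains \emph{all} $\binom{n}{2}$ twistor signs, not only the $3k$ black-arc ones, and your two proposed routes for controlling the remaining signs are not arguments but heuristics. The ``degeneration/induction on $k$'' idea does not come with a mechanism for forcing a prescribed sign on a given non-black-arc twistor while preserving the others, and the ``wall-crossing'' idea would require you to prove connectedness of $D_{k+1,n}$ under some adjacency compatible with continuous deformations of $(V,Z)$---a statement you neither formulate nor prove. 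As written, Step~3 is the whole theorem restated.

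The paper's proof avoids this difficulty by a change of viewpoint that you are missing. Via Proposition~\ref{prop:twistor}, the twistor coordinates of $Y\in\mathcal A_{n,k,2}(Z)$ are exactly the Pl\"ucker coordinates of a $2$-plane $X\subset W:=\operatorname{colspan}(Z)$ in $\Gr_{2,n}$. So realizing the $w$-chamber amounts to (i) finding a $2\times n$ matrix $z$ whose Pl\"ucker signs match the $w$-pattern, and (ii) finding a totally positive $(k+2)$-plane $W$ containing $\operatorname{rowspan}(z)$. Step~(i) is a rank-$2$ oriented-matroid realization, which is elementary and done explicitly (Proposition~\ref{prop:GchamberNonempty}). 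Step~(ii) is handled by an explicit block-matrix construction extending $z$ to $W\in\Gr_{k+2,n}^{\geq 0}$ (Lemma~\ref{lem:extend}), followed by a small perturbation to total positivity. This decoupling---realize the sign pattern in $\Gr_{2,n}$ first, then build $Z$ around it---is the key idea, and it is absent from your plan, where you fix a positroid tile first and then try to manoeuvre inside it.
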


We can now show the main result of this section.

\begin{thm}[Tilings of $\Delta_{k+1,n}$ and $\mathcal{A}_{n,k,2}$ are T-dual] \label{thm:simpTriangGiveAmp}
	The collection $\mathcal{C}=\{\Gamma_{\pi}\}$ is a positroid tiling of $\Delta_{k+1, n}$ if and only if for all $Z \in \Mat_{n,k+2}^{>0}$, the collection of T-dual Grasstopes $\td{\mathcal{C}}=\{\gt{\td{\pi}}\}$ is a positroid tiling of $\AA_{n, k, 2}(Z)$.
\end{thm}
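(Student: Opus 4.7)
The plan is to reduce both sides of the equivalence to statements about the realizable $w$-simplices and $w$-chambers, using the fact that these common refinements of positroid polytopes and positroid tiles are indexed by the same set $D_{k+1,n}$. The principal tools will be \cref{prop:simplexContainment} (the bridge $\simp{w} \subseteq \Gamma_\pi \Leftrightarrow \asimp{w} \subseteq \gt{\hat\pi}$ whenever $\asimp{w}$ is nonempty), \cref{thm:nonempty} (every $w \in D_{k+1,n}$ has $\asimp{w} \neq \emptyset$ for some $Z$), together with \cref{thm:wSimpCover}, \cref{lem:disjointOrContained}, and \cref{cor:GTsInBijection}. Before starting, I would note that ``each member of $\mathcal{C}$ is a positroid tile'' is equivalent on the two sides by \cref{cor:GTsInBijection}, so only covering and disjointness of interiors need proof.

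For the forward direction, fix $Z$. Since $\AA_{n, k, 2}(Z) = \bigcup_w \asimp{w}$ by \cref{thm:wSimpCover}, it suffices to place each realized $\asimp{w}$ in some $\gt{\hat\pi}$ of the T-dual collection. Each $\simp{w}$ sits inside a unique $\Gamma_\pi \in \mathcal{C}$, so \cref{prop:simplexContainment} gives $\asimp{w} \subseteq \gt{\hat\pi}$, yielding the covering $\bigcup_\pi \gt{\hat\pi} = \AA_{n,k,2}(Z)$. For disjointness, suppose $Y \in \gto{\hat{\pi_1}} \cap \gto{\hat{\pi_2}}$. I would perturb $Y$ within this intersection to make all twistor coordinates nonzero (their vanishing loci are proper subvarieties of each open Grasstope), placing the perturbed point in some $\asimp{w}^\circ$. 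Then \cref{lem:disjointOrContained} forces $\asimp{w} \subseteq \gt{\hat{\pi_i}}$ for $i=1,2$, hence $\simp{w} \subseteq \Gamma_{\pi_i}$ via \cref{prop:simplexContainment}. A dimension argument inside the hyperplane $x_{[n]} = k+1$ then promotes this to $\simp{w}^\circ \subseteq \Gamma_{\pi_i}^\circ$, contradicting disjointness of the hypersimplex tiling unless $\pi_1 = \pi_2$.

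The reverse direction has the same structure but invokes \cref{thm:nonempty} to choose a specific $Z$ realizing each $\asimp{w}$. For covering, given $w \in D_{k+1,n}$ I pick $Z$ so that $\asimp{w} \neq \emptyset$, locate a generic point of $\asimp{w}^\circ$ lying in some open Grasstope $\gto{\hat\pi}$ (possible since the $\{\gto{\hat\pi}\}$ tile $\AA_{n,k,2}(Z)$), apply \cref{lem:disjointOrContained} to get $\asimp{w} \subseteq \gt{\hat\pi}$, and invoke \cref{prop:simplexContainment} to conclude $\simp{w} \subseteq \Gamma_\pi$; summing over $w$ yields $\Delta_{k+1,n} = \bigcup_w \simp{w} \subseteq \bigcup_\pi \Gamma_\pi$. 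For disjointness, if $\Gamma_{\pi_1}^\circ \cap \Gamma_{\pi_2}^\circ \neq \emptyset$ with $\pi_1 \neq \pi_2$, this intersection contains some $\simp{w}^\circ$ since the $w$-simplices triangulate $\Delta_{k+1,n}$; choosing $Z$ via \cref{thm:nonempty} and applying \cref{prop:simplexContainment} puts $\asimp{w}$ inside both $\gt{\hat{\pi_1}}$ and $\gt{\hat{\pi_2}}$, producing the desired contradiction on the amplituhedron side.

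The main obstacle I anticipate is the bookkeeping around strict versus closed containments rather than any deep new ingredient. The key observation that makes it all go through is that every point of $\asimp{w}^\circ$ has all twistor coordinates nonzero by construction, so once \cref{lem:disjointOrContained} places such a point inside $\gt{\hat\pi}$, the strict sign characterization of \cref{thm:surjectivity} automatically places it in the open positroid tile $\gto{\hat\pi}$. This turns every closed containment coming out of the $w$-chamber machinery into an open Grasstope intersection, which is precisely what the disjointness clause of a positroid tiling forbids.
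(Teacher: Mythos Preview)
Your proposal is correct and follows essentially the same approach as the paper's proof: both directions are driven by \cref{prop:simplexContainment}, \cref{thm:nonempty}, \cref{thm:wSimpCover}, \cref{lem:disjointOrContained}, and \cref{cor:GTsInBijection} in exactly the way you describe. The paper's reverse direction is marginally more economical—it argues in one stroke that every $\simp{w}$ lies in \emph{precisely one} $\Gamma_\pi$ (giving covering and disjointness simultaneously)—whereas you treat covering and disjointness separately; but the content is the same, and your final paragraph making explicit the passage from closed to open containments via the strict sign description of \cref{thm:surjectivity} is a point the paper leaves implicit.
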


\begin{proof}
	$(\implies):$ Suppose $\mathcal{C}$ is a positroid tiling of $\Delta_{k+1, n}$ and choose $Z\in \Mat_{n,k+2}^{>0}$. We already know that $\gt{\td{\pi}}$ is a positroid tile from \cref{cor:GTsInBijection}.
	
	We first show that the Grasstopes in $\hat{\mathcal{C}}$ are dense in the amplituhedron. Consider a nonempty amplituhedron $w$-chamber $\asimp{w}$. Since $\mathcal{C}$ is a positroid tiling, there exists a tree positroid polytope $\Gamma_{\pi} \in \mathcal{C}$ which contains $\simp{w}$. By \cref{prop:simplexContainment}, $\asimpo{w} \subset \gto{\td{\pi}}$, where the latter is by definition in $\td{\mathcal{C}}$. So we have
	\[\bigcup_w \asimpo{w} \subseteq \bigcup_{\td{\mathcal{C}}} \gto{\td{\pi}} \subseteq \AA_{n, k, 2}(Z).
	\]
	By \cref{thm:wSimpCover}, the closure of the left-most set is equal to the right, so the closure of the middle set is $\AA_{n, k, 2}(Z)$, as desired.
	
	Now, suppose for the sake of contradiction that two distinct $\gto{\td{\pi}},\gto{\td{\mu}} \in\td{\mathcal{C}}$ are not disjoint. They are open, so their intersection is open, and thus their intersection contains a point in $\asimpo{w}$ for some $w$. \cref{lem:disjointOrContained} implies that in fact the entire $w$-simplex $\asimpo{w}$ is contained in their intersection. But then by \cref{prop:simplexContainment}, $\simp{w}$ is contained in $\Gamma_\pi \cap \Gamma_\mu$, a contradiction.
	
	$(\Longleftarrow)$: Suppose that for all $Z \in \Mat_{n,k+2}^{>0}$, $\hat{\mathcal{C}}$ is a positroid tiling of $\AA_{n, k, 2}(Z)$. By \cref{thm:nonempty}, for all $w \in D_{k+1, n}$, we can choose $Z$ so that $\asimp{w}$ is nonempty. In particular, $\asimpo{w}$ must intersect one of the positroid tiles $\gto{\hat{\pi}}$ and thus by \cref{lem:disjointOrContained}, $\asimp{w} \subset \gt{\hat{\pi}}$. Because $\hat{\mathcal{C}}$ is a positroid tiling, $\asimp{w}$ is not contained in any other positroid tile in $\hat{\mathcal{C}}$. Using \cref{prop:simplexContainment}, we see that every $w$-simplex is contained in precisely one positroid polytope in $\mathcal{C}$, and thus $\mathcal{C}$ is a positroid tiling of $\Delta_{k+1, n}$.
\end{proof}

In \cite{Karp:2017ouj}, they conjectured there are $\binom{n-2}{k}$
Grasstopes in a positroid tiling of $\mathcal{A}_{n,k,2}$. As noted in \cite{LPW}, this is also the number of positroid polytopes in a regular positroid tiling of $\Delta_{k+1,n}$ \cite{SW}, which are those arising from the \emph{tropical positive Grassmannian} $\Trop^+\Gr_{k+1,n}$ \cite{LPW}. 
A positroid tiling of $\AA_{n,k,2}$ is \emph{regular} if it is T-dual to a regular positroid tiling of $\Delta_{k+1,n}$. By Theorem \ref{thm:simpTriangGiveAmp}, we have:

\begin{corollary}
	There are $\binom{n-2}{k}$ Grasstopes in any regular positroid tiling of $\AA_{n,k,2}(Z)$.
\end{corollary}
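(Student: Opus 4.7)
The proof is essentially immediate from the machinery already established, so my plan is short. By definition, a regular positroid tiling of $\AA_{n,k,2}(Z)$ is the T-dual of a regular positroid tiling $\{\Gamma_\pi\}$ of $\Delta_{k+1,n}$, and such tilings are already known to consist of $\binom{n-2}{k}$ positroid polytopes by \cite{SW} (equivalently, by the count of maximal cones of $\Trop^+\Gr_{k+1,n}$ as recalled in \cite{LPW}). So the entire task reduces to verifying that the T-dual collection $\{Z_{\hat{\pi}}\}$ has exactly the same cardinality as $\{\Gamma_\pi\}$.

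First I would invoke \cref{thm:simpTriangGiveAmp} to know that $\{Z_{\hat{\pi}}\}$ is indeed a positroid tiling of $\AA_{n,k,2}(Z)$ (and in particular that each $Z_{\hat{\pi}}$ is a positroid tile, which also follows from \cref{cor:GTsInBijection}). Second, I would observe that the map $\pi \mapsto \hat{\pi}$ is a bijection by \cref{lem:tDualityBiject}: T-duality is a bijection between loopless decorated permutations of type $(k+1,n)$ and coloopless decorated permutations of type $(k,n)$. Since each $\Gamma_\pi$ in the regular tiling corresponds to a unique loopless $\pi$ (indeed $\Gamma_\pi$ is full-dimensional in $\Delta_{k+1,n}$, so $S_\pi$ has no loops) and distinct positroid tiles come from distinct cells, the collection $\{Z_{\hat{\pi}}\}$ has the same cardinality as $\{\Gamma_\pi\}$.

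Putting these together: since $|\{\Gamma_\pi\}| = \binom{n-2}{k}$ by \cite{SW,LPW}, we conclude $|\{Z_{\hat{\pi}}\}| = \binom{n-2}{k}$ as well.

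There is no real obstacle to this argument — the work has all been done in \cref{lem:tDualityBiject}, \cref{cor:GTsInBijection}, \cref{thm:simpTriangGiveAmp}, and the earlier counting result of \cite{SW}. The only minor point to check is that distinct tiles in a regular tiling of $\Delta_{k+1,n}$ correspond to distinct decorated permutations (so the bijection $\pi \mapsto \hat{\pi}$ can be applied meaningfully), which is immediate because a positroid polytope determines its positroid cell.
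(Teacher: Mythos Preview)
Your proposal is correct and follows essentially the same approach as the paper. The paper's proof is simply the sentence ``By Theorem~\ref{thm:simpTriangGiveAmp}, we have:'' together with the preceding paragraph defining regular tilings of $\AA_{n,k,2}$ as T-duals of regular tilings of $\Delta_{k+1,n}$ and citing \cite{SW} for the count $\binom{n-2}{k}$; you have spelled out the cardinality-preservation step via \cref{lem:tDualityBiject} a bit more explicitly, but the argument is the same.
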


\begin{rmk}
\cite{LPW} showed that all \emph{BCFW} tilings of $\AA_{n,k,2}(Z)$ contain $\binom{n-2}{k}$ Grasstopes; there are BCFW tilings which are not regular and regular tilings which are not BCFW.
\end{rmk}

\subsection{The $\mathcal{G}$-amplituhedron, the hypercube and the total amplituhedron}

In this subsection, we embed $\AA_{n,k,2}(Z)$ into a full-dimensional subset of $\Gr_{2,n}$ -- the `$\mathcal{G}$-amplituhedron' $\mathcal{G}_{n,k,2}$-- which does not depend on $Z$. We use sign chambers in the $\mathcal{G}$-amplituhedron to prove that all $w$-chambers of $\AA_{n, k, 2}$ are realizable (\cref{thm:nonempty}). We also draw another parallel between the hypersimplex and the ampliltuhedron. In \cref{rem:hypercube} we saw that the union of the (projected) hypersimplices $\tilde{\Delta}_{k+1,n}$ is the hypercube $\mbox{\mancube}_{n-1}$. Analogously, we take the union of $\mathcal{G}$-amplituhedra varying over all $k$ to obtain the \emph{total amplituhedron} $\mathcal{G}_{n}^{(2)}$, which is the amplituhedron-analogue of $\mbox{\mancube}_{n-1}$.

The following definition is intended to be a $Z$-independent version of the amplituhedron, inspired by \cref{cor:G}. 
\begin{definition}[The $\mathcal{G}$-Amplituhedron]
	Fix $k<n$ and let 
	\begin{align*}
		\mathcal{G}_{n,k,2}^{\circ}:= \{z\in \Gr_{2,n} \ &\vert \  
		p_{i,i+1}(z)>0 \text{ for }1 \leq i \leq n-1,
		\text{ and }  p_{n,\hat{1}}(z) >0,\\
		&\text{ and } 
		\var((p_{12}(z), 
		p_{13}(z),  \dots 
		p_{1n}(z))=k\},
	\end{align*}
	The closure $\mathcal{G}_{n,k,2}:=\overline{\mathcal{G}_{n,k,2}^{\circ}}$ in $\Gr_{2,n}$ is
	the \emph{$\mathcal{G}$-amplituhedron}. 
\end{definition}

\begin{rmk} Following the sign-flip descriptions from
	\cite{ATT,karpwilliams}, one can 
	generalise most of the definitions in this section for any $m$. We leave this to future work.
\end{rmk}
Comparing with  \cref{cor:G} we  have:
\begin{prop}\label{prop:Bamplchar}
	Fix $k<n$, and $W \in \Gr_{k+2,n}^{>0}$.
	Then 
	\begin{equation*}
		\mathcal{G}^{\circ}_{n,k,2}(W) = \{z \in \mathcal{G}^{\circ}_{n,k,2} \ \vert \ z \subset W \} =
		\mathcal{G}^{\circ}_{n,k,2} \cap \Gr_2(W) \quad \text{ and } \quad 
		\mathcal{B}_{n, k, 2}(W)= \overline{\mathcal{G}^{\circ}_{n,k,2} \cap \Gr_2(W)}.
	\end{equation*}
\end{prop}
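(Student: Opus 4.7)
The proposition is essentially a matter of unpacking definitions and then invoking \cref{cor:G}, so my plan is short.

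First I would establish the chain of equalities $\mathcal{G}^{\circ}_{n,k,2}(W) = \{z \in \mathcal{G}^{\circ}_{n,k,2} : z \subset W\} = \mathcal{G}^{\circ}_{n,k,2} \cap \Gr_2(W)$. The second equality is immediate from the definition of $\Gr_2(W)$ as the subset of $\Gr_{2,n}$ consisting of elements $z$ with $z \subset W$. For the first equality, I would compare the definition of $\mathcal{G}^{\circ}_{n,k,2}(W)$ from \cref{cor:G} with the definition of $\mathcal{G}^{\circ}_{n,k,2}$ given just above the proposition: the two sets are cut out by exactly the same sign conditions on Pl\"ucker coordinates, namely $p_{i,i+1}(z)>0$ for $1 \le i \le n-1$, $p_{n,\hat 1}(z) > 0$, and $\var(p_{12}(z),\dots,p_{1n}(z)) = k$. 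The only difference between the two definitions is the ambient space: $\mathcal{G}^{\circ}_{n,k,2}(W)$ is defined as a subset of $\Gr_2(W)$, while $\mathcal{G}^{\circ}_{n,k,2}$ is defined as a subset of $\Gr_{2,n}$. Hence imposing the additional constraint $z \in \Gr_2(W)$ on elements of $\mathcal{G}^{\circ}_{n,k,2}$ recovers exactly $\mathcal{G}^{\circ}_{n,k,2}(W)$.

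For the second assertion of the proposition, namely $\mathcal{B}_{n,k,2}(W) = \overline{\mathcal{G}^{\circ}_{n,k,2} \cap \Gr_2(W)}$, I would simply combine the first assertion with \cref{cor:G}, which states that $\mathcal{B}_{n,k,2}(W) = \overline{\mathcal{G}^{\circ}_{n,k,2}(W)}$. Substituting the identity $\mathcal{G}^{\circ}_{n,k,2}(W) = \mathcal{G}^{\circ}_{n,k,2} \cap \Gr_2(W)$ yields the desired statement.

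There is no real obstacle here since the content of the proposition is just the observation that the $\mathcal{B}$-amplituhedron, which a priori depends on $W$, can be described uniformly as the intersection of the $Z$-independent set $\mathcal{G}^{\circ}_{n,k,2} \subset \Gr_{2,n}$ with the slice $\Gr_2(W)$, followed by closure. The only subtlety to double-check is that the definition of $\mathcal{G}^{\circ}_{n,k,2}(W)$ in \cref{cor:G} uses precisely the same sign conventions (including the twisted $p_{n,\hat 1}$) as the definition of $\mathcal{G}^{\circ}_{n,k,2}$ above the proposition, which is a direct verification.
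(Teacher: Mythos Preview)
Your proposal is correct and matches the paper's approach exactly: the paper simply states the proposition as following from ``Comparing with \cref{cor:G}'', without giving any further proof, since it is indeed just a matter of unpacking the definitions of $\mathcal{G}^{\circ}_{n,k,2}(W)$ and $\mathcal{G}^{\circ}_{n,k,2}$ and applying \cref{cor:G}.
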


\begin{rmk}
	Note that $\mathcal{G}_{n,k,2}$ is full-dimensional in $\Gr_{2,n}$, i.e. it has dimension $2(n-2)$, whereas $\mathcal{G}^\circ_{n,k,2}(W)$ and $\mathcal{B}_{n,k,2}(W)$ are full-dimensional in $\Gr_2(W)$, i.e.  have dimension $2k$. 
\end{rmk}

Motivated by the decomposition of $\AA_{n,k,2}(Z)$ into $w$-chambers, we analogously define $w$-chambers for $\mathcal{G}_{n,k,2}$.
\begin{defn}\label{def:ampchamber2}
	Let $w \in D_{k+1,n}$ and let $I_a:=\cdes(w^{(a-1)})$.
	Then the \emph{open $\mathcal{G}$-amplituhedron $w$-chamber} $\hat{\Delta}_w^{\circ}(\mathcal{G})$ consists of 
	$z \in \mathcal{G}_{n,k,2}$ with all nonzero Pl\"ucker coordinates such that for $a=1, \dots, n$,
	\begin{equation*}
		\flip(	p_{a \hat{1}}(z) , p_{a \hat{2}}(z), \dots, p_{a \widehat{a-1}}(z), p_{a a}(z), p_{a a+1}(z), \dots, p_{a n}(z))=
		I_a \setminus \{a\}.
	\end{equation*}	 	
	Equivalently, $\Delta^{\circ}_w(\mathcal{G})$ consists of 
	$z \in \Gr_{2,n}$ such that
	\begin{equation}
		\sgn p_{ a j}(z)= (-1)^{|I_a \cap [a, j-1]|-1}  \text{ for } j>a \quad \text{ and } \quad 
		\sgn p_{a \hat{j}}(z)= (-1)^{|I_a \cap [a, j-1]|-1}  \text{ for } j< a. \label{eq:sgnCond}
	\end{equation}
	The \emph{closed $\mathcal{G}$-amplituhedron $w$-chamber} is the closure $\hat{\Delta}_w(\mathcal{G}):= \overline{\hat{\Delta}^{\circ}_w(\mathcal{G})}$.
	Abusing notation, we will often omit `closed' when referring to closed $\mathcal{G}$-amplituhedron $w$-chambers.
\end{defn}

The situation for $\mathcal{G}$-amplituhedron $w$-chambers is quite straightforward. We will see that 
the second part of \eqref{eq:sgnCond} follows from the first part, 
so each $\hat{\Delta}_w^{\circ}(\mathcal{G})$ is an oriented matroid stratum, whose underlying matroid is the rank 2 uniform matroid on $[n]$.

\begin{prop} \label{prop:GchamberNonempty}
	Let $w \in D_{k+1, n}$. Then 
	$\hat{\Delta}_w^{\circ}(\mathcal{G})$ 
	is nonempty and is contractible.
\end{prop}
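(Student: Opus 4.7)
The plan is to analyze $\hat{\Delta}_w^{\circ}(\mathcal{G})$ in an angular parametrization of the uniform-matroid stratum of $\Gr_{2,n}$ in which each chamber becomes an open convex polytope. On the locus where all Pl\"ucker coordinates are nonzero, represent a point of $\Gr_{2,n}$ by a $2 \times n$ matrix with columns $v_j = (\cos\theta_j, \sin\theta_j) \in \mathbb{R}^2$, so that $p_{ij} = \sin(\theta_j - \theta_i)$. Normalizing $\theta_1 = 0$ by rotation, the conditions defining $\mathcal{G}^{\circ}_{n,k,2}$ translate to the open convex polytope
\[
P := \{(\theta_2,\ldots,\theta_n) \in \mathbb{R}^{n-1} : \theta_{i+1} - \theta_i \in (0,\pi) \text{ for } 1 \leq i \leq n-1,\ \theta_n \in (k\pi,(k+1)\pi)\}.
\]
Since $\sgn p_{ij} = (-1)^{\lfloor(\theta_j-\theta_i)/\pi\rfloor}$, imposing \eqref{eq:sgnCond} forces $\lfloor(\theta_j-\theta_i)/\pi\rfloor$ to equal the specific integer $m_{ij} := |I_i \cap [i,j-1]| - 1$ (not merely its parity)---within $P$, the number of sign flips of $(p_{i,i+1},\ldots,p_{i,j})$ equals the number of multiples of $\pi$ crossed by $\theta_j - \theta_i$---and the twisted conditions for $i>j$ give the same family of slabs. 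Thus $\hat{\Delta}_w^{\circ}(\mathcal{G})$ is the intersection of $P$ with a system of open slabs $\{m_{ij}\pi < \theta_j - \theta_i < (m_{ij}+1)\pi\}$, which is an open convex subset of $\mathbb{R}^{n-1}$. Contractibility then reduces to nonemptiness.

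To prove nonemptiness, the plan is to exhibit an explicit point. Set $\theta_j := \pi \cdot d_w(j) + \epsilon\,\tau_j$, where $d_w(j) := |\{i \in \des(w) : i < j\}|$, $\epsilon > 0$ is small, and $(\tau_j)_{j=1}^n$ with $\tau_1 = 0$ is to be chosen. The backbone $\pi \cdot d_w(j)$ realizes the $a=1$ slab conditions exactly, since $d_w$ jumps by one precisely at each $j \in \des(w) = I_1 \setminus \{1\}$; however, it places each $\theta_b - \theta_1$ on the \emph{boundary} of its slab, so the perturbations $\tau_j$ must push each $\theta_b - \theta_a$ into the correct open slab. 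A direct calculation shows this reduces to strict inequalities of the form $\tau_b > \tau_a$ when $d_w(b) - d_w(a) = m_{ab}$, and $\tau_b < \tau_a$ when $d_w(b) - d_w(a) = m_{ab}+1$ (the only two cases consistent with the backbone), together with the consecutive-positivity requirement that $\tau_{j+1} > \tau_j$ iff $j \notin \des(w)$. This is a system of strict linear inequalities on $(\tau_j)$.

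The main obstacle is showing that this system is consistent, i.e., that the induced binary relation on $\{1,\ldots,n\}$ extends to a total order. The plan is to induct on $a$ cycling through $a = 1,\, w_1+1,\, w_2+1,\ldots,\, w_{n-1}+1$, using the descent-set recursion $I_{w_i+1} = I_{w_{i-1}+1} \setminus \{w_i\} \cup \{w_i+1\}$ of \cref{rmk:circuitDefn}: at each step, moving from the constraints of $I_{w_{i-1}+1}$ to those of $I_{w_i+1}$ adjoins only the single new comparison between $\tau_{w_i}$ and $\tau_{w_i+1}$, and careful bookkeeping shows this comparison is compatible with all previously imposed ones. Equivalently, consistency is the combinatorial content of the sortedness of $\{I_1,\ldots,I_n\}$ in the sense of \cref{lem:sortedPairwiseEnough} and the argument of \cref{thm:wSimpCover}. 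Once a total order realizing the system is in hand, choosing any representative values for $(\tau_j)$ and taking $\epsilon > 0$ sufficiently small yields a point of $\hat{\Delta}_w^{\circ}(\mathcal{G})$, completing the proof.
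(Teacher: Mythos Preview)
Your contractibility argument has a genuine error: not every point of the uniform stratum of $\Gr_{2,n}$ admits a $2\times n$ matrix representative with unit-length columns once $n\ge 4$. Normalizing $n$ column norms imposes $n$ conditions, but left $GL_2$ has only four degrees of freedom; concretely, the $2\times 4$ matrix with columns $(1,0),\,(0,1),\,(1,1),\,(1,2)$ has no $GL_2$-translate with all columns of unit length. Hence your angular map does not surject onto the chamber, and indeed the chamber has dimension $2(n-2)$ while your polytope lives in $\R^{n-1}$. The paper does not attempt a direct argument here: it observes that $\hat{\Delta}_w^{\circ}(\mathcal{G})$ is a rank-$2$ oriented matroid stratum and cites a standard result that all such strata are contractible.

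For nonemptiness your idea is sound---a single unit-column point suffices---but the key consistency step is not established. Your inductive claim that passing from the constraints of $I_{w_{i-1}+1}$ to those of $I_{w_i+1}$ adjoins only the single comparison between $\tau_{w_i}$ and $\tau_{w_i+1}$ is not correct as stated: processing a new base index $a=w_i+1$ introduces comparisons of $\tau_a$ against every $\tau_b$ with $b\in\{w_{i+1}+1,\dots,w_{n-1}+1\}$, none of which follow from the previously processed rows. The paper instead writes down the answer directly: take any $v_1,v_{w_1+1},\dots,v_{w_{n-1}+1}$ which in this order form a totally positive $2\times n$ matrix, and set $z_b:=(-1)^{|I_1\cap[1,b-1]|-1}v_b$. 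Verifying $z\in\hat{\Delta}_w^{\circ}(\mathcal{G})$ is then a short computation using the recursion of \cref{rmk:circuitDefn}: one shows $|I_a\cap[a,j-1]|-|I_1\cap[a,j-1]|$ is $1$ or $0$ according as $a-1$ precedes or follows $j-1$ in $w$. Translated to your framework, the constraint from each pair $(a,b)$ is exactly $\tau_a<\tau_b \Longleftrightarrow w^{-1}(a-1)<w^{-1}(b-1)$ (with $\tau_1$ minimal), visibly a total order---so your system \emph{is} consistent, but the direct construction reveals the ordering without the inductive bookkeeping.
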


\begin{proof} 
	Consider $n$ vectors $v_1, v_2, \dots, v_n$ in $\RR^2$ so that the matrix
	\[\begin{bmatrix}
		v_1 & v_{w_1 +1}& v_{w_2+1} &\dots& v_{w_{n-1}+1}
	\end{bmatrix}\]
	has all maximal minors positive. In particular, drawing the vectors in the plane and going counterclockwise, we see $v_1, v_{w_1 +1}, v_{w_2+1}, \dots, v_{w_{n-1}+1}$ in that order.
	
	Now, set $z_1:=v_1$ and $z_b := (-1)^{|I_1 \cap [1, b-1]|-1}v_b$ for $b \geq 2$. We claim that 
	\begin{equation*}
		z=\begin{bmatrix}
		z_1 & z_2& z_3 &\dots& z_n
	\end{bmatrix}\end{equation*}
	represents a point in $\hat{\Delta}_w^{\circ}(\mathcal{G})$.
	
	Clearly $p_{1b}(z)$ has the correct sign. Consider $1 \neq a<j$. We will assume $\det [v_a v_j]>0$; the other case is similar. Note that $p_{aj}(z)$ has sign $(-1)^{|I_1 \cap [a, j-1]|}$; we would like to show that this is equal to $(-1)^{|I_a \cap [a, j-1]|-1}$. Because $\det[v_a v_j]>0$, $a-1$ occurs before $j-1$ in $w$, written in one-line notation. Recall from \cref{rmk:circuitDefn} that $I_{w_i +1}= I_{w_{i-1}+1} \setminus \{w_i\} \cup \{w_i+1\}$. That is, $I_a$ can be obtained from $I_1$ by removing $w_1$ and adding $w_1+1$, then removing $w_2$ and adding $w_2+1$, and so on until one removes $w_q=a-1$ and adds $a$. Note that for $c=w_1, \dots, w_{q-1}$, the numbers $c$ and $c+1$ are either both in $[a, j-1]$ or both not in $[a, j-1]$, so $|I_1 \cap [a, j-1]|=|I_{c+1} \cap [a, j-1]|$. Removing $a-1$ from $I_{w_{q-1}+1}$ and adding $a$ increases the size of the intersection with $[a, j-1]$ by one, so $|I_1 \cap [a, j-1]|= |I_a \cap [a, j-1]|-1$. This shows $p_{aj}(z)$ has the correct sign for $a<j$; a similar argument shows that for $a>j$, $p_{a \hat{j}}(z)$ has the desired sign so long as $p_{ja}(z)$ does.
	
	So $\hat{\Delta}_w^{\circ}(\mathcal{G})$ is an oriented matroid stratum for a rank 2 oriented matroid. By \cite[Corollary 8.2.3]{OrientedMatroidBook}, all rank 2 oriented matroid strata are contractible.
\end{proof}

\begin{example} \label{ex:realizable}
	Let $w=(2,6,1,4,5,3,7)\in D_{k+1,n}$ with $k=3$ and $n=7$.
	We have $I_1 = \{1,2,4,6\}$.
	Following the proof of \cref{prop:GchamberNonempty}, we can choose 
	\begin{align*}
		(v_1, v_{w_1 +1}, v_{w_2 +1}, v_{w_3+1}, v_{w_4+1}, v_{w_5+1}, v_{w_6+1})
		&= (v_1, v_3, v_7, v_2, v_5, v_6, v_4) \\ 
		&= \begin{pmatrix}
			1 & 1 & 1 & 1 & 1 & 1 & 1 \\
			1 & 2 & 3 & 4 & 5 & 6 & 7
		\end{pmatrix}.
	\end{align*}
	We then get 
	\begin{equation*}
		z = \begin{pmatrix}
			1 & 1 & -1 & -1 & 1 & 1 & -1 \\
			1 & 4 & -2 & -7 & 5 & 6 & -3
		\end{pmatrix}.
	\end{equation*}
	One can check that $z$ lies in $\hat{\Delta}_w^{\circ}(\mathcal{G})$.
	Also note that both row vectors $z^{(1)}$ and $z^{(2)}$ of $z$ have 
	$\var(z^{(1)}) = \var(z^{(2)})=k$ by construction.
\end{example}

\begin{rmk} \label{rk:wchambersasslices}
	The $w$-chambers of the $\mathcal{G}$-amplituhedron do \emph{not} depend on $Z$. Roughly speaking, the amplituhedron $w$-chambers are linear slices of $\mathcal{G}$-amplituhedron $w$-chambers. More precisely, for
	$Z\in \Mat^{>0}_{n,k+2}$ with column span $W\in \Gr_{k+2,n}^{>0}$, we have
	\begin{equation*}
		f_Z(\hat{\Delta}^\circ_w(\mathcal{G})\cap 
		\Gr_{2}(W))= \hat{\Delta}^\circ_w(Z),
	\end{equation*}
	where $f_Z$ is the homeomorphism from \cref{prop:twistor}.	
\end{rmk}

Our next goal is to use
\cref{prop:GchamberNonempty}
and the connection with the $\mathcal{B}$-amplituhedron from 
\cref{prop:Bamplchar}
to deduce \cref{thm:nonempty} on realizability of $w$-chambers. We start by proving the following lemma.
\begin{lemma}\label{lem:extend}
	Given a $2\times n$ matrix $z$ as constructed in the proof of \cref{prop:GchamberNonempty}, we can construct 
	a $(k+2) \times n$ matrix $A'$ representing a point $W\in \Gr_{k+2,n}^{\geq 0}$ which contains
	$\rowspan(z)$ as a subspace.
\end{lemma}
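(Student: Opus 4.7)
My plan is to construct $A'$ explicitly in block form
\[
A' \;=\; \begin{pmatrix} z \\ U \end{pmatrix}
\]
where $U$ is a carefully chosen $k\times n$ matrix. Any such $A'$ automatically has $\rowspan(A')\supseteq\rowspan(z)$, so the only content is to arrange that every $(k+2)\times(k+2)$ minor of $A'$ has the same sign. If successful, after possibly rescaling one row we obtain $A'\in\Mat^{\geq 0}_{k+2,n}$, and we may take $W=\rowspan(A')$.

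First I would make the construction of $z$ explicit by choosing $v_b=(1,\sigma^{-1}(b))$, where $\sigma\in S_n$ is the permutation with $\sigma(1)=1$ and $\sigma(i+1)=w_i+1$, so that the matrix with columns $v_1,v_{w_1+1},\dots,v_{w_{n-1}+1}$ is the totally positive Vandermonde $\begin{pmatrix}1 & 1 & \cdots & 1\\ 1 & 2 & \cdots & n\end{pmatrix}$. The $2\times 2$ minors of $z$ are then $p_{ab}(z)=\epsilon_a\epsilon_b(\sigma^{-1}(b)-\sigma^{-1}(a))$, whose signs are controlled entirely by the sign pattern $(\epsilon_b)$ (determined by $I_1=\cdes(w)$) and the relative order of $\sigma^{-1}(a),\sigma^{-1}(b)$.

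Next I would take $U$ to be a matrix of standard basis rows $e_{j_1},\dots,e_{j_k}$ for a $k$-subset $J=\{j_1<\dots<j_k\}\subset[n]$ chosen to be compatible with the sign pattern of $z$. The natural first guess $J=I_1\setminus\{1\}$ works in many examples but, as a small $n=5$ check reveals, can fail; the corrected rule is to choose $J$ so that on the complement $[n]\setminus J$ the signs of the $2\times 2$ minors of $z$ exactly match the Laplace-expansion signs of $A'$. For such a choice, each nonzero $(k+2)\times(k+2)$ minor of $A'$ equals $(-1)^{n_a+n_b}\,p_{ab}(z)$ where $\{a,b\}=I\setminus J$ and $n_x=|\{r:j_r>x\}|$; the required identity
\[
(-1)^{n_a+n_b}\,\epsilon_a\epsilon_b\;=\;\sgn(\sigma^{-1}(b)-\sigma^{-1}(a))
\qquad(a<b,\ a,b\in[n]\setminus J)
\]
can then be verified by a careful parity/counting argument using that $j\in J$ iff $j$ is a left descent of $w$ (plus the cyclic contribution from $1$). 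The main obstacle will be showing that such a $J$ always exists in general and proving the above sign identity uniformly in $w$; if a direct choice of $J$ proves unwieldy, I would instead use Karp's sign-variation characterization (every nonzero $v\in W$ has $\var(v)\le k+1$) together with an inductive extension argument: starting from $\rowspan(z)$, which satisfies $\var(v)\le k+1$ for every $v$ thanks to the Vandermonde structure of the columns $v_b$, repeatedly adjoin a vector $u\in\RR^n$ with $\var(u)\le k+1$ such that every linear combination with the current subspace still has $\var\le k+1$, which by Karp's theorem yields a totally nonnegative $(k+2)$-dimensional $W\supseteq\rowspan(z)$.
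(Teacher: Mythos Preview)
Your proposal is a plan, not a proof, and the two routes you sketch both have genuine gaps. In the first approach you never exhibit a $k$-subset $J$ that works: you try $J=I_1\setminus\{1\}$, say it fails, then describe the ``corrected rule'' as ``choose $J$ so that the signs match''---which is the desired conclusion, not a construction---and then revert to ``$j\in J$ iff $j$ is a left descent of $w$'', which is the same set $I_1\setminus\{1\}$ you already rejected. No existence is established. In the fallback, even granting that every $v\in\rowspan(z)$ has $\var(v)\le k+1$, the inductive extension step is the entire content of the lemma and is not justified: Karp's theorem tells you when a given subspace is totally nonnegative, not that a subspace with bounded sign variation can always be enlarged to one of higher dimension with the same bound.

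The paper's proof is quite different and exploits a structural feature of the constructed $z$ that you do not use: the two rows $z^{(1)},z^{(2)}$ have the \emph{same} sign pattern, constant on consecutive intervals $H_1\sqcup\cdots\sqcup H_{k+1}$. Rather than stacking $k$ extra rows beneath $z$, the paper keeps $z^{(1)}$ as the top row and \emph{spreads} $z^{(2)}$ into $k+1$ rows, the $i$th row being $z^{(2)}$ restricted to $H_i$ and zero elsewhere (this is \cite[Lemma 4.1]{karp}). The resulting $(k+2)\times n$ matrix $A'$ visibly contains $\rowspan(z)$, and each nonzero maximal minor factors as a $2\times 2$ minor $p_{b_j b'_j}(z)$ (with $b_j,b'_j$ in the same block $H_j$, hence positive by the construction of $z$) times a product of entries of $z^{(2)}$ whose sign is $(-1)^{j+1}$, exactly cancelling the Laplace-expansion sign. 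This block structure makes the sign check a two-line computation; your standard-basis approach, by contrast, forces you to control $p_{ab}(z)$ for all pairs in $[n]\setminus J$, with no control on which block $a,b$ lie in.
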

\begin{proof}
	Let $z^{(1)}=(z^{(1)}_1,\dots, z^{(1)}_n)$ and $z^{(2)}=(z^{(2)}_1,\dots,z^{(2)}_n)$ denote the rows of $z$.
	By construction, $\var(z^{(1)})=\var(z^{(2)})=k$ and moreover we can partition $[n]$ 
	into disjoint consecutive intervals $H_1 \sqcup \dots \sqcup H_{k+1}$ such that 
	the entries of $z^{(1)}$ and $z^{(2)}$ in positions $H_i$ are positive if $i$ is odd and negative if $i$ is even.
	
	By \cite[Lemma 4.1]{karp},  since $\var(z^{(2)})=k$,
	we can construct a $(k+1) \times n$ matrix $A$ with maximal minors nonnegative
	whose row sum  is $z^{(2)}$.  More explicitly, we define the $i$th row of $A$ to be the vector 
	$(a_{i1},\dots,a_{i2})$ such that $a_{ij}= z^{(2)}_j$ for $j\in H_i$ and $a_{ij}=0$ for $j \notin H_i$.
	Therefore the nonvanishing Pl\"ucker coordinates of $A$
	are precisely the $p_B(A)$ such that $B=\{b_1<b_2<\dots<b_{k+1}\}$ with $b_i \in H_i$.
	
	Let $A'$ be the matrix obtained from $A$ by adding $z^{(1)}$ as a new top ($0$th) row. 
	We will label the rows of $A'$ from $0$ to $k+1$.
	The nonvanishing Pl\"ucker coordinates of $A'$ 
	are precisely the $p_{B'}(A')$ where $B'=\{b_1<b_2<\dots <b_{k+1}\} \cup \{b'_j\}$
	with $b_i\in H_i$ and both $b_j, b'_j$ lie in $H_j$.
	
	Now we can compute the Pl\"ucker coordinates of $A'$ in terms of Pl\"ucker coordinates of $z$ and minors of $A$.
	Let $B'=\{b_1<b_2<\dots <b_{k+1}\} \cup \{b'_j\}$ as above.  Then 
	we have 
	\begin{align*}
		p_{B'}(A') &= (-1)^{j-1} \Delta_{0j, b_j b'_j}(A') \cdot \Delta_{[k+1]\setminus {j}, B'\setminus \{b_j, b'_j\}}(A')\\
		&= (-1)^{j-1} p_{b_j b'_j}(z) 
		\prod_{i \neq j} z^{(2)}_{b_i}
	\end{align*}
	where $\Delta_{R, C} (A')$ denotes the minor of $A'$ on rows $R$ and columns $C$. 
	Now it follows from the construction of $z$ that 
	since both $b_j, b'_j$ lie in $H_j$, we have 
	$p_{b_j b'_j}(z)>0$.  Additionally, we have that 
	the sign of $\prod_{i \neq j} z^{(2)}_{b_i}$ is $(-1)^{j+1}.$
	Therefore $p_{B'}(A')$ is positive, as desired.

\end{proof}

\begin{example} 
	\label{ex:realizable2}
	We illustrate the proof of 
	\cref{lem:extend} using 
	our running example from \cref{ex:realizable}.
	We have \begin{equation*}
		z = \begin{pmatrix}
			1 & 1 & -1 & -1 & 1 & 1 & -1 \\
			1 & 4 & -2 & -7 & 5 & 6 & -3
		\end{pmatrix}
	\end{equation*}
	so \begin{equation*}
		A = \begin{pmatrix}
			1 & 4 & 0 & 0 & 0 & 0 & 0\\
			0 & 0 & -2 & -7 & 0 & 0 & 0\\
			0 & 0 & 0 & 0 & 5 & 6 & 0\\
			0 & 0 & 0 & 0 & 0 & 0 & -3
		\end{pmatrix} \text{ and }
		A' = \begin{pmatrix}
			1 & 1 & -1 & -1 & 1 & 1 & 1\\
			1 & 4 & 0 & 0 & 0 & 0 & 0\\
			0 & 0 & -2 & -7 & 0 & 0 & 0\\
			0 & 0 & 0 & 0 & 5 & 6 & 0\\
			0 & 0 & 0 & 0 & 0 & 0 & -3
		\end{pmatrix}.
	\end{equation*}
	Both matrices have maximal minors nonnegative.
	If $B' = \{2,3,5,6,7\}$ then $2\in H_1, 3\in H_2, 5,6\in H_3, 7\in H_4$ 
	and we have 
	$$p_{B'}(A') = \Delta_{03,56}(A') \Delta_{124, 237}(A') = 
	p_{56}(z) \cdot (4 \cdot (-2) \cdot (-3)).$$
\end{example}

\begin{proof}[Proof of \cref{thm:nonempty}]
	By \cref{prop:twistor}, we know that $\mathcal{B}_{n,k,2}(W)$ is homeomorphic to 
	$\mathcal{A}_{n,k,2}(Z)$, where $W\in \Gr_{k+2,n}^{>0}$ is the column span of $Z$.
	Moreover the Pl\"ucker coordinates of the former agree with the twistor coordinates of the latter.
	\cref{prop:GchamberNonempty} gives an explicit construction of a $2 \times n$ matrix
	$z$ representing a point in 
	$\hat{\Delta}_w^{\circ}(\mathcal{G})$, and by \cref{prop:Bamplchar} we have 
	$	\mathcal{B}_{n, k, 2}(W)= \overline{\mathcal{G}^{\circ}_{n,k,2} \cap \Gr_2(W)},$
	so to prove the theorem, 
	we just need to realize $z$ as a two-dimensional subspace contained in some  $(k+2)$-plane
	$W\in \Gr_{k+2,n}^{>0}$.
	
	By \cref{lem:extend}, we can realize $z$ as a two-dimensional subspace contained in 
	a $(k+2)$-plane $W\in \Gr_{k+2,n}^{\geq 0}$.  (Here $W = \rowspan(A')$.)
	We want to now slightly deform $A'$ to make it totally positive.
	
	We claim that $A'\in \Gr_{k+2,n}^{\geq 0}$ is the limit of a sequence of 
	points $\{\tilde{A}_t\} \in \Gr_{k+2,n}^{>0}$ where 
	$\rowspan(\tilde{A}_t)$ contains a $2$-plane $z(t)$
	which 
	lies in the same sign-chamber as $z$. 
	To see this, 
	we use the fact that $\Gr_{k+2,n}^{\geq 0} = \overline{\Gr_{k+2,n}^{>0}}$ (see 
	\cref{rem:PL}).
	We can therefore write
	$A'$ as the limit of a sequence of matrices of the form 
	$A'+(\epsilon_{ij}(t)) \in \Gr_{k+2,n}^{>0}$, where 
	$(\epsilon_{ij}(t))$ is a $(k+2) \times n$ matrix, and each 
	$\epsilon_{ij}(t)$ is a function of $t$ with small absolute value and 
	$\epsilon_{ij}(t) \to 0$ as $t\to 0$.
	
	We denote the rows of 
	$A'+(\epsilon_{ij}(t))$ by $r_i(t)$ for $0\leq i \leq k+1$.
	Let $z^{(1)}(t): = r_0(t)$, 
	let $z^{(2)}(t): = r_1(t)+r_2(t)+\dots + r_{k+1}(t),$ and 
	let $z(t)$ be the matrix with rows 
	$z^{(1)}(t)$ and $z^{(2)}(t)$. 
	
	Then when $t=0$, we have $z=z(t)$.  Moreover for small $t$,
	the Pl\"ucker coordinates of $z(t)$ have the same signs as the Pl\"ucker 
	coordinates of $z$, so $z(t)$ lies in
	the same $w$-chamber $\hat{\Delta}_w^{\circ}(\mathcal{G})$ as $z$.
	But now by construction, $\rowspan(z(t))$ lies in the positive $(k+2)$-plane 
	$W = \rowspan(A'+(\epsilon_{ij}(t)))$.
	This completes the proof of the theorem.
\end{proof}

Recall the definition of realizable amplituhedron chamber from \cref{def:realizable}.
\begin{cor}[Amplituhedron chambers and Eulerian numbers]\label{cor:realizableChambers}
	The realizable amplituhedron chambers $\AA^{\sigma}_{n,k,2}$ are exactly the $w$-chambers 
	$\hat{\Delta}_w^{\circ}$ where $w \in D_{k+1, n}$.
\end{cor}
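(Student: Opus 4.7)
The plan is to assemble the corollary from two earlier results, one for each inclusion. For the inclusion $\{\hat{\Delta}_w^{\circ} : w \in D_{k+1,n}\} \subseteq \{\text{realizable chambers}\}$, I would invoke \cref{thm:nonempty}: for each $w \in D_{k+1,n}$ that theorem produces some $Z \in \Mat_{n,k+2}^{>0}$ with $\hat{\Delta}_w(Z) \neq \emptyset$. Since $\hat{\Delta}_w^{\circ}$ is cut out by the explicit sign conditions of \cref{def:ampchamber}, the sign vector $\sigma_w$ encoded by the cyclic descent sets $I_a = \cdes(w^{(a-1)})$ is realized by any $Y$ in the interior of $\hat{\Delta}_w(Z)$, exhibiting $\hat{\Delta}_w^{\circ} = \AA^{\sigma_w}_{n,k,2}(Z)$ as a realizable chamber.

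For the reverse inclusion, suppose $\sigma \in \{+,-\}^{\binom{n}{2}}$ is such that $\AA^{\sigma}_{n,k,2}(Z)$ is nonempty for some $Z$, and pick $Y$ in this chamber. By definition all twistor coordinates $\lrangle{Y Z_i Z_j}$ are nonzero, so the proof of \cref{thm:wSimpCover} applies directly and produces a permutation $w \in D_{k+1,n}$ together with sets $I_a \subseteq [n]$ such that
\[
\sgn \lrangle{Y a j} = (-1)^{|I_a \cap [a,j-1]|-1} \quad \text{for } j > a,
\]
and analogously for $j < a$. These are precisely the sign conditions defining $\hat{\Delta}_w^{\circ}$ in \cref{def:ampchamber}, so $Y \in \hat{\Delta}_w^{\circ}(Z)$ and hence $\sigma$ agrees with $\sigma_w$ (up to the overall $\pm 1$ scalar in \cref{def:chamber}). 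Consequently $\AA^{\sigma}_{n,k,2} = \hat{\Delta}_w^{\circ}$, and every realizable chamber is a $w$-chamber.

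The substantive work has already been done in \cref{thm:nonempty} (which rests on the explicit construction in \cref{prop:GchamberNonempty} together with the totally positive extension in \cref{lem:extend}) and in \cref{thm:wSimpCover} (whose core is the 3-term Pl\"ucker relation analysis producing the sorted collection $(I_1,\dots,I_n)$). All that remains in the corollary itself is therefore to match definitions, plus the observation that distinct $w \in D_{k+1,n}$ produce distinct tuples $(I_1,\dots,I_n)$, since by the circuit description recalled in \cref{rmk:circuitDefn} one has $I_{w_i+1} = I_{w_{i-1}+1} \setminus \{w_i\} \cup \{w_i+1\}$, so consecutive members of the tuple recover the one-line notation of $w$. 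This ensures the indexing of realizable chambers by $D_{k+1,n}$ is bijective, giving the Eulerian count $E_{k,n-1}$ as a byproduct.
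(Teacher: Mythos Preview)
Your proposal is correct and takes essentially the same approach as the paper: the paper's proof is two sentences, citing \cref{thm:nonempty} for realizability of each $w$-chamber and \cref{thm:wSimpCover} for the fact that no other sign chambers occur. Your additional remarks about distinct $w$ giving distinct sign vectors are accurate but not strictly needed for the statement as phrased.
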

\begin{proof}
	\cref{thm:nonempty} shows that each $w$-chamber is realizable. \cref{thm:wSimpCover} shows that no other sign chambers are realizable.
\end{proof}

We now turn to the 
$\mathcal{G}$-amplituhedron. The proof of Theorem \ref{thm:wSimpCover} implies the following.
\begin{thm}\label{thm:GAmplWSimpl} Fix $k<n$, then
	\begin{equation*}
		\mathcal{G}_{n,k,2}= \bigcup_{w \in D_{k+1, n}} \hat{\Delta}_w(\mathcal{G}).
	\end{equation*}
\end{thm}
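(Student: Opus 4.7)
The strategy is to adapt the proof of \cref{thm:wSimpCover} almost verbatim, with twistor coordinates $\lrangle{Y i j}$ replaced by Plücker coordinates $p_{ij}(z)$ on $\Gr_{2,n}$. The inclusion $\bigcup_{w \in D_{k+1,n}} \hat{\Delta}_w(\mathcal{G}) \subseteq \mathcal{G}_{n,k,2}$ is immediate from \cref{def:ampchamber2}, since each $\hat{\Delta}_w^\circ(\mathcal{G}) \subseteq \mathcal{G}_{n,k,2}$ by construction and $\mathcal{G}_{n,k,2}$ is closed. For the reverse inclusion, the points of $\mathcal{G}^\circ_{n,k,2}$ with all Plücker coordinates nonzero form a dense subset (their complement being a finite union of hypersurfaces), so it suffices to show that every such $z$ lies in $\hat{\Delta}_w^\circ(\mathcal{G})$ for some $w \in D_{k+1,n}$; the conclusion then follows by taking closures.

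Fix such a $z$ and, for each $a \in [n]$, define
\[L_a := \flip\bigl(p_{a\hat{1}}(z), \ldots, p_{a\widehat{a-1}}(z), p_{aa}(z), p_{a,a+1}(z), \ldots, p_{an}(z)\bigr).\]
First I would check that $|L_a| = k$ for every $a$. The case $a = 1$ is the defining sign-variation condition of $\mathcal{G}^\circ_{n,k,2}$; the cases $a \geq 2$ follow from the twisted cyclic symmetry of $\Gr_{2,n}$, concretely the map $(z_1, \ldots, z_n) \mapsto (z_2, \ldots, z_n, (-1)^{k+1} z_1)$, which preserves $\mathcal{G}^\circ_{n,k,2}$ and rotates the role of $L_1$ into that of $L_a$ after $a-1$ applications. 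This is the $\mathcal{G}$-amplituhedron analogue of \cref{cor:cyclic}, and is proved identically once the cyclic shift is observed.

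Setting $I_a := L_a \cup \{a\}$, the main step is a direct transcription of the case analysis of \cref{thm:wSimpCover}. The three-term Plücker relations in $\Gr_{2,n}$ between $p_{ij}$'s are formally identical to the three-term Plücker relations between twistor coordinates exploited there (via \cref{prop:twistor}), so the same parity analysis based on $|L_a \cap [b,n]|$ and $|L_b \cap [b,n]|$ shows that, for all $a \neq b$, the pair $\{I_a, I_b\}$ can be ordered to form a sorted pair. By \cref{lem:sortedPairwiseEnough}, the full collection $\{I_1, \ldots, I_n\}$ can be ordered as a sorted collection of $(k+1)$-subsets of $[n]$. By Sturmfels' characterization of the $w$-simplices in the standard triangulation of $\Delta_{k+1,n}$, such a sorted collection is the vertex set of a unique $w$-simplex for some $w \in D_{k+1,n}$; combined with the circuit structure in \cref{rmk:circuitDefn} and the property $a \in I_a$, $a-1 \notin I_a$, this fixes the labeling so that $I_a = \cdes(w^{(a-1)})$, whence $z \in \hat{\Delta}_w^\circ(\mathcal{G})$.

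The expected main obstacle is essentially bookkeeping: verifying the twisted cyclic symmetry argument and confirming that the intricate sign/parity case analysis of \cref{thm:wSimpCover}---whose cases separate into whether $a \in L_b$, $b-1 \in L_a$, etc.---goes through with $p_{ab}$, $p_{a\hat{b}}$ in place of $\lrangle{Y ab}$, $\lrangle{Y a \hat b}$ without modification. No essentially new ideas beyond those in \cref{thm:wSimpCover} and the cyclic symmetry of $\mathcal{G}_{n,k,2}$ are required.
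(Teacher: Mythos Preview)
Your proposal is correct and is exactly what the paper does: the paper's entire proof is the single sentence ``The proof of Theorem~\ref{thm:wSimpCover} implies the following,'' and you have correctly identified that the argument there is really an argument about Pl\"ucker coordinates on $\Gr_{2,n}$ (via \cref{prop:twistor}) and therefore carries over verbatim to $\mathcal{G}_{n,k,2}$. Your observation that the analogue of \cref{cor:cyclic} is needed and follows from the twisted cyclic shift on $\Gr_{2,n}$ is the only small point the paper leaves implicit, and you handle it correctly.
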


	Using the sign characterization of a positroid tile $Z_{\mathcal{T}}$ of $\mathcal{A}_{n,k,2}(Z)$ (\cref{thm:surjectivity}), one can define a \emph{positroid tile} $\mathcal{G}_{\mathcal{T}}$ in $\mathcal{G}_{n,k,2}$ as (the closure of) the region in $\Gr_{2,n}$ whose Pl\"ucker coordinates satisfy the same sign conditions as the twistor coordinates of $Z_{\mathcal{T}}$. Analogously to \cref{cor:GTunionWSimp}, $\mathcal{G}_{\T}$ is a union of $\mathcal{G}$-amplituhedron $w$-chambers. Moreover, $Z_{\T}$ is a linear slice of $\mathcal{G}_{\T}$ (analogously to \cref{rk:wchambersasslices}). We say a \emph{positroid tiling} of $\mathcal{G}_{n,k,2}$ is a collection of positroid tiles which cover $\mathcal{G}_{n,k,2}$ and have disjoint interiors. 
	Since all $\mathcal{G}$-amplituhedron $w$-chambers are non-empty, the analogue of \cref{thm:simpTriangGiveAmp} holds for the $\mathcal{G}$-amplituhedron (without any dependence on $Z$): T-duality gives a bijection between positroid tilings of $\Delta_{k+1, n}$ and positroid tilings of $\mathcal{G}_{n, k, 2}$.

\begin{definition}[Total Amplituhedron]
	The \emph{total  amplituhedron} $\mathcal{G}_{n}^{(2)}$ is
	\begin{equation*}
		\mathcal{G}_{n}^{(2)}:= \bigcup_{k=0}^{n-2} \mathcal{G}_{n,k,2}.
	\end{equation*}
\end{definition}
%
%
%
Note that $\mathcal{G}_{n}^{(2)}$ has top dimension $2(n-2)$ in $\Gr_{2,n}$,  
and it does \emph{not} depend on $Z$.

Recall that the 
hypercube $\mbox{\mancube}_{n-1}\subset \R^{n-1}$ 
can be decomposed into $(n-1)!$ $w$-simplices in a way which is compatible with 
its slicing into (projected) hypersimplices 
$\tilde{\Delta}_{1,n}, \tilde{\Delta}_{2,n}, \ldots,\tilde{\Delta}_{n-1,n}$. Each 
$\tilde{\Delta}_{k+1,n}$ is a union of exactly $E_{k,n-1}$ simplices,
where $E_{k,n-1}$ is the Eulerian number.


Analogously, 
by \cref{thm:GAmplWSimpl},  
the total amplituhedron $\mathcal{G}_n^{(2)} \subset \Gr_{2,n}$
can be decomposed into $(n-1)!$ $w$-chambers 
in a way which is 
compatible with its decomposition into 
the $\mathcal{G}$-amplituhedra $\mathcal{G}_{n,0,2},$ $\mathcal{G}_{n,1,2}, \ldots,\mathcal{G}_{n,n-2,2}$. Each $\mathcal{G}_{n,k,2}$ is a union of exactly $E_{k,n-1}$ $w$-chambers.
This is the `$m=2$' equivalent of encoding all helicity sectors at once for tree-level scattering amplitudes of $\mathcal{N}=4$ SYM for $m=4$. 
A related space was discussed in the context of the $\mathcal{B}$-amplituhedron \cite[Section 3.4]{karpwilliams}.

\subsection{Empty $w$-chambers and tilings of $\AA_{n,k,2}$}\label{subsec:wChambersEmpty}
 In this section we provide algorithms to find \emph{all} positroid tilings of the hypersimplex and the amplituhedron using $w$-simplices and $w$-chambers. 
 As mentioned in \cref{rmk:emptyWs}, $\asimp{w}$ may be empty for some choices of $Z\in \Mat^{>0}_{n,k+2}$. 
 We take a closer look at this phenomenon and give some examples.
 
 \begin{rmk}
 	 It is \emph{a priori} possible for an amplituhedron $\AA_{n, k, 2}(Z)$ to have a positroid tiling $\hat{\mathcal{C}}$ which is not T-dual to a hypersimplex positroid tiling. However, \cref{thm:simpTriangGiveAmp} tells us that the collection of Grasstopes $\hat{\mathcal{C}}$ will fail to be a tiling for some other amplituhedron $\AA_{n, k, 2}(Z')$. We have not found any instances of such ``sporadic" tilings.
 \end{rmk}

\begin{prop}[Algorithm for positroid tilings of $\Delta_{k+1,n}$]
In order to find all positroid tilings of $\Delta_{k+1,n}$ proceed as follows. Call two positroid tiles $\Gamma_{\pi_1}$ and $\Gamma_{\pi_2}$ \emph{compatible} if they do not contain any common $w$-simplex.
\begin{itemize}
\item[Step 1.] Define a graph $\mathcal{G}$ whose vertices are positroid tiles of $\Delta_{k+1,n}$ and edges connect compatible positroid tiles.
\item[Step 2.] Compute the set $Cl(\mathcal{G})$ of all maximal cliques of $\mathcal{G}$;
\item[Step 3.] For each clique $\mathcal{C} \in Cl(\mathcal{G})$, compute the list $\mathcal{L}_\mathcal{C}$ of all $w$-simplices contained in any positroid tile $\Gamma_\pi \in C$;
\item[Step 4.] If $\mathcal{L}_\mathcal{C}$ consists of all $w$-simplices of $\Delta_{k+1,n}$, then $\mathcal{C}$ is a positroid tiling of   $\Delta_{k+1,n}$. Otherwise it is not.
\end{itemize}
\end{prop}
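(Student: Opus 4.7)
The plan is to unwind the definition of positroid tiling of $\Delta_{k+1,n}$ in terms of the $w$-simplex triangulation, and then check that Steps 1--4 amount exactly to enumerating the sets $\mathcal{C}$ of tiles satisfying both the disjointness and covering conditions.

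First I would establish the key translation: \emph{two positroid tiles $\Gamma_{\pi_1}$ and $\Gamma_{\pi_2}$ are compatible (in the sense of Step 1) if and only if their open interiors are disjoint.} Since every full-dimensional positroid polytope is a union of $w$-simplices (Stanley's refinement of positroid subdivisions), and since distinct open $w$-simplices are disjoint, an intersection $\Gamma^\circ_{\pi_1}\cap \Gamma^\circ_{\pi_2}$ is nonempty precisely when $\Gamma_{\pi_1}$ and $\Gamma_{\pi_2}$ share the interior of some $w$-simplex, i.e.\ a common $w$-simplex. Thus the edges of $\mathcal{G}$ encode exactly the pairwise disjointness-of-interiors condition from \cref{def:dissectionSimp}.

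Next I would translate the covering condition. A collection $\mathcal{C}$ of positroid tiles satisfies $\bigcup_{\Gamma_\pi \in \mathcal{C}} \Gamma_\pi = \Delta_{k+1,n}$ if and only if every $w$-simplex is contained in some $\Gamma_\pi \in \mathcal{C}$; this is immediate from $\Delta_{k+1,n} = \bigcup_{w\in D_{k+1,n}} \Delta_w$ and the fact that each $\Delta_w$ either lies in $\Gamma_\pi$ or meets $\Gamma^\circ_\pi$ only on a lower-dimensional set (the hypersimplex analogue of \cref{lem:disjointOrContained}). Therefore Step 4 tests exactly the covering axiom of \cref{def:dissectionSimp}.

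Putting these together: any positroid tiling $\mathcal{C}$ of $\Delta_{k+1,n}$ is by definition a set of pairwise compatible tiles, hence a clique in $\mathcal{G}$; and it is maximal, since any tile $\Gamma_\pi \notin \mathcal{C}$ must contain a $w$-simplex which (by covering) already lies in some $\Gamma_{\pi'} \in \mathcal{C}$, making $\Gamma_\pi$ incompatible with $\Gamma_{\pi'}$. Hence every tiling appears among the maximal cliques enumerated in Step 2 and is detected in Step 4. Conversely, any clique $\mathcal{C}$ that passes Step 4 consists of pairwise compatible tiles whose union is $\Delta_{k+1,n}$, and so is a positroid tiling by the two translations above. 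The main (but only mildly) subtle point is verifying the hypersimplex version of \cref{lem:disjointOrContained}, which follows from \cite[Section 2.4]{LamPost} since the triangulation by $w$-simplices is the common refinement of all positroid subdivisions of $\Delta_{k+1,n}$; all other steps are bookkeeping.
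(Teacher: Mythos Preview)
The paper states this proposition without proof; it is presented as an algorithm whose correctness is implicit from the surrounding development (the $w$-simplex triangulation refines every positroid subdivision, as cited from \cite{LamPost}). Your proof sketch correctly supplies the justification: the equivalence of compatibility with disjoint open interiors, the equivalence of covering all $w$-simplices with covering $\Delta_{k+1,n}$, and the argument that every tiling is in fact a \emph{maximal} clique (so Step~2 does not miss any). All of this is sound and matches the paper's implicit reasoning.
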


\begin{prop}[Algorithm for positroid tilings of $\AA_{n,k,2}$] \label{prop:triangamplalg}
In order to find all positroid tilings of $\AA_{n,k,2}(Z)$ proceed as follows. Let $\mathcal{E}_Z$ be the list of all $w$-simplices $\Delta_w$ in $\Delta_{k+1,n}$ such that $\asimp{w}=\emptyset$.
 Call two positroid tiles $Z_{\hat{\pi}_1}, Z_{\hat{\pi}_2}$ \emph{compatible} if and only if $\Gamma_{\pi_1} \cap \Gamma_{\pi_2}$ is empty or is the union of $w$-simplices which are in $\mathcal{E}_Z$.
\begin{itemize}
\item[Step 1.] Make a graph $\mathcal{\hat{G}}$ whose vertices are positroid tiles of $\AA_{n,k,2}(Z)$ and edges connect compatible positroid tiles;
\item[Step 2.] Compute the set $Cl(\hat{\mathcal{G}})$ of all maximal cliques of $\hat{\mathcal{G}}$;
\item[Step 3.] For each clique $\hat{\mathcal{C}} \in Cl(\hat{\mathcal{G}})$, consider the collection $\mathcal{C}$ of T-dual positroid tiles in $\Delta_{k+1,n}$. Compute the list $\mathcal{L}_{\mathcal{C}}$ of all $w$-simplices in $\Delta_{k+1,n}$ contained in any positroid tile $\Gamma_\pi \in \mathcal{C}$;
\item[Step 4.] If the (possibly empty) complement of $\mathcal{L}_\mathcal{C}$ is contained in $\mathcal{E}_Z$, then $\hat{\mathcal{C}}$ is a positroid tiling of $\AA_{n,k,2}(Z)$. Otherwise it is not.
\end{itemize}
\end{prop}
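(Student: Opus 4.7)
The plan is to translate the three defining conditions of a positroid tiling of $\AA_{n,k,2}(Z)$ (injectivity/dimension, disjointness of open tiles, coverage) into combinatorial conditions on the $w$-simplices in $\Delta_{k+1,n}$, via the correspondence between $w$-chambers and $w$-simplices set up in \cref{sec:wsimplices} and \cref{sec:wsimplices2}. The full-dimensionality of Grasstopes and injectivity of $\tilde Z$ on the corresponding cells are already encoded in the vertex set of $\hat{\mathcal{G}}$, so only disjointness and coverage require verification.

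The main translation I would use is the following. For any $w$ with $\asimp{w}\neq\emptyset$, i.e.\ $\Delta_w\notin\mathcal{E}_Z$, \cref{prop:simplexContainment} gives $\asimp{w}\subseteq Z_{\hat\pi}$ if and only if $\Delta_w\subseteq\Gamma_\pi$, and the proof there moreover shows $\asimp{w}^\circ\subseteq Z^\circ_{\hat\pi}$ in that case. Combined with \cref{lem:disjointOrContained}, this gives the key dichotomy: if $\asimp{w}\neq\emptyset$, then either $\Delta_w\subseteq\Gamma_\pi$ (and hence $\asimp{w}\subseteq Z_{\hat\pi}$ with $\asimp{w}^\circ\subseteq Z^\circ_{\hat\pi}$) or $\asimp{w}\cap Z^\circ_{\hat\pi}=\emptyset$. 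Applying this to a pair $\hat\pi_1,\hat\pi_2\in\hat{\mathcal{C}}$, and using that every point of $Z^\circ_{\hat\pi_1}\cap Z^\circ_{\hat\pi_2}$ has an open neighborhood meeting some $\asimp{w}^\circ$ (by \cref{thm:wSimpCover}, since the locus where some twistor coordinate vanishes has codimension $\geq 1$), I conclude that $Z^\circ_{\hat\pi_1}\cap Z^\circ_{\hat\pi_2}=\emptyset$ if and only if no $\Delta_w\notin\mathcal{E}_Z$ lies in $\Gamma_{\pi_1}\cap\Gamma_{\pi_2}$. Since any tree positroid polytope is a union of $w$-simplices (the $w$-triangulation refines every positroid subdivision), this is exactly the compatibility condition of the algorithm. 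Similarly, $\bigcup_{\hat\pi\in\hat{\mathcal{C}}}Z_{\hat\pi}=\AA_{n,k,2}(Z)$ iff every nonempty $\asimp{w}$ is contained in some $Z_{\hat\pi}$, iff every $\Delta_w\notin\mathcal{E}_Z$ lies in some $\Gamma_\pi$ with $\hat\pi\in\hat{\mathcal{C}}$, i.e.\ iff the complement of $\mathcal{L}_{\mathcal{C}}$ is contained in $\mathcal{E}_Z$.

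Given these two translations, the two directions of the algorithm are straightforward. If $\hat{\mathcal{C}}$ is a positroid tiling, then pairwise disjointness yields a clique in $\hat{\mathcal{G}}$ and coverage yields the $\mathcal{L}_{\mathcal{C}}$ condition in Step~4. For maximality, suppose $Z_{\hat\pi'}$ is compatible with every element of $\hat{\mathcal{C}}$. By \cref{cor:GTunionWSimp}, $Z_{\hat\pi'}$ is a nonempty union of $w$-chambers with $\asimp{w}\neq\emptyset$, so $\Gamma_{\pi'}$ contains some $\Delta_w\notin\mathcal{E}_Z$; by coverage of $\hat{\mathcal{C}}$, this $\Delta_w$ is also in some $\Gamma_\pi$ with $\hat\pi\in\hat{\mathcal{C}}$, forcing $\Gamma_{\pi'}\cap\Gamma_\pi\not\subseteq\mathcal{E}_Z$, contradiction. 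So $\hat\pi'\in\hat{\mathcal{C}}$ and the clique is maximal. Conversely, any clique satisfying Step~4 has pairwise disjoint open Grasstopes by the first translation and covers $\AA_{n,k,2}(Z)$ by the second, so it is a positroid tiling (full-dimensionality and injectivity of $\tilde Z$ hold by the vertex-set definition of $\hat{\mathcal{G}}$).

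The step I expect to require the most care is the backward direction of the disjointness translation, namely extracting a nonempty $\asimp{w}^\circ$ from an alleged intersection point of two open Grasstopes. Here one must use that the open Grasstopes are open in $\AA_{n,k,2}(Z)$ and that the union of hypersurfaces $\{\langle Y i j\rangle=0\}$ has empty interior, so any nonempty open set in the intersection meets some $\asimp{w}^\circ$ with $\asimp{w}\neq\emptyset$; then \cref{lem:disjointOrContained} and \cref{prop:simplexContainment} deliver a $w$-simplex common to $\Gamma_{\pi_1}$ and $\Gamma_{\pi_2}$ and outside $\mathcal{E}_Z$. Everything else is a straightforward bookkeeping exercise in the two bijections between $w$-chambers and $w$-simplices.
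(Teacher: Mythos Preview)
The paper states this proposition without proof; it is meant to follow directly from the machinery already in place (\cref{thm:wSimpCover}, \cref{lem:disjointOrContained}, \cref{prop:simplexContainment}, \cref{cor:GTunionWSimp}), and your proposal correctly supplies that argument. Your two translations---disjointness of open Grasstopes $\Leftrightarrow$ no $\Delta_w\notin\mathcal{E}_Z$ in $\Gamma_{\pi_1}\cap\Gamma_{\pi_2}$, and coverage $\Leftrightarrow$ complement of $\mathcal{L}_{\mathcal{C}}$ contained in $\mathcal{E}_Z$---are exactly what is needed, and your handling of the delicate backward direction for disjointness (using openness of $Z^\circ_{\hat\pi_1}\cap Z^\circ_{\hat\pi_2}$ to find a point with all twistor coordinates nonzero, hence in some $\asimpo{w}$) is the same idea used in the proof of \cref{thm:simpTriangGiveAmp}. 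Your maximality argument is also correct and worth including, since it explains why the algorithm may restrict to maximal cliques without missing any tilings.

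One minor point of phrasing: the compatibility condition ``$\Gamma_{\pi_1}\cap\Gamma_{\pi_2}$ is empty or is the union of $w$-simplices in $\mathcal{E}_Z$'' should be read as ``every $w$-simplex contained in $\Gamma_{\pi_1}\cap\Gamma_{\pi_2}$ lies in $\mathcal{E}_Z$''. Your proof implicitly uses this reading, which is the correct one (the literal set-theoretic intersection of two positroid polytopes may include lower-dimensional faces of $w$-simplices not fully contained in either, but these are irrelevant for the disjointness of open Grasstopes).
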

\begin{rmk}\label{rmk:counterexample}
If we would like to find a positroid tiling $\hat{\mathcal{C}}$ of the amplituhedron $\AA_{n,k,2}(Z)$ which is \emph{not} a positroid tiling of $\Delta_{k+1,n}$, then after Step 3 we need check that either: i) the complement of $\mathcal{L}_\mathcal{C}$ is \emph{nonempty} and contained in $\mathcal{E}_Z$; or ii) $\mathcal{L}_\mathcal{C}$ is the set of all $w$-simplices of $\Delta_{k+1,n}$ and there is a pair of positroid tiles $\Gamma_{\pi_1},\Gamma_{\pi_2}$ in $\mathcal{C}$ which both contain a $w$-simplex in $\mathcal{E}_Z$.
\end{rmk}

Below, we report some results on empty $w$-chambers in the cases $k=1,2$. 

{\bf $k=1$ Case.} The amplituhedron $\AA_{n,1,2}(Z)$ is just an $n$-gon  $\mathbf{P}_n(Z)$ in $\mathbb{P}^2$ with vertices $Z_1, \dots, Z_n$ going clockwise. Let $i \rightarrow j$ be a side or a diagonal of $\mathbf{P}_n(Z)$, with $i<j$. The twistor coordinate $\langle Y ij \rangle$ is positive, negative or zero if $Y$ lies to the right, left, or on the diagonal $i \rightarrow j$ respectively. Then the nonempty $w$-chambers $\asimp{w}$ are the connected components of the complement of all diagonals of $\mathbf{P}_n(Z)$ (see Figure~\ref{fig:EmptyWs}).
If no three diagonals of $\mathbf{P}_n(Z)$ intersect at a point in the interior, it is well known the number of connected components is given by:
\begin{equation*}
N_n= \sum_{r=2}^4 {n-1 \choose r}={n \choose 4}+{n-1 \choose 2}.
\end{equation*}
The number of empty $w$-chambers in this case is show in \cref{tab:emptyWk1}.

If three diagonals of $\mathbf{P}_n(Z)$ intersect at a point in its interior, then the number of empty $w$-chambers is larger (as the number of regions realized is smaller).

\begin{table}[h]
\caption{Empty $w$-chambers vs. Eulerian numbers for $k=1$.} 
\centering 
\begin{tabular}{|r||c|c|c|c|c|c|c|} 
\hline\hline 
 $n$ & $3$ & $4$ & $5$ & $6$ & $7$ & $8$ & $9$\\
\hline 
 $N_n$ & $1$ & $4$ & $11$ & $25$ & $50$ & $91$ & $154$\\
\hline 
 $E_{1, n-1}$ & $1$ & $4$ & $11$ & $26$ & $57$ & $120$ & $247$\\
\hline 
 $\#$ Empty $\hat{\Delta}_w$ & $0$ & $0$ & $0$ & $1$ & $7$ & $29$ & $93$\\
\hline 
\end{tabular}
\label{tab:emptyWk1}
\end{table}

\begin{example}
Consider $\AA_{6,1,2}(Z)$, which is an hexagon. 
Let us consider the permutations $w^{(+)}=145236$ and $w^{(-)}=341256$. Points in $\asimp{w^{(+)}}$ and $\asimp{w^{(-)}}$ have all twistor coordinates with the same sign, except for $\{\langle Y 14\rangle, \langle Y 25\rangle, \langle Y 36\rangle \}$, whose signs are $\{+ \, - \, +\}$ and $\{- \, + \, -\}$, respectively. Let $Z^*$ be the intersection of the diagonals $(1,4)$ and $(2,5)$. Then 
$\asimp{w^{(+)}}$ (respectively, $\asimp{w^{(-)}}$) is non-empty if and only if $Z^*$ is to the right (respectively, left) of the diagonal $3 \rightarrow 6$. This happens when 
\begin{equation}\label{eq:badfcn}
 \langle Z_1,Z_2,Z_5 \rangle \langle Z_4,Z_3,Z_6 \rangle-\langle Z_1,Z_3,Z_6 \rangle\langle Z_4,Z_2,Z_5 \rangle
\end{equation}
is positive (respectively, negative), see Figure~\ref{fig:EmptyWs}. So for any choice of $Z$, either $\asimp{w^{(+)}}= \emptyset$ or $\asimp{w^{(-)}} = \emptyset$, and both are empty if \eqref{eq:badfcn} vanishes. 
\begin{figure}[h]
\includegraphics[width=0.8\textwidth]{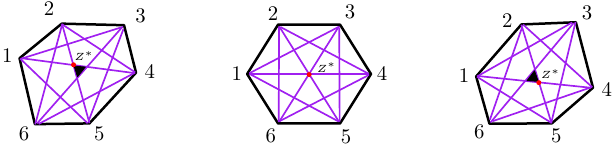}
\caption{From left to right: For $\langle Z^*,Z_3,Z_6 \rangle>0$, $\hat{\Delta}_{w^{(+)}}$ is nonempty (in black) but $\hat{\Delta}_{w^{(-)}}$ is empty;  if $\langle Z^*,Z_3,Z_6 \rangle=0$ then $\hat{\Delta}_{w^{(+)}}$ and $\hat{\Delta}_{w^{(-)}}$ are both empty; for $\langle Z^*,Z_3,Z_6 \rangle<0$ then $\hat{\Delta}_{w^{(+)}} $ is empty but $\hat{\Delta}_{w^{(-)}}$ is not (shown in black).}
	\label{fig:EmptyWs}
\end{figure}
\end{example}
If a collection of Grasstopes covers $\AA_{n, k, 2}(Z)$, the T-dual positroid polytopes may not cover $\Delta_{k+1,n}$.
\begin{example} Let $Z_0$ be the point in $Gr^{>0}_{3,6}$ invariant under cyclic symmetry, so $\AA_{6,1,2}(Z_0)$ is a regular hexagon in $\mathbb{P}^2$. Consider the positroid tiles $Z_{\hat{\pi}_1}, \ldots,Z_{\hat{\pi}_6}$ below. 
	\vspace{12pt}
	
	\begin{center}
		\includegraphics[width=\textwidth]{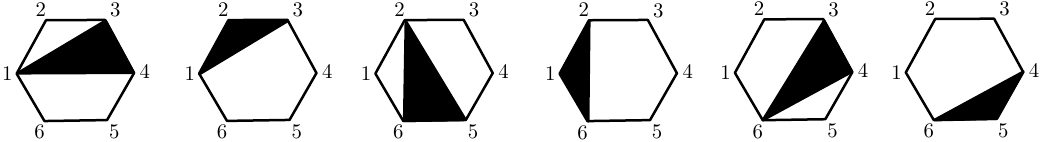}
	\end{center}

\vspace{12pt}
	
	 Clearly, they \emph{do} cover $\AA_{6,1,2}(Z_0)$ (and overlap). However, $\Delta_{w^{(+)}}$ is not contained in $\Gamma_{\pi_1} \cup  \cdots \cup \Gamma_{\pi_6} \subset \Delta_{2,6}$. Therefore the T-dual positroid tiles do \emph{not} cover $\Delta_{2,6}$.
\end{example}

Despite the presence of empty $w$-chambers, for \emph{any} $Z$ in $\Mat^{>0}_{k+2,n}$, positroid tilings of $\Delta_{2,n}$ and $\AA_{n,1,2}(Z)$ are still in bijection:
\begin{prop}\label{prop:Triangsk1}
	A collection of tree positroid polytopes $ 
	\{\Gamma_{\pi}\}$ is a positroid tiling of $\Delta_{2, n}$ if and only if 
	$\{\gt{\td{\pi}}\}$ is a positroid tiling of $\AA_{n, 1, 2}(Z)$. All such tilings are regular.
\end{prop}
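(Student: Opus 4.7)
The plan is to upgrade \cref{thm:simpTriangGiveAmp} (which gives the bi-implication with ``for all $Z$'' on the amplituhedron side) to a bi-implication with a fixed $Z$ by exploiting the special geometry of $\AA_{n,1,2}(Z)$. The forward direction is immediate from \cref{thm:simpTriangGiveAmp}: if $\{\Gamma_\pi\}$ tiles $\Delta_{2,n}$, then $\{Z_{\hat\pi}\}$ tiles $\AA_{n,1,2}(Z')$ for every $Z'$, in particular for our given $Z$. The content is therefore in the reverse direction and in the regularity assertion.

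For the reverse direction, the key observation is that for $k=1$, the tiling condition is purely combinatorial. Indeed, $\AA_{n,1,2}(Z)$ is a convex $n$-gon $\mathbf{P}_n(Z)$ with vertices $Z_1,\dots,Z_n$, and by \cref{thm:allGTs} (specialized to $k=1$), the positroid tiles are exactly the triangles on triples of vertices $\{Z_a,Z_b,Z_c\}$ with $a<b<c$. Two such triangles have disjoint interiors in $\mathbf{P}_n(Z)$ if and only if the corresponding chords of $\mathbf{P}_n(Z)$ do not cross, and whether two chords of a convex polygon cross depends only on whether their endpoint pairs interleave cyclically on $[n]$, which is a purely combinatorial condition. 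Likewise, a non-crossing collection of triangles on vertices of a convex $n$-gon covers the polygon if and only if it is a triangulation of the abstract $n$-gon (in which case it automatically has $n-2$ triangles). Hence the condition ``$\{Z_{\hat\pi}\}$ tiles $\AA_{n,1,2}(Z)$'' is independent of $Z$: if it holds for some $Z$, it holds for every $Z'\in\Mat^{>0}_{n,3}$. Applying \cref{thm:simpTriangGiveAmp} then yields that $\{\Gamma_\pi\}$ tiles $\Delta_{2,n}$.

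For the regularity assertion, I would use the classical fact that $\Trop^+\Gr_{2,n}$ is the associahedron fan of type $A_{n-3}$, whose maximal cones are in bijection with triangulations of the convex $n$-gon. By the discussion above, every positroid tiling of $\Delta_{2,n}$ (equivalently every tiling of $\AA_{n,1,2}(Z)$) arises from a triangulation of the $n$-gon, and by the correspondence of \cite{SW,LPW} between maximal cones of $\Trop^+\Gr_{k+1,n}$ and regular positroid tilings of $\Delta_{k+1,n}$, every such triangulation is regular. The Catalan count $C_{n-2}$ confirms that the number of triangulations, positroid tilings of $\Delta_{2,n}$, and regular tilings all agree.

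There is no real obstacle here: the proof is essentially the observation that ``tiling a convex polygon by triangles on its vertices'' is a combinatorial rather than a metric condition, which allows one to strip the ``for all $Z$'' quantifier from \cref{thm:simpTriangGiveAmp} in the $k=1$ case. The most delicate point to state carefully is that the combinatorial data of a collection of Grasstopes $\{Z_{\hat\pi}\}$ (namely the set of triples of $[n]$) is independent of $Z$, so that checking disjointness and covering for one $Z$ really does check it for all.
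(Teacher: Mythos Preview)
Your proposal is correct and follows essentially the same approach as the paper. Both argue the forward direction via \cref{thm:simpTriangGiveAmp} and the reverse direction via the observation that $\AA_{n,1,2}(Z)$ is a convex $n$-gon whose positroid tilings are triangulations by vertex-triangles; the paper phrases this as a direct bijection with the Catalan-type regular tilings of $\Delta_{2,n}$ from \cite[Proposition 10.7]{LPW}, while you route it through ``the tiling condition is $Z$-independent, hence \cref{thm:simpTriangGiveAmp} applies,'' but the underlying idea is identical.
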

\begin{proof}
The forward direction comes from Theorem \ref{thm:simpTriangGiveAmp}. The other direction
	comes from the fact that $\AA_{n, 1, 2}(Z)$ is just an $n$-gon.  Its positroid tilings
 are in bijection with the regular positroid tilings of $\Delta_{2,n}$ described in \cite[Proposition 10.7]{LPW} (of `Catalan' type).
\end{proof}

{\bf $k=2$ Case.}
We used 
Mathematica and the package `\texttt{positroid}' \cite{Bourjaily:2012gy}. 

For $n=6$, 
 there are choices of $Z$
such that all $w$-chambers of 
$\AA_{6,2,2}(Z)$ are nonempty.

For $n=7$ and some choices of $Z$, there are empty $w$-chambers $\hat{\Delta}_{w}$ for which 
$\Delta_{w}$ is the intersection of just $2$ positroid tiles of $\Delta_{3,7}$. This  implies that in general the compatibility graph $\mathcal{G}$ of positroid tiles of 
$\Delta_{k+1,n}$  differs from the one $\hat{\mathcal{G}}$ of positroid tiles of $\mathcal{A}_{n,k,2}(Z)$ (cf. \cref{prop:triangamplalg}). For example, if $w=1645237$, then $\Delta_{w}=\Gamma_{\pi_1} \cap \Gamma_{\pi_2}$, with $\pi_1=2 3 7 1 6 4 5$ and  $\pi_2=6 7 4 5 2 3 1$. The positroid polytopes $\{\Gamma_{\pi_1}, \Gamma_{\pi_2}\}$ are \emph{not} compatible in $\Delta_{3,7}$, but there are choices of $Z=Z^*$ for which the T-dual Grasstopes $\{Z_{\pi_1}, Z_{\pi_2}\}$ are compatible in $\AA_{7,2,2}(Z^*)$, as $Z_{\pi_1} \cap Z_{\pi_2} = \hat{\Delta}_{w}=\emptyset$.
Nevertheless, the $3073$ positroid tilings of $\AA_{7,2,2}(Z^*)$ are still in bijection with the $3073$ positroid tilings of $\Delta_{3,7}$.

For $n=8$, we checked only a few choices of $Z$, but found that 
there are more than 100 $w$-chambers which can be empty depending on $Z$.
As in the $n=7$ case, the compatibility graph  of $\Delta_{3,8}$
differs from that of $\mathcal{A}_{8,2,2}(Z)$. 
Nevertheless, for all such such choices of $Z$,
the $6443460$ positroid tilings of $\AA_{8,2,2}(Z)$
are  in bijection with the positroid tilings of $\Delta_{3,8}$.

\subsection{Descent and sign-flip tilings}

Recall that permutations and their cyclic descents were used to
define both the $w$-simplices in $\Delta_{k+1, n}$ and the $w$-chambers in $\AA_{n, k, 2}(Z)$. 
In the same spirit, by refining the set of permutations based on the 
\emph{positions} of the  descents, we will obtain a 
distinguished positroid tiling of $\Delta_{k+1,n}$ and a distinguished positroid
tiling of  $\AA_{n,k,2}(Z)$.  These tilings are T-dual to each other. 

Recall that
\[\Delta_{k+1, n}=\bigcup_{w \in D_{k+1, n}} \simp{w}.\]
Since 1 is always a cyclic descent of $w \in D_{k+1, n}$, we have that
$D_{k+1, n}$ is the set of permutations $w\in S_n$ with $k$ left descents and $w(n)=n$. 
The Eulerian numbers have a very natural refinement by descent set $\des(w)$. 
If $w\in S_n$ has $w(n)=n$, then neither 1 nor $n$ is a left descent of $w$, so we have
\begin{equation*} \label{eq:EulerianNumbersId}
	E_{k, n-1}=\sum_{I \in {[2, n-1] \choose k}} \# \{w \in S_{n}: w(n)=n, \des(w)=I \}.
\end{equation*}

This inspires the following decomposition of $\Delta_{k+1, n}$. For $I \in \binom{[2, n-1]}{k}$, let 
\[\Gamma_I:= \bigcup_{\substack{{w \in D_{k+1, n}}\\{\des(w)=I}}} \simp{w}.\]
Clearly, the collection of $\Gamma_I$ cover the hypersimplex and their interiors are pairwise disjoint. There are also $\binom{n-2}{k}$ of them, which is exactly the number of full-dimensional positroid polytopes in a regular positroid tiling of $\Delta_{k+1,n}$ \cite{SW}.
We will show that each $\Gamma_I$ is in fact a positroid polytope, and that $\{\Gamma_I\}$ is a (regular) positroid tiling of $\Delta_{k+1,n}$. We will refer to it as the \emph{descent tiling}.

On the other hand, given the sign-flip characterization
of the amplituhedron from \cref{thm:main1}, 
it is natural to subdivide $\AA_{n,k,2}(Z)$ into regions based on where 
the sequence 
$(\langle Y 1a \rangle)_{a=1}^n$
has 
sign flips.  That is, for each $I \in {[2, n-1] \choose k}$, we define\footnote{Because of our conventions regarding sign flips, $Z_I$ would be empty if $1, n \in I$.}
$$\gto{I}:=\{Y\in \AA_{n,k,2}(Z) \ \vert \ 
\flip(\lrangle{Y11},\langle Y 12 \rangle, \langle Y 13 \rangle, \ldots, \langle Y 1n \rangle) = I\}$$
and define $\gt{I}$ to be the closure of $\gto{I}$.

\cite[Section 7]{ATT} conjectured that 
$\{Z_I\}$ is a positroid tiling of 
$\AA_{n,k,2}(Z)$. The authors referred to $\{Z_I\}$ as a \emph{sign-flip (or kermit\footnote{For $k=2$, $\AA_{n,2,2}(Z)$ provides the integrand for the $1$-loop $n$-point scattering amplitude in $\mathcal{N}=4$ SYM.  The name `kermit' comes from the resemblance of the pictorial expansion of such amplitude (e.g. see \cite[pg. 18]{ArkaniHamed:2010kv}) with the Muppet character `Kermit the Frog'.}) tiling}.
In this section  we prove this conjecture.  Moreover we show that sign-flip tilings of 
$\AA_{n,k,2}(Z)$ and descent tilings of the hypersimplex $\Delta_{k+1,n}$ are T-dual to each other and also regular.

\begin{definition}[Bicolored triangulations of kermit type]\label{def:kermit}
	Let $I=\{i_1,\ldots,i_k\} \in {[2, n-1] \choose k}$ and let $\T_I$ be the bicolored triangulation whose black triangles have vertices $\{1,i_{\ell},i_{\ell}+1\}$ for $\ell=1, \dots k$. We say $\T_I$ is \emph{kermit type} and denote the plabic graph $\hat{G}(\T_I)$ by $K_I$. We also denote the plabic graph $G(\T_I)$ by $C_I$, and call it a \emph{caterpillar} tree.
\end{definition}
\vspace{-.4cm}
\begin{figure}[h]
	\includegraphics[width=0.25\textwidth]{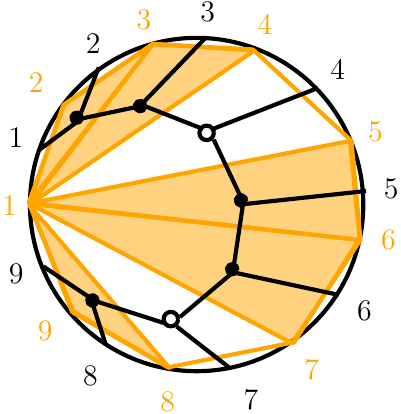}
	\caption{In orange, the bicolored triangulation $\T_I$ of kermit-type for $I=\{2,3,5,6,8\}$. In black, the dual caterpillar tree $C_I$. }
	\label{fig:kermitExample}
\end{figure}
\vspace{-.4cm}
\begin{prop}[Descent and sign-flips tilings are T-dual]\label{prop:eulersigntriangulations} Let $I$ run over ${[2, n-1] \choose k}$. The collections $\{\Gamma_I\}$ and $\{Z_I\}$ are T-dual regular positroid tilings of $\Delta_{k+1, n}$ and $\AA_{n,k,2}(Z)$. 
	Furthermore, $\Gamma_I=\Gamma_{C_I}$ and $\gt{I}=\gt{K_I}$ where $C_I$ and $K_I$ are as in \cref{def:kermit}.
\end{prop}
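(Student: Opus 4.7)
The plan is to identify the $I$-indexed pieces with kermit-type positroid tiles, proving $\gt{I} = \gt{K_I}$ and $\Gamma_I = \Gamma_{C_I}$ directly, after which T-duality and both tiling statements will follow almost automatically from the framework of \cref{cor:GTsInBijection,thm:simpTriangGiveAmp}. Regularity will be the only point requiring extra argument.

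First I would verify $\gt{I} = \gt{K_I}$. For $I = \{i_1 < \cdots < i_k\}$, the black arcs of $\T_I$ emanating from vertex $1$ are $1 \to i_\ell$ and $1 \to i_\ell+1$ with $\area(1\to i_\ell) = \ell-1$ and $\area(1\to i_\ell+1)=\ell$. By \cref{thm:surjectivity}, this pins down $\sgn\lrangle{Y 1 i_\ell} = (-1)^{\ell-1}$ and $\sgn\lrangle{Y1(i_\ell+1)} = (-1)^\ell$ on $\gto{K_I}$, and together with $\lrangle{Y12}>0$ (which always holds on $\AA_{n,k,2}(Z)$) this forces the entire sign sequence $(\lrangle{Y11},\lrangle{Y12},\ldots,\lrangle{Y1n})$ to have flip set exactly $I$, so $\gto{K_I} \subseteq \gto{I}$. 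Conversely, the flip condition defining $\gto{I}$ determines the signs $\sgn\lrangle{Y1i_\ell}$ and $\sgn\lrangle{Y1(i_\ell+1)}$ to be exactly those appearing in the sign description of $\gto{K_I}$, giving the reverse inclusion.

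Next I would prove $\Gamma_I = \Gamma_{C_I}$ via \cref{prop:simplexContainment}. For $w \in D_{k+1,n}$, the condition $w_n = n$ gives $\cdes(w) = \des(w) \cup \{1\}$, and \cref{def:ampchamber} then yields $\sgn\lrangle{Y1j} = (-1)^{|\des(w) \cap [2,j-1]|}$ on $\asimp{w}$. Hence the flip set of $(\lrangle{Y11},\ldots,\lrangle{Y1n})$ on $\asimp{w}$ is exactly $\des(w)$, so $\asimp{w} \subseteq \gt{K_I}$ iff $\des(w) = I$. Applying \cref{prop:simplexContainment} (invoking \cref{thm:nonempty} to ensure the $w$-chamber is non-empty for some choice of $Z$) gives $\simp{w} \subseteq \Gamma_{C_I}$ iff $\des(w) = I$, so $\Gamma_{C_I} = \bigcup_{w:\, \des(w)=I} \simp{w} = \Gamma_I$. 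Since the descent map $\des : D_{k+1,n} \to \binom{[2,n-1]}{k}$ surjects and its fibers partition $D_{k+1,n}$, and open $w$-simplices are pairwise disjoint, the collection $\{\Gamma_I = \Gamma_{C_I}\}$ is a positroid tiling of $\Delta_{k+1,n}$ by \cref{prop:homeo2}. T-duality between $C_I$ and $K_I$ is built into \cref{cor:GTsInBijection}, and \cref{thm:simpTriangGiveAmp} then automatically promotes this to a positroid tiling $\{\gt{I}\} = \{\gt{K_I}\}$ of $\AA_{n,k,2}(Z)$ for every $Z$, T-dual to $\{\Gamma_I\}$.

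The main remaining obstacle is regularity of $\{\Gamma_I\}$; once this is in place, regularity of $\{\gt{I}\}$ is immediate from the T-dual definition. My plan is to exhibit an explicit strictly convex height function inducing the subdivision, most naturally of separable form $h(J) = \sum_{j \in J} \varphi(j)$ with $\varphi$ chosen so that for each $I$ the lower envelope of $\{(e_J, h(J))\}$ has a unique facet projecting onto $\Gamma_{C_I}$. Equivalently, I would produce a point in the interior of the purported maximal cone of $\Trop^+\Gr_{k+1,n}$ and verify the tropical Pl\"ucker inequalities. I expect this verification to be the subtle part, since regularity is invisible from the $w$-simplex description; the cleanest route is likely to construct a weight vector whose level sets interact with the caterpillar structure of $C_I$ exactly as the nested arcs through vertex $1$ demand, so that the coherence check reduces to a one-variable convexity statement for $\varphi$.
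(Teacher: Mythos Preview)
Your argument for $\gt{I}=\gt{K_I}$ via \cref{thm:surjectivity} is essentially the paper's. For $\Gamma_I=\Gamma_{C_I}$ you take a more indirect route: the paper simply checks via the facet inequalities of \cref{prop:treefacet} that $\Gamma_{C_I}=\bigcup_{w:\des(w)=I}\simp{w}$ directly in the hypersimplex, whereas you pass through the amplituhedron, using \cref{prop:simplexContainment} and \cref{thm:nonempty} to transfer the containment $\asimp{w}\subseteq\gt{K_I}$ back to $\simp{w}\subseteq\Gamma_{C_I}$. Both work; the paper's route is shorter and avoids invoking realizability of $w$-chambers.

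The substantive divergence is regularity. You propose building an explicit height function and verifying a tropical Pl\"ucker condition, which you correctly flag as the delicate step. The paper sidesteps this entirely: it observes that $\{\Gamma_{C_I}\}$ is a tiling of \emph{Catalan type} in the sense of \cite[Proposition~10.7]{LPW}, and such tilings are already known there to be regular (they arise from maximal cones of $\Trop^+\Gr_{k+1,n}$). So no new height function needs to be constructed; regularity is inherited from an existing classification. Your height-function program would work in principle but is unnecessary labor.
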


\begin{proof}
	By the sign description of $\gto{K_I}$ in \cref{thm:surjectivity}, it is straightforward that $Z_I=Z_{K_I}$. Moreover, using \cref{def:ampchamber}
	and Corollary \ref{cor:GTunionWSimp}, we have
	\begin{equation} \label{eq:kermitDecomp}
		Z_{K_I}=\bigcup_{w: \des(w)=I} \asimp{w}.
	\end{equation}
	Using \cref{prop:treefacet}, it is not hard to check that the positroid polytope $\Gamma_{C_I}$ satisfies
	\begin{equation}\label{eq:caterpillarDecomp}
		\Gamma_{C_I}=\bigcup_{w: \des(w)=I} \Delta_w.
	\end{equation}
	But this is exactly $\Gamma_I$. 
	
	Finally, it is easy to check that $\{\Gamma_{C_I}\}_{I \in {[2, n-1] \choose k}}$ is a positroid tiling of $\Delta_{k+1,n}$ of the sort appearing in \cite[Proposition 10.7]{LPW} (`Catalan type'), hence is a regular positroid tiling. It follows that $\{Z_{K_I}\}_{I \in {[2, n-1] \choose k}}$ is the T-dual regular positroid tiling.
\end{proof}

\begin{rmk}
	Sign-flip tilings of $\AA_{n,k,2}(Z)$ and descent tilings of $\Delta_{k+1,n}$ are of BFCW type (in particular, of `Catalan type', see \cite[Proposition 10.7]{LPW}).
\end{rmk}

We end this section by describing each $w$-simplex (resp. $w$-chamber) as an intersection of cyclically shifted caterpillar positroid polytopes (resp. kermit Grasstopes). For $I \subset [n]$ and $a \in [n]$, let $I^{(a)}$ denote the cyclic shift of $I$ such that $1 \mapsto a$. Similarly, for $G$ a plabic graph, let $G^{(a)}$ denote the cyclic shift of $G$ such that $1 \mapsto a$.

\begin{prop} \label{prop:wSimplChambInt}
	Let $\simp{w}, 
	\asimp{w}$ 
	be a $w$-simplex and a $w$-chamber in $\Delta_{k+1,n}$ and $\AA_{n,k,2}(Z)$ respectively. Let $I_1, \dots, I_n$ give the vertices of $\simp{w}$, and let $J_a:= (I_a \setminus \{a\})^{(2-a)}$.
	Then:
	\begin{equation*}
		\simp{w}=\bigcap_{a \in [n]} \Gamma_{C^{(a)}_{J_a}}\quad \text{ and } \quad \asimp{w}=\bigcap_{a \in [n]} Z_{K^{(a)}_{J_a}}.
	\end{equation*}
\end{prop}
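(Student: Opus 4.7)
The heart of the argument is that $\asimp{w}$ and $\simp{w}$ are each cut out by fixing a sign-flip/descent pattern at \emph{every} boundary position $a\in[n]$, while each cyclically shifted kermit Grasstope $Z_{K_{J_a}^{(a)}}$ (respectively caterpillar polytope $\Gamma_{C_{J_a}^{(a)}}$) encodes exactly the data at a single position $a$, so intersecting over all $a$ recovers the full object. The first thing I would do is a combinatorial check: since $a\in I_a(w)$ and $a-1\notin I_a(w)$ (both following from $I_a(w)=\cdes(w^{(a-1)})$ together with the fact that $a-1$ is the last letter of $w^{(a-1)}$), the set $J_a=(I_a\setminus\{a\})^{(2-a)}$ avoids both $1$ and $n$, so $J_a\in\binom{[2,n-1]}{k}$ and $K_{J_a},C_{J_a}$ are well-defined via \cref{def:kermit}. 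Undoing the cyclic shift, the black triangles of $K_{J_a}^{(a)}$ are $\{a,i,i+1\}$ (indices mod $n$) for $i\in I_a(w)\setminus\{a\}$, and similarly for $C_{J_a}^{(a)}$.

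For the amplituhedron part, applying \cref{thm:surjectivity} and \cref{thm:sign1} to $K_{J_a}^{(a)}$ shows that the open positroid tile $Z_{K_{J_a}^{(a)}}^{\circ}$ is precisely the locus in $\AA_{n,k,2}(Z)$ where the sign-flip set of the cyclic sequence $(\lrangle{Y a \hat 1},\ldots,\lrangle{Y a n})$ starting at position $a$ equals $I_a(w)\setminus\{a\}$. Comparing with \cref{def:ampchamber} identifies $\asimp{w}^{\circ}=\bigcap_a Z_{K_{J_a}^{(a)}}^{\circ}$, and passing to closures yields $\asimp{w}\subseteq\bigcap_a Z_{K_{J_a}^{(a)}}$. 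For the reverse inclusion I would invoke \cref{cor:GTunionWSimp} to decompose $Z_{K_{J_a}^{(a)}}=\bigcup_{v:\,I_a(v)=I_a(w)}\asimp{v}$; a point of $\bigcap_a Z_{K_{J_a}^{(a)}}$ with all twistor coordinates nonzero lies in the interior of a unique $\asimp{v}$, forcing $I_a(v)=I_a(w)$ for every $a$ and hence $v=w$. The hypersimplex analogue then follows by T-duality: cyclic shift commutes with the T-duality operation of \cref{defn:plabicTDual} (the construction depends only on the internal graph structure and boundary labels), so $\hat{C}_{J_a}^{(a)}=K_{J_a}^{(a)}$. Then \cref{prop:simplexContainment} combined with \cref{thm:nonempty} (which guarantees nonemptiness of each $\asimp{w}$ for some $Z$) transfers the containment $\asimp{w}\subseteq Z_{K_{J_a}^{(a)}}$ into $\simp{w}\subseteq\Gamma_{C_{J_a}^{(a)}}$. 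Equality then follows from convexity: $\bigcap_a\Gamma_{C_{J_a}^{(a)}}$ is a closed convex polytope whose interior, by the facet-inequality analysis of \cref{prop:treefacet}, agrees with $\simp{w}^{\circ}$.

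The main obstacle is the reverse inclusion in the amplituhedron setting at boundary points with some vanishing twistor coordinates, where a priori $Y$ could lie on the common boundary of several closed $w$-chambers $\asimp{v}$ with $v\neq w$. The resolution is to show that $\bigcap_a Z_{K_{J_a}^{(a)}}$ is a regular closed set, equal to the closure of its own interior $\asimp{w}^{\circ}$; this follows from \cref{rmk:closure} (each positroid tile is the closure of its open part) together with the observation, extracted from the sign matching above, that the unique open $w$-chamber contained in every $Z_{K_{J_a}^{(a)}}$ is $\asimp{w}^{\circ}$. Hence $\bigcap_a Z_{K_{J_a}^{(a)}}=\overline{\asimp{w}^{\circ}}=\asimp{w}$, and the parallel convex-polytope argument completes the hypersimplex side.
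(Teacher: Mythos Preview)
Your amplituhedron argument is essentially the paper's: both write $Z_{K^{(a)}_{J_a}}$ as the union of the $w$-chambers $\asimp{v}$ with $I_a(v)=I_a(w)$ (you via \cref{cor:GTunionWSimp}, the paper via \eqref{eq:kermitDecomp} together with the cyclic shift on $\Gr_{k,n}$), and then observe that intersecting over all $a$ forces $I_a(v)=I_a(w)$ for every $a$, hence $v=w$.

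Where you diverge is on the hypersimplex side, and this is where the proposal becomes both unnecessarily circuitous and incomplete. The paper treats the two cases symmetrically: it uses the already-established decomposition \eqref{eq:caterpillarDecomp}, which says $\Gamma_{C_I}$ is the union of the $w$-simplices with first vertex $I\cup\{1\}$, applies the cyclic shift on $\Delta_{k+1,n}$ to get $\Gamma_{C^{(a)}_{J_a}}=\bigcup_{v:\,I_a(v)=I_a(w)}\simp{v}$, and intersects. You instead deduce $\simp{w}\subseteq\Gamma_{C^{(a)}_{J_a}}$ by first establishing the amplituhedron containment and then transferring it via \cref{prop:simplexContainment} and \cref{thm:nonempty}; this works but invokes two substantial results where none are needed. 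More seriously, your reverse inclusion is not actually proved: the sentence ``equality follows from convexity\ldots whose interior, by the facet-inequality analysis of \cref{prop:treefacet}, agrees with $\simp{w}^{\circ}$'' is an assertion, not an argument. You would need to verify that the caterpillar facet inequalities over all $a$ cut out exactly the alcove of $\simp{w}$---or, much more simply, use \eqref{eq:caterpillarDecomp} directly as the paper does.

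One smaller point: your ``regular closed'' resolution for boundary points on the amplituhedron side does not work as written. An intersection of regular closed sets need not be regular closed, and knowing that $\asimpo{w}$ is the only open $w$-chamber contained in every $Z_{K^{(a)}_{J_a}}$ does not by itself preclude extra lower-dimensional pieces in the intersection. The paper's proof is equally terse here, so this is not a deficiency unique to your approach, but the justification you give (\cref{rmk:closure} plus the sign matching) does not establish the claim.
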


\begin{proof}
	First, $C^{(a)}_{J_a}$ is dual to a kermit-type bicolored triangulation whose black triangles all use vertex $a$.
	
	To see the statement about $\simp{w}$, note that another way to phrase \eqref{eq:caterpillarDecomp} is that $\Gamma_{C_I}$ is the union of all $w$-simplices with 1st vertex given by $I \cup \{1\}$. Using the cyclic shift on the hypersimplex, it is not hard to see that $\Gamma_{C^{(a)}_{J_a}}$ is the union of all $w$-simplices with $a$th vertex given by $I_a$. So taking the intersection gives exactly the $w$-simplex with vertices $e_{I_1},\dots, e_{I_a}$.

	The statement about $\asimp{w}$ follows from a similar argument, using \eqref{eq:kermitDecomp} and the cyclic shift on $\Gr_{k, n}$.
	
\end{proof}

\begin{example}
	Let us consider $w=324156$ from Example \ref{ex:descents}. We have:
	\begin{align*}
		&I_1=\{1,2,3\},&& I_2=\{2,3,5\},&& I_3=\{1,3,4\},&& I_4=\{1,2,4\},&& I_5=\{1,3,5\},&& I_6=\{2,3,6\}; \\
		&J_1=\{2, 3\},&& J_2=\{2, 4\}, &&J_3=\{2, 5\}, &&J_4=\{4, 5\},&& J_5=\{3, 5\}, &&J_6=\{3, 4\}.
	\end{align*}
	Then $\Delta_w$ is the intersection of $\Gamma_{C^{(1)}_{J_1}}, \ldots, \Gamma_{C^{(6)}_{J_6}}$ and $\hat{\Delta}_w$ is the intersection of $Z_{K^{(1)}_{J_1}}, \ldots, Z_{K^{(6)}_{J_6}}$. The cyclically rotated kermit-type bicolored triangulations $\T^{(1)}_{J_1}, \ldots,\T^{(6)}_{J_6}$ are displayed below.
	\begin{center}
		\includegraphics[width=\textwidth]{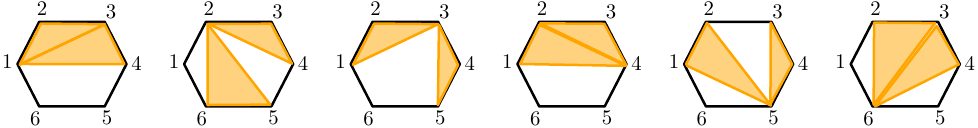}
	\end{center}
	Notice that $\T^{(1)}_{J_1}$ is equivalent to $\T^{(4)}_{J_4}.$
\end{example}



\section{Schr\"oder numbers: separable permutations and positroid tiles}
\label{sec:appendixB}

Recall from \cref{cor:GTsInBijection}
that positroid tiles for both $\AA_{n,k,2}(Z)$ 
and $\Delta_{k+1,n}$
are in bijection with 
bicolored subdivisions of type $(k,n)$ and tree positroids in $\Gr_{k+1,n}^{\geq 0}$.
\cite{Lukowski:2019sxw} provided experimental evidence that the number $R_{k,n-2}$ of positroid tiles for $\AA_{n,k,2}$ is given by  ~\cite[A175124]{OEIS}, a refinement of
the \emph{large Schr\"oder numbers} (see Table \ref{tab:Schroeder}). 
 In this section we prove this statement by 
giving a bijection between tree positroids in 
$\Gr_{k+1,n}^{\geq 0}$ and 
\emph{separable} permutations on $[n-1]$ with $k$ descents (enumerated by $R_{k,n-2}$).


\begin{definition}
	A permutation 
	$w=w_1 \ldots w_{n}$ (in one-line notation) 
	is  \emph{separable} if it is 3142- and 2413-avoiding, i.e. 
there are not four indices $i_1<i_2<i_3<i_4$ such that 
$w_{i_3}<w_{i_1}<w_{i_4}<w_{i_2}$ or  $w_{i_2}<w_{i_4}<w_{i_1}<w_{i_3}$. 
\end{definition}

\begin{table}[h] 
\centering 
\begin{tabular}{|l||c|c|c|c|c|c||r|} 
\hline 
\diagbox{$n$}{$k$} & $0$ & $1$ & $2$ & $3$ & $4$ & $5$  & $R_{n-2}$\\
\hline\hline 
 $2$ & $1$ & $ $ & $ $ & $ $ & $ $ & $ $ &  $1$ \\
 \hline 
 $3$ & $1$ & $1$ & $ $ & $ $ & $ $ & $ $ & $2$\\
\hline 
 $4$ & $1$ & $4$ & $1$ & $ $ & $ $ & $ $ & $6$\\
\hline 
 $5$ & $1$ & $10$ & $10$ & $1$ & $ $ & $ $ & $22$\\
 \hline 
 $6$ & $1$ & $20$ & $48$ & $20$ & $1$ & $ $ & $90$\\
  \hline 
 $7$ & $1$ & $35$ & $161$ & $161$ & $35$ & $1$ &  $394$\\
   \hline 
\end{tabular}
\caption{Large Schr\"oder numbers $R_{n-2}$ and their refinement $R_{k,n-2}$ which count the number of bicolored subdivisions of type $(k,n)$.} 
\label{tab:Schroeder}
\end{table}

\vspace{-14pt}

\begin{definition}
Let $\pi$ and $\nu$ be permutations on $[k]$ and $[l]$, respectively. The \emph{direct sum} $\pi  \oplus \nu$ and the \emph{skew sum}  $\pi \ominus \nu$ of $\pi$ and $\nu$ are permutations on $[k+l]$ defined by:
\begin{equation*}
(\pi  \oplus \nu)_i =
\begin{cases}
\pi_i, \quad i \in [1,k]\\
\nu_{i-k}+k,  \quad i \in [k+1,k+l]
\end{cases} ,
\quad (\pi  \ominus \nu)_i =
\begin{cases}
\pi_i+l, \quad i \in [1,k]\\
\nu_{i-k},  \quad i \in [k+1,k+l]
\end{cases}.
\end{equation*}
\end{definition}
For example, $123 \oplus 21=12354$ and $123 \ominus 21=34521$.

\begin{prop}[\cite{kitaev2011patterns}]
A permutation is separable if and only if $w$ can be built from the permutation $1$ by repeatedly applying $\oplus$ and $\ominus$.
\end{prop}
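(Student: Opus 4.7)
My plan is to prove the two directions of the equivalence separately, with the easy direction by induction on the construction and the hard direction by induction on $n$, the bulk of the effort going into a ``split lemma.''

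\textbf{The $(\Leftarrow)$ direction.} I would induct on the number of $\oplus$ and $\ominus$ operations used to build $w$. The base case, the singleton $1$, is trivially $3142$- and $2413$-avoiding. For the inductive step, assume $\pi$ on $[k]$ and $\nu$ on $[\ell]$ are separable. Suppose $\pi\oplus\nu$ contains a $3142$ occurrence at indices $i_1<i_2<i_3<i_4$; the inductive hypothesis rules out all four indices lying entirely in the $\pi$-block or entirely in the $\nu$-block. In $\pi\oplus\nu$, every value in the $\pi$-block is strictly less than every value in the $\nu$-block, while positions of the $\pi$-block precede those of the $\nu$-block. A brief case check on how many of $i_1,\dots,i_4$ lie in each block shows that the required inequalities $w_{i_3}<w_{i_1}<w_{i_4}<w_{i_2}$ force a value in the later, larger block to be smaller than a value in the earlier, smaller block, a contradiction. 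Avoidance of $2413$ and the $\ominus$ case are handled symmetrically.

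\textbf{The $(\Rightarrow)$ direction.} I would induct on $n$. The cases $n\le 2$ are immediate. For a separable $w$ of length $n\ge 3$, the inductive step reduces to the following \emph{split lemma}: there exists $1\le i\le n-1$ such that either $\{w_1,\dots,w_i\}=[1,i]$ (a \emph{direct split}) or $\{w_1,\dots,w_i\}=[n-i+1,n]$ (a \emph{skew split}). Given a direct split $w=w^L\oplus w^R$ or a skew split $w=w^L\ominus w^R$, where $w^L$ and $w^R$ are the standardized restrictions to positions $\{1,\dots,i\}$ and $\{i+1,\dots,n\}$, both $w^L$ and $w^R$ are separable on smaller sets (pattern-avoidance is hereditary), so by the inductive hypothesis each is built from $1$ via $\oplus,\ominus$; combining them finishes the induction.

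\textbf{Main obstacle.} The technical heart is the split lemma. A clean route is via the \emph{substitution decomposition}: any permutation of size $\ge 2$ decomposes uniquely as $\sigma[\alpha_1,\dots,\alpha_r]$ where $\sigma$ is a \emph{simple} permutation (its only intervals are singletons and the whole set) and the $\alpha_j$ are smaller permutations. If $\sigma$ has length $2$, then $\sigma\in\{12,21\}$ and the decomposition is exactly $\alpha_1\oplus\alpha_2$ or $\alpha_1\ominus\alpha_2$, which produces a split. If $\sigma$ has length $\ge 4$, then picking one entry from each $\alpha_j$ realizes $\sigma$ as a pattern in $w$; combined with the classical fact that every simple permutation of length $\ge 4$ contains $2413$ or $3142$, this contradicts separability of $w$. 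Establishing the substitution decomposition and the pattern-containment fact for simple permutations is standard combinatorics but not entirely trivial. An alternative, self-contained approach is direct: assuming no split exists, track the positions of $1$ and $n$ in $w$; the failure of every skew split forces a value below $n-p+1$ to appear in the prefix of length $p$ (where $p$ is the position of $n$), and the failure of every direct split forces an analogous pattern on the side of $1$, and careful bookkeeping on these four witness entries exhibits a $3142$ or $2413$ occurrence. I would attempt the substitution-decomposition route first since it is conceptually cleaner and explains why only $3142$ and $2413$ appear in the pattern-avoidance characterization.
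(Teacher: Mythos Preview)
The paper does not actually prove this proposition: it is stated with a citation to \cite{kitaev2011patterns} and followed only by an illustrative example, so there is no ``paper's own proof'' to compare against. Your proposal is the standard argument and is correct; the $(\Leftarrow)$ direction is the routine observation that direct and skew sums cannot create new $3142$ or $2413$ patterns, and for $(\Rightarrow)$ your split lemma via the substitution decomposition (every permutation is an inflation of a simple permutation, and simple permutations of size $\ge 4$ contain $2413$ or $3142$) is exactly the textbook route.
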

For example, the permutation $w=231654$ can be written as  
\begin{equation*}
\left((1 \oplus 1) \ominus 1 \right) \oplus \left((1 \ominus 1) \ominus 1 \right)=\left(12 \ominus 1 \right) \oplus \left(21 \ominus 1 \right)= 231 \oplus 321=231654.
\end{equation*}

\begin{prop} Let $\beta$ be the map sending a permutation 
	$w=w_1 \ldots w_{n-1}$ in one-line notation 
	to the permutation $\beta(w)=(w_1, \ldots ,w_{n-1} ,n)$ in cycle notation. Then $\beta$ is a bijection between separable permutations on $[n-1]$ with $k$ descents and trip permutations of tree positroids in $\Gr^{\geq 0}_{k+1,n}$.
\end{prop}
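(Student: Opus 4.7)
By \cref{rk:treesTilings} and \cref{cor:GTsInBijection}, the trip permutations of tree positroids in $\Gr^{\geq 0}_{k+1,n}$ are exactly the permutations $\pi_{G(\overline{\T})}$ where $\overline{\T}$ ranges over bicolored subdivisions of type $(k,n)$. Since $\beta$ is plainly a bijection from permutations of $[n-1]$ onto $n$-cycles in $S_n$ (written in cycle notation with $n$ last), it is enough to construct a descent-preserving bijection $\Phi$ from separable permutations of $[n-1]$ to bicolored subdivisions of the $n$-gon such that $\pi_{G(\Phi(w))}=\beta(w)$.

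I would argue by induction on $n$, with trivial base case $n=2$. For the inductive step, recall that a separable permutation $w$ of $[n-1]$ with $|w|\ge 2$ admits a unique top-level decomposition $w = w_{(1)}\star\cdots\star w_{(r)}$ with $\star\in\{\oplus,\ominus\}$, each $w_{(i)}$ separable and $\star$-indecomposable. Geometrically, a bicolored subdivision $\overline{\T}$ of the $n$-gon decomposes canonically around the polygon $P$ containing the boundary edge $\{n,1\}$: if $P$ has vertices $1=b_0<b_1<\cdots<b_{r-1}<b_r=n$ in cyclic order, then cutting along those edges $\{b_i,b_{i+1}\}$ of $P$ which are genuine chords of the $n$-gon splits $\overline{\T}$ into $P$ together with smaller bicolored subdivisions $\overline{\T}_{(i)}$ on the polygons with vertex set $\{b_i,b_i+1,\ldots,b_{i+1}\}$. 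I would define $\Phi$ inductively by pairing $\star=\oplus$ with $P$ white and $\star=\ominus$ with $P$ black, setting $\overline{\T}_{(i)}=\Phi(w_{(i)})$ after relabelling vertices.

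Three verifications finish the induction. \emph{(Consistent coloring.)} Because each $w_{(i)}$ is $\star$-indecomposable, induction guarantees that the polygon of $\overline{\T}_{(i)}$ adjacent to the edge $\{b_i,b_{i+1}\}$ has the opposite color from $P$, so adjacent polygons of $\Phi(w)$ have different colors. \emph{(Descent count.)} A white polygon $P$ contributes $0$ black triangles and $\oplus$ introduces no new descents; a black polygon $P$ with $r+1$ vertices contributes $r-1$ black triangles and $\ominus$ introduces exactly $r-1$ junction descents. In either case, induction yields the identity $k=\#\textup{descents}(w)$. \emph{(Compatibility with $\beta$.)} The cycle $\beta(w)=(w_1,\ldots,w_{n-1},n)$ factors along the decomposition at $P$: a trip in $G(\overline{\T})$ either stays entirely within one sub-polygon (in which case it traces $\beta(w_{(i)})$ up to shift) or else passes through the single vertex of $G(\overline{\T})$ dual to $P$ and is routed into the next sub-polygon in the cyclic order prescribed by $\star$; splicing these sub-cycles reconstructs $\beta(w)$.

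The main obstacle is the compatibility-with-$\beta$ verification: one must carefully trace how trips through the vertex of $G(\overline{\T})$ dual to $P$ concatenate the sub-cycles $\beta(w_{(i)})$ into the global cycle $\beta(w)$, reconciling the ``rules of the road'' with the index-shifting conventions of $\oplus$ and $\ominus$. The color-versus-operation matching ($\oplus \leftrightarrow$ white, $\ominus \leftrightarrow$ black) is dictated by the fact that at a white-dual vertex the trip enters and exits on the \emph{same} side of $P$ (encoding concatenation in the order $u$ then $v$, with $u$'s values shifted below $v$'s), while at a black-dual vertex the trip crosses $P$ (encoding the reverse order with $u$'s values shifted above $v$'s), which matches the numerical effects of $\oplus$ and $\ominus$ on one-line notation. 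Once this cycle surgery is in place the inverse construction is forced, completing the bijection and the induction.
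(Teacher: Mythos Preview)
Your proposal is correct and follows essentially the same inductive strategy as the paper, with two presentational differences worth noting. First, you work on the bicolored-subdivision side while the paper works directly with plabic trees; these are dual and the translation is routine via \cref{rk:treesTilings}. Second, you use the \emph{full} top-level decomposition $w=w_{(1)}\star\cdots\star w_{(r)}$ into $\star$-indecomposable pieces (matching the polygon $P$ with $r+1$ vertices), whereas the paper uses a single binary split $w=u\star v$ and glues two trees at a new trivalent vertex adjacent to boundary vertex $n$. Your choice makes the ``consistent coloring'' check automatic (indecomposability forces the adjacent sub-polygon to have the opposite color), at the cost of slightly heavier bookkeeping; the paper's binary version sidesteps coloring entirely since it works with trivalent trees, and is correspondingly terser. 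Both leave the trip-permutation verification as a direct check; your heuristic about trips ``staying on the same side'' versus ``crossing'' $P$ is the right intuition, though in writing it up you should simply trace the rules of the road at the degree-$3$ vertex dual to $P$ rather than rely on that phrasing.
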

\begin{proof}
We use strong induction on $n$; 
the base case $n=2$ is trivial. 
It is enough to show that $\beta$ is well-defined and surjective.
	Suppose that $w \in S_{n-1}$ is separable. Then either $w=u \oplus v$ or  $w=u \ominus v$, for some $u \in S_{\ell-1}, v \in S_{r-1}$ separable, with $\ell-1+r-1=n-1$. By the induction hypothesis, $\beta(u)\in S_{\ell}$ and $\beta(v)\in S_r$ are the trip permutations 
	of  tree plabic graphs $S$ and $T$. We now ``glue" together $S$ and $T$ in order to obtain a tree plabic graph with boundary vertices
	$\{1,2,\dots,n\}$ with trip permutation $\beta(w)\in S_{n}$ (see \cref{fig:separableBij}).
	
	\begin{figure}[h]
		\includegraphics[width=0.8\linewidth]{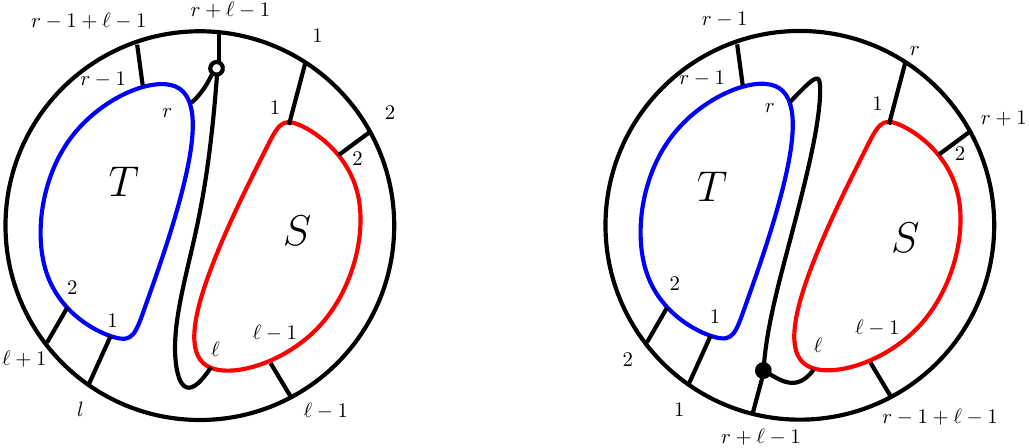}
		\caption{\label{fig:separableBij} How to glue $S, T$ together when $w= u \oplus v$ (on the left) and when $w= u \ominus v$ (on the right). }
	\end{figure}

It is straighforward to check that the trip permutation of the resulting tree is $\beta(w)$. This shows that $\beta$ is well-defined.

For surjectivity, consider a trivalent tree plabic graph $G$ on $[n]$. 
Let $v$ be the internal vertex adjacent to the boundary vertex $n$. 
Then deleting $v$ gives two trees: $S$ on $[\ell]$ and $T$ on $[\ell+1,n-1]$. Let $\pi$ be the trip permutation of $S$. Subtract $\ell$ from the boundary labels of $T$ to get a tree $T'$ on $[1,n-\ell-1]$ and let $\nu$ be its trip permutation. Then define $w$ to be either $\beta^{-1}(\pi) \oplus \beta^{-1}(\nu)$ or $\beta^{-1}(\pi) \ominus \beta^{-1}(\nu)$, based on whether $v$ is white or black. By the argument used above to show well-definedness, $\beta(w)$ is the trip permutation of $G$.
\end{proof}

\begin{rmk}
If $S$ and $T$ are tree plabic graphs, 
the positroids associated to $S \oplus T$ and $S\ominus T$ 
are the 
\emph{parallel-connection} and \emph{series-connection} of the 
matroids associated to $S, T$.
\end{rmk} 

The large Schr\"oder number $R_{n-2}$ counts separable permutations on $[n-1]$ \cite{WEST1995247}
and $R_{k,n-2}$ counts separable permutations on $[n-1]$ with $k$ descents \cite[Theorem 1.1]{FU20182616}.

\begin{corollary} 
Positroid tiles of $\Delta_{k+1}$ and $\AA_{n,k,2}(Z)$
are in bijection with separable permutations on $[n-1]$ with $k$ descents. They are enumerated by $R_{k,n-2}$ from~\cite[A175124]{OEIS}.  
\end{corollary}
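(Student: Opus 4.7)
The plan is to assemble the corollary by composing bijections that are already in place, so the proof should be short. First, by \cref{prop:homeo2}, positroid tiles of $\Delta_{k+1,n}$ are exactly the positroid polytopes $\Gamma_G$ with $G$ a tree plabic graph of type $(k+1,n)$, and such trees are in turn indexed by their trip permutations (a reduced plabic graph is determined up to move-equivalence by its decorated permutation). The proposition immediately preceding the corollary supplies a bijection $\beta$ between separable permutations on $[n-1]$ with $k$ descents and trip permutations of tree positroids in $\Gr^{\geq 0}_{k+1,n}$, so composing yields the bijection between positroid tiles of $\Delta_{k+1,n}$ and separable permutations on $[n-1]$ with $k$ descents. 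For the amplituhedron side, \cref{cor:GTsInBijection} identifies positroid tiles of $\AA_{n,k,2}(Z)$ with positroid tiles of $\Delta_{k+1,n}$ via T-duality on the underlying bicolored subdivision of type $(k,n)$ (equivalently, $G \mapsto \hat{G}$); composing once more gives the claimed bijection for $\AA_{n,k,2}(Z)$.

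The enumeration is then a direct appeal to the literature: West \cite{WEST1995247} showed that the large Schr\"oder number $R_{n-2}$ counts separable permutations on $[n-1]$, and Fu \cite[Theorem~1.1]{FU20182616} refined this to show that $R_{k,n-2}$ counts separable permutations on $[n-1]$ with exactly $k$ descents. Combined with the chain of bijections above, this gives $|\{\text{positroid tiles of } \AA_{n,k,2}(Z)\}| = |\{\text{positroid tiles of } \Delta_{k+1,n}\}| = R_{k,n-2}$, matching the conjectural count of \cite{Lukowski:2019sxw}. There is no genuine obstacle here, since all of the combinatorial content, including the compatibility of the descent statistic on $w$ with the type parameter $k+1$ of the corresponding tree (which is built into the inductive construction in the proof of the preceding proposition, via the fact that $\oplus$ and $\ominus$ each add exactly the right number of descents when assembling a tree from its two subtrees across a new trivalent vertex), has already been absorbed upstream; the corollary itself is pure bookkeeping.
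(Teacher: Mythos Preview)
Your proposal is correct and matches the paper's approach exactly: the paper states the corollary without proof, relying on the immediately preceding proposition (the bijection $\beta$), \cref{cor:GTsInBijection}, and the cited enumeration results of West and Fu, which is precisely the chain of bijections you assemble.
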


\appendix
\section{Combinatorics of the totally nonnegative Grassmannian}
\label{sec:appendix}

In \cite{postnikov}, Postnikov defined several families of combinatorial objects which are in bijection with cells of the positive Grassmannian, including \emph{decorated permutations},
and equivalence classes of \emph{reduced plabic graphs}. 
He also used these objects to give concrete descriptions of 
the cells.  Here we review some of this technology.


\begin{defn}\label{defn:decperm}
A \emph{decorated permutation} on $[n]$ is a bijection $\pi : [n] \to [n]$ whose fixed points are each coloured either black (loop) or white (coloop). We denote a black fixed point $i$ by $\pi(i) = \underline{i}$, and a white fixed point $i$ by $\pi(i) = \overline{i}$.
An \emph{anti-excedance} of the decorated permutation $\pi$ is an element $i \in [n]$ such that either $\pi^{-1}(i) > i$ or $\pi(i)=\overline{i}$.  We say that a decorated permutation
	on $[n]$ is of
	\emph{type $(k,n)$} if it has $k$ anti-excedances.
\end{defn}

For example, $\pi = (3,\underline{2},5,1,6,8,\overline{7},4)$ has 
a loop in position $2$, and a coloop in position $7$.
It has three anti-excedances $1, 4, 7$.

Decorated permutations can be equivalently thought of as affine permutations \cite{KLS}.
	An \emph{affine permutation} on $[n]$ is a bijection $\pi : \mathbb{Z} \to \mathbb{Z}$ such that $\pi(i+n)=\pi(i)+n$ and $i \leq \pi(i) \leq i+n$, for all $i \in \mathbb{Z}$.
	It is additionally \emph{$(k,n)$-bounded} if 
	$\sum_{i=1}^n (\pi(i)-i) = kn$.


There is a bijection between decorated permutations of type $(k,n)$ and $(k,n)$-bounded 
affine permutations. Given a decorated permutation $\pi_d$ we can define an affine permutation $\pi_a$ by the following procedure: if $\pi_d(i)>i$, then define $\pi_a(i):=\pi_d(i)$; if $\pi_d(i)<i$, then define $\pi_a(i):=\pi_d(i)+n$; if $\pi_d(i)$ is a loop then define $\pi_a(i):=i$; if $\pi_d(i)$ is a coloop then define $\pi_a(i):=i+n$. 
For example, under this map, the decorated permutation $\pi_d=(3,\underline{2},5,1,6,8,\overline{7},4)$ in the previous example gives rise to $\pi_a=(3,2,5,9,6,8,15,12)$.


Given a $k\times n$ matrix $C = (c_1,\dots,c_n)$ written as a list of its columns,
we associate a decorated permutation $\pi$ as follows.
Given $i,j\in [n]$, let 
$r[i,j]$ denote the rank of $\langle c_i, c_{i+1},\dots, c_j\rangle$, 
where we list the columns in cyclic order, going from $c_n$ to $c_1$ if 
$i>j$.
We set $\pi(i):=j$ to be the label of the first column $j$ such that 
$c_i \in \spn \{c_{i+1}, c_{i+2},\dots, c_j\}$.
If $c_i$ is the all-zero vector, we decorate $i$ as loop, and if $c_i$ is not in the span of the other column
vectors, we decorate $i$ as coloop.

The map $C \mapsto \pi$ extends to a map on positroid cells.  Moreover,
Postnikov showed that the positroids for $Gr_{k,n}^{\ge 0}$ are in bijection with decorated permutations of $[n]$ with exactly $k$ anti-excedances (equivalently, by $(k,n)$-bounded affine permutations) \cite[Section 16]{postnikov}.
One may read off the dimension of the cell $S_{\pi}$ from the affine permutation 
$\pi$ as follows.  Let $\inv(\pi)$ be the number of pairs $(i,j)$ such that 
$i\in [n], j\in \Z, i<j$, and $\pi(i)>\pi(j)$.  Then the dimension of 
$S_{\pi}$ equals $k(n-k)-\inv(\pi)$.

\begin{defn}\label{def:plabic}
	A {\it planar bicolored graph} (or ``plabic graph'')
is a planar graph $G$ properly embedded into a closed disk, such that 
			each internal vertex is colored black or white;
			each internal vertex is connected by 
			a path to some boundary vertex;
			there are (uncolored) vertices lying on the 
			boundary of the disk labeled $1,\dots, n$
			for some positive $n$;
and 			each of the boundary vertices is incident to a single 
			edge.
See Figure \ref{G25} for an example.
\end{defn}
\vspace{-.4cm}
\begin{figure}[h]
\centering
\includegraphics[height=1in]{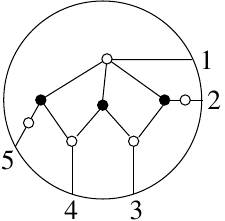}
\caption{A plabic graph}
\label{G25}
\end{figure}
\vspace{-.4cm}

If the connected component of $G$ attached to a boundary vertex $i$ is a 
path ending at a black (resp., white) leaf, we call this component a black (resp., white) 
\emph{lollipop}.  
\emph{We will require that our plabic graphs have no internal leaves
except for lollipops.}

There is a natural set of local transformations (moves) of plabic graphs:

(M1) \emph{Square move} (or \emph{urban renewal}).  If a plabic graph has a square formed by
four trivalent vertices whose colors alternate,
then we can switch the
colors of these four vertices.

(M2) \emph{Contracting/expanding a vertex}.
Two adjacent internal vertices of the same color can be merged.
This operation can also be reversed.  

(M3) \emph{Middle vertex insertion/removal}.
We can remove/add degree $2$ vertices.

See \cref{fig:M1} for depictions of these three moves.

\begin{figure}[h]
\centering
\includegraphics[height=.5in]{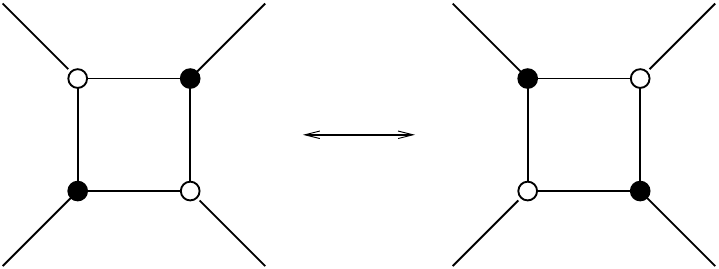}
\hspace{.3in}
\raisebox{6pt}{\includegraphics[height=.3in]{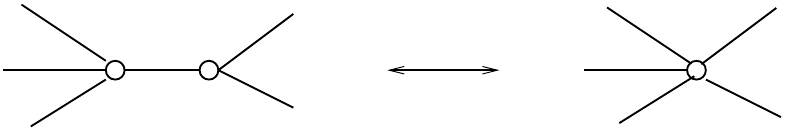}}
\hspace{.3in}
\raisebox{16pt}{\includegraphics[height=.07in]{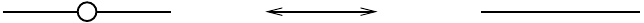}}
\caption{%
	Local moves (M1), (M2), (M3) on plabic graphs.}
\label{fig:M1}
\end{figure}
\begin{definition}\label{def:move}
Two plabic graphs are called \emph{move-equivalent} if they can be obtained
from each other by moves (M1)-(M3).  The \emph{move-equivalence class}
of a given plabic graph $G$ is the set of all plabic graphs which are move-equivalent
to $G$.
A plabic graph 
is called \emph{reduced} if there is no graph in its move-equivalence
in which two adjacent vertices $u$ and $v$ 
		are connected by 
			more than one edge
\end{definition}
	
 Note that given a plabic graph $G$,  we can always 
	apply moves to $G$ to obtain a new graph $G'$ which is bipartite.

\begin{defn}\label{def:rules}
Let $G$ be a reduced plabic graph as above with boundary vertices $1,\dots, n$. For each boundary vertex $i\in [n]$, we follow a path along the edges of $G$ starting at $i$, turning (maximally) right at every internal black vertex, and (maximally) left at every internal white vertex. This path ends at some boundary vertex $\pi(i)$. By \cite[Section 13]{postnikov}, the fact that $G$ is reduced implies that each fixed point of $\pi$ is attached to a lollipop; we color each fixed point by the color of its lollipop. In this way we obtain the \emph{decorated permutation} $\pi_G = \pi$ of $G$. We say that $G$ is of {\itshape type} $(k,n)$, where $k$ is the number of anti-excedances of $\pi_G$. 
\end{defn}
The decorated permutation of the plabic graph $G$ of 
\cref{G25}
is $\pi_G = (3,4,5,1,2)$, which has 
$k = 2$ anti-excedances.

\begin{definition}\label{def:matching}
	Let $G$ be a bipartite plabic graph.  Use move (M3) to 
ensure that each boundary vertex
is incident to a white vertex.
An {\it almost perfect matching} $M$ of a plabic graph $G$ 
is a
subset $M$ of edges such that each internal vertex is incident to
exactly one edge in $M$ (and each boundary vertex $i$ is incident
to either one or no edges in $M$). 
	We let $\partial M = \{i \ \vert \ i \text{ is incident to an edge of }M\}$.
\end{definition}

We associate to each  graph $G$ as above
a collection of 
subsets $\M(G) \subset [n]$ as follows.

\begin{proposition}\label{prop:positroidMatching}
\cite[Propostion~11.7, Lemma~11.10]{postnikov}
	Let $G$ be a plabic graph as in \cref{def:matching}, and let 
$\M(G) = \{\partial M \ \vert \ M \text{ an almost perfect matching of }G\}.$
	Then $\M(G)$ is the set of bases of a positroid on $[n]$. Its rank is 
	$\#\{\text{white vertices of }G\}-\#\{\text{black vertices of }G\},$
	which is the size of $\partial M$ for any almost perfect matching $M$ of $G$.
\end{proposition}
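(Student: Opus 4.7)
The plan is to deduce both the matroid/positroid structure of $\mathcal{M}(G)$ and the rank formula from Postnikov's boundary measurement parametrization of positroid cells by weighted bipartite plabic graphs. The key bridge is that, after choosing positive real weights on the edges of $G$, one obtains an explicit point $C(G,\mathrm{wt}) \in \Gr_{k,n}^{\geq 0}$ whose Pl\"ucker coordinates are sums of matching weights, and then the nonvanishing Pl\"ucker coordinates are in bijection with the boundaries of almost perfect matchings.

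First I would verify the rank formula and the fact that $|\partial M|$ is constant, which is a purely combinatorial bipartite edge-count. Since $G$ is bipartite and we have applied (M3) so that every boundary vertex is adjacent to a white internal vertex, every edge of $G$ is either between a white internal vertex and a black internal vertex, or between a white internal vertex and a boundary vertex. In any almost perfect matching $M$, each internal vertex is covered exactly once. Counting matched edges by their white endpoint gives $|M| = W$, where $W$ is the number of white internal vertices; counting by the other endpoint gives $|M| = B + |\partial M|$, where $B$ is the number of black internal vertices. Hence
\begin{equation*}
|\partial M| = W - B,
\end{equation*}
independent of $M$. This also forces all elements of $\mathcal{M}(G)$ to have the same size $k := W - B$.

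Next I would invoke (and cite) Postnikov's boundary measurement construction: to a reduced plabic graph $G$ of type $(k,n)$ with positive real edge weights one associates a matrix $C(G,\mathrm{wt})$ representing a point of $\Gr_{k,n}^{\geq 0}$. The core input I would use is the well-known formula (proved by Postnikov/Talaska, and restated for bipartite graphs via the Lindstr\"om--Gessel--Viennot/Kasteleyn interpretation) expressing each Pl\"ucker coordinate as a nonnegative sum
\begin{equation*}
p_I\bigl(C(G,\mathrm{wt})\bigr) \;=\; \sum_{M\,:\,\partial M = I} \mathrm{wt}(M),
\end{equation*}
where the sum runs over almost perfect matchings $M$ of $G$ with boundary $I$, and $\mathrm{wt}(M) = \prod_{e\in M} \mathrm{wt}(e)$. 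Since the weights are positive, $p_I(C(G,\mathrm{wt})) > 0$ if and only if there exists an almost perfect matching $M$ with $\partial M = I$; that is, if and only if $I \in \mathcal{M}(G)$.

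Finally I would conclude as follows. As the weights vary over $\mathbb{R}_{>0}^{E(G)}$, the matrices $C(G,\mathrm{wt})$ sweep out a single positroid cell $S_\pi \subseteq \Gr_{k,n}^{\geq 0}$ (this is Postnikov's theorem identifying the image of the boundary measurement map with a cell indexed by the trip permutation $\pi_G$, up to move-equivalence). By the definition of positroid cell, the set of $I$ for which $p_I$ does not vanish identically on $S_\pi$ is precisely the positroid $\mathcal{M}_\pi$ associated to $\pi$. Combined with the matching formula above, this gives $\mathcal{M}(G) = \mathcal{M}_\pi$, so $\mathcal{M}(G)$ is the set of bases of a positroid of rank $k = W - B$. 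The main obstacle is the matching-sum formula for the Pl\"ucker coordinates of $C(G,\mathrm{wt})$: one has to match conventions between the flow/path version of Postnikov's boundary measurements and the matching version used here (a subtle sign/normalization check for bipartite graphs), and one must ensure that nonvanishing is preserved (i.e.\ no cancellations), which is automatic here because all summands are positive.
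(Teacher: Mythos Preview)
The paper does not supply its own proof of this proposition; it is stated with a citation to Postnikov \cite[Proposition~11.7, Lemma~11.10]{postnikov} and treated as background. Your argument is a correct reconstruction of the standard proof: the bipartite edge-count giving $|\partial M| = W - B$ is exactly right, and the second half (Pl\"ucker coordinates as positive sums of matching weights, hence nonvanishing iff a matching exists) is precisely the content of the Kasteleyn/Speyer theorem that the paper states immediately afterward as \cref{thm:Kasteleyn}. So your approach is not just correct but is the same circle of ideas the paper is relying on in the surrounding text.
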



Postnikov used plabic graphs to give parameterizations of cells
of $\Grk$.  These parameterizations of cells 
can be recast as a variant of a theorem of Kasteleyn,
as was made explicit in \cite{Speyer}.  We follow the exposition 
there.

\begin{theorem}\cite{Speyer}\label{thm:Kasteleyn}
Let $G$ be a bipartite graph with boundary embedded in a disk, such 
that all of the boundary vertices are black.  Suppose there
are $N+k$ white vertices $W_1,\dots, W_{N+k}$,
$N$ internal black vertices $B_1,\dots, B_N$, and 
$n$ boundary vertices $B_{N+1},\dots, B_{N+n}$, labeled in 
clockwise order.  
Let $w:\Edges(G) \to \R_{>0}$ be any weighting function; if there
is an edge between vertices $i$ and $j$, we denote the weight on 
this edge by $w_{ij}$.
For a perfect matching $M$, define $w(M) = \prod_{e\in M} w(e)$ and define $\partial M$ to be the indices of the boundary vertices covered by an edge in $M$.
For a subset $I$ of $\{W_{N+1},\dots, W_{N+n}\}$, define
$\DD(G,I,w) = \sum_{\partial M = I} w(M)$. 

Then there is a real
	$k\times n$ \emph{Kasteleyn matrix} $L$ such that 
for each $k$-element subset $I$ of $\partial G$, the 
determinant $\det L_I$ of the $k\times k$ submatrix of $L$ using
the columns indexed by $I$ is $\det L_I = \DD(G,I,w).$
In particular, all Pl\"ucker coordinates of $L$ are non-negative.
\end{theorem}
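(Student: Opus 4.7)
The plan is to realize $L$ as a Schur complement of a larger signed weighted bipartite adjacency matrix, in the style of the classical Kasteleyn theorem.

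First I would choose a \emph{Kasteleyn signing} $\epsilon \colon \Edges(G) \to \{\pm 1\}$: an assignment to every edge such that, around every bounded face of length $2\ell$, the product of the edge signs is $(-1)^{\ell+1}$. Existence is standard for planar bipartite graphs, since the dual of the bounded faces is a tree and $\epsilon$ can be propagated face by face. The crucial consequence, proved by a cycle-swap argument (the symmetric difference of two matchings with the same boundary is a disjoint union of even cycles, each enclosing a controlled number of internal faces), is that the product $\sgn(\tau_M)\,\epsilon(M)$ is constant as $M$ ranges over the perfect matchings with any fixed boundary $\partial M$.

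Second, I would form the weighted signed bipartite adjacency matrix $K$ of size $(N+k) \times (N+n)$, rows indexed by the white vertices and columns by the black vertices (internal columns first, then boundary columns), with entry $\epsilon(W_i B_j)\,w_{W_i B_j}$ on each edge and $0$ otherwise. Leibniz expansion of a maximal minor $\det K_{*,J}$ writes it as a signed sum over perfect matchings between $\{W_1,\dots,W_{N+k}\}$ and the columns in $J$. By the previous step, all nonzero terms in this sum carry the same sign, so $|\det K_{*,J}|$ equals the weighted enumerator of perfect matchings with that prescribed column set.

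Third, I would reduce from $K$ to the required $k \times n$ matrix $L$. Since every perfect matching covers every internal black vertex, only column sets $J$ containing the full set $J_{\mathrm{int}}$ of internal columns contribute; the remaining columns range over $k$-subsets $I$ of the boundary. Provided the $N\times N$ block $K_{\mathrm{int}}$ on $J_{\mathrm{int}}$ (with an appropriate choice of rows) is invertible, row reduction against the internal columns brings $K$ into block form $\left(\begin{smallmatrix} \mathrm{I}_N & * \\ 0 & L \end{smallmatrix}\right)$ for a unique $k \times n$ matrix $L$, and a Laplace expansion then yields $\det L_I = \det K_{*,J_{\mathrm{int}}\sqcup I}/\det K_{\mathrm{int}}$. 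Up to the global positive normalization by $|\det K_{\mathrm{int}}|$, this matches $\DD(G,I,w)$, which is manifestly nonnegative.

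The main obstacle is pinning down the sign conventions: the Kasteleyn face rule must be coordinated with the cyclic boundary order on the disk so that the uniform sign $\sgn(\tau_M)\epsilon(M)$ comes out $+1$ rather than $-1$ across every boundary type $I$ simultaneously. The cleanest way to handle this uniformly is to first use moves (M2) and (M3) to reduce to a bipartite graph with all internal vertices trivalent, where a canonical Kasteleyn signing is available, then check that contraction/expansion of bivalent vertices preserves the desired identity $\det L_I = \DD(G,I,w)$.
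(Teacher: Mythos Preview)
The paper does not prove this theorem: it is quoted from \cite{Speyer} and stated without proof in the appendix. The only additional content the paper provides is Remark~\ref{rm:Kasteleyn}, which records the construction of $L$: form the $(N+k)\times(N+n)$ signed weighted adjacency matrix $K$ with $K_{ij}=\pm w_e$, then row-reduce to block form $\left(\begin{smallmatrix}\Id_N & \star\\ 0 & L\end{smallmatrix}\right)$. This is exactly your steps 2 and 3, so your outline matches both the paper's sketch and Speyer's argument.

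Your sketch is correct in broad strokes. One remark: the invertibility of the internal $N\times N$ block you invoke in step 3 is equivalent to the existence of at least one almost-perfect matching of $G$ (since $|\det K_{\mathrm{int}}|$ is the weighted count of matchings of the internal bipartite graph), which is the hypothesis the paper's remark explicitly assumes. Your closing comment about reducing to the trivalent case via (M2)/(M3) is a reasonable way to tame the signs, though Speyer handles the signing directly without that reduction.
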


The \emph{positroid cell} $S_G \subset \Gr_{k,n}$ associated to the plabic graph $G$
is the set of all $k$-planes in $\R^n$ spanned by matrices $L$ 
as in 
\cref{thm:Kasteleyn}.  If $G$ is a tree, we call $S_G$  a 
\emph{tree positroid}.

\begin{remark}\label{rm:Kasteleyn}
The Kasteleyn matrix $L$ is constructed as follows.
First construct an $(N+k)\times (N+n)$ matrix $K$, with 
rows indexed by white vertices and columns indexed by black vertices,
with $K_{ij} = \pm w_e$ if there is an edge $e$
between vertices $i$ and $j$ (otherwise $K_{ij}=0$).
Then, assuming $G$ has at least one perfect matching,
	we can apply row operations to transform $K$ into a 
matrix of block form 
$	\begin{pmatrix}
		\Id_N & \star \\
		0 & L
	\end{pmatrix}.$

\end{remark}

\bibliographystyle{alpha}
\bibliography{bibliography}

\newcommand{\etalchar}[1]{$^{#1}$}
\begin{thebibliography}{CHDD{\etalchar{+}}20}

\bibitem[AHBC{\etalchar{+}}11]{ArkaniHamed:2010kv}
N.~Arkani-Hamed, J.~Bourjaily, F.~Cachazo, S.~Caron-Huot, and J.~Trnka.
\newblock The all-loop integrand for scattering amplitudes in planar {$\mathcal
  N=4$} {SYM}.
\newblock {\em J. High Energy Phys.}, (1):041, 46, 2011.

\bibitem[AHBC{\etalchar{+}}16]{abcgpt}
Nima Arkani-Hamed, Jacob Bourjaily, Freddy Cachazo, Alexander Goncharov,
  Alexander Postnikov, and Jaroslav Trnka.
\newblock {\em Grassmannian geometry of scattering amplitudes}.
\newblock Cambridge University Press, Cambridge, 2016.

\bibitem[AHBL17]{Arkani-Hamed:2017tmz}
Nima Arkani-Hamed, Yuntao Bai, and Thomas Lam.
\newblock Positive geometries and canonical forms.
\newblock {\em J. High Energy Phys.}, (11):039, front matter+121, 2017.

\bibitem[AHCCK10]{ArkaniHamed:2009dn}
N.~Arkani-Hamed, F.~Cachazo, C.~Cheung, and J.~Kaplan.
\newblock A duality for the {$S$} matrix.
\newblock {\em J. High Energy Phys.}, (3):020, 69, 2010.

\bibitem[AHT14]{arkani-hamed_trnka}
Nima Arkani-Hamed and Jaroslav Trnka.
\newblock The amplituhedron.
\newblock {\em J. High Energy Phys.}, (10):33, 2014.

\bibitem[AHTT18]{ATT}
Nima Arkani-Hamed, Hugh Thomas, and Jaroslav Trnka.
\newblock Unwinding the amplituhedron in binary.
\newblock {\em J. High Energy Phys.}, (1):016, front matter+40, 2018.

\bibitem[AR08]{Alday:2008yw}
Luis~F. Alday and Radu Roiban.
\newblock Scattering amplitudes, {W}ilson loops and the string/gauge theory
  correspondence.
\newblock {\em Phys. Rep.}, 468(5):153--211, 2008.

\bibitem[Ati82]{A:82}
M.~F. Atiyah.
\newblock Convexity and commuting {H}amiltonians.
\newblock {\em Bull. London Math. Soc.}, 14(1):1--15, 1982.

\bibitem[BB05]{bjornerbrenti}
Anders Bj\"{o}rner and Francesco Brenti.
\newblock {\em Combinatorics of {C}oxeter groups}, volume 231 of {\em Graduate
  Texts in Mathematics}.
\newblock Springer, New York, 2005.

\bibitem[BCFW05]{BCFW}
Ruth Britto, Freddy Cachazo, Bo~Feng, and Edward Witten.
\newblock Direct proof of the tree-level scattering amplitude recursion
  relation in {Y}ang-{M}ills theory.
\newblock {\em Phys. Rev. Lett.}, 94(18):181602, 4, 2005.

\bibitem[BCTJ22]{posquotients}
Carolina Benedetti, Anastasia Chavez, and Daniel Tamayo~Jim\'{e}nez.
\newblock Quotients of uniform positroids.
\newblock {\em Electron. J. Combin.}, 29(1):Paper No. 1.13, 20, 2022.

\bibitem[BFZ05]{ca3}
Arkady Berenstein, Sergey Fomin, and Andrei Zelevinsky.
\newblock Cluster algebras. {III}. {U}pper bounds and double {B}ruhat cells.
\newblock {\em Duke Math. J.}, 126(1):1--52, 2005.

\bibitem[BH19]{BaoHe}
Huanchen Bao and Xuhua He.
\newblock The m=2 amplituhedron, 2019.
\newblock Preprint, \texttt{arxiv:1909.06015}.

\bibitem[BLVS{\etalchar{+}}99]{OrientedMatroidBook}
Anders Bj{\"o}rner, Michel Las~Vergnas, Bernd Sturmfels, Neil White, and
  G{\"u}nter~M. Ziegler.
\newblock {\em Oriented matroids}, volume~46 of {\em Encyclopedia of
  Mathematics and its Applications}.
\newblock Cambridge University Press, Cambridge, second edition, 1999.

\bibitem[BM08]{Berkovits:2008ic}
Nathan Berkovits and Juan Maldacena.
\newblock Dual superconformal symmetry, and the amplitude/{W}ilson loop
  connection.
\newblock {\em J. High Energy Phys.}, (9):062, 44, 2008.

\bibitem[BMS10]{Bullimore:2009cb}
Mathew Bullimore, Lionel Mason, and David Skinner.
\newblock Twistor-strings, {G}rassmannians and leading singularities.
\newblock {\em J. High Energy Phys.}, (3):070, 54, 2010.

\bibitem[Bou12]{Bourjaily:2012gy}
Jacob~L. Bourjaily.
\newblock Positroids, plabic graphs, and scattering amplitudes in mathematica.
\newblock 2012.
\newblock Preprint, \texttt{arXiv:1212.6974}.

\bibitem[BW20]{BW}
Alexey Balitskiy and Julian Wellman.
\newblock Flip cycles in plabic graphs.
\newblock {\em Selecta Math. (N.S.)}, 26(1):Paper No. 15, 29, 2020.

\bibitem[Car11]{cyclic}
Constantin Carath\'eodory.
\newblock Uber den variabilitatsberiech der fourier'schen konstanten von
  positiven harmonischen funktionen.
\newblock {\em Rendiconto del Circolo Matematico di Palermo}, 32:193--217,
  1911.

\bibitem[CEGM19]{Cachazo:2019ngv}
Freddy Cachazo, Nick Early, Alfredo Guevara, and Sebastian Mizera.
\newblock Scattering equations: from projective spaces to tropical
  {G}rassmannians.
\newblock {\em J. High Energy Phys.}, (6):039, 32, 2019.

\bibitem[CHCM23]{Caron-Huot:2023wdh}
Simon Caron-Huot, Frank Coronado, and Beatrix M\"uhlmann.
\newblock {Determinants in self-dual N=4 SYM and twistor space}.
\newblock 4 2023.

\bibitem[CHDD{\etalchar{+}}20]{Caron-Huot:2020bkp}
Simon Caron-Huot, Lance~J. Dixon, James~M. Drummond, Falko Dulat, Jack Foster,
  \"Omer G\"urdo\u{g}an, Matt von Hippel, Andrew~J. McLeod, and Georgios
  Papathanasiou.
\newblock {The Steinmann Cluster Bootstrap for $N$ = 4 Super Yang-Mills
  Amplitudes}.
\newblock {\em PoS}, CORFU2019:003, 2020.

\bibitem[CHY13]{Cachazo:2013iaa}
Freddy Cachazo, Song He, and Ellis~Ye Yuan.
\newblock {Scattering in Three Dimensions from Rational Maps}.
\newblock {\em J. High Energy Physics}, 10:141, 2013.

\bibitem[CMZ17]{Cachazo:2016ror}
Freddy Cachazo, Sebastian Mizera, and Guojun Zhang.
\newblock {Scattering Equations: Real Solutions and Particles on a Line}.
\newblock {\em J. High Energy Physics}, 03:151, 2017.

\bibitem[DFG18]{Drummond:2017ssj}
James Drummond, Jack Foster, and {\" O}mer G{\" u}rdo{\u g}an.
\newblock {Cluster Adjacency Properties of Scattering Amplitudes in $N=4$
  Supersymmetric Yang-Mills Theory}.
\newblock {\em Phys. Rev. Lett.}, 120(16):161601, 2018.

\bibitem[DFG19]{Drummond:2018dfd}
James Drummond, Jack Foster, and {\"O}mer G{\"u}rdo{\u g}an.
\newblock {Cluster adjacency beyond MHV}.
\newblock {\em J. High Energy Physics}, 03:086, 2019.

\bibitem[DF{\L}P19]{mamp}
David Damgaard, Livia Ferro, Tomasz {\L}ukowski, and Matteo Parisi.
\newblock The momentum amplituhedron.
\newblock {\em J. High Energy Phys.}, (8):042, 21, 2019.

\bibitem[EZLT21]{ELT}
Chaim Even-Zohar, Tsviqa Lakrec, and Ran Tessler.
\newblock The amplituhedron {BCFW} triangulation.
\newblock {\em \href{http://arxiv.org/abs/2112.02703} {arXiv:2112.02703}},
  2021.

\bibitem[FLZ18]{FU20182616}
Shishuo Fu, Zhicong Lin, and Jiang Zeng.
\newblock On two unimodal descent polynomials.
\newblock {\em Discrete Mathematics}, 341(9):2616--2626, 2018.

\bibitem[FWZ16]{ca123}
Sergey Fomin, Lauren Williams, and Andrei Zelevinsky.
\newblock Introduction to cluster algebras. {C}hapters 1-3.
\newblock 2016.
\newblock Preprint, \texttt{arXiv:1608.05735}.

\bibitem[FWZ17]{ca45}
Sergey Fomin, Lauren Williams, and Andrei Zelevinsky.
\newblock Introduction to cluster algebras. {C}hapters 4-5.
\newblock 2017.
\newblock Preprint, \texttt{arXiv:1707.07190}.

\bibitem[FWZ21]{ca7}
Sergey Fomin, Lauren Williams, and Andrei Zelevinsky.
\newblock Introduction to cluster algebras. {C}hapter 7.
\newblock 2021.
\newblock Preprint, \texttt{arXiv:2106.02160}.

\bibitem[FZ02]{ca1}
Sergey Fomin and Andrei Zelevinsky.
\newblock Cluster algebras {I}. {F}oundations.
\newblock {\em J. Amer. Math. Soc.}, 15(2):497--529 (electronic), 2002.

\bibitem[FZ03]{Fomin_2003}
Sergey Fomin and Andrei Zelevinsky.
\newblock Cluster algebras {II}: Finite type classification.
\newblock {\em Inventiones mathematicae}, 154(1):63–121, May 2003.

\bibitem[Gal18]{Gal18}
Pavel Galashin.
\newblock Plabic graphs and zonotopal tilings.
\newblock {\em Proc. Lond. Math. Soc. (3)}, 117(4):661--681, 2018.

\bibitem[Gal21]{GalCritVar}
Pavel Galashin.
\newblock Critical varieties in the {G}rassmannian, 2021.
\newblock Preprint, \texttt{arXiv:2102.13339}.

\bibitem[GGMS87]{GGMS}
I.~M. Gelfand, R.~M. Goresky, R.~D. MacPherson, and V.~V. Serganova.
\newblock Combinatorial geometries, convex polyhedra, and {S}chubert cells.
\newblock {\em Adv. in Math.}, 63(3):301--316, 1987.

\bibitem[GGS{\etalchar{+}}14]{Golden:2013xva}
John Golden, Alexander~B. Goncharov, Marcus Spradlin, Cristian Vergu, and
  Anastasia Volovich.
\newblock Motivic amplitudes and cluster coordinates.
\newblock {\em J. High Energy Phys.}, (01):91, 2014.

\bibitem[GHK15]{GHK}
Mark Gross, Paul Hacking, and Sean Keel.
\newblock Birational geometry of cluster algebras.
\newblock {\em Algebr. Geom.}, 2(2):137--175, 2015.

\bibitem[GL20]{parityduality}
Pavel Galashin and Thomas Lam.
\newblock Parity duality for the amplituhedron.
\newblock {\em Compos. Math.}, 156(11):2207--2262, 2020.

\bibitem[GP20]{Gurdogan:2020tip}
\"Omer G\"urdo\u{g}an and Matteo Parisi.
\newblock {Cluster patterns in Landau and Leading Singularities via the
  Amplituhedron}.
\newblock 2020.
\newblock Preprint, \texttt{arXiv:2005.07154}.

\bibitem[GPW19]{GalashinPostWilliams}
Pavel Galashin, Alexander Postnikov, and Lauren Williams.
\newblock Higher secondary polytopes and regular plabic graphs, 2019.
\newblock {T}o appear in Adv. Math.

\bibitem[GS82]{GS}
V.~Guillemin and S.~Sternberg.
\newblock Convexity properties of the moment mapping.
\newblock {\em Invent. Math.}, 67(3):491--513, 1982.

\bibitem[HL21]{He:2020uhb}
Song He and Zhenjie Li.
\newblock {A note on letters of Yangian invariants}.
\newblock {\em J. High Energy Physics}, 02:155, 2021.

\bibitem[Hod13]{Hodges:2009hk}
Andrew Hodges.
\newblock {Eliminating spurious poles from gauge-theoretic amplitudes}.
\newblock {\em J. High Energy Physics}, 05:135, 2013.

\bibitem[Kar17]{karp}
Steven~N. Karp.
\newblock Sign variation, the {G}rassmannian, and total positivity.
\newblock {\em J. Combin. Theory Ser. A}, 145:308--339, 2017.

\bibitem[Kit11]{kitaev2011patterns}
Sergey Kitaev.
\newblock {\em Patterns in permutations and words}.
\newblock Springer Science \& Business Media, 2011.

\bibitem[KL20]{Kojima:2020tjf}
Ryota Kojima and Cameron Langer.
\newblock {Sign Flip Triangulations of the Amplituhedron}.
\newblock 2020.
\newblock Preprint, \texttt{arXiv:2001.06473}.

\bibitem[KLS13]{KLS}
Allen Knutson, Thomas Lam, and David~E. Speyer.
\newblock Positroid varieties: juggling and geometry.
\newblock {\em Compositio Mathematica}, 149(10):1710–1752, 2013.

\bibitem[KW14]{KodamaWilliams}
Yuji Kodama and Lauren Williams.
\newblock K{P} solitons and total positivity for the {G}rassmannian.
\newblock {\em Invent. Math.}, 198(3):637--699, 2014.

\bibitem[KW19]{karpwilliams}
Steven~N. Karp and Lauren~K. Williams.
\newblock The {$m=1$} amplituhedron and cyclic hyperplane arrangements.
\newblock {\em Int. Math. Res. Not. IMRN}, (5):1401--1462, 2019.

\bibitem[KWZ20]{Karp:2017ouj}
Steven~N. Karp, Lauren~K. Williams, and Yan~X. Zhang.
\newblock Decompositions of amplituhedra.
\newblock {\em Ann. Inst. Henri Poincar\'{e} D}, 7(3):303--363, 2020.
\newblock With an appendix by Karp, Williams, Zhang and Hugh Thomas.

\bibitem[Laf03]{Lafforgue}
L.~Lafforgue.
\newblock {\em Chirurgie des grassmanniennes}, volume~19 of {\em CRM Monograph
  Series}.
\newblock American Mathematical Society, Providence, RI, 2003.

\bibitem[Lam16a]{LamStanley}
Thomas Lam.
\newblock Amplituhedron cells and {S}tanley symmetric functions.
\newblock {\em Comm. Math. Phys.}, 343(3):1025--1037, 2016.

\bibitem[Lam16b]{lam}
Thomas Lam.
\newblock Totally nonnegative {G}rassmannian and {G}rassmann polytopes.
\newblock In {\em Current developments in mathematics 2014}, pages 51--152.
  Int. Press, Somerville, MA, 2016.

\bibitem[LP07]{LamPost}
Thomas Lam and Alexander Postnikov.
\newblock Alcoved polytopes. {I}.
\newblock {\em Discrete Comput. Geom.}, 38(3):453--478, 2007.

\bibitem[{\L}PSV19]{Lukowski:2019sxw}
Tomasz {\L}ukowski, Matteo Parisi, Marcus Spradlin, and Anastasia Volovich.
\newblock Cluster adjacency for {$m = 2$} {Y}angian invariants.
\newblock {\em J. High Energy Phys.}, (10):158, 10, 2019.

\bibitem[{\L}PW20]{LPW}
Tomasz {\L}ukowski, Matteo Parisi, and Lauren~K. Williams.
\newblock The positive tropical {G}rassmannian, the hypersimplex, and the $m=2$
  amplituhedron, 2020.
\newblock Preprint, \texttt{arXiv:2002.06164}.

\bibitem[{\L}uk19]{Lukowski:2019kqi}
Tomasz {\L}ukowski.
\newblock {On the Boundaries of the m=2 Amplituhedron}.
\newblock 2019.
\newblock Preprint, \texttt{arXiv:1908.00386}.

\bibitem[Lus94]{lusztig}
G.~Lusztig.
\newblock Total positivity in reductive groups.
\newblock In {\em Lie theory and geometry}, volume 123 of {\em Progr. Math.},
  pages 531--568. Birkh\"auser Boston, Boston, MA, 1994.

\bibitem[Lus19]{Lusztig3}
G.~Lusztig.
\newblock On the totally positive {G}rassmannian, 2019.
\newblock Preprint, \texttt{arXiv:1905.09254}.

\bibitem[MR04]{MR}
B.~R. Marsh and K.~Rietsch.
\newblock Parametrizations of flag varieties.
\newblock {\em Represent. Theory}, 8:212--242, 2004.

\bibitem[MS17]{MullerSpeyer}
Greg Muller and David~E. Speyer.
\newblock The twist for positroid varieties.
\newblock {\em Proc. Lond. Math. Soc. (3)}, 115(5):1014--1071, 2017.

\bibitem[MSSV19]{Mago:2019waa}
Jorge Mago, Anders Schreiber, Marcus Spradlin, and Anastasia Volovich.
\newblock {Yangian invariants and cluster adjacency in $ \mathcal{N} $ = 4
  Yang-Mills}.
\newblock {\em J. High Energy Physics}, 10:099, 2019.

\bibitem[MSSV20]{Mago:2020kmp}
Jorge Mago, Anders Schreiber, Marcus Spradlin, and Anastasia Volovich.
\newblock {Symbol alphabets from plabic graphs}.
\newblock {\em J. High Energy Physics}, 10:128, 2020.

\bibitem[OPS15]{OPS}
Suho Oh, Alexander Postnikov, and David~E. Speyer.
\newblock Weak separation and plabic graphs.
\newblock {\em Proc. Lond. Math. Soc. (3)}, 110(3):721--754, 2015.

\bibitem[Pos06]{postnikov}
Alexander Postnikov.
\newblock Total positivity, {G}rassmannians, and networks, 2006.
\newblock Preprint, \texttt{arXiv:0609764}.

\bibitem[Ram97]{Rambau}
J\"{o}rg Rambau.
\newblock Triangulations of cyclic polytopes and higher {B}ruhat orders.
\newblock {\em Mathematika}, 44(1):162--194, 1997.

\bibitem[Rie]{rietsch_private}
K.~Rietsch.
\newblock Private communication, 2009.

\bibitem[Rie98]{rietsch}
Konstanze~Christina Rietsch.
\newblock {\em Total Positivity and Real Flag Varieties}.
\newblock Ph.D.\ thesis, Massachusetts Institute of Technology, 1998.

\bibitem[S{\etalchar{+}}]{OEIS}
Neil~JA Sloane et~al.
\newblock The {O}nline {E}ncyclopedia of {I}nteger {S}equences.

\bibitem[Sco06]{Scott}
Jeanne~S. Scott.
\newblock Grassmannians and cluster algebras.
\newblock {\em Proc. London Math. Soc. (3)}, 92(2):345--380, 2006.

\bibitem[Spe08]{SpeyerTLS}
David~E. Speyer.
\newblock Tropical linear spaces.
\newblock {\em SIAM J. Discrete Math.}, 22(4):1527--1558, 2008.

\bibitem[Spe16]{Speyer}
David~E. Speyer.
\newblock Variations on a theme of {K}asteleyn, with application to the totally
  nonnegative {G}rassmannian.
\newblock {\em Electron. J. Combin.}, 23(2):Paper 2.24, 7, 2016.

\bibitem[Sta77]{StanleyTriangulation}
Richard Stanley.
\newblock Eulerian partitions of a unit hypercube.
\newblock In Martin Aigner, editor, {\em Higher combinatorics: Proceedings of
  the {NATO} {A}dvanced {S}tudy {I}nstitute held in {B}erlin, {S}eptember 1-10,
  1976}, page~49. D. Reidel Publishing Co., Dordrecht-Boston, Mass., 1977.
\newblock NATO Advanced Study Institute Series. Ser. C: Mathematical and
  Physical Sciences, 31.

\bibitem[Sta12]{stanley_ec1}
Richard~P. Stanley.
\newblock {\em Enumerative combinatorics. {V}olume 1}, volume~49 of {\em
  Cambridge Studies in Advanced Mathematics}.
\newblock Cambridge University Press, Cambridge, second edition, 2012.

\bibitem[Stu88]{Sturmfels}
Bernd Sturmfels.
\newblock Totally positive matrices and cyclic polytopes.
\newblock In {\em Proceedings of the {V}ictoria {C}onference on {C}ombinatorial
  {M}atrix {A}nalysis ({V}ictoria, {BC}, 1987)}, volume 107, pages 275--281,
  1988.

\bibitem[Stu96]{SturmfelsGrobner}
Bernd Sturmfels.
\newblock {\em Gr\"{o}bner bases and convex polytopes}, volume~8 of {\em
  University Lecture Series}.
\newblock American Mathematical Society, Providence, RI, 1996.

\bibitem[SV09]{Spradlin:2009qr}
Marcus Spradlin and Anastasia Volovich.
\newblock {From twistor string theory to recursion relations}.
\newblock {\em Phys. Rev. D}, 80:085022, 2009.

\bibitem[SW05]{troppos}
David Speyer and Lauren Williams.
\newblock The tropical totally positive {G}rassmannian.
\newblock {\em J. Algebraic Combin.}, 22(2):189--210, 2005.

\bibitem[SW21]{SW}
David Speyer and Lauren~K. Williams.
\newblock The positive {D}ressian equals the positive tropical {G}rassmannian.
\newblock {\em Trans. Amer. Math. Soc. Ser. B}, 8:330--353, 2021.

\bibitem[TW13]{talaska_williams}
Kelli Talaska and Lauren Williams.
\newblock Network parametrizations for the {G}rassmannian.
\newblock {\em Algebra Number Theory}, 7(9):2275--2311, 2013.

\bibitem[TW15]{tsukerman_williams}
E.~Tsukerman and L.~Williams.
\newblock Bruhat interval polytopes.
\newblock {\em Adv. Math.}, 285:766--810, 2015.

\bibitem[Wel76]{welsh}
Dominic J.~A. Welsh.
\newblock {\em Matroid theory}.
\newblock Academic Press [Harcourt Brace Jovanovich Publishers], London, 1976.
\newblock L. M. S. Monographs, No. 8.

\bibitem[Wes95]{WEST1995247}
Julian West.
\newblock Generating trees and the {C}atalan and {S}chröder numbers.
\newblock {\em Discrete Mathematics}, 146(1):247--262, 1995.

\end{thebibliography}

\end{document}